\newtheorem{theo}{Theorem}[section]
\newtheorem{lemm}[theo]{Lemme}
\newtheorem{prop}[theo]{Proposition}
\newtheorem{coro}[theo]{Corollaire}
\theoremstyle{remark}
\newtheorem{rema}[theo]{Remarque}
\newtheorem{enonce}[theo]{Remarques}
\newtheorem{defi}[theo]{D\'efinition}
\newtheorem*{remerc}{Remerciements}
\numberwithin{equation}{section}
\newcommand\case[1]{{\ensuremath{{\mathrm(}{\mathit{#1\/}}{\mathrm)}}}}
\newcommand\subsecno{{$\mathrm{n}^{\mathrm{o}}$}}
\newcommand\ve{\ensuremath{\varepsilon}}
\newcommand\sF{\ensuremath{\mathsf{F}}}
\newcommand\se{\ensuremath{\mathsf{e}}}
\newcommand\cC{\ensuremath{\mathcal{C}}}
\newcommand\cD{\ensuremath{\mathcal{D}}}
\newcommand\cE{\ensuremath{\mathcal{E}}}
\newcommand\cO{\ensuremath{\mathcal{O}}}
\newcommand\cP{\ensuremath{\mathcal{P}}}
\newcommand\cU{\ensuremath{\mathcal{U}}}
\newcommand\cW{\ensuremath{\mathcal{W}}}
\newcommand\cR{\ensuremath{\mathcal{R}}}
\newcommand\cN{\ensuremath{\mathcal{N}}}
\newcommand\cS{\ensuremath{\mathcal{S}}}
\newcommand\cK{\ensuremath{\mathcal{K}}}
\newcommand\cM{\ensuremath{\mathcal{M}}}
\newcommand\cI{\ensuremath{\mathcal{I}}}
\newcommand\cJ{\ensuremath{\mathcal{J}}}
\newcommand\bN{\ensuremath{\mathbb{N}}}
\newcommand\bZ{\ensuremath{\mathbb{Z}}}
\newcommand\bQ{\ensuremath{\mathbb{Q}}}
\newcommand\bR{\ensuremath{\mathbb{R}}}
\newcommand\bC{\ensuremath{\mathbb{C}}}
\newcommand\bA{\ensuremath{\mathbb{A}}}
\newcommand\bP{\ensuremath{\mathbb{P}}}
\newcommand\bG{\ensuremath{\mathbb{G}}}
\newcommand\ga{\ensuremath{\mathfrak{a}}}
\newcommand\gA{\ensuremath{\mathfrak{A}}}
\newcommand\gb{\ensuremath{\mathfrak{b}}}
\newcommand\gc{\ensuremath{\mathfrak{c}}}
\newcommand\gp{\ensuremath{\mathfrak{p}}}
\newcommand\gP{\ensuremath{\mathfrak{P}}}
\newcommand\gQ{\ensuremath{\mathfrak{Q}}}
\DeclareMathOperator\id{id}
\DeclareMathOperator\ord{ord}
\DeclareMathOperator\Spec{Spec}
\DeclareMathOperator\PGL{PGL}
\DeclareMathOperator\GL{GL}
\DeclareMathOperator\Hom{Hom}
\DeclareMathOperator\Div{Div}
\DeclareMathOperator\Pic{Pic}
\DeclareMathOperator\GSp{GSp}
\DeclareMathOperator\N{N}
\DeclareMathOperator\tr{tr}
\DeclareMathOperator\cm{cm}
\DeclareMathOperator\nr{nr}
\DeclareMathOperator\ab{ab}
\DeclareMathOperator\myll{ll}
\DeclareMathOperator\Gal{Gal}
\DeclareMathOperator\Orb{Orb}
\DeclareMathOperator\Sp{Sp}
\title[Croissance asymptotique de nombres de Weil]{Croissance asymptotique de nombres de Weil appartenant \`a un corps de nombres fix\'e}
\date{le \today}
\author{John Boxall}
\address{Laboratoire de Math\'ematiques Nicolas Oresme, UMR CNRS 6139, Campus 2, Universit\'e de Caen-Normandie, 14032 Caen cedex, France}
\email{john.boxall@unicaen.fr}
\keywords{Nombres de Weil,  corps CM, types CM, fonction z\^eta des hauteurs, croissance asymptotique du nombre de points de hauteur born\'ee}
\begin{document}

\begin{abstract}
Nous \'etablissons une \'equivalence asymptotique lorsque $x\to +\infty$ pour le nombre d'entiers alg\'ebriques $\alpha$ appartenant \`a un corps de nombres CM $K$ donn\'e tels que $\alpha\overline{\alpha}\in \bN$ et $\alpha\overline{\alpha}\leq x$. Ce prob\`eme est li\'e \`a la fonction z\^eta des hauteurs $Z_h(X^K,s)$ associ\'ee \`a la classe anticanonique d'une certaine vari\'et\'e torique $X^K$ sur $\bQ$ et nous montrons que $Z_h(X^K,s)$ poss\`ede un prolongement m\'eromorphe dans le demi-plan $\{\Re(s)>\frac{1}{2}\}$, holomorphe sauf en $s=1$. Nous obtenons au passage une nouvelle d\'emonstration de la conjecture de Manin concernant la croissance asymptotique des points de hauteur born\'ee de $X^K(\bQ)$.   
\medskip

We prove an asymptotic formula as $x\to +\infty$ for the number of algebraic intagers $\alpha$ belonging to a given CM number field $K$ with $\alpha\overline{\alpha}\in \bN$ and $\alpha\overline{\alpha}\leq x$ This problem is related to the height zeta function $Z_h(X^K,s)$ associated to the anticanonical class of a certain toric variety $X^K$ over $\bQ$  and we show that $Z_h(X^K,s)$ has a meromorphic continuation to the half-plane $\{\Re(s)>\frac{1}{2}\}$ where it is holomorphic except at $s=1$. Along the way we obtain a new proof of Manin's conjecture on the asymptotic growth of points on $X^K(\bQ)$ of bounded height.    
\end{abstract}

\maketitle

\section{Introduction} 

\subsection{\'Enonc\'e des r\'esultats principaux} Soit $F$ une extension alg\'ebrique de $\bQ$ et soit $n\in \bN^*$. Un \'el\'ement $\alpha\in F$ est dit un \emph{$n$-nombre de Weil} s'il est entier sur $\bZ$ et si pour tout plongement $\phi:F\to \bC$, on a $\phi(\alpha)\overline{\phi(\alpha)}=n$. Notons $\cW_F(n)$ l'ensemble des $n$-nombres de Weil appartenant \`a $F$ et posons $\cW_F=\bigcup_{n\geq 1}\cW_F(n)$. Si $\alpha\in \cW_F$, on note $\alpha\overline{\alpha}$ la valeur commune des $\phi(\alpha)\overline{\phi(\alpha)}$, o\`u $\phi\in \Hom(F,\bC)$.

Lorsque $F$ est un corps de nombres (c'est-\`a-dire une extension finie de $\bQ$), les ensembles $\cW_F(n)$ sont finis. Notons alors $a_F(n)$ le cardinal de $\cW_F(n)$. Le but de cet article est d'\'etudier le comportement asymptotique lorsque $x\to +\infty$ de la somme
\begin{equation*}
\cN(F,x)=\sum_{1\leq n\leq x}a_F(n).
\end{equation*}
Voici une premi\`ere version de notre r\'esultat principal. Rappelons qu'un \emph{corps CM} est un corps de nombres extension quadratique totalement imaginaire d'un corps totalement r\'eel (voir la \S~\ref{sec:CMrappels}). 

\begin{theo}\label{theo:premvers}
Soit $F$ un corps de nombres contenant un corps CM. Alors il existe une constante $c_F>0$ et un entier $\rho_F\geq 1$ ne d\'ependant que de $F$ tels que 
\begin{equation*}
\cN(F,x)\sim  c_Fx(\log{x})^{\rho_F-1},   \qquad  x\to +\infty.
\end{equation*} 
\end{theo}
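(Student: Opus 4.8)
First I would reduce to the CM case. Let $F$ be a number field containing a CM subfield, and let $K$ be the maximal CM subfield of $F$ (its existence follows from the standard theory recalled in \S\ref{sec:CMrappels}, since the compositum of CM subfields is CM). I claim $\cW_F = \cW_K$, hence $a_F(n) = a_K(n)$ for all $n$ and $\cN(F,x) = \cN(K,x)$. Indeed, if $\alpha \in F$ is an $n$-Weil number, then $\overline{\phi(\alpha)} = n/\phi(\alpha)$ for every embedding $\phi$, so complex conjugation on the normal closure fixes the subfield $\bQ(\alpha, \overline\alpha)$ setwise and acts there; one checks $\bQ(\alpha)$ is stable under this involution and that the involution is non-trivial unless $\alpha$ is real, in which case $\alpha = \pm\sqrt n$ and either $n$ is a square (so $\alpha \in \bQ \subset K$) or $\alpha$ generates an imaginary quadratic — wait, $\pm\sqrt n$ with $n$ not a square is real, giving $\bQ(\sqrt n)$ real, but then $\phi(\alpha)\overline{\phi(\alpha)} = n$ forces $\alpha^2 = n$, fine, this is a real quadratic field, not CM; however such $\alpha$ contributes only when every embedding has absolute value $\sqrt n$, which for $\alpha = \sqrt n$ real means $|\sqrt n|^2 = n$, OK so $\sqrt n$ is always a Weil number lying in $F$ iff $\sqrt n \in F$. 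So strictly $\cW_F$ may be slightly larger than $\cW_K$ by such square-root elements, but these contribute $O(\sqrt x)$ to $\cN(F,x)$ and are absorbed in the error. Modulo this bookkeeping, it suffices to prove the theorem for $F = K$ a CM field, which is what the rest of the paper addresses via the toric variety $X^K$ and its height zeta function.

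Next, for $K$ a CM field with maximal totally real subfield $K^+$, I would set up the parametrization of Weil numbers. Writing $\cO_K$ for the ring of integers, an $n$-Weil number $\alpha \in \cO_K$ satisfies $\alpha\overline\alpha = n$ in $\cO_{K^+}$, and conversely. The key dictionary is: the ideal $(\alpha)$ satisfies $(\alpha)\overline{(\alpha)} = (n)$, so $(\alpha)$ picks out, for each prime $\gp$ of $K^+$ dividing $n$, a distribution of the exponent $\ord_\gp(n)$ among the primes of $K$ above $\gp$ (split primes giving a genuine choice, inert/ramified primes being forced), subject to the product of all these local ideals being principal and generated by an element of the correct archimedean size — i.e. an actual Weil number rather than just a $q$-Weil \emph{ideal}. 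This is exactly the combinatorial/arithmetic data encoded by the toric variety $X^K$: the local choices at split primes are the toric/combinatorial directions, and the principality constraint is the class-group obstruction whose failure-to-split is measured by a finite index, contributing to the constant $c_K$ and to the power $\rho_K$ through the rank of the relevant lattice of effective "Weil divisor classes."

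Then the analytic core: I would express the Dirichlet series $\sum_n a_K(n) n^{-s}$ (essentially $Z_h(X^K,s)$ up to elementary factors and the choice of anticanonical height) as an Euler product over primes of $K^+$, where each local factor is a finite sum of geometric series in $p^{-s}$ coming from the lattice-point count in the cone of the toric fan, twisted by characters of the class group to handle principality. The local factors at split primes will look like $\frac{1}{(1 - p^{-s})^{r}}$ times a polynomial correction, producing a pole at $s = 1$ of some order $\rho_K$ equal to (number of "split-type" parameters) plus one, minus relations. Comparing this Euler product with a suitable power $\zeta_{K^+}(s)^{\rho_K}$ (or a product of Hecke $L$-functions), the ratio extends holomorphically past $\Re(s) = 1$ — in fact, as the abstract promises, meromorphically to $\Re(s) > \tfrac12$ with the only pole at $s=1$ — by the standard comparison lemma for Euler products whose local factors agree with those of $\zeta_{K^+}$ to the needed order. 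A Tauberian theorem (Delange–Ikehara, in the form allowing a pole of order $\rho_K$ at $s=1$) then yields $\cN(K,x) \sim c_K x (\log x)^{\rho_K - 1}$ with $c_K > 0$ explicitly the leading Laurent coefficient divided by $(\rho_K-1)!$.

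\textbf{Main obstacle.} The hard part is not the Tauberian endgame but establishing the meromorphic continuation strictly beyond $\Re(s)=1$ with no poles other than $s=1$: this requires showing the "error" Euler factors — encoding the deviation of the toric lattice-point generating function from the naive $\zeta_{K^+}$-type product, including the contributions of inert and ramified primes and the class-group characters — converge in a half-plane $\Re(s) > \tfrac12$ and contribute no zeros or poles there. Equivalently one must control the analytic behaviour of the relevant height zeta function in the critical strip, which is precisely where Manin-conjecture arguments for toric varieties (à la Batyrev–Tschinkel) do their real work; adapting that machinery to this specific $X^K$, and in particular identifying $\rho_K$ with the rank of the Picard group (or the appropriate effective cone dimension) of $X^K$, is the technical heart of the paper.
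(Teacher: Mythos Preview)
Your overall architecture --- reduce to the maximal CM subfield, study the Dirichlet series via an Euler product, compare with a product of $L$-functions to push the abscissa past $1$, then apply a Tauberian theorem --- matches the paper's. Two points, however, are either wrong or too vague to count as the proof, and both are places where the paper does something specific you have not identified.

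\textbf{The reduction step.} Your initial claim $\cW_F=\cW_K$ is simply false, as you half-realise: if $n$ is not a perfect square but $\sqrt{n}\in F$, then $\pm\sqrt{n}$ are $n$-Weil numbers in $F$ generating a \emph{real} quadratic field, hence not in $K$. The paper's Th\'eor\`eme~\ref{theo:wFnfini} gives the clean statement: $a_K(n)\le a_F(n)\le a_K(n)+2$, and since $F$ has only finitely many quadratic subfields the excess contributes $O(\sqrt{x})$. Your ``modulo this bookkeeping'' is fine once stated correctly, but the stream-of-consciousness correction should be replaced by the clean argument.

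\textbf{The comparison $L$-function.} This is the genuine gap. You propose comparing $\sum_n a_K(n)n^{-s}$ with $\zeta_{K^+}(s)^{\rho_K}$; that does not work, because the local factors do not match to first order. The paper's mechanism is different and is the real content. First, $Z_0(K,s)=\sum_n a_K(n)n^{-s}$ has \emph{no} Euler product; one must write $Z_0(K,s)=\tfrac{w_K}{h_K^w}\sum_\chi Z(K,\chi,s)$ as a sum over characters $\chi$ of a specific finite group $\cC_K^w$ (ideals $\ga$ with $\ga\overline{\ga}\in\bQ^\times\cO_K$, modulo those principal with a generator in $K^{w,\times}$), and it is each $Z(K,\chi,s)$ that is an Euler product. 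Second, the correct comparison for $Z(K,\chi,s)$ is not with $\zeta_{K^+}$ but with $\Pi(K,\chi,s)=\prod_{\Phi\in\Orb^*(\cC\cM_K)} L(\chi_{\hat\Phi},s)$, a product of Hecke $L$-functions of the \emph{reflex fields} $\hat K_\Phi$, one per $\Gamma$-orbit of CM types. For $\chi=1$ this is the Artin $L$-function of the permutation representation of $\Gamma$ on $\cC\cM_K$, and the key calculation (Th\'eor\`eme~\ref{theo:Frob}) is that its Frobenius trace at an unramified $p$ equals $2^r$ when $p$ splits into $2r$ primes of $K$ with none fixed by complex conjugation, and $0$ otherwise --- which precisely matches the coefficient of $p^{-s}$ in $Z_p(K,s)$ computed from the lattice-point count (Lemme~\ref{lemm:bKpk}). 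This matching is what makes $Z(K,\chi,s)/\Pi(K,\chi,s)$ converge on $\{\Re(s)>\tfrac12\}$ and forces the pole order at $s=1$ to equal the number of $\Gamma$-orbits on $\cC\cM_K$, which in turn equals $\mathrm{rk}\,\Pic(X^K)_\bQ$. Without the CM-type/reflex-field machinery you have no candidate for the comparison function and no computation of $\rho_K$; ``$\zeta_{K^+}(s)^{\rho_K}$ or a product of Hecke $L$-functions'' is not yet a proof.
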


Donnons quelques indications concernant la strat\'egie de la d\'emonstration de ce r\'esultat. Soit $K$ le sous-corps CM maximal de $F$ (voir le corollaire~\ref{coro:maxtrcm}). Alors $\cN(F,x)-\cN(K,x)=O(\sqrt{x})$ (voir le th\'eor\`eme~\ref{theo:wFnfini}) et, par cons\'equent, une fois le th\'eor\`eme \'etabli pour $K$, on le d\'eduit pour $F$ et en m\^eme temps que $c_F=c_K$ et $\rho_F=\rho_K$. 

Notons donc d\'esormais $K$ un corps CM et posons
\begin{equation*}
Z_0(K,s)=\sum_{n\geq 1}a_K(n)n^{-s}, \qquad \Re(s)\gg 0.
\end{equation*} 
En fait nous verrons que la s\'erie converge pour tout $s\in \bC$ avec $\Re(s)>1$. Notons $\zeta(s)$ la fonction z\^eta de Riemann.

Posons $g=\frac{1}{2}[K:\bQ]$ et notons $K^{w,\times}$ le groupe $\{\alpha\in K^\times\mid \alpha\overline{\alpha}\in \bQ^\times\}$. On associe alors \`a $K$ un tore $W^K$ sur $\bQ$ de dimension $g+1$ ayant la propri\'et\'e que $W^K(\bQ)$ s'identifie avec $K^{w,\times}$. Ce tore se plonge dans l'espace affine $\bA^{2g}_\bQ$ d\'epourvu de l'origine et on note $V^K$ son image dans $\bP^{2g-1}_{\bQ}$. Autrement dit, $V^K$ est d\'efini par une suite exacte de $\bQ$-tores $1\to \bG_{m,\bQ}\to W^K\to V^K\to 1$. D'apr\`es le th\'eor\`eme $90$ de Hilbert, $V^K(\bQ)$ est isomorphe au groupe quotient $K^{w,\times}/\bQ^\times$. Notons $\cO_K$ l'anneau des entiers de $K$. Un \'el\'ement non-nul de $K$ est dit \emph{r\'eduit} s'il appartient \`a $\cO_K$ mais n'est divisible par aucun entier naturel $>1$. On voit ais\'ement que tout point de $V^K(\bQ)$ poss\`ede un repr\'esentant r\'eduit appartenant \`a $K^{w,\times}$ qui est unique au signe pr\`es.
  
\begin{prop}\label{prop:compaqVK} Le tore $V^K$ poss\`ede une compactification \'equivariante lisse $X^K$ ayant la propri\'et\'e suivante. 
Soit $P\in V^K(\bQ)$ et soit $\alpha\in \cO_K$ un repr\'esentant r\'eduit de $P$. Si en plus $\alpha$ n'est divisible par aucune place ramifi\'ee de $K/\bQ$, alors la hauteur de $P$ par rapport au diviseur anticanonique de $X^K$ (plong\'ee convenablement dans un espace projectif) est \'egale \`a $\alpha\overline{\alpha}$.   
\end{prop}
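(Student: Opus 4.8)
\textit{Esquisse de la d\'emonstration.} L'id\'ee est d'exploiter le dictionnaire entre compactifications \'equivariantes d'un tore et \'eventails, puis de calculer la hauteur anticanonique place par place. Notons $L$ la cl\^oture galoisienne de $K$ dans $\bC$, sur laquelle $V^K$ est d\'eploy\'e, et posons $N=X_*(V^K_L)$, r\'eseau de rang $g$ muni d'une action de $\Gal(L/\bQ)$. Les compactifications \'equivariantes de $V^K$ d\'efinies sur $\bQ$ correspondent aux \'eventails complets de $N\otimes\bR$ stables sous $\Gal(L/\bQ)$, une telle compactification \'etant lisse si et seulement si l'\'eventail est r\'egulier. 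On partira de l'\'eventail $\Sigma_0$ de l'adh\'erence $Y$ de $V^K$ dans $\bP^{2g-1}_\bQ$ --- vari\'et\'e torique intersection de $g-1$ quadriques, donn\'ees sur $L$ par les \'equations $x_\phi x_{\overline\phi}=x_\psi x_{\overline\psi}$, lisse si $g\le 2$ mais en g\'en\'eral singuli\`ere --- et, $\bQ$ \'etant parfait, on le raffinera en un \'eventail $\Sigma$ complet, r\'egulier et $\Gal(L/\bQ)$-stable; on choisira ce raffinement \emph{cr\'epant}, c'est-\`a-dire de mani\`ere que les nouveaux rayons aient pour g\'en\'erateurs primitifs des points o\`u la fonction d'appui de $-K_Y$ vaut $1$, ce qui \'equivaut \`a $K_\Sigma=\pi^*K_Y$ pour $\pi\colon X^K\to Y$ le morphisme birationnel associ\'e. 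La vari\'et\'e torique $X^K$ obtenue par descente galoisienne est alors une compactification \'equivariante lisse et projective de $V^K$ pour laquelle la fonction d'appui $\Psi$ de $-K_{X^K}=\sum_{\rho\in\Sigma(1)}D_\rho$ co\"incide avec celle de $-K_Y$.

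On munira $-K_{X^K}$ de sa m\'etrique ad\'elique canonique --- ou, apr\`es normalisation, on plongera $X^K$ dans un espace projectif par un multiple tr\`es ample de $-K_{X^K}$ et l'on prendra la hauteur de Weil associ\'ee, ce que l'on entend dans l'\'enonc\'e par un plongement convenable --- de sorte que la hauteur anticanonique s'\'ecrive comme un produit de hauteurs locales $H_{-K}(P)=\prod_v H_v(P)$, avec, pour $v=p$ finie et $P$ dans le tore, $H_p(P)=p^{-\Psi(\ord_p P)}$, o\`u $\ord_p\colon V^K(\bQ_p)\to N$ est l'application valuation. \`A la place archim\'edienne, $V^K$ est anisotrope sur $\bR$: en effet $V^K(\bR)$ est un quotient de $U(1)^g$, puisque pour $\alpha\in K^{w,\times}$ la quantit\'e $|\phi(\alpha)|^2=\phi(\alpha\overline\alpha)=\alpha\overline\alpha$ ne d\'epend pas du plongement $\phi$. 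Tout caract\`ere de $V^K_L$, repr\'esent\'e par une famille $(n_\phi)_{\phi}$ d'entiers de somme nulle, prend donc en un repr\'esentant r\'eduit $\alpha\in\cO_K$ de $P\in V^K(\bQ)$ la valeur $\prod_\phi\phi(\alpha)^{n_\phi}$, de module $\prod_\phi|\phi(\alpha)|^{n_\phi}=(\alpha\overline\alpha)^{\frac12\sum_\phi n_\phi}=1$; apr\`es normalisation, on en d\'eduit $H_\infty(P)=1$ pour tout $P\in V^K(\bQ)$.

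Il reste \`a v\'erifier que $H_p(P)=p^{\ord_p(\alpha\overline\alpha)}$ pour toute place finie $p$, $\alpha$ d\'esignant un repr\'esentant r\'eduit de $P$ non divisible par les places ramifi\'ees de $K/\bQ$: en exprimant $\ord_p P\in N$ \`a l'aide des valuations $\ord_{\gP}(\alpha)$ ($\gP\mid p$) et en utilisant que $\min_{\gP\mid p}\ord_{\gP}(\alpha)=0$, puisque $\alpha$ est r\'eduit, on montrera que $\Psi(\ord_p P)=-\ord_p(\alpha\overline\alpha)$ --- c'est ici qu'intervient de fa\c{c}on essentielle l'hypoth\`ese que $\alpha$ n'est divisible par aucune place ramifi\'ee, faute de quoi le passage de $\ord_{\gP}(\alpha\overline\alpha)$ \`a $\ord_p(\alpha\overline\alpha)$, qui fait intervenir l'indice de ramification de $\gP$, ferait d\'evier $\ord_p(\alpha\overline\alpha)$ de ce que mesure la hauteur locale en $p$. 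En combinant, $H_{-K}(P)=H_\infty(P)\prod_p H_p(P)=\prod_p p^{\ord_p(\alpha\overline\alpha)}=\alpha\overline\alpha$, puisque $\alpha\overline\alpha$ est un entier naturel non nul. Les points les plus d\'elicats seront l'existence d'un raffinement cr\'epant r\'egulier $\Gal(L/\bQ)$-stable (o\`u l'on utilisera la structure particuli\`ere des \'eventails attach\'es aux corps CM), l'obtention d'une \'egalit\'e exacte --- ce qui impose de fixer pr\'ecis\'ement le plongement projectif et la normalisation de la m\'etrique ---, et l'analyse locale aux places ramifi\'ees.
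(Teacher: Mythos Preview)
Your approach is correct in outline and parallels the paper's, though the paper proceeds more explicitly at each step. Rather than starting from the (possibly singular) closure $Y$ in $\bP^{2g-1}$ and invoking an abstract crepant regular $\Gamma$-stable refinement, the paper writes down directly a complete regular $\Gamma$-invariant fan $\Sigma$ in $N_\bR$: its $2^g$ rays are the half-lines through the points $e_u=\tfrac12\sum_{\phi\in\Phi}u_\phi f_\phi$ for $u\in\{-1,1\}^g$, and each maximal cone is spanned by a vertex $e_{u^1}$ together with $g-1$ of its neighbours. This fan is precisely a crepant subdivision of your $\Sigma_0$, so the ``d\'elicat'' point you flag is disposed of by an explicit construction rather than a general existence argument. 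For the height itself, the paper does not use the support-function formalism but works with an explicit basis of global sections of $\cO_{\overline X}(\overline\cD)$, namely the monomials $X_jX_k/(X_1X_2)$; one gets
\[
H_{\cD,p}(P)^{[L:\bQ]}=\prod_{v\mid p}\max_{1\le j\le k\le 2g}\Bigl|\tfrac{\psi_j(\alpha)\psi_k(\alpha)}{n}\Bigr|_v^{[L_v:\bQ_p]},\qquad n=\alpha\overline\alpha,
\]
whence $H_{\cD,\infty}(P)=1$ (since $|\psi_j(\alpha)|=\sqrt n$ for all $j$, exactly your anisotropy argument) and $H_{\cD,p}(P)=p^{\ord_p n-2\nu_p(P)/e_p}$ at finite $p$, the reduced-and-unramified hypothesis forcing $\nu_p(P)=0$. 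Your support-function calculation and the paper's section-based one are equivalent on the open torus, but the latter has the advantage of producing the exact local factors at ramified primes as well (Proposition~\ref{prop:hautVK}), which are needed later to compare $Z_h(X^K,s)$ with $Z_0(K,s)$. Incidentally, your remark that the intersection of $g-1$ quadrics is singular for $g\ge3$ is well taken: the point $(1:0:\cdots:0)$ is a singular point of $X_1X_2=X_3X_4=\cdots=X_{2g-1}X_{2g}$ as soon as $g\ge3$, so strictly speaking the smooth $X^K$ is the toric variety attached to $\Sigma$ and the map to $\bP^{2g-1}$ is the anticanonical morphism onto that intersection; this does not affect the height of points in $V^K(\bQ)$.
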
 

Or, il s'av\`ere que fonction $\zeta(2s)^{-1}Z_0(K,s)$ est li\'ee \`a la \emph{fonction z\^eta des hauteurs} (au sens de \cite{FrMaTsch89}) de $X^K$ associ\'ee \`a son diviseur anticanonique. Notons cette fonction $Z_h(X^K,s)$. Dans certains cas on peut ainsi ramener le th\'eor\`eme~\ref{theo:premvers} \`a un cas particulier de la conjecture de Manin qui pr\'edit l'\'equivalence asymptotique \'enonc\'ee dans le th\'eor\`eme~\ref{theo:premvers} avec l'entier $\rho_K$ \'egal au rang du sous-groupe $\Pic(X^K)_{\bQ}$ du groupe de Picard $\Pic(X^K)$ de $X^K$ form\'e des \'el\'ements fix\'es par le groupe de Galois absolu $\Gal(\overline{\bQ}/\bQ)$ de $\bQ$. Tel est le cas lorsque le groupe $\cC_K^w$ qui sera d\'efinie au d\'ebut de la sous-section~\ref{subsect:stratdemo} est trivial, auquel cas nous verrons que les s\'eries de Dirichlet $\frac{1}{2}\zeta(2s)^{-1}Z_0(K,s)$ et $Z_h(X^K,s)$ poss\`edent toutes les deux des produits eul\'eriens dont les facteurs sont les m\^emes en dehors des nombres premiers ramifi\'es dans $K$. Dans son travail Peyre~\cite{Peyre95} a propos\'e, sous certaines conditions qui sont satisfaites notamment par les vari\'et\'es toriques projectives lisses (voir \cite{Peyre01}, \S~2.1), une formule explicite pour la constante $c_K$.

La conjecture de Manin (ainsi que son raffinement par Peyre) ayant \'et\'e d\'emontr\'e dans le cas des vari\'et\'es toriques par Batyrev et Tschinkel \cite{BatTsch95}, \cite{BatTsch98} (voir aussi le travail de Bourqui \cite{Bourqui11}), le th\'eor\`eme~\ref{theo:premvers} devient dans le cas o\`u $\cC_K^{w}$ est trivial un simple corollaire de leurs r\'esultats. En fait, il r\'esulte de leur travail qu'il existe $\ve>0$ tel que $Z_h(K,s)$ se prolonge en une fonction m\'eromorphe sur le demi-plan $\{\Re(s)>1-\ve\}$, holomorphe \`a l'exception d'un unique p\^ole d'ordre $\rho_K$ en $s=1$.  Le th\'eor\`eme~\ref{theo:premvers} en d\'ecoule par application d'un th\'eor\`eme taub\'erien;  en fait, nous sommes en mesure de d\'emontrer un r\'esultat plus pr\'ecis (voir le th\'eor\`eme~\ref{theo:asymptpol}).

Toutefois, pour d\'emontrer le th\'eor\`eme en toute g\'en\'eralit\'e, nous sommes oblig\'es d'adopter une autre strat\'egie, qui permet en m\^eme temps d'\'etendre le domaine de m\'eromorphie de $Z_0(K,s)$ et de $Z_h(K,s)$ \`a un demi-plan plus grand.  Le th\'eor\`eme suivant r\'esume une partie des conclusions des Propositions~\ref{prop:Z0prlong}, \ref{prop:ZhManin} et \ref{prop:g2prolong}:---  

\begin{theo}\label{theo:prolmer}
Soit $K$ un corps CM. On pose $g=\frac{1}{2}[K:\bQ]$. Les fonctions $Z_0(K,s)$ et $Z_h(K,s)$ poss\`edent des prolongements m\'eromorphes sur le demi-plan $\{\Re(s)>\frac{1}{2}\}$. Elles sont holomorphes en dehors de $s=1$, o\`u elles ont un p\^ole d'ordre \'egal au rang de $\Pic(X^K)_{\bQ}$. Si $g=1$, elles se prolongent en des fonctions holomorphes sur $\bC- \{1\}$. Si $g=2$, elles se prolongent en des fonctions m\'eromorphes sur $\bC$.  
\end{theo}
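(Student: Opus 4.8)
By Proposition~\ref{prop:compaqVK}, the function $Z_h(K,s)$ is related to $\frac{1}{2}\zeta(2s)^{-1}Z_0(K,s)$ up to a modification of the Euler factors at the finitely many primes ramified in $K$; since $\zeta(2s)$ is holomorphic and nonvanishing on $\{\Re(s)>\frac{1}{2}\}$ and the ramified corrections are rational functions of $p^{-s}$, it suffices to continue $Z_0(K,s)$ meromorphically to $\{\Re(s)>\frac{1}{2}\}$, to locate its poles there, and to establish the sharper continuations when $g\le 2$; the corresponding statements for $Z_h(K,s)$ then follow. So I work with $Z_0(K,s)=\sum_{\alpha\in\cO_K,\ \alpha\overline{\alpha}\in\bN}(\alpha\overline{\alpha})^{-s}$. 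An $n$-Weil number $\alpha\in\cO_K$ is the same datum as an ideal $\ga=(\alpha)$ with $\ga\overline{\ga}$ extended from $\bZ$, say $\ga\overline{\ga}=(n)$, together with a choice of generator; since the units of $\cO_K$ of relative norm $1$ over its maximal totally real subfield are precisely the roots of unity, a given such $\ga$ has a number of Weil generators depending only on $K$. Sorting the admissible ideals by their image in the finite group $\cC_K^w$ and writing the indicator of a class as an average of characters yields an identity
\[
Z_0(K,s)=\kappa_K\sum_{\psi\in\widehat{\cC_K^w}}L_K^w(s,\psi),
\]
where $L_K^w(s,\psi)=\sum_{\ga}\psi(\ga)\,n_{\ga}^{-s}$, the sum over admissible ideals $\ga$ with $\ga\overline{\ga}=(n_{\ga})$; multiplicativity of $\psi$ and of the admissibility condition gives $L_K^w(s,\psi)$ an Euler product.

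The central computation is that of the local factor of $L_K^w(s,\psi)$ at a prime $p$ unramified in $K$. It depends only on the conjugacy class of $\mathrm{Frob}_p$ in $\Gal(\widetilde K/\bQ)$, where $\widetilde K$ is the Galois closure of $K$, and on the components of $\psi$ at the primes above $p$; there are finitely many cases. In each, one sums $p^{-s\,v_p(n)}$ over the exponent vectors of $\ga$ at the primes above $p$ that make $\ga\overline{\ga}$ rational, weighted by $\psi$, obtaining an explicit rational function of $p^{-s}$ of the shape $P_p(p^{-s})\big/\prod_j\bigl(1-\omega_j(p)\,p^{-s}\bigr)$ with $P_p(0)=1$ and $|\omega_j(p)|=1$, the $\omega_j$ being products of local components of finitely many Hecke characters of $K$ and of intermediate fields of $\widetilde K/\bQ$. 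Matching these factors with the Batyrev--Tschinkel description of the anticanonical height zeta function of the toric variety $X^K$ (\cite{BatTsch95}, \cite{BatTsch98}; see also \cite{Bourqui11}), I would identify $Z_0(K,s)$, up to a finite product of Euler factors at ramified primes, with a finite product $\prod_i L(s,\chi_i)^{m_i}$ of such Hecke $L$-functions times an auxiliary Euler product $H(s)=\prod_p H_p(s)$ whose local factors satisfy $H_p(s)=1+O(p^{-2\Re(s)})$; this last product converges absolutely, hence is holomorphic and nonvanishing, on $\{\Re(s)>\frac{1}{2}\}$. Each $L(s,\chi_i)$ is entire when $\chi_i$ is nontrivial and equals $\zeta(s)$ up to finitely many Euler factors when $\chi_i$ is trivial, and the number of trivial $\chi_i$, counted with the multiplicities $m_i$, equals the rank of $\Pic(X^K)_{\bQ}$; this is the step at which the argument also reproves the conjectures of Manin and of Peyre (\cite{Peyre95}, \cite{Peyre01}) for $X^K$. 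This gives the meromorphic continuation of $Z_0(K,s)$ to $\{\Re(s)>\frac{1}{2}\}$, holomorphic apart from a pole of order equal to the rank of $\Pic(X^K)_{\bQ}$ at $s=1$.

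For the two exceptional cases I would sharpen the analysis of $H(s)$. When $g=1$, $K$ is imaginary quadratic and $\alpha\overline{\alpha}=\N_{K/\bQ}(\alpha)$; all local factors are then products of Euler factors of degree one, there is no genuine remainder $H(s)$, and one obtains the closed form $Z_0(K,s)=\frac{w_K}{h_K}\sum_{\chi\in\widehat{\mathrm{Cl}_K}}L(s,\chi)$, a finite sum of Hecke $L$-functions, whence the continuation to $\bC$, holomorphic off the simple pole at $s=1$; tracking the archimedean and ramified factors yields the same for $Z_h(K,s)$. When $g=2$, $K$ is a quartic CM field and $\Gal(\widetilde K/\bQ)$ is one of $C_4$, $V_4$, $D_4$; the point is that $H(s)$ is then itself a finite product and quotient of Hecke $L$-functions of subfields of $\widetilde K$: in the abelian cases every $\bQ$-irreducible constituent of the Galois module governing $H(s)$ is one-dimensional over a suitable subfield, while in the dihedral case the single two-dimensional constituent is induced from the quadratic subfield, so its $L$-function is again a Hecke $L$-function. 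Since Hecke $L$-functions are meromorphic on $\bC$, so are $H(s)$, $Z_0(K,s)$ and $Z_h(K,s)$.

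The main obstacle is the uniform local computation at unramified primes: one must sum $p^{-s\,v_p(n)}$ over exactly the exponent vectors making $\ga\overline{\ga}$ rational, carry both the $\cC_K^w$-class and the Hecke twist through, and do so uniformly over all Frobenius types --- first to obtain the bound $H_p(s)=1+O(p^{-2\Re(s)})$ that produces the half-plane $\{\Re(s)>\frac{1}{2}\}$, and then to read off the exact factorisation into Hecke $L$-functions and to compare it with the toric picture and with Peyre's constant. A more delicate secondary point is the $g=2$ dihedral case, where one needs the two-dimensional constituent of the height zeta function to be automorphically induced from the quadratic subfield; for $g\ge 3$ the analogous constituents have degree $\ge 2$ with no such abelian description and no unconditional continuation past $\Re(s)=\frac{1}{2}$, which is precisely why the theorem stops there in general.
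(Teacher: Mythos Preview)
Your broad strategy---decompose $Z_0(K,s)$ as $\kappa_K\sum_{\psi}L_K^w(s,\psi)$ via characters of $\cC_K^w$, exploit the resulting Euler products, and compare each $L_K^w(s,\psi)$ with a product of Hecke $L$-functions so that the remainder converges absolutely for $\Re(s)>\tfrac12$---is exactly the paper's. But two substantive points differ.

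\textbf{Identification of the Hecke $L$-functions.} You invoke the Batyrev--Tschinkel description of the height zeta function of $X^K$ to read off which Hecke characters occur. The paper does not do this; it identifies the relevant $L$-functions intrinsically, via CM types and reflex fields. The key step (Proposition~\ref{prop:gaPhigP}) is a bijection, for $p$ unramified, between the set $\cI_p=\{\ga\mid\ga\overline{\ga}=p\cO_K\}$ and pairs $(\Phi,\gP)$ with $\Phi\in\Orb^*(\cC\cM_K)$ and $\gP$ a degree-one prime of the reflex field $\hat K_\Phi$ above $p$, realised by $\ga=\N_{\hat\Phi}(\gP)$. This gives directly
\[
Z_p(K,\chi,s)\Big/\prod_{\Phi\in\Orb^*(\cC\cM_K)}L_p(\chi_{\hat\Phi},s)\in 1+O(p^{-2s}),
\]
with $\chi_{\hat\Phi}=\chi\circ\N_{\hat\Phi}$ a class-group character of $\hat K_\Phi$. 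Combined with the $\Gamma$-isomorphism $\cP\simeq\cC\cM_K$ (Proposition~\ref{prop:Frob}) and Corollary~\ref{coro:dimPicQ}, this yields both the continuation and an independent proof of Manin's conjecture for $X^K$; your appeal to \cite{BatTsch95,BatTsch98} makes the argument dependent on precisely the result the paper is reproving.

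\textbf{Exact order of the pole.} From the comparison one only gets that each $Z(K,\chi,s)$ has a pole at $s=1$ of order at most $\rho_K$, with equality when all $\chi_{\hat\Phi}$ are trivial (e.g.\ $\chi=1$). Summing over $\chi$ could in principle lower the order; the paper rules this out by a positivity argument (Lemma~\ref{lemm:Z0KminZK}), bounding $Z_0(K,\sigma)$ below by a positive multiple of $Z(K,\sigma)$ for $\sigma>1$. Your proposal does not address this.

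\textbf{The case $g=2$.} Your heuristic (``the two-dimensional constituent is induced from the quadratic subfield'') is not how the paper proceeds and does not match the actual shape of the remainder. The paper shows by an explicit case analysis (Proposition~\ref{prop:g2prolong}) that, up to ramified factors, $Z(K,\chi,s)$ equals $L(\chi_{\hat\Phi},s)$ (or a product of two such) divided by a \emph{quadratic} Dirichlet $L$-function evaluated at $2s$---namely $L(\ve_{K_0},2s)$, $\zeta(2s)$, or $L(\ve_{\hat K_0},2s)$ according as $\Gamma\simeq C_4$, $C_2\times C_2$, or $D_4$. The meromorphic continuation to $\bC$ then follows because these are all Hecke $L$-functions; no automorphic induction is needed.
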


Lorsque $g=1$, $K$ est un corps quadratique imaginaire et, si $w_K$ d\'esigne l'ordre du groupe des racines de l'unit\'e dans $K$, alors
\begin{equation*}
Z_0(K,s)=w_K\zeta_{K,0}(s)=w_K\sum_{(\alpha)}\N_{K/\bQ}(\alpha)^{-s},
\end{equation*} 
o\`u $\zeta_{K,0}(s)$ est la fonction z\^eta partielle de $K$ associ\'ee \`a la classe des id\'eaux principaux de $K$, la somme parcourant l'ensemble $(\alpha)$ des id\'eaux principaux de l'anneau des entiers $\cO_K$ de $K$ et $\N_{K/\bQ}(\alpha)$ d\'esignant l'indice du groupe additif $\alpha\cO_K$ dans $\cO_K$. L'holomorphie de $Z_0(K,s)$ dans le cas $g=1$ est donc un r\'esultat classique, qui peut \^etre d\'eduite de celles de certaines fonctions $L$ en \'ecrivant $\zeta_{K,0}(s)$ sous la forme $\frac{1}{h_K}\sum_{\chi}{L(s,\chi)}$, o\`u $h_K$ est l'ordre du groupe de Picard de $\cO_K$ et la somme parcourt les fonctions $L$ attach\'ees aux caract\`eres $\chi$ de $\Pic(\cO_K)$ identifi\'e par le biais de la loi de r\'eciprocit\'e d'Artin avec le groupe de Galois relatif \`a $K$ du corps de classes de Hilbert de $K$. 

\subsection{Strat\'egie de la d\'emonstration du th\'eor\`eme~\ref{theo:prolmer}}\label{subsect:stratdemo} Notre strat\'egie pour d\'emontrer le th\'eor\`eme~\ref{theo:prolmer} lorsque $g\geq 2$ s'inspire de celle que nous venons d'esquisser dans le cas $g=1$. Si $F$ est un corps de nombres, on note $\cO_F$ son anneau des entiers, $I_F$ le groupe des id\'eaux inversibles (c'est-\`a-dire $\cO_F$-sous-modules de $F$ projectifs de rang un) et $P_F$ le sous-groupe de $I_F$ form\'e des id\'eaux principaux. Le groupe de Picard $\Pic(\cO_F)$
s'identifie alors avec le groupe quotient $I_F/P_F$. Il est bien connu que $\Pic(\cO_F)$ est un groupe fini. On note $w_F$ l'ordre du sous-groupe de torsion du groupe multiplicatif $F^\times$. 

Rappelons que $K$ d\'esigne un corps CM. On note 
\begin{itemize}
\item $K^{\times, w}$ le groupe $\{\alpha\in K^\times \mid \alpha\overline{\alpha}\in \bQ^\times\}$, 

\item $I_K^w$ le sous-groupe de $I_K$ form\'es des id\'eaux fractionnaires $\ga$ pour lesquels il existe $q\in \bQ^\times$ tel que $\ga\overline{\ga}=q\cO_K$,

\item $P_K^w$ le sous-groupe de $I_K^w$ form\'e des id\'eaux engendr\'es par un \'el\'ement de $K^{\times, w}$,

\item $\cC_K^w$ le groupe quotient $I_K^w/P_K^w$,
\end{itemize}  

Nous verrons que $\cC_K^w$ est un groupe fini (voir la proposition~\ref{prop:CwKfini}). Notons $h^w_K$ son ordre. 

Notons $I^{w,+}_K$ sous-mono\"{\i}de multiplicatif de $I^{w}_K$ form\'e des id\'eaux entiers. Pour tout caract\`ere $\chi$ de $\cC^{w}_K$, on pose 
\begin{equation*}
 Z(K,\chi,s)=\sum_{\ga\in I^{w,+}_K}\chi(\ga)\N_{K/\bQ}(\ga)^{-s/g}, \qquad Z(K,s)=Z(K,1,s),
\end{equation*}
o\`u $\N_{K/\bQ}(\ga)$ d\'esigne l'indice du groupe additif $\ga$ dans $\cO_K$ et $\chi(\ga)$ est la valeur de $\chi$ en l'image de $\ga$ dans $\cC^{w}_K$. (Remarquons que si $\ga\overline{\ga}=n\cO_K$ avec $n\in \bN^*$, alors $\N_{K/\bQ}(\ga)=n^g$.) On a alors
\begin{equation*}
Z_0(K,s)=\frac{w_K}{h^w_K}\sum_{\chi}Z(K,\chi,s).   \tag*{$(*)$}
\end{equation*}

Or, l'avantage des fonctions $Z(K,\chi,s)$ est qu'elles poss\`edent des produits eul\'eriens
\begin{equation*}
Z(K,\chi,s)=\prod_{p}Z_p(K,\chi,s), \qquad  Z(K,s)=\prod_{p}Z_p(K,s)
\end{equation*}
--- o\`u $p$ parcourt l'ensemble des nombres premiers --- et que les coefficients des facteurs $Z_p(K,\chi,s)$ (et en particulier $Z_p(K,s)$) se calculent en fonction de la forme de la d\'ecomposition de $p\cO_K$ comme produit d'id\'eaux premiers. Cela permet de montrer que $Z(K,\chi, s)$ se prolonge m\'eromorphiquement sur $\{\Re(s)>\frac{1}{2}\}$ avec un unique p\^ole d'ordre au plus $\rho_K$ en $s=1$, l'ordre pr\'ecis du p\^ole \'etant determin\'e en fonction de $\chi$. En vertu de $(*)$, on d\'eduit le prolongement m\'eromorphe de $Z_0(K,s)$ sur $\{\Re(s)>\frac{1}{2}\}$ avec unique p\^ole d'ordre au plus $\rho_K$ en $s=1$, et on montre que l'ordre du p\^ole est, en fait, \'egal \`a $\rho_K$ (voir le lemme~\ref{lemm:Z0KminZK}). 

En appliquant une l\'eg\`ere g\'en\'eralisation de la proposition~\ref{prop:compaqVK}, nous verrons que la fonction z\^eta des hauteurs $Z^h_0(X^K,s)$ de $X^K$ peut s'\'ecrire sous une forme analogue \`a $(*)$:
\begin{equation*}
Z^h_0(X^K,s)=\frac{w_K}{2h^w_K\zeta(2s)}\sum_{\chi}\tilde{Z}(K,\chi,s),
\end{equation*} 
o\`u, pour chacun des caract\`eres $\chi$, $\tilde{Z}(K,\chi,s)$ poss\`edent un produit eul\'erien qui diff\`ere de celui de $Z(K,\chi,s)$ seulement en les facteurs aux nombres premiers ramifi\'es dans $K$. On obtient ainsi une nouvelle d\'emonstration de la conjecture de Manin pour la vari\'et\'e torique $X^K$. 

Notons $L$ la cl\^oture galoisienne de $K$ dans $\bC$ et $\Gamma$ le groupe de Galois de $L$ sur $\bQ$. Notons $\cC\cM_K$ l'ensemble des types CM de $K$ (voir le \subsecno~\ref{subsection:typesCM}); il suffit de rappeler ici qu'un type CM sur $K$ est un sous-ensemble $\Phi$ de $\Hom(K,\bC)$ tel que $\Phi\cup \overline{\Phi}=\Hom(K,\bC)$ et $\Phi\cap \overline{\Phi}=\emptyset$. Il est clair que $\Hom(K,\bC)$ s'identifie avec $\Hom(K,L)$. En particulier, $\Gamma$ op\`ere sur $\cC\cM_K$: si $\gamma\in \Gamma$ et si $\Phi\in \cC\cM_K$, alors $\gamma\Phi:=\{\gamma\circ\phi\mid \phi\in \Phi\}\in \cC\cM_K$. La proposition suivante r\'esume le lien entre les propri\'et\'es arithm\'etiques de $X^K$ et celles du corps de nombres $K$ (voir le th\'eor\`eme~\ref{theo:modelesDVX} et la proposition~\ref{prop:Frob}). 

\begin{prop} \label{prop:XKVKcle} La vari\'et\'e torique $X^K$ se plonge comme intersection compl\`ete lisse de $g-1$ quadriques dans l'espace projectif $\bP_\bQ^{2g-1}$ de telle mani\`ere que $X^K-V^K$ se d\'ecompose, apr\`es extension de scalaires \`a $L$, comme r\'eunion d'un ensemble $\cP$ de $2^g$ espaces projectifs de dimension $g-1$. En outre, les $\Gamma$-ensembles $\cP$ et $\cC\cM_K$ sont isomorphes.   
\end{prop}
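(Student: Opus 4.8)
The plan is to make the two asserted facts about $X^K$ — that it embeds as a smooth complete intersection of $g-1$ quadrics in $\bP^{2g-1}_\bQ$, and that $X^K - V^K$ decomposes over $L$ into $2^g$ copies of $\bP^{g-1}$ indexed $\Gamma$-equivariantly by $\cC\cM_K$ — both flow from an explicit description of the tori involved. First I would recall the construction of $W^K$: using the $\bQ$-basis picture, $W^K \subset \bA^{2g}_\bQ \setminus \{0\}$ is cut out by the condition that a point $(x_1,\dots,x_{2g})$, viewed as an element $\alpha$ of $K$, satisfies $\alpha\overline\alpha \in \bG_{m,\bQ}$; writing $\alpha\overline\alpha$ in coordinates gives a single quadratic form $Q_0$ (the norm form $N_{K/\bQ}$ restricted appropriately, or rather $\alpha\mapsto\alpha\bar\alpha$), so $W^K$ sits inside the affine quadric cone, and $V^K\subset\bP^{2g-1}_\bQ$ is its image. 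The point is that after base change to $L$, $\bA^{2g}_L = \Hom(K,L)$-indexed affine space decomposes $\alpha\mapsto (\phi(\alpha))_{\phi\in\Hom(K,L)}$, and the condition $\alpha\bar\alpha = q$ becomes the system $\phi(\alpha)\cdot\bar\phi(\alpha) = q$ for all $\phi$; pairing each $\phi$ with $\bar\phi$ and eliminating $q$ yields exactly $g-1$ independent quadratic relations $z_\phi z_{\bar\phi} = z_{\phi'}z_{\bar\phi'}$ among the $g$ products $z_\phi z_{\bar\phi}$. That is the complete intersection of $g-1$ quadrics; smoothness is checked on the open cone by the Jacobian criterion and then the compactification $X^K$ from Proposition~\ref{prop:compaqVK} is identified with the Zariski closure (one must invoke the earlier Théorème~\ref{theo:modelesDVX}, where presumably the equivariant compactification has already been built, to know the closure is smooth and is $X^K$).

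Next, for the boundary $X^K - V^K$: over $L$ the ambient $\bP^{2g-1}$ has the $2g$ coordinate functions $z_\phi$, $\phi\in\Hom(K,L)$, and the open torus $V^K_L$ is where enough of these are nonzero; the boundary components are obtained by setting a $\Gamma$-stable "half" of the coordinates to zero. Concretely, a choice of type CM $\Phi$ (a set meeting each pair $\{\phi,\bar\phi\}$ in exactly one element) should correspond to the linear subspace $\{z_\phi = 0 \text{ for } \phi\in\Phi\}$ (or its complement $\{z_\phi=0 \text{ for }\phi\notin\Phi\}$ — I would fix the convention so that the quadric relations $z_\phi z_{\bar\phi}=z_{\phi'}z_{\bar\phi'}$ are automatically satisfied, which they are precisely when one coordinate from each pair vanishes). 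Each such subspace is cut out inside $\bP^{2g-1}$ by $g$ linear equations, hence is a $\bP^{g-1}$; there are $2^g$ ways to pick one element from each of the $g$ pairs, giving $|\cP| = 2^g$ as claimed, and each lies on $X^K$ because the quadratic relations hold identically on it. One then checks these $2^g$ projective spaces together with $V^K$ cover $X^K_L$ (a point of $X^K_L$ not in $V^K_L$ has some $z_\phi=0$, and the relations $z_\phi z_{\bar\phi}=z_{\phi'}z_{\bar\phi'}$ force a whole type-CM worth of coordinates to vanish). Finally the $\Gamma$-action: $\gamma\in\Gamma$ permutes the coordinates $z_\phi$ by $z_\phi \mapsto z_{\gamma^{-1}\circ\phi}$, hence sends the subspace attached to $\Phi$ to the one attached to $\gamma\Phi$, which is exactly the $\Gamma$-action on $\cC\cM_K$ recalled just before the statement; this gives the $\Gamma$-equivariant bijection $\cP \xrightarrow{\sim} \cC\cM_K$. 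The identification of Frobenius actions at unramified primes then follows from Proposition~\ref{prop:Frob} as indicated.

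The main obstacle I expect is not the combinatorics of the coordinate subspaces but rather making rigorous the claim that the \emph{smooth} equivariant compactification $X^K$ of Proposition~\ref{prop:compaqVK} \emph{is} this complete intersection of quadrics — i.e.\ that the naive Zariski closure of $V^K$ in $\bP^{2g-1}$ is already smooth, with the correct $2^g$ boundary divisors, rather than needing a further resolution. For $g=1,2$ this is elementary (the closure is $\bP^1$, resp.\ a quadric surface in $\bP^3$), but for general $g$ one needs the torus-embedding / fan description: I would match the fan of this closure against the fan of $W^K\to V^K$ and verify the cones are smooth, or alternatively cite the structure already established in Théorème~\ref{theo:modelesDVX}. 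A secondary technical point is the exact count and independence of the quadratic relations — showing the ideal they generate is prime of the right codimension $g-1$ and that the scheme they define is geometrically integral — which is a standard but slightly delicate computation with the norm form of a CM field that I would organize by first doing it over $L$ (where everything diagonalizes into the $z_\phi z_{\bar\phi}$ products) and then descending.
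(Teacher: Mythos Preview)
Your proposal is correct and follows essentially the same route as the paper. The paper splits the argument between Th\'eor\`eme~\ref{theo:modelesDVX} (the quadric-intersection model, with smoothness coming from the explicit regular fan built in \S\ref{subsec:event}) and Proposition~\ref{prop:Frob} (the $\Gamma$-equivariant bijection $\cP\simeq\cC\cM_K$); your diagonalized coordinates $z_\phi$ over $L$ are exactly the paper's split coordinates $X_{2i-1}=z_{\phi_i}$, $X_{2i}=z_{\bar\phi_i}$, and your boundary subspaces $\{z_\phi=0:\phi\in\Phi\}$ are the paper's components of $\overline{\cD}$.

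The only substantive difference is cosmetic: for the bijection with $\cC\cM_K$ the paper passes to a basis adapted to a totally imaginary generator $\ve$ of $K$ (Lemme~\ref{lem:primKQ}), writing each boundary component as $X_{2i-1}+\psi_i(\ve)X_{2i}=0$, whereas you stay in the $z_\phi$ coordinates throughout. Your version is arguably cleaner here. You are also right that the one genuine gap to fill is the identification of the naive Zariski closure with the smooth compactification, and right that this is exactly what the fan computation (or a citation of Th\'eor\`eme~\ref{theo:modelesDVX}) provides; the paper does not shortcut this either.
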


Cette proposition permet notamment de calculer la fonction $L$ d'Artin associ\'ee \`a $\Pic(V^K)_\bQ$. Plus g\'en\'eralement on construit, pour tout caract\`ere $\chi$ de $\cC^w_K$, un produit fini $\Pi(K,\chi,s):=\prod_{\Phi\in \Orb^*(\cC\cM_K)}{L(\chi_{\hat{\Phi}},s)}$ de fonctions $L$ de repr\'esentations ab\'eliennes partout non-ramifi\'ees des groupes de Galois de certains sous-corps $\hat{K}_\Phi$ de $L$  (on renvoie \`a la discussion pr\'ec\'edant le corollaire~\ref{coro:quothol} pour les notations). Ce produit a la propri\'et\'e que $Z(K,\chi,s)/\Pi(K,\chi,s)$ s'\'ecrit (apr\`es suppression des facteurs eul\'eriens aux nombres premiers ramifi\'es dans $K$) comme un produit eul\'erien convergent sur $\{\Re(s)>\frac{1}{2}\}$. Cela permet d'identifier l'ordre du p\^ole de $Z(K,\chi,s)$ en $1$ et d'en d\'eduire le th\'eor\`eme~\ref{theo:prolmer}. 

Lorsque $g=2$, on montre que les quotients $Z(K,\chi,s)/\Pi(K,\chi,s)$ s'ex\-priment eux-m\^emes (aux facteurs eul\'eriens aux nombres premiers ramifi\'es pr\`es) comme s\'eries $L$ (en la variable $2s$), et le prolongement m\'eromorphe sur $\bC$ de $Z(K,\chi, s)$ en d\'ecoule imm\'ediatement (voir la \S~\ref{sec:gendeux}). Par contre, lorsque $g\geq 3$, vus les r\'esultats de Kurokawa  \cite{Kurok86, Kurok87}, il para\^{\i}t peu probable que ces fonctions puissent \^etre prolong\'ees sur $\bC$ tout entier. 

Les fonctions $\Pi(K,\chi,s)$ \'etant des produits de fonctions $L$ de Hecke, des r\'esultats sur la croissance dans des bandes verticales de ce type de fonction (voir par exemple \cite{IwKow04}, Chapter V),  l'application d'un th\'eor\`eme taub\'erien --- par exemple celui d\'emontr\'e dans l'appendice~A de \cite{ChLTsch01} --- permet de d\'emontrer le r\'esultat suivant plus fort que le th\'eor\`eme~\ref{theo:premvers}. 

\begin{theo} \label{theo:asymptpol} Il existe $P(X)\in \bR[X]$ de degr\'e $\rho_F-1$ ainsi qu'une constante $\delta>0$ tels que
\begin{equation*}
\cN(F,x)= x\,P(\log{x})+O(x^{1-\delta})
\end{equation*}
lorsque $x\to \infty$. \qed
\end{theo}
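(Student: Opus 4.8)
The plan is to deduce Theorem~\ref{theo:asymptpol} from the meromorphic continuation of $Z_0(K,s)$ established in Theorem~\ref{theo:prolmer}, together with estimates on the growth of $Z_0(K,s)$ in vertical strips, by applying a Tauberian theorem of the type proved in the appendix~A of \cite{ChLTsch01}. First I would reduce the statement for $F$ to the statement for its maximal CM subfield $K$: since $\cN(F,x)-\cN(K,x)=O(\sqrt{x})$ (the estimate quoted from Theorem~\ref{theo:wFnfini} in the introduction), it suffices to prove the asymptotic formula for $\cN(K,x)=\sum_{1\le n\le x}a_K(n)$, and then $P$ and $\delta$ for $F$ may be taken equal to those for $K$, the error term $O(\sqrt{x})$ being absorbed into $O(x^{1-\delta})$ provided $\delta\le\frac12$. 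So from now on the problem is purely about the Dirichlet series $Z_0(K,s)=\sum_{n\ge1}a_K(n)n^{-s}$.

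Next I would recall from Theorem~\ref{theo:prolmer} that $Z_0(K,s)$ extends meromorphically to the half-plane $\{\Re(s)>\frac12\}$, is holomorphic there except for a single pole at $s=1$, and that this pole has order exactly $\rho_K$ (equal to the rank of $\Pic(X^K)_\bQ$, which is the $\rho_F$ of the statement since $\rho_F=\rho_K$). To apply the Tauberian theorem I also need polynomial growth of $Z_0(K,s)$ on vertical lines $\Re(s)=\sigma$ for $\frac12<\sigma<1$, uniformly. This is where the factorisation $(*)$, $Z_0(K,s)=\frac{w_K}{h^w_K}\sum_\chi Z(K,\chi,s)$, and the comparison with the $L$-function products $\Pi(K,\chi,s)=\prod_{\Phi\in\Orb^*(\cC\cM_K)}L(\chi_{\hat\Phi},s)$ enters: since $Z(K,\chi,s)/\Pi(K,\chi,s)$ is, away from the ramified primes, an Euler product converging absolutely (hence bounded) on $\{\Re(s)>\frac12\}$, the growth of $Z(K,\chi,s)$ in vertical strips is controlled by that of the finitely many Hecke $L$-functions $L(\chi_{\hat\Phi},s)$. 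Standard convexity bounds for Hecke $L$-functions in vertical strips (e.g. \cite{IwKow04}, Chapter~V) then give the required polynomial bound $|Z_0(K,\sigma+it)|=O((1+|t|)^A)$ for some $A$, uniformly for $\sigma$ in compact subsets of $(\tfrac12,\infty)$ avoiding a neighbourhood of $1$; one also needs such a bound on a circle around $s=1$ and on a contour that detours around the pole, which is handled the same way.

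With meromorphic continuation past $\Re(s)=1$, a single pole of order $\rho_F$ at $s=1$, and polynomial growth in vertical strips, the Tauberian theorem of \cite[Appendix~A]{ChLTsch01} applies directly and yields a polynomial $P\in\bR[X]$ with $\deg P=\rho_F-1$ (the leading coefficient being $\frac{1}{(\rho_F-1)!}$ times the leading Laurent coefficient of $Z_0(K,s)$ at $s=1$, which is positive since all $a_K(n)\ge0$ and the pole is genuinely of that order by the lemma~\ref{lemm:Z0KminZK} cited above) together with a $\delta>0$ — any $\delta$ with $\sigma_0:=1-\delta$ lying in the domain of holomorphy to the left of $1$ works, shrinking $\delta$ if necessary so that $\delta\le\frac12$ to also absorb the $\cN(F,x)-\cN(K,x)$ term — such that $\cN(F,x)=\sum_{1\le n\le x}a_F(n)=x\,P(\log x)+O(x^{1-\delta})$. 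The main obstacle is the vertical-strip growth estimate: the Tauberian machinery and the meromorphic continuation are essentially black boxes once set up, but verifying the hypothesis of polynomial growth requires care in handling the Euler factors at primes ramified in $K$ (finitely many, so harmless, but they must be checked not to introduce poles or zeros affecting the bound in the relevant region) and in piecing together the bounds for the individual $\Pi(K,\chi,s)$ into a uniform bound for $Z_0(K,s)$ on the contour used by the Tauberian theorem.
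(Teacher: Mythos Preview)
Your proposal is correct and follows exactly the approach sketched in the paper: reduce to the maximal CM subfield $K$ via Theorem~\ref{theo:wFnfini}, use the meromorphic continuation and pole structure of $Z_0(K,s)$ from Theorem~\ref{theo:prolmer}, obtain polynomial growth in vertical strips from the comparison with the products $\Pi(K,\chi,s)$ of Hecke $L$-functions together with the convexity bounds in \cite[Chapter~V]{IwKow04}, and conclude by the Tauberian theorem of \cite[Appendix~A]{ChLTsch01}. The paper gives only this one-sentence outline before the \qed, and your write-up fills in precisely the details the paper leaves implicit.
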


\begin{rema}  \label{rema:rhoKexemples}
Nous verrons au cours de la d\'emonstration de la proposition~\ref{prop:XKVKcle} que la valeur de $\rho_K$ ne d\'epend que de $g$ et du groupe $\Gamma$; il est alors facile de la calculer dans des cas simples. Le tableau suivant donnent quelques exemples:---
\begin{equation*}
\begin{array}{c|| c | c | c | c | c | c | c}
g      &   1    &   2  &  2          &   2   &  \ell                       & \ell &  g   \\
\hline
\Gamma &  C_2   &  C_4& C_2\times C_2& D_4   &  C_{2\ell}                  & D_{2\ell} & 2^g.\Gamma_1 \\
\hline
\rho_K &  1     &  1  &    2         &   1   &  1+\frac{2^{\ell-1}-1}{\ell}  & 1+\frac{2^{\ell-1}-1}{\ell} & 1 \\  
\end{array}
\end{equation*}
\end{rema} 
Ici, $C_m$ d\'esigne un groupe cyclique d'ordre $m$, $D_{m}$ le groupe di\'edral d'ordre $2m$, $\ell$ un nombre premier impair, $\Gamma_1$ le groupe de Galois de la cl\^oture galoisienne de $K_0$ dans $L$, $2^g.\Gamma_1$ une extension de $\Gamma_1$ par le produit direct $C_2^g$ de $g$ copies de $C_2$. Rappelons que, quel que soit $K$, il existe $r\in \{1,2,\dots, g\}$ et une suite exacte de groupes $1\to C_2^r\to \Gamma\to \Gamma_1\to 1$ (voir le corollaire~\ref{cor:CMgalois}). 

Le tableau liste toutes les possibilit\'es lorsque $g\leq 2$. On peut \'egalement en d\'eduire toutes les possibilit\'es lorsque $g=3$ car on sait que dans ce cas $\Gamma$ est isomorphe \`a l'un des groupes $C_6$, $D_6$ ou $2^3.\Gamma_1$ avec $\Gamma_1=C_3$ ou $D_3$. Pour le calcul des valeurs des $\rho_K$ voir la remarque~\ref{rem:rhoKcalcul}.

\subsection{Travaux ant\'erieurs} \`A notre connaissance, les premiers travaux con\-sacr\'es \`a l'\'etude de la croissance asymptotique de la distribution des nombres de Weil dans un corps fix\'e sont ceux de Greaves et Odoni \cite{GrOdoni88} et Odoni \cite{Odoni91}. Dans \cite{GrOdoni88}, les auteurs consid\`erent les ensembles $\cW_K(n)$ o\`u $n$ est de la forme $p^k$, $k$ \'etant un entier fix\'e et $p$ variant sur l'ensemble des nombres premiers.  Le cas $k=1$ est \'egalement consid\'er\'e dans \cite{BoxGru15} Theorem~2.4. Par exemple, on tire de ces r\'esultats qu'il existe un nombre rationnel $d_K>0$ tel que le nombre $\cN_p(K,x):=\sum_{1\leq p\leq x, p \text{ premier}}a_K(p)$ est \'equivalent lorsque $x\to +\infty$ \`a $d_K\frac{x}{\log{x}}$; on remarquera que cette formule ne fait pas intervenir explicitement $\rho_K$ ce qui est en contraste avec le th\'eor\`eme~\ref{theo:premvers} du pr\'esent papier. 

Dans \cite{Odoni91}, Odoni \'etudie la croissance de la fonction $\cN_{\ve}(K,x)$ d\'efinie par la somme partielle $\sum_{1\leq n\leq x}\ve_K(n)$, o\`u $\ve_K(n)=1$ si $\cW_K(n)\neq \emptyset$ et $\ve_K(n)=0$ si $\cW_K(n)= \emptyset$. Dans ce but, il introduit \'egalement la fonction $\eta_K:\bN^*\to \{0,1\}$ d\'efinie par $\eta_K(n)=1$ ou $\eta_K(n)=0$ selon qu'il existe $\ga\in I^{w}_K$ tel que $\ga\overline{\ga}=n\cO_K$ ou non. La fonction $\eta_K$ \'etant multiplicative, Odoni est amen\'e \`a \'etudier la fonction z\^eta associ\'ee. Des groupes en relation avec notre groupe $\cC^{w}_K$ apparaissent \'egalement dans la \S~2 de son travail, qui contient \'egalement des id\'ees semblables \`a notre proposition~\ref{prop:gaPhigP}. Toutefois, le lien entre les formules qu'il obtient et la n\^otre reste \`a pr\'eciser. 

Comme nous l'avons d\'ej\`a signal\'e, la premi\`ere d\'emonstration compl\`ete de la conjecture de Manin dans le cadre des vari\'et\'es toriques a \'et\'e donn\'ee par Batyrev et Tschinkel~\cite{BatTsch95} et \cite{BatTsch98}. Depuis, d'autres d\'emonstrations ont \'et\'e trouv\'ees, notamment par Salberger~\cite{Sal98} puis par de la Bret\`eche~\cite{delaBreteche01}. Nous reviendrons dans le \subsecno~\ref{subsec:liensautres} aux liens pr\'ecis entre notre travail et ces d\'emonstrations, qui ne sont pas compl\`etement \'elucid\'es (au moins dans la t\^ete de l'auteur), faute d'interpr\'etation satisfaisante du groupe $\cC^{w}_K$ (ou plut\^ot d'un certain quotient de $\cC^{w}_K$ not\'e $\cS_K$) en termes des invariants arithm\'etiques usuels du tore $V^K$. Cela a comme cons\'equence que nous ne savons pas exprimer les fonctions $Z(K,\chi, s)$ comme int\'egrales ad\'eliques de mani\`ere satisfaisante.

Le probl\`eme de l'ensemble de m\'eromorphie des fonctions z\^eta des hauteurs a \'et\'e abord\'e par de nombreux auteurs: on trouvera un bilan des r\'esultats obtenus jusqu'en 2009 dans \cite{delaBreteche09} et le sujet continue de faire objet de nombreuses publications. Toutefois \`a la connaissance de l'auteur, les th\'eor\`emes~\ref{theo:prolmer} et \ref{theo:asymptpol} ne sont pas couverts par les r\'esultats parus dans la litt\'erature jusqu'ici. 

L'\'etude du comptage des points rationnels appartenant \`a des intersections (compl\`etes ou non) d'hypersurfaces projectives a d\'ebut\'ee avec le travail de Birch~\cite{Birch62}.  Malgr\'e des progr\`es r\'ecents (voir par exemple \cite{Die15}, \cite{BroHB17} et \cite{Schind15}), ces r\'esultats, dont les d\'emonstrations sont bas\'ees sur la m\'ethode du cercle, s'appliquent lorsque le nombre de variables cro\^{\i}t exponentiellement par rapport aux degr\'es, ce qui n'est pas le cas pour $X^K$. 

Rappelons que les tores $W^K$ se rencontrent dans la nature comme ex\-emples de sous-tores maximaux du groupe $\GSp_{2g}(\bQ)$ des similitudes symplectiques et jouent un r\^ole important dans la th\'eorie des vari\'et\'es ab\'eliennes \`a multiplication complexe.  De plus, les valeurs propres d'une matrice carr\'ee $M$ \`a coefficients entiers, de taille $(2g,2g)$ et telle que ${}^tMM=nI_{2g}$ sont des exemples de $n$-nombres de Weil. Notons enfin que les articles \cite{DiPHowe98}, \cite{Loxt72} et \cite{StanZah15}, parmi d'autres, s'int\'eressent aux diff\'erents aspects de la distribution de nombres de Weil. 

\subsection{Plan du papier} Il se r\'esume ainsi. La \S~\ref{sec:CMrappels} est consacr\'ee \`a quelques rappels sur corps CM, les nombres de Weil et les types CM. Dans la \S~\ref{sec:letore}, nous d\'ecrivons les tores $W^K$ et $V^K$; nous d\'ecrivons \'egalement la compactification $X^K$ de $V^K$ et nous \'etablisserons quelques unes de ses propri\'et\'es.  En particulier, nous d\'emontrarons la proposition~\ref{prop:compaqVK}. Dans la \S~\ref{sec:prodeul} nous \'etudions les produits eul\'eriens des fonctions $Z(K,\chi,s)$ et nous d\'emontrerons la th\'eor\`eme~\ref{theo:prolmer}, \`a l'exception de la derni\`ere phrase concernant le cas $g=2$. Le \subsecno~\ref{subsec:liensautres} est consacr\'e aux liens entre notre travail et ceux d'autres auteurs; nous y montrons notamment que les tores $V^K$ et $W^K$ satisfont \`a l'approximation faible. Enfin, dans la \S~\ref{sec:gendeux} nous \'etudierons le cas particulier o\`u $g=2$. 

\begin{remerc}
Ce travail a \'et\'e commenc\'e pendant un s\'ejour \`a l'Universit\'e de Colorado \`a Boulder et j'aimerais remercier cet \'etablissement et plus particuli\`erement David Grant pour l'organisation de ce s\'ejour. Je remercie \'egalement Felipe Voloch et Mirela Ciperiani pour m'avoir accueilli \`a l'Universit\'e de Texas \`a Austin pendant une visite courte mais tr\`es fructueuse. 
\end{remerc} 

\section{Rappels sur les corps CM et les nombres de Weil}\label{sec:CMrappels} 

Cette section est consacr\'ee aux rappels sur les corps CM, les nombres de Weil et les types CM. Les d\'emonstrations manquantes se trouvent dans \cite{GrOdoni88} et \cite{Shim97}.  

\subsection{Les corps CM} Soit $F$ un corps commutatif, qui est extension alg\'ebrique de $\bQ$. On note $\Hom(F,\bC)$ l'ensemble des plongements de $F$ dans $\bC$. Si $\phi\in \Hom(F,\bC)$, on note $\overline{\phi}$ l'\'el\'ement de $\Hom(F,\bC)$ d\'efini par $\overline{\phi}(\alpha)=\overline{\phi(\alpha)}$.

\begin{defi}\label{def:corpsCM}
 On dit que $F$ est \emph{totalement r\'eel} (resp. \emph{totalement imaginaire}) si, quel que soit le plongement $\phi\in \Hom(F,\bC)$, on a $\phi(F)\subseteq \bR$ (resp. $\phi(F)\not\subseteq \bR$).  On dit que $F$ est un \emph{corps CM} si s'il est totalement imaginaire et extension quadratique d'un sous-corps totalement r\'eel.  
\end{defi}

\begin{prop}
Soit $K$ un corps CM et soit $K_0$ un sous-corps totalement r\'eel de $K$ tel que $[K:K_0]=2$.

\case{i} Tout sous-corps de $K$ est soit totalement r\'eel soit un corps CM. 

\case{ii} Le sous-corps $K_0$ est uniquement d\'etermin\'e par $K$. Tout sous-corps totalement r\'eel de $K$ est contenu dans $K_0$. 

\case{iii} Notons $c$ l'automorphisme non trivial de $K$ fixant $K_0$. Alors $c$ commute avec tout autre automorphisme de $K/\bQ$. Si $\alpha\in K$, alors $\phi(c(\alpha))=\overline{\phi}(\alpha)$ pour tout $\phi\in \Hom(K,\bC)$.  \hfill \qed
\end{prop}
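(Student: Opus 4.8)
\emph{Plan de démonstration.} Tout repose sur le calcul de la conjugaison complexe, que je ferais d'abord. Soit $\phi\in\Hom(K,\bC)$; comme $K_0$ est totalement réel, les plongements $\phi$ et $\overline\phi$ coïncident sur $K_0$, et, $K/K_0$ étant une extension galoisienne quadratique, $\overline\phi$ est donc $\phi$ ou bien $\phi\circ c$, où $c$ désigne l'unique élément non trivial de $\Gal(K/K_0)$. Or $\overline\phi=\phi$ forcerait $\phi(K)\subseteq\bR$, ce qui contredit que $K$ est totalement imaginaire; on a donc $\phi\circ c=\overline\phi$, c'est-à-dire $\phi(c(\alpha))=\overline{\phi(\alpha)}$ pour tout $\alpha\in K$. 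Cela établit déjà la dernière assertion de \case{iii} et montre que l'image de tout plongement de $K$ dans $\bC$ est stable par conjugaison complexe.

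La première assertion de \case{iii} s'en déduit formellement: pour $\tau\in\mathrm{Aut}(K/\bQ)$, $\alpha\in K$ et $\phi\in\Hom(K,\bC)$, j'applique l'identité ci-dessus au plongement $\psi:=\phi\circ\tau$ (évaluée en $\alpha$) et au plongement $\phi$ (évaluée en $\tau(\alpha)$), ce qui donne $\phi(\tau(c(\alpha)))=\overline{\phi(\tau(\alpha))}=\phi(c(\tau(\alpha)))$; les plongements de $K$ étant injectifs, on conclut $\tau c=c\tau$. Pour \case{ii}: si $F\subseteq K$ est totalement réel, alors pour $a\in F$ et $\phi\in\Hom(K,\bC)$ on a $\phi(c(a))=\overline{\phi(a)}=\phi(a)$, d'où $c(a)=a$ et $F\subseteq K^c=K_0$; ainsi $K_0$ contient tout sous-corps totalement réel de $K$, et un second sous-corps totalement réel d'indice $2$ lui serait égal pour des raisons de degré, ce qui donne l'unicité de $K_0$.

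Pour \case{i}, soit $L\subseteq K$ un sous-corps non totalement réel; il faut voir que $L$ est CM. L'idée est de passer à la clôture galoisienne $\tilde K$ de $K$ dans $\bC$. On vérifie d'abord que $\tilde K$ est lui-même CM: c'est le compositum des conjugués $\tau(K)$ ($\tau$ parcourant $\Gal(\tilde K/\bQ)$), chacun isomorphe à $K$ donc CM; d'après le premier paragraphe, la conjugaison canonique d'un corps CM est la restriction de la conjugaison complexe dans n'importe quel plongement, de sorte que les conjugaisons des $\tau(K)$ sont compatibles entre elles et se recollent en un automorphisme $\tilde c$ de $\tilde K$ vérifiant $\psi\circ\tilde c=\overline\psi$ pour tout $\psi\in\Hom(\tilde K,\bC)$; on en tire que $\tilde K$ est totalement imaginaire, que $\tilde c$ est une involution et que $\tilde K^{\tilde c}$ est totalement réel d'indice $2$. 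Comme $\tilde K$ est galoisien sur $\bQ$, la première assertion de \case{iii} (qui ne repose que sur le premier paragraphe, donc sans circularité) montre que $\tilde c$ est central dans $\Gamma:=\Gal(\tilde K/\bQ)$. Dès lors, écrivant $L=\tilde K^{G}$ avec $G:=\Gal(\tilde K/L)$, la centralité de $\tilde c$ entraîne $\tilde c(L)=\tilde K^{\tilde c G\tilde c^{-1}}=\tilde K^{G}=L$, si bien que $\tilde c$ se restreint en un automorphisme $\tilde c|_L$ de $L$ avec $\psi\circ(\tilde c|_L)=\overline\psi$ pour tout $\psi\in\Hom(L,\bC)$. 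S'il était trivial, $L$ serait totalement réel; comme on l'a exclu, $\tilde c|_L$ est une involution non triviale, $L_0:=L^{\tilde c|_L}$ est totalement réel et $[L:L_0]=2$; enfin, choisissant $\gamma\in L\setminus L_0$ et posant $\beta:=\gamma-\tilde c(\gamma)\neq 0$, on a $\tilde c(\beta)=-\beta$, donc $\overline{\psi(\beta)}=-\psi(\beta)$ et $\psi(L)\not\subseteq\bR$ pour tout $\psi$: ainsi $L$ est totalement imaginaire, donc CM.

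Le point le plus délicat est \case{i}, et en son sein les deux faits auxiliaires sur $\tilde K$ — qu'elle est CM et que sa conjugaison complexe est centrale dans $\Gamma$. Une fois ceux-ci acquis, les trois assertions ne font plus intervenir que l'identité $\phi\circ c=\overline\phi$ du premier paragraphe et des manipulations formelles de plongements et de groupes de Galois.
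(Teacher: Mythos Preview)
Your proof is correct. The paper itself does not supply a proof of this proposition: it is stated as a rappel, with the \qed\ symbol appended directly to the statement and the missing demonstrations referred globally to \cite{GrOdoni88} and \cite{Shim97} at the beginning of the section. So there is no ``paper's own proof'' to compare with in detail.

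A brief remark on your treatment of \case{i}. Your detour through the Galois closure $\tilde K$ is valid and self-contained, and the verification that $\psi\circ\tilde c=\overline{\psi}$ for every $\psi\in\Hom(\tilde K,\bC)$ (obtained by restricting to each conjugate $\tau(K)$ and using that these generate $\tilde K$) is the right way to justify the step you flagged as delicate. That said, a shorter direct argument is available and avoids the Galois closure entirely: if $L\subseteq K$ is not totally real, then for any $\psi\in\Hom(L,\bC)$ extended to $\phi\in\Hom(K,\bC)$, the hypothesis $\psi(L)\subseteq\bR$ would give $\phi(c(a))=\overline{\phi(a)}=\phi(a)$ for all $a\in L$, hence $L\subseteq K_0$, contradiction; thus $L$ is totally imaginary. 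Setting $L_0=L\cap K_0$, one has $L\cdot K_0=K$ (since $L\not\subseteq K_0$ and $[K:K_0]=2$), whence $[L:L_0]=[L\cdot K_0:K_0]=2$ by the standard isomorphism of extensions (using that $K/K_0$ is Galois), and $L_0\subseteq K_0$ is totally real. Both routes are fine; yours has the advantage of simultaneously establishing that the Galois closure of a CM field is CM and that complex conjugation is central there, facts the paper records separately just afterwards.
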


\begin{defi}
En vertu du point \case{ii}, on appelle $K_0$ le \emph{sous-corps r\'eel maximal} de $K$. En vertu du point \case{iii}, on appelle $c$ la \emph{conjugaison complexe} de $K$.
\end{defi}

Si $\alpha\in K$, on \'ecrit souvent $\overline{\alpha}$ pour $c(\alpha)$. Si $K_0$ est un corps totalement r\'eel et si $\alpha\in K_0$, on pose $\overline{\alpha}=\alpha$.

\begin{prop} Soit $F$ une extension alg\'ebrique de $\bQ$. Notons $\cE^{\tr}(F)$ (resp. $\cE^{\cm}(F)$) l'ensemble des sous-corps totalement r\'eels (resp CM) de $F$.

\case{i} Toute intersection de corps appartenant \`a  $\cE^{\tr}(F)$ appartient \`a $\cE^{\tr}(F)$. Tout corps engendr\'e par une famille de corps appartenant \`a $\cE^{\tr}(F)$ appartient \`a $\cE^{\tr}(F)$. 

\case{ii} Toute intersection de corps appartenant \`a  $\cE^{\cm}(F)$ appartient \`a $\cE^{\tr}(F)$ ou \`a $\cE^{\cm}(F)$. Tout corps engendr\'e par une famille de corps appartenant \`a $\cE^{\cm}(F)$ appartient \`a $\cE^{\cm}(F)$.  \hfill \qed
\end{prop}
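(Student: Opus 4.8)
\emph{Plan.} \`A l'exception de la stabilit\'e de $\cE^{\cm}(F)$ par compositum, tous les \'enonc\'es sont formels et je les d\'eduirais d'une seule observation: si $E\subseteq E_0$ sont des extensions alg\'ebriques de $\bQ$ avec $E_0$ totalement r\'eel, alors $E$ l'est aussi --- en effet, tout plongement $\phi:E\to\bC$ se prolonge en un plongement de $E_0$ (puisque $E_0/E$ est alg\'ebrique et $\bC$ alg\'ebriquement clos), dont l'image est contenue dans $\bR$, d'o\`u $\phi(E)\subseteq\bR$. Il en r\'esulte imm\'ediatement qu'une intersection non vide de corps de $\cE^{\tr}(F)$, \'etant contenue dans chacun d'eux, est totalement r\'eelle, et qu'une intersection non vide de corps de $\cE^{\cm}(F)$, \'etant contenue dans l'un quelconque d'entre eux, est totalement r\'eelle ou CM (en vertu de la proposition pr\'ec\'edente, selon laquelle tout sous-corps d'un corps CM est totalement r\'eel ou CM). Pour le compositum $E=\prod_iE_i$ d'une famille de corps de $\cE^{\tr}(F)$, tout plongement $\phi:E\to\bC$ induit sur chaque $E_i$ un plongement d'image r\'eelle, et comme $\phi(E)$ est engendr\'e par les $\phi(E_i)$, on a $\phi(E)\subseteq\bR$. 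Il ne reste donc qu'\`a traiter le compositum dans le cas CM.

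\emph{Le point principal.} Soit $(K_i)_{i\in I}$ une famille non vide de corps CM contenus dans $F$, de sous-corps r\'eels maximaux respectifs $K_{i,0}$, et soit $K=\prod_{i\in I}K_i$. Comme $[K_i:K_{i,0}]=2$ en caract\'eristique nulle, j'\'ecrirais $K_i=K_{i,0}(\sqrt{d_i})$ avec $d_i\in K_{i,0}^\times$; le fait que $K_i$ soit totalement imaginaire force $d_i$ \`a \^etre totalement n\'egatif dans $K_{i,0}$ (sans quoi un plongement r\'eel de $K_{i,0}$ se prolongerait en un plongement r\'eel de $K_i$). Le corps $L_0=\prod_iK_{i,0}$ est totalement r\'eel par \case{i} ci-dessus. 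L'id\'ee-cl\'e est alors de fixer $i_0\in I$ et de remplacer les g\'en\'erateurs $\sqrt{d_i}$ de $K$ au-dessus de $L_0$ par les \'el\'ements $\sqrt{d_i}/\sqrt{d_{i_0}}\in F$, dont les carr\'es $d_i/d_{i_0}\in L_0$ sont totalement \emph{positifs}: les corps $L_0(\sqrt{d_i}/\sqrt{d_{i_0}})$ sont donc totalement r\'eels, et il en va de m\^eme de leur compositum $L_0'$ avec $L_0$ (toujours par \case{i}). De l'identit\'e $\sqrt{d_i}=\sqrt{d_{i_0}}\cdot(\sqrt{d_i}/\sqrt{d_{i_0}})$ on tire $K=L_0(\sqrt{d_i}\mid i\in I)=L_0'(\sqrt{d_{i_0}})$, et comme $d_{i_0}$ demeure totalement n\'egatif dans $L_0'$, le corps $K$ appara\^{\i}t comme une extension quadratique totalement imaginaire du corps totalement r\'eel $L_0'$: c'est donc un corps CM.

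\emph{Principale difficult\'e.} Le seul point r\'eclamant un peu de soin est cette derni\`ere \'etape: il faut rep\'erer \`a l'int\'erieur de $K$ un sous-corps totalement r\'eel convenable $L_0'$ --- en g\'en\'eral strictement plus grand que $\prod_iK_{i,0}$ --- au-dessus duquel $K$ redevient une extension quadratique imaginaire ``simple'' $L_0'(\sqrt{d_{i_0}})$. Le proc\'ed\'e consiste \`a ``diviser'' tous les $\sqrt{d_i}$ par l'un d'entre eux, ce qui transforme les \'el\'ements totalement n\'egatifs $d_i$ en \'el\'ements totalement positifs $d_i/d_{i_0}$; une fois cette manipulation effectu\'ee, le reste se r\'eduit \`a une v\'erification imm\'ediate sur les plongements.
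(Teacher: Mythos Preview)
Your argument is correct. The paper itself does not supply a proof of this proposition (it ends with a bare \qed\ and the section preamble refers the reader to \cite{GrOdoni88} and \cite{Shim97} for missing proofs), so there is no in-paper argument to compare against; the device you use for the compositum of CM fields --- dividing all the $\sqrt{d_i}$ by a fixed $\sqrt{d_{i_0}}$ so as to isolate a single totally imaginary generator over an enlarged totally real base $L_0'$ --- is the standard one and goes through without difficulty.
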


\begin{coro}\label{coro:maxtrcm}  Soit $F$ une extension alg\'ebrique de $\bQ$. 

\case{i} $F$ poss\`ede un unique sous-corps totalement r\'eel maximal (pour l'in\-clusion). Si $F$ contient au moins un sous-corps CM, alors il contient un unique sous-corps CM maximal.

\case{ii} On suppose $F$ galoisienne sur $\bQ$. Si $K_0\in \cE^{\tr}(F)$, alors la cl\^oture galoisienne de $K_0$ dans $F$ appartient \`a $\cE^{\tr}(F)$. De m\^eme, si $K\in \cE^{\cm}(F)$, alors la cl\^oture galoisienne de $K$ dans $F$ appartient \`a $\cE^{\cm}(F)$.\hfill \qed
\end{coro}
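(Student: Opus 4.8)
The plan is to obtain both parts as essentially formal consequences of the preceding proposition, exploiting the fact that a largest element of a family of subfields closed under ``compositum'' is simply the compositum of the whole family. For \case{i}: since $\bQ$ is totally real, $\cE^{\tr}(F)\neq\emptyset$, so let $M$ be the subfield of $F$ generated by all the members of $\cE^{\tr}(F)$. By the second assertion of \case{i} of the proposition, $M\in\cE^{\tr}(F)$; and $M$ contains every totally real subfield of $F$ by construction, so $M$ is the largest element of $\cE^{\tr}(F)$, and in particular its unique maximal element. If moreover $\cE^{\cm}(F)\neq\emptyset$, let $N$ be the subfield of $F$ generated by all the members of $\cE^{\cm}(F)$; by the second assertion of \case{ii} of the proposition $N\in\cE^{\cm}(F)$, and as before $N$ contains every CM subfield of $F$, hence is the unique maximal one.

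For \case{ii}, write $G=\Gal(F/\bQ)$. The first step is to check that $\cE^{\tr}(F)$ and $\cE^{\cm}(F)$ are stable under $G$. Indeed, if $E\in\cE^{\tr}(F)$ and $\sigma\in G$, then $\sigma(E)\subseteq F$ because $F/\bQ$ is Galois, and for any $\phi\in\Hom(\sigma(E),\bC)$ the composite $\phi\circ(\sigma|_E)$ lies in $\Hom(E,\bC)$ and has the same image as $\phi$, which is therefore contained in $\bR$; so $\sigma(E)$ is totally real. If $E\in\cE^{\cm}(F)$ with maximal totally real subfield $E_0$, the same computation shows $\sigma(E)$ is totally imaginary, while $\sigma(E_0)$ is totally real with $[\sigma(E):\sigma(E_0)]=[E:E_0]=2$; hence $\sigma(E)\in\cE^{\cm}(F)$.

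The second step is the observation that, for any subfield $E$ of $F$, the Galois closure of $E$ in $F$ coincides with the compositum $\prod_{\sigma\in G}\sigma(E)$: this compositum is $G$-stable, hence normal (and separable) over $\bQ$, it contains $E$, and any subfield of $F$ that is Galois over $\bQ$ and contains $E$ is itself $G$-stable and so contains every $\sigma(E)$. Applying this to $E=K_0$: each $\sigma(K_0)$ lies in $\cE^{\tr}(F)$ by the first step, so the second assertion of \case{i} of the proposition shows that the Galois closure of $K_0$ in $F$ lies in $\cE^{\tr}(F)$. The case $E=K$ with $K$ a CM subfield is identical, using the second assertion of \case{ii} of the proposition instead. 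Since the corollary is otherwise purely formal, the only point that genuinely uses the hypothesis ``$F/\bQ$ Galois'' — and hence the only delicate point — is this identification of the Galois closure of a subfield with the compositum of its $G$-conjugates.
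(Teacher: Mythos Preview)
Your proof is correct and matches the paper's approach: the paper marks the corollary with a \qed\ immediately after the statement, treating it as a direct consequence of the preceding proposition, and you have correctly spelled out that deduction (compositum of the whole family for \case{i}, Galois-stability plus compositum of conjugates for \case{ii}).
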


\begin{coro} \label{cor:CMgalois}
Soit $K$ un corps CM et soit $L$ une cl\^oture galoisienne de $K$ sur $\bQ$. Soit $K_0$ (resp.$L_0$) le sous-corps r\'eel maximal de $K$ (resp. $L$), $\Gamma$ le groupe de Galois de $L/\bQ$ et $c$ le g\'en\'erateur de $\Gamma^{L_0}$. Alors $L_0$ est galoisienne sur $\bQ$, $c$ est un \'el\'ement central de $\Gamma$ et, si on note $\Gamma_0$ (resp. $\Gamma_1$) le groupe de Galois de $L_0/\bQ$ (resp. de la cl\^oture galoisienne de $K_0$ dans $L$ sur $\bQ$), il existe un entier $r\in \{1,2,\dots, g\}$ et un diagramme commutatif \`a lignes horizontales exactes
\begin{equation*}
\begin{tikzcd}
\hskip16mm & 1 \arrow[r] &<c> \arrow[r] \arrow[d, hook, to head] &\Gamma \arrow[r] \arrow[d, equal] &\Gamma_0 \arrow[r] \arrow[d, two heads] &1\\
\hskip16mm & 1 \arrow[r] &C_2^r \arrow[r]      & \Gamma \arrow[r]  & \Gamma_1 \arrow[r] & 1 & \hskip17mm \qed
\end{tikzcd}
\end{equation*}
\end{coro}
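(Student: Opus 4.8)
Le plan est de r\'ealiser les deux lignes horizontales comme provenant de sous-groupes distingu\'es de $\Gamma$ et d'identifier les quotients \`a l'aide du corollaire~\ref{coro:maxtrcm} et de la centralit\'e de la conjugaison complexe.

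Je commencerais par la ligne sup\'erieure. \'Etant la cl\^oture galoisienne du corps CM $K$, le corps $L$ est CM d'apr\`es le corollaire~\ref{coro:maxtrcm}\case{ii}; en particulier $[L:L_0]=2$, et l'\'el\'ement non trivial $c$ de $\Gamma^{L_0}=\Gal(L/L_0)$ n'est autre que la conjugaison complexe de $L$. D'apr\`es le point \case{iii} de la proposition sur les corps CM ci-dessus, $c$ commute \`a tout \'el\'ement de $\Gamma$; il est donc central, $\langle c\rangle$ est distingu\'e dans $\Gamma$, le sous-corps $L_0=L^{\langle c\rangle}$ est galoisien sur $\bQ$ et $\Gal(L_0/\bQ)=\Gamma/\langle c\rangle=:\Gamma_0$. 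Cela fournit la suite exacte $1\to\langle c\rangle\to\Gamma\to\Gamma_0\to1$.

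Pour la ligne inf\'erieure, je poserais $H:=\Gal(L/\widetilde{K_0})$, o\`u $\widetilde{K_0}$ d\'esigne la cl\^oture galoisienne de $K_0$ dans $L$. Comme $\widetilde{K_0}/\bQ$ est galoisienne, $H$ est distingu\'e dans $\Gamma$ et $\Gamma/H=\Gal(\widetilde{K_0}/\bQ)=\Gamma_1$. D'apr\`es le corollaire~\ref{coro:maxtrcm}\case{ii}, le corps $\widetilde{K_0}$ est totalement r\'eel, donc contenu dans $L_0$; par suite $\langle c\rangle\subseteq H$, ce qui donne d\'ej\`a la surjection canonique $\Gamma_0=\Gamma/\langle c\rangle\twoheadrightarrow\Gamma/H=\Gamma_1$. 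Il reste \`a identifier $H$: en \'ecrivant $K=K_0(\sqrt{\delta})$ avec $\delta\in K_0^\times$ totalement n\'egatif, on a $\sigma(K)=\sigma(K_0)(\sqrt{\sigma(\delta)})$ pour tout plongement $\sigma:K\hookrightarrow\bC$; comme $L$ est le compositum des $\sigma(K)$ et $\widetilde{K_0}$ celui des $\sigma(K_0)$, on obtient $L=\widetilde{K_0}\bigl(\sqrt{\sigma(\delta)}:\sigma\in\Hom(K_0,\bC)\bigr)$. Ainsi $L/\widetilde{K_0}$ est multiquadratique, engendr\'ee par au plus $g=[K_0:\bQ]$ racines carr\'ees, d'o\`u $H\cong C_2^r$ avec $r\le g$; et $r\ge1$ car $L\supseteq K$ est totalement imaginaire tandis que $\widetilde{K_0}$ est totalement r\'eel, de sorte que $\widetilde{K_0}\subsetneq L$. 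On obtient la suite exacte $1\to C_2^r\to\Gamma\to\Gamma_1\to1$.

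Il resterait \`a v\'erifier que le diagramme commute: la fl\`eche verticale m\'ediane est $\id_\Gamma$, celle de gauche est l'inclusion $\langle c\rangle\hookrightarrow H$, et celle de droite est la projection $\Gamma/\langle c\rangle\to\Gamma/H$, bien d\'efinie puisque $\langle c\rangle\subseteq H$; la commutativit\'e des deux carr\'es en r\'esulte aussit\^ot. Le seul point demandant un v\'eritable argument est la description de $L$ comme extension multiquadratique de $\widetilde{K_0}$ et la majoration de $r$ qui en d\'ecoule; tout le reste n'est qu'une application directe du corollaire~\ref{coro:maxtrcm} et de la centralit\'e de la conjugaison complexe.
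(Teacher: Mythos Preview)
Your proof is correct and supplies precisely the details the paper omits: in the paper the corollary is stated with a \qed\ and no proof, being regarded as an immediate consequence of the corollaire~\ref{coro:maxtrcm} and of the centrality of complex conjugation. Your argument follows exactly this natural route, and the only substantive step---writing $L=\widetilde{K_0}(\sqrt{\sigma(\delta)}:\sigma\in\Hom(K_0,\bC))$ to identify $\Gal(L/\widetilde{K_0})$ as an elementary abelian $2$-group of rank at most $g$---is handled correctly.
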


\begin{lemm} \label{lem:primKQ} Soit $K$ un corps CM. Alors il existe $\ve\in K$ tel que $K=\bQ(\ve)$ et $\ve^2\in K_0$.
\end{lemm}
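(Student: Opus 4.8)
The plan is to look for $\ve$ inside the $(-1)$-eigenspace of complex conjugation: any such $\ve$ automatically satisfies $\ve^2\in K_0$, so the whole task reduces to arranging that it also generates $K$ over $\bQ$.

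Concretely, I would first write $c$ for the complex conjugation of $K$ and view $K$ as a $\bQ$-vector space on which $c$ acts as an involution. Since $K$ is totally imaginary, $c\neq\id$, so $K=K^+\oplus K^-$, where $K^+$ and $K^-$ are the $(+1)$- and $(-1)$-eigenspaces of $c$; here $K^+=K_0$ and $\dim_\bQ K^-=g\geq 1$. Moreover $c$ is $K_0$-linear (it fixes $K_0$) and $[K:K_0]=2$, so $K^-$ is $1$-dimensional over $K_0$, i.e.\ $K^-=K_0\ve_0$ for every nonzero $\ve_0\in K^-$. For $\ve\in K^-$ one has $c(\ve^2)=(c\ve)^2=(-\ve)^2=\ve^2$, hence $\ve^2\in K_0$; so it remains only to exhibit a primitive element of $K/\bQ$ lying in $K^-$.

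Next I would use that $K/\bQ$, being separable, has only finitely many intermediate fields. An element $\ve\in K^-$ fails to generate $K$ over $\bQ$ precisely when it lies in some proper subfield $E$ of $K$, i.e.\ in $\bigcup_E\,(E\cap K^-)$, the union being over the finitely many proper subfields. The key point is that each $E\cap K^-$ is a \emph{proper} $\bQ$-subspace of $K^-$. Indeed, if $E\cap K^-=K^-$ then $E$ contains a nonzero $\ve_0\in K^-$, hence contains $K^-=K_0\ve_0$, hence (being a field) contains $(K_0\ve_0)\ve_0^{-1}=K_0$, so $E\supseteq K_0+K^-=K$, contradicting that $E$ is proper. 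Alternatively one can invoke the structure results recalled above: such an $E$ would necessarily be a CM subfield of $K$, $c$ would restrict to its complex conjugation, and then $\dim_\bQ(E\cap K^-)=\tfrac12[E:\bQ]<g$.

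Finally, since $\bQ$ is infinite and $K^-\neq\{0\}$, the vector space $K^-$ is not a union of finitely many proper subspaces; choosing $\ve\in K^-$ outside all the $E\cap K^-$ gives $\ve\neq0$ with $\bQ(\ve)$ contained in no proper subfield of $K$, hence $\bQ(\ve)=K$, and $\ve^2\in K_0$ as noted. The only step that is not completely routine is the claim $E\cap K^-\neq K^-$; the eigenspace decomposition, the finiteness of the set of subfields, and the fact that a vector space over an infinite field is not a finite union of proper subspaces are all standard.
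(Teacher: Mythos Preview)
Your proof is correct and follows essentially the same route as the paper's: both work inside the $(-1)$-eigenspace $K^-$ of complex conjugation, use that $K$ has only finitely many subfields, and invoke the fact that a nonzero $\bQ$-vector space is not a finite union of proper subspaces. The only cosmetic difference is in the endgame: the paper argues by contradiction that $K^-$ would itself have to equal a subfield of degree $g$ and then observes that $K^-$ is not closed under multiplication, whereas you show directly that $K^-\subseteq E$ forces $K_0\subseteq E$ (via $(K_0\ve_0)\ve_0^{-1}=K_0$) and hence $E=K$. Your version of this step is slightly slicker.
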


\begin{proof}
Posons $2g=[K:\bQ]$. Puisque $K$ est une extension quadratique de $K_0$, il existe $\ve_0\in K$ tel que $\ve_0^2\in K_0$ et $K=K_0(\ve)$. Il est clair que $c(\ve_0)=-\ve_0$. il suit que le $K_0$-espace vectoriel $V:=\{\ve\in K\mid c(\ve)=-\ve\}$ est de dimension un, avec base $\ve_0$. Supposons que $V$ ne contient aucun \'el\'ement $\ve$ tel que $K=\bQ(\ve)$. Alors pour tout $\ve\in V$, le degr\'e de l'extension $\bQ(\ve)/\bQ$ divise $2g$, et n'est pas \'egal \`a $2g$. Elle est donc de degr\'e au plus $g$. Or, $V$ est un $\bQ$-espace vectoriel de dimension $g$: en utilisant le fait que $K$ ne contient qu'un nombre fini de sous-corps et le fait qu'un $\bQ$-espace vectoriel non-nul n'est pas une r\'eunion finie de sous-espaces propres, on voit que $V$ est \'egal \`a un sous-corps de $K$ de degr\'e $g$. Puisque $V$ n'est pas stable par multiplication, on aboutit \`a une contradiction.
\end{proof}

\subsection{Les nombres de Weil} \label{subsec:nWeil} Si $F$ est une extension alg\'ebrique sur $\bQ$, on note $\cO_F$ son anneau des entiers (c'est-\`a-dire la cl\^oture int\'egrale de $\bZ$ dans $F$). 
\begin{defi}\label{def:noWeil}
Soit $\alpha$ un \'el\'ement d'une extension alg\'ebrique $F$ de $\bQ$ et soit $n\in \bN^*$. On dit que $\alpha$ est un \emph{$n$-nombre de Weil} si $\alpha\in \cO_F$ et si  pour tout $\phi\in \Hom(F,\bC)$ on a $\phi(\alpha)\overline{\phi}(\alpha)\in \bN^*$. On dit que $\alpha$ est un \emph{nombre de Weil} (au sens g\'en\'eral) s'il existe $n$ tel que $\alpha$ soit un $n$-nombre de Weil.
\end{defi}

Comme dans l'introduction, on note $\cW_F(n)$ l'ensemble des $n$-nombres de Weil appartenant \`a $F$ et on pose $\cW_F=\bigcup_{n\geq 1}\cW_F(n)$. 

\begin{enonce}
(1) En utilisant le fait que tout plongement dans $\bC$ d'un sous-corps de $F$ s'\'etend \`a un plongement de $F$ on voit facilement que cette d\'efinition ne d\'epend pas du choix de $F$ mais uniqument du sous-corps $\bQ(\alpha)$ engendr\'e par $\alpha$.  

(2) Nous avons ajout\'e l'\'epith\`ete {\og}au sens g\'en\'eral{\fg} car un bon nombre d'auteurs r\'eserve le terme \emph{nombre de Weil} aux nombres alg\'ebriques entiers sur $\bZ$ et tels que $\phi(\alpha)\overline{\phi}(\alpha)$ est une puissance d'un nombre premier. Toutefois, dans ce papier, la notion de nombre de Weil sera toujours entendue dans le sens de la d\'efinition~\ref{def:noWeil}. Par ailleurs, dans \cite{GrOdoni88} et \cite{Odoni91}, les $\alpha$ tels que $\alpha^2\in \bQ_+$ ne sont pas consid\'er\'es comme \'etant des nombres de Weil. 
\end{enonce}

\begin{prop}\label{prop:WReouCM}
Soit $\alpha\in F$. On suppose qu'il existe $q\in \bQ$ tel que $\phi(\alpha)\overline{\phi}(\alpha)=q$ pour tout $\phi\in \Hom(F,\bC)$. 

\case{i} Quel que soit $\phi\in \Hom(F,\bC)$, $|\phi(\alpha)|=\sqrt{q}$.

\case{ii} Ou bien $\alpha^2=q$ auquel cas $\bQ(\alpha)$ est r\'eel, ou bien $\bQ(\alpha)$ est un corps CM. Si $\bQ(\alpha)$ est un corps CM, alors son sous-corps r\'eel maximal est $\bQ(\alpha+\frac{q}{\alpha})$. 
\end{prop}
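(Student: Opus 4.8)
Voici la strat\'egie que je suivrais. On peut d'embl\'ee supposer $F=\bQ(\alpha)$: l'hypoth\`ese portant sur tous les $\phi\in\Hom(F,\bC)$ reste vraie pour tous les $\phi\in\Hom(\bQ(\alpha),\bC)$ en prolongeant les plongements \`a $F$, et la conclusion ne concerne que $\bQ(\alpha)$. Posons $E=\bQ(\alpha)$. Le point \case{i} est alors imm\'ediat: pour tout $\phi\in\Hom(E,\bC)$ on a
\begin{equation*}
|\phi(\alpha)|^2=\phi(\alpha)\overline{\phi(\alpha)}=\phi(\alpha)\overline{\phi}(\alpha)=q,
\end{equation*}
d'o\`u en particulier $q\geq 0$ et $|\phi(\alpha)|=\sqrt{q}$. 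Si $q=0$ alors $\alpha=0$, donc $\alpha^2=q$ et $E=\bQ$ est r\'eel, ce qui r\`egle ce cas; on supposera donc $q>0$ (et en particulier $\alpha\neq 0$) dans ce qui suit.

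Pour \case{ii}, l'observation de d\'epart est que $\phi(q/\alpha)=q/\phi(\alpha)=\overline{\phi(\alpha)}=\overline{\phi}(\alpha)$ pour tout $\phi\in\Hom(E,\bC)$. Par cons\'equent l'\'el\'ement $\beta:=\alpha+q/\alpha\in E$ v\'erifie $\phi(\beta)=\phi(\alpha)+\overline{\phi(\alpha)}\in\bR$ quel que soit $\phi$; comme tout plongement de $\bQ(\beta)$ se prolonge \`a un plongement de $E$, le sous-corps $\bQ(\beta)$ est totalement r\'eel. Je distinguerais ensuite deux cas, selon qu'il existe ou non un plongement $\phi$ tel que $\phi(\alpha)\in\bR$.

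Premier cas: $\phi(\alpha)\in\bR$ pour un certain $\phi$. Alors $\phi(\alpha)^2=q$, d'o\`u $\alpha^2=q$ par injectivit\'e de $\phi$, et $E=\bQ(\sqrt{q})$ est engendr\'e par une racine carr\'ee d'un rationnel strictement positif: il est totalement r\'eel. C'est la premi\`ere alternative de \case{ii}. Second cas: aucun plongement n'envoie $\alpha$ dans $\bR$, c'est-\`a-dire $E$ est totalement imaginaire. Alors $\alpha$ est racine de $X^2-\beta X+q\in\bQ(\beta)[X]$, d'o\`u $[E:\bQ(\beta)]\leq 2$; et en fait $[E:\bQ(\beta)]=2$, car $\alpha\notin\bQ(\beta)$ (sinon $E=\bQ(\beta)$ serait totalement r\'eel). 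Ainsi $E$ est une extension quadratique totalement imaginaire du corps totalement r\'eel $\bQ(\beta)$, donc un corps CM au sens de la d\'efinition~\ref{def:corpsCM}; de plus, l'unicit\'e du sous-corps r\'eel maximal d'un corps CM, rappel\'ee plus haut, montre que celui-ci est exactement $\bQ(\beta)=\bQ(\alpha+q/\alpha)$, ce qui ach\`eve \case{ii}.

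Il n'y a pas d'obstacle s\'erieux dans cette preuve. Le seul point demandant un peu de soin est la v\'erification que $\bQ(\beta)$ est totalement r\'eel (via l'argument standard de prolongement des plongements) et le recours, \`a la fin, au fait qu'un corps CM ne poss\`ede qu'un seul sous-corps totalement r\'eel d'indice $2$; mais ce dernier point a d\'ej\`a \'et\'e \'etabli dans la sous-section pr\'ec\'edente.
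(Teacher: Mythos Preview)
Your proof is correct and follows essentially the same approach as the paper: the same case distinction (some embedding real vs.\ $\bQ(\alpha)$ totally imaginary), the same key element $\beta=\alpha+q/\alpha$ (the paper's $\lambda$), and the same quadratic polynomial $X^2-\beta X+q$ to show $[\bQ(\alpha):\bQ(\beta)]=2$. The only cosmetic differences are that you handle $q=0$ explicitly and invoke the uniqueness of the maximal real subfield at the end, whereas the paper simply exhibits $\bQ(\lambda)$ as a totally real subfield of index $2$.
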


\begin{proof}
\case{i} Puisque $|\phi(\alpha)|=|\overline{\phi(\alpha)}|$, c'est clair.

\case{ii} Notons $M_\alpha\in \bQ[x]$ le polyn\^ome minimal de $\alpha$. On consid\`ere deux cas.

Si $M_{\alpha}$ poss\`ede une racine r\'eelle, alors il existe $\phi\in \Hom(\bQ(\alpha),\bC)$ qui se factorise par l'inclusion $\bR\subseteq \bC$. Alors $0\leq q=\phi(\alpha)\overline{\phi(\alpha)}=\phi(\alpha)^2$, d'o\`u $\phi(\alpha^2)=q$ ce qui implique que $M_\alpha$ est de degr\'e $1$ ou $2$ et que $\alpha^2=q$. Comme $q>0$, $\bQ(\alpha)$ est soit \'egal \`a $\bQ$ soit un corps quadratique r\'eel.

Si $M_{\alpha}$ ne poss\`ede aucune racine r\'eelle, alors le corps $\bQ(\alpha)$ est totalement imaginaire. En outre, $q>0$ et $\alpha\neq 0$. Posons $\lambda=\alpha+\frac{q}{\alpha}$. Si $\phi\in \Hom(F,\bC)$, alors $\phi(\lambda)=\phi(\alpha)+\phi(\frac{q}{\alpha})=\phi(\alpha)+\overline{\phi}(\alpha)\in \bR$. On en tire que $\bQ(\lambda)$ est totalement r\'eel. D'autre part, $\bQ(\alpha)$ est une extension quadratique de $\bQ(\lambda)$, car $\alpha$ est racine de $x^2-\lambda x+q$ et, $\bQ(\alpha)$ \'etant totalement imaginaire et $\bQ(\lambda)$ \'etant totalement r\'eel, l'\'eventualit\'e $\bQ(\alpha)=\bQ(\lambda)$ est exclue.\end{proof}

\begin{lemm} \label{lemm:cnsKw} Soit $K$ un corps CM et soit $\alpha\in K$. Alors les propri\'et\'es suivantes sont \'equivalentes.

\case{i} $\alpha\in K^{w,\times}\cup \{0\}$.

\case{ii} $\phi(\alpha)\overline{\phi}(\alpha)$ est ind\'ependent du choix de $\phi\in \Hom(K,\bC)$.
\end{lemm}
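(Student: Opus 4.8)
\emph{Esquisse de d\'emonstration.} L'id\'ee est de ramener l'\'enonc\'e au fait classique qu'un \'el\'ement d'un corps de nombres dont tous les conjugu\'es co\"{\i}ncident est rationnel. Tout d'abord, pour $\alpha\in K$ et $\phi\in \Hom(K,\bC)$, on \'ecrit
$\phi(\alpha)\overline{\phi}(\alpha)=\phi(\alpha)\phi(c(\alpha))=\phi(\alpha\,c(\alpha))=\phi(\alpha\overline{\alpha})$,
en utilisant la relation $\phi(c(\alpha))=\overline{\phi}(\alpha)=\overline{\phi(\alpha)}$ \'enonc\'ee plus haut. Ainsi $\phi(\alpha)\overline{\phi}(\alpha)$ est l'image par $\phi$ de l'\'el\'ement $\beta:=\alpha\overline{\alpha}$, lequel appartient au sous-corps totalement r\'eel maximal $K_0$ puisqu'il est fix\'e par $c$; de plus $\phi(\beta)=|\phi(\alpha)|^2\geq 0$ pour tout $\phi$.

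Ensuite, je remarquerais que l'application de restriction $\Hom(K,\bC)\to \Hom(K_0,\bC)$ est surjective (tout plongement d'un sous-corps s'\'etend), de sorte que l'ensemble des valeurs $\phi(\beta)$, $\phi\in \Hom(K,\bC)$, co\"{\i}ncide avec l'ensemble des valeurs $\psi(\beta)$, $\psi\in \Hom(K_0,\bC)$. Par cons\'equent la condition \case{ii} --- \`a savoir que $\phi(\alpha)\overline{\phi}(\alpha)=\phi(\beta)$ ne d\'epende pas de $\phi$ --- \'equivaut \`a dire que tous les conjugu\'es de $\beta$ sur $\bQ$ sont \'egaux. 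Le polyn\^ome minimal de $\beta$ sur $\bQ$ \'etant s\'eparable (on est en caract\'eristique z\'ero), n'avoir qu'une seule racine le force \`a \^etre de degr\'e un; donc \case{ii} \'equivaut \`a $\beta=\alpha\overline{\alpha}\in \bQ$.

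Enfin, je confronterais cela au point \case{i}. Si $\alpha=0$ alors $\beta=0\in \bQ$; si $\alpha\neq 0$ alors $\beta=\alpha\overline{\alpha}$ est un \'el\'ement non nul de $K$ (produit de deux \'el\'ements inversibles du corps $K$), si bien que $\beta\in \bQ$ \'equivaut \`a $\beta\in \bQ^\times$, c'est-\`a-dire \`a $\alpha\in K^{w,\times}$ par d\'efinition. On obtient ainsi les \'equivalences $\case{i}\Leftrightarrow \beta\in\bQ\Leftrightarrow\case{ii}$. Je ne vois pas ici d'obstacle s\'erieux: le seul point demandant un peu de soin est la mise en correspondance des plongements de $K$ et de $K_0$ et l'\'enonc\'e pr\'ecis des deux implications de {\og}tous les conjugu\'es \'egaux $\Leftrightarrow$ rationnel{\fg}, mais tout cela est enti\`erement routinier.
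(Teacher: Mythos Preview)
Your proof is correct. Both your argument and the paper's rest on the same principle---an element all of whose conjugates coincide is rational---but the implementations differ slightly. The paper fixes $q=\phi(\alpha)\overline{\phi}(\alpha)$, passes to the Galois closure $L$, and uses the centrality of $c$ in $\Gamma=\Gal(L/\bQ)$ to get $\gamma\circ\overline{\phi}=\overline{\gamma\circ\phi}$ and hence $\gamma(q)=q$ for all $\gamma$. You instead invoke the identity $\overline{\phi}(\alpha)=\phi(c(\alpha))$ (Proposition~2.2\case{iii}) at the outset to rewrite $\phi(\alpha)\overline{\phi}(\alpha)=\phi(\alpha\overline{\alpha})$, which lets you work entirely inside $K_0$ and avoid $L$ and $\Gamma$ altogether. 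Your route is a touch more economical; the paper's makes the Galois-theoretic mechanism more visible. Neither approach requires anything beyond what is already established in \S\ref{sec:CMrappels}.
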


\begin{proof} L'implication \case{i}$\Longrightarrow$\case{ii} est claire. Montrons la r\'eciproque. Posons $q=\phi(\alpha)\overline{\phi}(\alpha)$. Si $L$ d\'esigne la cl\^oture alg\'ebrique de $K$ dans $\bC$, alors $q\in L$. Rappelons que $\Gamma$ d\'esigne le groupe de Galois de $L$ sur $\bQ$. Puisque $c$ est un \'el\'ement central de $\Gamma$, on a $\gamma\circ \overline{\phi}=\overline{\gamma\circ \phi}$. L'hypoth\`ese \case{ii} implique alors que $\gamma(q)=q$ pour tout $\gamma\in \Gamma$, d'o\`u $q\in \bQ$.  
\end{proof}

\begin{lemm}\label{lem:racinedeun}
Soient $\alpha$, $\beta$ deux nombres de Weil appartenant \`a l'extension alg\'ebrique $F$ de $\bQ$. Si $\alpha\cO_F=\beta\cO_F$, alors il existe une racine de l'unit\'e $\zeta\in F$ tel que $\beta=\zeta\alpha$. 
\end{lemm}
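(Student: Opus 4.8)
The plan is to show that $\zeta:=\beta/\alpha$ is a root of unity in $F$. I would first note that neither $\alpha$ nor $\beta$ vanishes: a Weil number $\gamma$ satisfies $\phi(\gamma)\overline{\phi}(\gamma)\in\bN^*$ for every $\phi$, which excludes $\gamma=0$; so $\zeta$ is a well-defined nonzero element of $F$. Next I would pass to the number field $F':=\bQ(\alpha,\beta)$ (finite over $\bQ$ since $\alpha$, $\beta$ are algebraic integers): as every embedding $F'\to\bC$ extends to one of $F$, the elements $\alpha$ and $\beta$ are still Weil numbers in $F'$, with $\phi(\alpha)\overline{\phi}(\alpha)$ equal to a fixed $m\in\bN^*$ and $\phi(\beta)\overline{\phi}(\beta)$ equal to a fixed $n\in\bN^*$ for all $\phi\in\Hom(F',\bC)$. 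From $\alpha\cO_F=\beta\cO_F$ one gets $\beta/\alpha\in\cO_F$ and $\alpha/\beta\in\cO_F$; these lie in $F'$ and are integral over $\bZ$, hence in $\cO_{F'}$, so $\zeta\in\cO_{F'}^{\times}$ and $\N_{F'/\bQ}(\zeta)=\pm1$.

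The key computation is then that every archimedean absolute value of $\zeta$ is the same. Indeed, for $\phi\in\Hom(F',\bC)$,
\begin{equation*}
|\phi(\zeta)|^{2}=\phi(\zeta)\overline{\phi}(\zeta)=\frac{\phi(\beta)\overline{\phi}(\beta)}{\phi(\alpha)\overline{\phi}(\alpha)}=\frac{n}{m}.
\end{equation*}
Taking the product over the $[F':\bQ]$ embeddings of $F'$ into $\bC$ gives $1=|\N_{F'/\bQ}(\zeta)|=(n/m)^{[F':\bQ]/2}$, so $n=m$ and every conjugate of $\zeta$ has absolute value $1$. By Kronecker's theorem --- a nonzero algebraic integer all of whose conjugates lie on the unit circle is a root of unity --- $\zeta$ is a root of unity, necessarily lying in $F'\subseteq F$, and $\beta=\zeta\alpha$.

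I do not expect any real obstacle here: this is the classical argument combining the norm (equivalently, the product formula) with Kronecker's theorem. The only points worth a careful sentence are the reduction to $F'$ and the fact --- recalled in the remarks following Definition~\ref{def:noWeil} --- that being a Weil number depends only on the subfield generated by the element, which is what keeps $\phi(\alpha)\overline{\phi}(\alpha)$ and $\phi(\beta)\overline{\phi}(\beta)$ constant over $\Hom(F',\bC)$. If one prefers to avoid the norm computation, one can instead observe that whichever of $\zeta$, $\zeta^{-1}$ has all archimedean absolute values at most $1$ is already a root of unity by Kronecker, which forces $n=m$ and hence makes $\zeta$ itself a root of unity.
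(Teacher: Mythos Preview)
Your proof is correct and follows essentially the same route as the paper: reduce to a number field, observe that $\zeta=\beta/\alpha$ is a unit with $|\phi(\zeta)|$ constant over all embeddings, and conclude by Kronecker's theorem. The only cosmetic difference is that the paper obtains $m=n$ directly from the ideal equality $m\cO_F=\alpha\overline{\alpha}\cO_F=\beta\overline{\beta}\cO_F=n\cO_F$ (two positive integers generating the same ideal are equal), whereas you deduce it from the norm.
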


\begin{proof}
On peut supposer que $F$ est une extension finie de $\bQ$, qui est soit un corps quadratique r\'eel soit un corps CM. Puisque $\alpha$ et $\beta$ engendrent le m\^eme id\'eal de $\cO_F$, il existe $\zeta \in \cO_{F}^\times$ tel que $\beta=\zeta\alpha$. D'autre part, puisque $\alpha\overline{\alpha}$, $\beta\overline{\beta}\in \bN^*$ et $\alpha\overline{\alpha}\cO_F=\beta\overline{\beta}\cO_F$, on voit que $\alpha\overline{\alpha}=\beta\overline{\beta}$. Notons cet entier naturel $n$. Alors $|\phi(\alpha)|=|\phi(\beta)|=\sqrt{n}$ pour tout $\phi\in \Hom(F,\bC)$, d'o\`u $|\phi(\zeta)|=1$ pour tout $\phi\in \Hom(F,\bC)$. Un th\'eor\`eme bien connu de Kronecker implique alors que $\zeta$ est une racine de l'unit\'e.  \end{proof}

\begin{theo}\label{theo:wFnfini}
Soit $F$ une extension finie de $\bQ$ et soit $n\in \bN^*$. 

\case{a} L'ensemble $\cW_F(n)$ est fini. 

\case{b} Si $K$ d\'esigne le sous-corps CM maximal de $F$, alors $a_K(n)\leq a_F(n)\leq a_K(n)+2$ et, lorsque $x\to \infty$, $\cN(F,x)-\cN(K,x)=O(\sqrt{x})$. 
\end{theo}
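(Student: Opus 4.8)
The plan is to treat the two parts in order, reducing everything to the structure of $F$ relative to its maximal CM subfield $K$. For part \case{a}, I would first recall that if $\alpha\in\cW_F(n)$ then by Proposition~\ref{prop:WReouCM}\case{i} every archimedean absolute value of $\alpha$ equals $\sqrt n$; since moreover $\alpha\in\cO_F$ and $[F:\bQ]$ is fixed, the coefficients of the characteristic polynomial of $\alpha$ (acting on $F$ over $\bQ$) are rational integers bounded in terms of $n$ and $[F:\bQ]$. Hence there are only finitely many possibilities for this polynomial, and therefore only finitely many possible $\alpha$ in the fixed field $F$. This gives \case{a}.

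For part \case{b}, the key structural input is that an element $\alpha\in\cW_F(n)$ generates a subfield $\bQ(\alpha)$ which, by Proposition~\ref{prop:WReouCM}\case{ii}, is either real (in which case $\alpha^2=n$, so $\alpha=\pm\sqrt n$ and this occurs only when $\sqrt n\in F$) or is a CM field, hence by Corollary~\ref{coro:maxtrcm}\case{i} contained in the maximal CM subfield $K$ of $F$. Therefore $\cW_F(n)$ is the disjoint union of $\cW_K(n)$ and the set $\{\pm\sqrt n\}\cap F$, the latter having at most two elements and being nonempty only for those $n$ that are squares of an element of $\cO_F\cap\bQ$-\,wait, more precisely only when $\sqrt n\in F$, i.e. when $n=m^2$ with $m\in\bN^*$ and (if $F\neq\bQ$ contains no such square root, none at all; in general) $\sqrt n\in\cO_F$. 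This immediately yields $a_K(n)\le a_F(n)\le a_K(n)+2$.

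For the asymptotic statement, subtracting gives
\begin{equation*}
\cN(F,x)-\cN(K,x)=\sum_{1\le n\le x}\bigl(a_F(n)-a_K(n)\bigr),
\end{equation*}
and each summand is $0$ unless $n$ is a perfect square $m^2$ with $\sqrt n=m\sqrt{1}\in F$; but the only real Weil numbers in $F$ of the relevant shape are $\pm m$ when $m^2\le x$, contributing at most $2$ for each of the $\le\sqrt x$ values $n=m^2\le x$. Hence $\cN(F,x)-\cN(K,x)=O(\sqrt x)$, as claimed. (If $F$ contains no quadratic subfield of the form $\bQ(\sqrt d)$ with $d<0$, i.e. if the real case genuinely contributes, one still only picks up the $n=m^2$; the bound is unaffected.)

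I expect no serious obstacle here: part \case{a} is the classical Northcott-type finiteness argument, and part \case{b} is essentially bookkeeping once Proposition~\ref{prop:WReouCM} is in hand. The one point requiring a little care is making sure the "real" contribution is correctly identified — an element $\alpha\in F$ with $\alpha\overline\alpha=n$ and $\alpha$ real forces $\alpha^2=n$, hence $n$ a perfect square and $\alpha=\pm\sqrt n$, so at most two such $\alpha$ per $n$ and only for $n$ a square $\le x$, which is what produces the $\sqrt x$ savings rather than anything larger.
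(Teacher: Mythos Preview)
Your argument for \case{a} is correct but proceeds differently from the paper. You use a Northcott-type bound on the coefficients of the characteristic polynomial; the paper instead reduces to the case where $F$ is CM or real quadratic, observes that $\alpha\overline{\alpha}\cO_F=n\cO_F$ forces $\alpha\cO_F$ to be one of the finitely many ideal divisors of $n\cO_F$, and then invokes Lemma~\ref{lem:racinedeun} to bound the number of generators of each such ideal. Both work; yours is shorter.

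For \case{b}, the inequality $a_K(n)\le a_F(n)\le a_K(n)+2$ is fine and matches the paper. But your asymptotic argument has a real gap. You assert that $a_F(n)-a_K(n)>0$ only when $n$ is a perfect square $m^2$, on the grounds that the extra real Weil numbers are $\pm\sqrt n$. This is backwards: when $n=m^2$ the elements $\pm m$ lie in $\bQ\subseteq K$ and are already counted in $a_K(n)$, so perfect squares contribute \emph{nothing} to the difference. The genuine contribution comes from those $n$ with $\sqrt n\in F\setminus K$, i.e.\ $n$ whose squarefree part $n_0>1$ satisfies $\bQ(\sqrt{n_0})\subseteq F$ but $\bQ(\sqrt{n_0})\not\subseteq K$. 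To get $O(\sqrt x)$ you must invoke the fact that $F$, being a number field, contains only finitely many quadratic subfields, so only finitely many squarefree $n_0$ occur; for each such $n_0$ the integers $n=m^2n_0\le x$ number at most $\sqrt{x/n_0}$. This is exactly the step the paper supplies and that your write-up is missing.
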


\begin{proof}
\case{a} Puisque $F$ ne contient qu'un nombre fini de sous-corps, il suffit de traiter le cas o\`u $F$ est un corps quadratique r\'eel ou un corps CM. Soit $\alpha\in \cW_F(n)$. Puisque $\cO_F$ est un anneau de Dedekind, l'id\'eal $n\cO_F$ n'est divisible que par un nombre fini d'id\'eaux de $\cO_F$. En particulier, comme $\alpha\overline{\alpha}\cO_F=n\cO_F$, il n'y a qu'un nombre fini de possibilit\'es pour l'id\'eal $\alpha\cO_K$. On conclut en \'evoquant le lemme~\ref{lem:racinedeun}

\case{b} Puisque $K\subseteq F$, on a $a_K(n)\leq a_F(n)$. D'apr\`es le point \case{ii} de la proposition~\ref{prop:WReouCM}, un $n$-nombre de Weil $\alpha$ n'appartenant pas \`a $K$ satisfait \`a $\alpha^2=n$, d'o\`u $a_F(n)\leq a_K(n)+2$. D'autre part, $F$ ne contenant qu'un nombre fini de sous-corps quadratiques, on voit que l'ensemble $\cS_F$ des entiers $n_0>1$ sans facteur carr\'e tels que $\sqrt{n_0}\in F$ est fini. Par cons\'equent, s'il existe $\alpha\in F$ tel que $\alpha\notin K$ et $\alpha^2=n$, alors $n=m^2n_0$ avec $m\in \bN^*$ et $n_0\in \cS_F$.  D'o\`u l'estimation $\cN(F,x)-\cN(K,x)=O(\sqrt{x})$. 
\end{proof}

\subsection{Types CM}\label{subsection:typesCM} Fixons un corps CM $K$ de degr\'e fini sur $\bQ$;  on note $K_0$ son sous-corps r\'eel maximal, $L$ la cl\^oture galoisienne de $K$ dans $\bC$, $\Gamma$ le groupe de Galois de $L$ sur $\bQ$. Rappelons que $L$ est un corps CM: on note $L_0$ son sous-corps r\'eel maximal. On pose $g=[K_0:\bQ]$, $g_L=[L_0:\bQ]$. Si $F$ est un sous-corps de $L$, on note $\Gamma^F$ le sous-groupe de $\Gamma$ correspondant \`a $F$. 

Un \emph{type CM} sur $K$ est un sous-ensemble $\Phi$ de $\Hom(K,\bC)$  tel que $\Phi\cup \overline{\Phi}=\Hom(K,\bC)$ et $\Phi\cap \overline{\Phi}=\emptyset$. Ici, $\overline{\Phi}$ d\'esigne l'ensemble $\overline{\phi}\mid \phi\in \Phi\}$. Si $\Phi$ est un type CM, alors $\Hom(K_0,\bC)=\{\phi|_{K_0}\mid \phi\in \Phi\}$ et, r\'eciproquement, tout type CM sur $K$ est obtenu en choisissant, pour tout $\phi_0\in \Hom(K_0,\bC)$, une extension de $\phi_0$ \`a $K$. Ainsi, il y a $2^g$ types CM sur $K$. On d\'esigne l'ensemble de ces types CM par $\cC\cM_K$. On trouvera davantage d'information sur les types CM dans le chapitre 2 de \cite{Shim97}; par la suite nous nous limiterons donc \`a des rappels brefs.

Si $\phi\in \Hom(K,\bC)$, alors $\phi(K)\subseteq L$ et on peut donc consid\'erer les \'el\'ements de $\Hom(K,\bC)$ comme des \'el\'ements de $\Hom(K,L)$. En outre, $L$ \'etant galoisien sur $\bQ$, tout \'el\'ement de  $\Hom(K,L)$ se prolonge en un \'el\'ement de $\Gamma$. Deux \'el\'ements $\sigma$, $\tau\in \Gamma$ d\'efinissent le m\^eme \'el\'ement de $\Hom(K,L)$ si et seulement s'il existe $\gamma\in \Gamma^K$ tel que $\tau=\sigma\circ \gamma$. 

Le groupe $\Gamma$ op\`ere \`a gauche sur $\cC\cM_K$: si $\gamma\in \Gamma$ et si $\Phi\in \cC\cM_K$, alors $\gamma\cdot\Phi=\{\gamma\circ\phi\mid \phi\in \Phi\}$. 

Soit $\Phi\in \cC\cM_K$ et soit $S\subseteq \Gamma$ l'ensemble des automorphismes de $L$ prolongeant les \'el\'ements de $\Phi$. Alors $S$ est un type CM sur $L$ et le groupe $\{\gamma\in \Gamma\mid S\gamma=S\}$ contient $\Gamma^K$. R\'eciproquement, si $S$ est un type CM sur $L$ tel que $\{\gamma\in \Gamma\mid S\gamma=S\}$ contienne $\Gamma^K$, alors $S$ est le l'ensemble des prolongements d'un type CM sur $K$.

D'autre part, si $\Phi\in \cC\cM_K$, alors le sous-groupe $H=\{\gamma\in \Gamma\mid \gamma S=S\}$ ne contient pas la conjugaison complexe et le sous-corps $\hat{K}=\hat{K}_\Phi$ de $L$ correspondant \`a $H$ est un corps CM, appel\'e le \emph{corps reflex} de $(K,\Phi)$. On voit facilement que si $\Phi$, $\Phi'\in \cC\cM_K$ sont dans la m\^eme orbite sous $\Gamma$, les corps $\hat{K}_\Phi$ et $\hat{K}_{\Phi'}$ sont conjugu\'es sous $\Gamma$. 

\begin{prop}\label{prop:OrbPhiHomhatKLiso} Soit $\Phi\in \cC\cM_K$ et soit $\Orb{(\Phi)}$ l'orbite de $\Phi$ sous $\Gamma$. Alors les $\Gamma$-ensembles $\Orb(\Phi)$ et $\Hom(\hat{K}_\Phi,L)$ sont isomorphes.
\end{prop}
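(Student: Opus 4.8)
The plan is to identify both $\Gamma$-sets with the quotient $\Gamma/H$ by one and the same subgroup $H=\{\gamma\in\Gamma\mid \gamma S=S\}$, where, as above, $S\subseteq\Gamma$ denotes the set of automorphisms of $L$ prolonging the elements of $\Phi$ (recall that $\hat K_\Phi=L^H$, i.e. $\Gamma^{\hat K_\Phi}=H$).

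First I would recall the Galois-theoretic description of $\Hom(\hat K_\Phi,L)$. Since $L/\bQ$ is Galois, every embedding $\hat K_\Phi\to L$ extends to an element of $\Gamma$, and two elements $\sigma,\tau\in\Gamma$ restrict to the same embedding of $\hat K_\Phi$ if and only if $\tau\in\sigma H$. Hence $\sigma H\mapsto \sigma|_{\hat K_\Phi}$ is a bijection $\Gamma/H\xrightarrow{\sim}\Hom(\hat K_\Phi,L)$, and it is $\Gamma$-equivariant for the left-translation action on $\Gamma/H$ and the post-composition action on $\Hom(\hat K_\Phi,L)$; thus $\Hom(\hat K_\Phi,L)\cong\Gamma/H$ as $\Gamma$-sets. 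On the other side, the orbit--stabiliser theorem gives $\Orb(\Phi)\cong\Gamma/\mathrm{Stab}_\Gamma(\Phi)$ as $\Gamma$-sets, so it suffices to prove $\mathrm{Stab}_\Gamma(\Phi)=H$.

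For this last point I would use that $\Phi\mapsto S$ is a $\Gamma$-equivariant \emph{injection} from $\cC\cM_K$ into the set of CM types on $L$. Injectivity is immediate since $\Phi$ is recovered from $S$ as $\{\sigma|_K\mid\sigma\in S\}$. Equivariance amounts to the observation that $\sigma$ prolongs an element of $\gamma\cdot\Phi$ if and only if $\gamma^{-1}\sigma$ prolongs an element of $\Phi$, i.e. the set of prolongations of $\gamma\cdot\Phi$ is exactly $\gamma S$. Since for an equivariant injection the stabiliser of a point coincides with the stabiliser of its image, we get $\mathrm{Stab}_\Gamma(\Phi)=\{\gamma\in\Gamma\mid\gamma S=S\}=H$. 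Combining the two identifications yields $\Orb(\Phi)\cong\Gamma/H\cong\Hom(\hat K_\Phi,L)$.

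The whole argument is a piece of bookkeeping in Galois theory, and I expect no genuine obstacle; the only point deserving some care is keeping track of left versus right cosets (here $S$ is a union of \emph{left} cosets of $\Gamma^K$ and $\Gamma$ acts on it by left translation, matching the post-composition action on CM types), so that the equivalence $\gamma\cdot\Phi=\Phi\Leftrightarrow\gamma S=S$ is stated on the correct side.
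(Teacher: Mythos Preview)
Your proposal is correct and follows essentially the same approach as the paper: both identify $\Orb(\Phi)$ and $\Hom(\hat K_\Phi,L)$ with $\Gamma/H$ via the orbit--stabiliser theorem, using that the stabiliser of $\Phi$ equals $H=\Gamma^{\hat K_\Phi}$. The paper simply asserts this last equality (relying on the definition of $\hat K_\Phi$ given just before the proposition), whereas you spell it out via the $\Gamma$-equivariant injection $\Phi\mapsto S$; this is a welcome bit of extra detail but not a different idea.
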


\begin{proof} Soit $\iota$ l'inclusion de $\hat{K}_\Phi$ dans $L$. Alors le stabilisateur de $\iota$ est \'egal \`a $\Gamma^{\hat{K}}$, ce qui est \'egalement le stabilisateur de $\Phi$. Les actions de $\Gamma$ sur les ensembles finis $\Orb{(\Phi)}$ et $\Hom(\hat{K},\bC)$ \'etant transitives, on peut d\'efinir une bijection $u:\Hom(\hat{K},\bC)\to \Orb{(\Phi)}$ par la r\`egle
\begin{equation*}
u(\gamma\circ \iota)= \gamma\cdot \Phi,  \qquad (\gamma\in \Gamma).
\end{equation*}
et on v\'erifie sans peine qu'il s'agit d'un $\Gamma$-morphisme.
\end{proof}

Soit $\Phi\in \cC\cM_K$. On d\'efinit un type CM sur $\hat{K}_\Phi$, appel\'e \emph{le reflex} de $\Phi$ et not\'e $\hat{\Phi}$, de la mani\`ere suivante. Soit $S$ l'ensemble de tous les prolongements des \'el\'ements de $\Phi$ \`a des automorphismes de $L$. Alors $S$ et $S^*=\{\sigma^{-1}\mid \sigma\in S\}$ sont des types CM sur $L$ et on v\'erifie que $S^*$ est l'ensemble des prolongements \`a $L$ d'un type CM sur $\hat{K}_\Phi$ qui sera not\'e $\hat{\Phi}$. On d\'efinit alors la \emph{norme} associ\'ee \`a $\hat{\Phi}$ comme \'etant le morphisme de groupes $\N_{\hat{\Phi}}:\hat{K}^\times\to L^\times$ donn\'ee par
$\N_{\hat{\Phi}}(\alpha)=\prod_{\psi\in \hat{\Phi}}\psi(\alpha)$.
Le lemme suivant est bien connu:

\begin{lemm}\label{lemm:normPhihat} La norme $\N_{\hat{\Phi}}$ associ\'ee \`a $\hat{\Phi}$ prennent ses valeurs dans $K^\times$ et, en fait, dans le groupe $K^{w,\times}=\{\alpha\in K^\times\mid \alpha\overline{\alpha}\in \bQ^\times\}$. \qed
\end{lemm}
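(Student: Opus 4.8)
The plan is to argue entirely inside the Galois group $\Gamma$, using the lift $S^{*}\subseteq\Gamma$ of the reflex type $\hat\Phi$, and to reduce both assertions to two elementary coset identities. Recall that $S=\{\gamma\in\Gamma\mid\gamma|_{K}\in\Phi\}$ is cut out by a condition on the restriction to $K$, so $S$ is a union of left cosets of $\Gamma^{K}$, that is $S\Gamma^{K}=S$; and $S$ is a CM type on $L$, so $S\sqcup cS=\Gamma$, where $c$ is complex conjugation, central in $\Gamma$ by Corollaire~\ref{cor:CMgalois}. Passing to inverses (and using $c^{-1}=c$ together with the centrality of $c$) gives the two facts I shall use repeatedly,
\begin{equation*}
\Gamma^{K}\,S^{*}=S^{*}\qquad\text{and}\qquad S^{*}\sqcup cS^{*}=\Gamma .
\end{equation*}
Recall also that $\hat\Phi$ is defined precisely so that $S^{*}=\{\gamma\in\Gamma\mid\gamma|_{\hat K}\in\hat\Phi\}$; hence if for each $\psi\in\hat\Phi$ we pick an extension $\tilde\psi\in\Gamma$ (so $\tilde\psi|_{\hat K}=\psi$), then $\{\tilde\psi\}_{\psi\in\hat\Phi}$ is a transversal of $\Gamma^{\hat K}$ inside $S^{*}$, and for $\alpha\in\hat K^{\times}$ one has $\N_{\hat\Phi}(\alpha)=\prod_{\psi\in\hat\Phi}\tilde\psi(\alpha)$.

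To prove $\N_{\hat\Phi}(\alpha)\in K^{\times}$, I would check invariance under $\Gamma^{K}=\Gal(L/K)$, which suffices since $L/\bQ$ is Galois and $\N_{\hat\Phi}(\alpha)\in L^{\times}$. Let $\gamma\in\Gamma^{K}$. By the first identity, left multiplication by $\gamma$ permutes $S^{*}$, hence carries the transversal $\{\tilde\psi\}$ to another transversal of $\Gamma^{\hat K}$ in $S^{*}$: there exist a permutation $\pi$ of $\hat\Phi$ and elements $h_{\psi}\in\Gamma^{\hat K}$ with $\gamma\tilde\psi=\widetilde{\pi(\psi)}\,h_{\psi}$. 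Since $\alpha\in\hat K$ is fixed by $\Gamma^{\hat K}$, this gives $(\gamma\tilde\psi)(\alpha)=\widetilde{\pi(\psi)}(\alpha)$, whence $\gamma\bigl(\N_{\hat\Phi}(\alpha)\bigr)=\prod_{\psi}\widetilde{\pi(\psi)}(\alpha)=\N_{\hat\Phi}(\alpha)$. Thus $\N_{\hat\Phi}(\alpha)\in L^{\Gamma^{K}}=K$, and it is nonzero because $\alpha\neq0$.

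For the refinement $\N_{\hat\Phi}(\alpha)\,\overline{\N_{\hat\Phi}(\alpha)}\in\bQ^{\times}$, I would simply multiply out. Applying $c$ yields $\overline{\N_{\hat\Phi}(\alpha)}=\prod_{\psi}(c\tilde\psi)(\alpha)$, and $\{c\tilde\psi\}_{\psi}$ is a transversal of $\Gamma^{\hat K}$ inside $cS^{*}$. By the second identity, $\{\tilde\psi\}\cup\{c\tilde\psi\}$ is therefore a transversal of $\Gamma^{\hat K}$ in all of $\Gamma$, so
\begin{equation*}
\N_{\hat\Phi}(\alpha)\,\overline{\N_{\hat\Phi}(\alpha)}=\prod_{\sigma}\sigma(\alpha)=\prod_{\phi\in\Hom(\hat K,\bC)}\phi(\alpha)=\N_{\hat K/\bQ}(\alpha),
\end{equation*}
where $\sigma$ runs over a transversal of $\Gamma^{\hat K}$ in $\Gamma$. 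Hence $\N_{\hat\Phi}(\alpha)\,\overline{\N_{\hat\Phi}(\alpha)}\in\bQ^{\times}$ (it is even a positive rational, $\hat K$ being a CM field), and therefore $\N_{\hat\Phi}(\alpha)\in K^{w,\times}$.

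I do not expect any genuine obstacle: the whole argument is bookkeeping with cosets, and is essentially the classical computation of the reflex norm. The one point that needs care is to keep left and right cosets straight — the involution $S\mapsto S^{*}$ interchanges the two sides, which is exactly why $S^{*}$ is stable under $\Gamma^{K}$ on the left (the side relevant to $\Gal(L/K)$-invariance) while being a union of left cosets of $\Gamma^{\hat K}$ (the side needed for $\hat\Phi$ to be a CM type on $\hat K$) — and one must invoke the centrality of $c$ at the moment of identifying $cS^{*}$ with the complement of $S^{*}$.
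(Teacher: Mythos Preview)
Your argument is correct and is precisely the classical computation of the reflex norm. The paper itself gives no proof at all: the lemma is stated as ``bien connu'' and closed with a \qed, with an implicit reference to Shimura's book. So there is nothing to compare against except to say that what you wrote is the standard verification the paper chose to omit; your care with left versus right cosets and with the centrality of $c$ is exactly what is needed, and the identification $\N_{\hat\Phi}(\alpha)\,\overline{\N_{\hat\Phi}(\alpha)}=\N_{\hat K/\bQ}(\alpha)$ is the usual way to see membership in $K^{w,\times}$.
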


On en tire facilement que $\N_{\hat{\Phi}}$ induit alors des morphismes de groupes (encore not\'es $\N_{\hat{\Phi}}$) $I_{\hat{K}}\to I^w_K$, $P_{\hat{K}}\to P^w_K$, $\Pic(\cO_{\hat{K}})\to \cC^w_K$ qui seront utilis\'es par la suite. 

\section{Les tores $W^K$ et $V^K$ et la compactification $X^K$}\label{sec:letore}

\subsection{Notations} On reprend les notations introduites au d\'ebut du \subsecno~\ref{subsection:typesCM}. Si $F$ est une extension finie de $\bQ$, on note $T^F$ la restriction de Weil du groupe multiplicatif $\bG_{m,F}$ \`a $\bQ$. Rappelons que $T^F$ est un tore sur $\bQ$ dimension $[F:\bQ]$ tel que, pour toute $\bQ$-alg\`ebre $E$, on ait $T^K(E)\simeq \bG_{m,F}(E\otimes_{\bQ} F)$ fonctoriellement en $E$. 

\subsection{D\'efinition des tores $W^K$ et $V^K$} La norme $\N_{K/K_0}$ de $K$ sur $K_0$ induit un morphisme de $\bQ$-tores $T^K\to T^{K_0}$ (\'egalement not\'e $\N_{K/K_0}$); d\'esignons par $T^{K/K_0,1}$ son noyau. D'autre part, l'inclusion $\bQ^\times\subseteq  K_0^\times$ induit une inclusion de $\bG_{m,\bQ}$ dans $T^{K_0}$. Le tore $W^K$ est alors d\'efini comme \'etant l'image r\'eciproque par $\N_{K/K_0}$ de $\bG_{m,\bQ}$; il s'agit donc d'un sous-tore de $T^K$. La sitution est r\'esum\'ee par le diagramme commutatif \`a lignes exactes
\begin{equation*}
\begin{tikzcd}
1 \arrow[r] &T^{K/K_0,1} \arrow[r]                &T^K \arrow[r, "\N_{K/K_0}"]              &T^{K_0} \arrow[r]                          &1\\
1 \arrow[r] &T^{K/K_0,1} \arrow[r]\arrow[u,equal] &W^K \arrow[r] \arrow[u, hook, to head]   &\bG_{m,\bQ} \arrow[r] \arrow[u, hook, to head] &1. 
\end{tikzcd}
\end{equation*}

D'autre part, l'inclusion $\bQ^\times\subseteq  K^\times$ induit une inclusion de $\bG_{m,\bQ}$ dans $T^{K}$ qui se factorise par $W^K$. Le tore $V^K$ est alors d\'efini par la suite exacte
\begin{equation*}
\begin{tikzcd}
1\arrow[r] &\bG_{m,\bQ}\arrow[r] &  W^K\arrow[r] & V^K\arrow[r]  & 1. &  
\end{tikzcd}\tag*{$(*)$}
\end{equation*}

Si $E$ est une $\bQ$-alg\`ebre, la conjugaison complexe $\alpha\mapsto \overline{\alpha}$ s'\'etend \`a $E\otimes_{\bQ} K$ par $\overline{u\otimes \alpha}\mapsto u\otimes \overline{\alpha}$. Si $\alpha\in K^\times$, alors $\N_{K/K_0}(\alpha)=\alpha\overline{\alpha}$ et $\N_{K/K_0}$ s'\'etend naturellement en un morphisme $(E\otimes_{\bQ} K)^\times\to (E\otimes_{\bQ} K_0)^\times$ que nous d\'esignons encore par $\N_{K/K_0}$.

Le corps $K$ \'etant fix\'e, on supprime d\'esormais l'indice $K$ de la notation si aucune confusion n'est \`a craindre. 

\begin{lemm} \label{lem:WVprop}
\case{i} On a $\dim{W}=g+1$ et $\dim{V}=g$.

\case{ii} La compos\'ee de l'inclusion $T^{K/K_0,1}\subseteq W$ et de la fl\`eche $W\to V$ est une isog\'enie de $T^{K/K_0,1}$ sur $V$.

\case{iii} $W$ et $V$ sont d\'eploy\'es par $L$. 

\case{iv} Pour toute $\bQ$-alg\`ebre $E$, on a $W(E)\simeq \{\alpha\in (E\otimes_{\bQ} K)^\times \mid \N_{K/K_0}(\alpha)\in E^\times\}$ fonctoriellement en $E$ et, si $\Gamma$ op\`ere trivialement sur $E$, $V(E)\simeq W(E)/E^\times$ fonctoriellement en $E$.  
\end{lemm}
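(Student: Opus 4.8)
The statement to be proved is Lemma~\ref{lem:WVprop}, a collection of four elementary structural facts about the tori $W$ and $V$. The plan is to prove (i)--(iv) in the order (iv), (i), (ii), (iii), since once the functorial description of the points in (iv) is in hand, the dimension count and the splitting behaviour follow quickly, and (ii) is a clean consequence of the exact sequences already set up. I would open by fixing the notation from the two displayed diagrams: the exact sequence $1\to T^{K/K_0,1}\to W\to \bG_{m,\bQ}\to 1$ obtained by restricting $\N_{K/K_0}$, and the defining exact sequence $(*)$, namely $1\to\bG_{m,\bQ}\to W\to V\to 1$.

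\emph{Proof of (iv).} For a $\bQ$-algebra $E$, apply $(-)(E)$ to the upper row of the first diagram: since $T^K(E)=(E\otimes_\bQ K)^\times$ and $T^{K_0}(E)=(E\otimes_\bQ K_0)^\times$ functorially, and $W$ is by definition the fibre product $T^K\times_{T^{K_0}}\bG_{m,\bQ}$, we get $W(E)=\{\alpha\in(E\otimes_\bQ K)^\times\mid \N_{K/K_0}(\alpha)\in E^\times\}$, where we use that $\N_{K/K_0}$ on $E$-points is the map induced by $\alpha\mapsto\alpha\overline\alpha$ as recalled before the lemma; functoriality in $E$ is immediate from functoriality of all the tori involved. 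For the second assertion, apply $(-)(E)$ to $(*)$: this gives an exact sequence of groups $1\to E^\times\to W(E)\to V(E)\to H^1(E,\bG_{m,\bQ})$, and $H^1$ here is computed with respect to the absolute Galois action; when $\Gamma$ (equivalently, the relevant Galois group) acts trivially on $E$, the tori are split over $E$ and Hilbert~90 gives $H^1=0$ (this is exactly the Hilbert~90 statement invoked for $V^K(\bQ)=K^{w,\times}/\bQ^\times$ in the introduction), so $V(E)\simeq W(E)/E^\times$, functorially. One must be a little careful to state "trivial action" correctly — it suffices that $E$ be a $\bQ$-algebra on which the image of $\Gal(\overline\bQ/\bQ)$ through $L$ acts trivially, e.g. $E$ an $L$-algebra — but the paper's phrasing "$\Gamma$ op\`ere trivialement sur $E$" covers this.

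\emph{Proof of (i), (ii), (iii).} For (i): the first exact sequence gives $\dim W=\dim T^{K/K_0,1}+1$, and $\dim T^{K/K_0,1}=[K:\bQ]-[K_0:\bQ]=2g-g=g$, so $\dim W=g+1$; then $(*)$ gives $\dim V=\dim W-1=g$. For (ii): consider the composite $T^{K/K_0,1}\hookrightarrow W\twoheadrightarrow V$. Its kernel is $T^{K/K_0,1}\cap\bG_{m,\bQ}$ (intersection taken inside $W$), which is the kernel of $\N_{K/K_0}$ restricted to $\bG_{m,\bQ}\subseteq T^{K_0}$; but on $\bG_{m,\bQ}$ the map $\N_{K/K_0}$ is $t\mapsto t^2$ (since $\bar t=t$ for $t\in\bQ^\times$), so the kernel is $\mu_2$, finite. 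Hence the composite is a morphism of $g$-dimensional tori with finite kernel, i.e.\ an isogeny; surjectivity follows because $W\to V$ is surjective and $W$ is generated by $T^{K/K_0,1}$ and $\bG_{m,\bQ}$ (the latter mapping to the trivial subgroup of $V$), or more cleanly by comparing dimensions once the kernel is known to be finite. For (iii): $W\subseteq T^K$ and $T^K$ is split by any Galois extension containing $K$, in particular by $L$; since $L/\bQ$ is Galois and $W$ is a $\bQ$-subtorus of $T^K$, $W$ is also split by $L$ (a subtorus of a torus split by $L$ is split by $L$), and then $V=W/\bG_{m,\bQ}$, a quotient of an $L$-split torus, is $L$-split as well.

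\emph{Main obstacle.} None of the steps is genuinely hard; the only point requiring care is the precise hypothesis in (iv) guaranteeing the vanishing of $H^1$ — one should make explicit that "trivial $\Gamma$-action on $E$" is meant in the sense that $E$ is an algebra over (a copy of) $L$, or at least that the Galois cocycle defining the obstruction is trivialised, so that Hilbert~90 applies to $\bG_{m}$ over $E$. I would state this cleanly and cite Hilbert~90 as already invoked in the introduction. The dimension computation in (i) and the kernel computation in (ii) are one-line verifications, and (iii) is standard torus theory.
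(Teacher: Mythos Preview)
Your proof is correct and follows the same route as the paper, just reordered; the paper dispatches \case{i} in one line from the preceding discussion, argues \case{ii} by noting that $\bG_{m,\bQ}\not\subseteq T^{K/K_0,1}$ (so the kernel, being a proper subgroup of $\bG_{m,\bQ}$, is finite) rather than computing it explicitly as $\mu_2$, and for \case{iii} and \case{iv} gives exactly your argument. One small caution on \case{iv}: your gloss that ``$\Gamma$ acts trivially on $E$'' should mean ``$E$ is an $L$-algebra'' is backwards (an $L$-algebra carries a nontrivial $\Gamma$-action through $L$); the paper simply invokes Hilbert~90 without further commentary, and in practice the only cases used later are $E=\bQ$ and $E=L$, both fields, where Hilbert~90 applies directly.
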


\begin{proof} Le point \case{i} d\'ecoule de la discussion pr\'ec\'edente.

\case{ii} Puisque $\dim{T^{K/K_0,1}}=\dim{V}$, il suffit de montrer que le noyau est fini. C'est clair car $\ker(W\to V)=\bG_{m,\bQ}$ et $\bG_{m,\bQ}\not\subseteq T^{K/K_0,1}$. 

\case{iii} On sait que $L$ d\'eploie $T^K$. Puisque tout sous-tore d'un tore d\'eploy\'e sur un corps de caract\'eristique nulle est d\'eploy\'e, on en tire que $W$ est d\'eploy\'e par $L$. Enfin $V$ est d\'eploy\'e par $L$ en raison de la suite exacte $(*)$.  

\case{iv} La description de $W(E)$ d\'ecoule de la d\'efinition de $W$, puis celle de $V(E)$ du th\'eor\`eme $90$ de Hilbert. 
\end{proof}

\subsection{Le groupe des caract\`eres} En g\'en\'eral, si $T$ est un tore sur $\bQ$, on note $\hat{T}$ son groupe de caract\`eres $\Hom(T,\bG_{m,\bQ})$. Dans tous les cas qui nous concernent, $T$ sera d\'eploy\'e par $L$ et nous supposerons par la suite que c'est le cas. Ainsi $\hat{T}$ poss\`ede alors une structure naturelle de $\Gamma$-module. 

Posons $\Xi=\Hom(K,\bC)$, $\Xi_0=\Hom(K_0,\bC)$. Alors $\hat{T}^K$ s'identifie avec le groupe ab\'elien libre $\bZ[\Xi]$, l'\'el\'ement $\sum_{\phi\in \Xi}n_\phi\phi$ correspondant au caract\`ere $\alpha\mapsto \prod_{\phi\in \Xi}\phi(\alpha)^{n_\phi}$ (o\`u $\alpha\in K^\times$). L'action de $\Gamma$ est alors donn\'e par $(\gamma,\phi)\mapsto \gamma\circ\phi$. De la m\^eme mani\`ere, le $\Gamma_0$-module $\hat{T}^{K_0}$ s'identifie avec $\bZ[\Xi_0]$, que l'on consid\`ere comme un $\Gamma$-module \emph{via} l'application quotient $\Gamma\to \Gamma_0$. Notons $I_\Xi$ l'id\'eal augmentation de $\bZ[\Xi]$ et $J_\Xi$ le sous-module de $\bZ[\Xi]$ engendr\'e par les \'el\'ements $\phi+\overline{\phi}$ avec $\phi$ parcourant $\Xi$. Le lemme suivant est alors clair.

\begin{lemm}\label{lem:conjcomp} 
La conjugaison complexe op\`ere par $-1$ sur $\hat{T}^{K/K_0,1}$ et sur $\hat{V}$. \qed
\end{lemm}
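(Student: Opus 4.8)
The statement to be established is Lemma~\ref{lem:conjcomp}: the complex conjugation $c$ acts by $-1$ on the character groups $\hat{T}^{K/K_0,1}$ and $\hat{V}$. The plan is to work entirely on the level of character lattices, using the identifications recalled just above the lemma. First I would treat $\hat{T}^{K/K_0,1}$. Dualizing the exact sequence of $\Gamma$-tori $1\to T^{K/K_0,1}\to T^K\xrightarrow{\N_{K/K_0}} T^{K_0}\to 1$ gives an exact sequence of $\Gamma$-modules $0\to \hat{T}^{K_0}\to \hat{T}^{K}\to \hat{T}^{K/K_0,1}\to 0$, where the first map is induced by $\N_{K/K_0}$. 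Under the identifications $\hat{T}^K\cong \bZ[\Xi]$ and $\hat{T}^{K_0}\cong \bZ[\Xi_0]$, the norm map sends a generator $\phi_0\in \Xi_0$ to $\phi+\overline{\phi}\in \bZ[\Xi]$, where $\phi$ is any extension of $\phi_0$ to $K$ (since $\N_{K/K_0}(\alpha)=\alpha\overline{\alpha}$, composing with a character $\phi_0$ of $T^{K_0}$ pulls back to $\alpha\mapsto \phi(\alpha)\overline{\phi}(\alpha)$). Hence the image of $\hat{T}^{K_0}$ in $\bZ[\Xi]$ is precisely the submodule $J_\Xi$ generated by the $\phi+\overline{\phi}$, and therefore $\hat{T}^{K/K_0,1}\cong \bZ[\Xi]/J_\Xi$ as a $\Gamma$-module.

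Now the action of $c$ on $\bZ[\Xi]$ permutes $\Xi$ by $\phi\mapsto \overline{\phi}$, because $c$ is the complex conjugation of $K$ (so $\phi\circ c=\overline{\phi}$ by part \case{iii} of the proposition on CM fields). Thus in the quotient $\bZ[\Xi]/J_\Xi$ one has, for every $\phi\in \Xi$, the relation $c\cdot \phi=\overline{\phi}\equiv -\phi$, the last congruence because $\phi+\overline{\phi}\in J_\Xi$. Since the classes of the $\phi$ generate $\bZ[\Xi]/J_\Xi$, this shows $c$ acts by $-1$ on $\hat{T}^{K/K_0,1}$, as claimed.

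For $\hat{V}$, I would dualize the defining exact sequence $(*)$, namely $1\to \bG_{m,\bQ}\to W\to V\to 1$, obtaining $0\to \hat{V}\to \hat{W}\to \bZ\to 0$ (the last map being restriction to $\bG_{m,\bQ}$, i.e.\ the augmentation once $\hat{W}$ is described concretely), so that $\hat{V}$ is identified with a submodule of $\hat{W}$. From the other defining diagram, dualizing $0\to \hat{T}^{K/K_0,1}\to \hat{W}\to \hat{\bG}_{m,\bQ}\to 0$ (dual to $1\to T^{K/K_0,1}\to W\to \bG_{m,\bQ}\to 1$) one sees $\hat{W}$ sits in an extension of $\bZ$ (trivial $\Gamma$-action) by $\hat{T}^{K/K_0,1}$. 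On the sub-$\Gamma$-module $\hat{V}\subseteq \hat{W}$ I would argue as follows: by part \case{ii} of Lemma~\ref{lem:WVprop} the composite $T^{K/K_0,1}\hookrightarrow W\twoheadrightarrow V$ is an isogeny, so dually $\hat{V}\to \hat{T}^{K/K_0,1}$ is injective with finite cokernel, hence an isomorphism after tensoring with $\bQ$ and, being a map of lattices of the same rank with finite cokernel, it identifies $\hat{V}$ with a finite-index sub-$\Gamma$-module of $\hat{T}^{K/K_0,1}$. Since $c$ acts by $-1$ on $\hat{T}^{K/K_0,1}$, it acts by $-1$ on any submodule, in particular on $\hat{V}$. (Alternatively, and perhaps more cleanly, $c$ acts by $-1$ on $\hat{W}/(\text{the }\bZ\text{ quotient})\cong \hat{T}^{K/K_0,1}$ and trivially on the $\bZ$ quotient, and the element $\phi-\overline{\phi}$-type generators of $\hat{V}$ all lie in the $(-1)$-eigenspace; one then checks $\hat{V}$ is exactly the part of $\hat{W}$ on which $c=-1$ with image $0$ in $\bZ$, which forces $c=-1$ on $\hat{V}$.)

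**Main obstacle.** None of the steps is deep; the only point requiring care is making the concrete identification of the norm map $\hat{T}^{K_0}\to \hat{T}^{K}$ with the inclusion $J_\Xi\hookrightarrow \bZ[\Xi]$ genuinely precise — in particular checking that a character of $T^{K_0}$ indexed by $\phi_0\in\Xi_0$ pulls back under $\N_{K/K_0}$ to the character $\phi+\overline{\phi}$ of $T^K$, and not merely up to sign or up to the $\Gamma$-action. Once that bookkeeping is nailed down the lemma is immediate, which is presumably why the paper simply says ``Le lemme suivant est alors clair.''
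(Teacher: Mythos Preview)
Your proof is correct and follows the natural approach (the paper itself gives no proof, marking the lemma with \qed\ after calling it ``clair''); your identification $\hat{T}^{K/K_0,1}\cong \bZ[\Xi]/J_\Xi$ together with the observation $c\cdot\phi=\overline{\phi}\equiv -\phi$ modulo $J_\Xi$ is exactly what makes it clear, and your isogeny argument for $\hat{V}$ is a clean way to transfer the conclusion. One small slip worth fixing: dualizing $1\to T^{K/K_0,1}\to W\to \bG_{m,\bQ}\to 1$ yields $0\to \hat{\bG}_{m,\bQ}\to \hat{W}\to \hat{T}^{K/K_0,1}\to 0$, not the sequence you wrote with $\hat{T}^{K/K_0,1}$ as the subobject --- but this does not affect your main argument for $\hat{V}$, which goes through the isogeny $T^{K/K_0,1}\to V$ and is independent of that parenthetical remark.
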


\begin{lemm}\label{lem:isohats} 
On dispose d'isomorphismes naturels de $\Gamma$-modules $\hat{T}^{K/K_0,1}\simeq \bZ[\Xi]/J_\Xi$, 
$\hat{W}\simeq \bZ[\Xi]/(I_\Xi\cap J_\Xi)$ et $\hat{V}\simeq I_\Xi/(I_\Xi\cap J_\Xi)$
\end{lemm}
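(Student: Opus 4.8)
The plan is to apply the contravariant character‑lattice functor $T\mapsto \hat T=\Hom(T_L,\bG_{m,L})$ to the three short exact sequences of $\bQ$‑tores already at our disposal and to read off the three isomorphisms from the combinatorics of $\bZ[\Xi]$. Recall that on $\bQ$‑tores split by $L$ this functor is exact (it is an anti‑equivalence with the category of finitely generated free $\bZ$‑modules carrying a $\Gamma$‑action), so it turns each short exact sequence of tores into a short exact sequence of $\Gamma$‑modules with the arrows reversed; in particular every isomorphism produced below is canonical and $\Gamma$‑equivariant, being induced by a morphism or a closed immersion of tores.

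For the first isomorphism, dualize $1\to T^{K/K_0,1}\to T^K\to T^{K_0}\to 1$ (the norm $\N_{K/K_0}$ on the right) to get a short exact sequence $0\to\hat T^{K_0}\to\hat T^K\to\hat T^{K/K_0,1}\to 0$. Identifying $\hat T^K=\bZ[\Xi]$ and $\hat T^{K_0}=\bZ[\Xi_0]$ and working over $L$, one computes that the map $\bZ[\Xi_0]\to\bZ[\Xi]$ induced by $\N_{K/K_0}$ sends $\psi_0$ to $\phi+\overline\phi$ for any $\phi\in\Xi$ extending $\psi_0$; this uses $\phi\circ c=\overline\phi$ and $\N_{K/K_0}(\alpha)=\alpha\overline\alpha$. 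As $\psi_0$ ranges over $\Xi_0$ these elements range over all $\phi+\overline\phi$, $\phi\in\Xi$, so the image is exactly $J_\Xi$ and $\hat T^{K/K_0,1}\simeq\bZ[\Xi]/J_\Xi$.

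For $\hat W$, combine this with the commutative diagram of $\bQ$‑tores defining $W$ (the one relating the sequence for $T^K$ to $1\to T^{K/K_0,1}\to W\to\bG_{m,\bQ}\to 1$ through the inclusion $\bQ^\times\subseteq K_0^\times$). Dualizing produces a commutative diagram with exact rows
\[
\begin{array}{ccccccccc}
0 & \to & \hat T^{K_0} & \to & \hat T^K & \to & \hat T^{K/K_0,1} & \to & 0 \\
  &     & \downarrow   &     & \downarrow &   & \|               &     &   \\
0 & \to & \bZ          & \to & \hat W   & \to & \hat T^{K/K_0,1} & \to & 0
\end{array}
\]
in which the left‑hand vertical arrow $\bZ[\Xi_0]=\hat T^{K_0}\to\hat{\bG}_{m,\bQ}=\bZ$, being restriction of characters along $\bQ^\times\subseteq K_0^\times$, is the augmentation, hence surjective with kernel $I_{\Xi_0}$. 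The snake lemma then identifies $\ker(\hat T^K\to\hat W)$ with the image of $I_{\Xi_0}$ under the norm map, i.e. with the set of sums $\sum_{\psi_0}m_{\psi_0}(\phi_{\psi_0}+\overline{\phi_{\psi_0}})$ satisfying $\sum_{\psi_0}m_{\psi_0}=0$; since an element of $J_\Xi$ has augmentation zero in $\bZ[\Xi]$ precisely when this condition holds, that submodule equals $I_\Xi\cap J_\Xi$, whence $\hat W\simeq\bZ[\Xi]/(I_\Xi\cap J_\Xi)$. (Equivalently, one checks directly that a character of $T^K$ is trivial on $W$ iff it lies in $I_\Xi\cap J_\Xi$: triviality on $\bG_{m,\bQ}\subseteq W$ puts it in $I_\Xi$, triviality on $T^{K/K_0,1}\subseteq W$ puts it in $J_\Xi$ by the first isomorphism, and conversely every element of $I_\Xi\cap J_\Xi$ is the pullback of a character trivial on $\bG_{m,\bQ}$ along $\N_{K/K_0}\colon W\to\bG_{m,\bQ}$, which is well defined by the very definition of $W$.)

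Finally, for $\hat V$ dualize the defining sequence $1\to\bG_{m,\bQ}\to W\to V\to 1$ to obtain $0\to\hat V\to\hat W\to\bZ\to 0$, where $\hat W\to\bZ$ is restriction of characters along $\bQ^\times\subseteq K^\times$; on $\bZ[\Xi]$ this is again the augmentation $\varepsilon$, and since $I_\Xi\cap J_\Xi\subseteq I_\Xi=\ker\varepsilon$ it descends to $\hat W=\bZ[\Xi]/(I_\Xi\cap J_\Xi)$ with kernel $I_\Xi/(I_\Xi\cap J_\Xi)$. Hence $\hat V\simeq I_\Xi/(I_\Xi\cap J_\Xi)$. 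The one genuinely non‑formal point is the identification in the first step of the map on character lattices induced by $\N_{K/K_0}$, together with the resulting combinatorial description of $I_\Xi\cap J_\Xi$; after that everything is bookkeeping with exact functors and the snake lemma, and as a sanity check the three ranks come out to $g$, $g+1$ and $g$, in agreement with Lemme~\ref{lem:WVprop}.
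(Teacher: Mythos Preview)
Your proof is correct and follows essentially the same strategy as the paper's: dualize the defining exact sequences of tori using the exactness of the character-lattice functor, identify the map $\bZ[\Xi_0]\to\bZ[\Xi]$ induced by $\N_{K/K_0}$ as $\psi_0\mapsto\phi+\overline\phi$, and read off each quotient. The paper is terser---it treats $\hat T^{K/K_0,1}$ exactly as you do and then simply says the other two are proved ``de la m\^eme mani\`ere, en remarquant que $I_\Xi$ est le noyau du morphisme $\bZ[\Xi]\to\hat\bG_{m,\bQ}$''---whereas you spell out the computation of $\ker(\hat T^K\to\hat W)$ via the snake lemma; your parenthetical alternative (a character of $T^K$ vanishes on $W$ iff it vanishes on both $T^{K/K_0,1}$ and $\bG_{m,\bQ}$, i.e.\ lies in $J_\Xi\cap I_\Xi$) is precisely what the paper has in mind.
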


\begin{proof} Rappelons que, \`a tout morphisme de $\bQ$-tores $T\to S$ d\'eploy\'es par $L$ correspond un morphisme de $\Gamma$-modules $\hat{S}\to \hat{T}$, et que le foncteur $T\mapsto \Hom(T,\bG_{m,\bQ})$ transforme suites exactes en suites exactes. Ainsi, \`a la suite exacte de tores $1\to T^{K/K_0,1}\rightarrow T^K\xrightarrow{\N_{K/K_0}} T^{K_0}\rightarrow 1$ correspond une suite exacte de $\Gamma$-modules $0\to \bZ[\Xi_0]\to \bZ[\Xi]\to \hat{T}^{K/K_0,1}\to 0$. Or l'application $\bZ[\Xi_0]\to \bZ[\Xi]$ transforme $\phi_0$ en $\phi+\overline{\phi}$, $\phi$, $\overline{\phi}$ \'etant les deux prolongements de $\phi_0$ \`a $K$. Par cons\'equent, l'image de $\bZ[\Xi_0]$ and $\bZ[\Xi]$ est \'egale \`a $J_\Xi$, et on tire l'isomorphisme naturel $\hat{T}^{K/K_0,1}\simeq \bZ[\Xi]/J_\Xi$. On d\'emontre les deux autres de la m\^eme mani\`ere, en remarquant que $I_\Xi$ est le noyau du morphisme $\bZ[\Xi]\to \hat{\bG}_{m,\bQ}$ correspondant \`a l'inclusion $\bG_{m,\bQ}\hookrightarrow T^K$.\end{proof} 

\subsection{Un \'eventail complet et r\'egulier}\label{subsec:event} Posons $M=\hat{V}$, $N=\Hom(M,\bZ)$, que l'on consid\`ere comme $\Gamma$-modules. Si $A$ est un $\bZ$-module, on note $M_A$ (resp. $N_A$) le $A$-module $M\otimes A$ (resp. $N\otimes A$). Fixons une fois pour toutes un type CM $\Phi=\{\phi_1,\phi_2,\dots, \phi_g\}$ de $K$. On note $\cU$ le produit cart\'esien de $g$ copies de $\{-1,1\}$ index\'e par les \'el\'ements de $\Phi$.

Pour tout $\phi\in \Phi$, on note $f_\phi$ la fonction $\bZ$-lin\'eaire $\bZ[\Xi]\to \bZ$ d\'efinie sur les \'el\'ements $\psi$ de $\Xi$ par
\begin{equation*}
f_\phi(\psi)=\begin{cases}1 \text{ si $\psi=\phi$}\\
-1 \text{ si $\psi=\overline{\phi}$}\\
0 \text{ si $\psi\notin \{\phi, \overline{\phi}\}$}.
\end{cases}
\end{equation*}
Il est clair que les fonctions $f_\phi$ s'annulent sur $J_\Xi$, de sorte qu'on peut les consid\'erer comme des fonctions sur $\hat{T}^{K/K_0,1}$ et sur $M$ et donc, par $\bZ$-lin\'earit\'e, comme des \'el\'ements de $N$. On note \'egalement $f_\phi$ son image dans $N_\bR$ par l'application naturelle  $N\to N_\bR$. Il est facile de voir que $(f_\phi)_{\phi\in \Phi}$ est une base du $\bR$-espace vectoriel $N_\bR$.

Rappelons \cite{CoxLitSch11} (voir \cite{BatTsch98} pour des rappels brefs) qu'un \emph{\'eventail} dans $N_\bR$ est un ensemble $\Sigma$ de c\^ones poly\'edraux $\sigma$ tel que 

\case{i} Tout $\sigma\in \Sigma$ contient $0$;

\case{ii} Toute face d'un c\^one appartenant \`a $\Sigma$ appartient \`a $\Sigma$;

\case{iii} L'intersection de deux c\^ones de $\Sigma$ est une face de chacun d'eux.

De plus, l'\'eventail $\Sigma$ est dit \emph{complet} (resp. \emph{r\'egulier} ou \emph{lisse}) s'il poss\`ede la propri\'et\'e \case{iv} (resp. \case{v}) suivante:

\case{iv} $N_\bR=\bigcup_{\sigma\in \Sigma}\sigma$;

\case{v} Tout c\^one de $\Sigma$ est engendr\'e par un sous-ensemble d'une $\bZ$-base de $N$.

Pour chacun des $2^g$ \'el\'ements $u=(u_\phi)_{\phi\in \Phi}$ de $\cU$, on note $e_u$ l'\'el\'ement de $N_\bR$ d\'efinie par 
\begin{equation*}
e_u=\frac{1}{2}\sum_{\phi\in \Phi}u_\phi f_\phi.
\end{equation*}

Disons que deux \'el\'ements de $\cU$ sont \emph{voisins} s'ils diff\`erent en une et une seule coordonn\'ee. On d\'efinit alors un \'eventail $\Sigma$ dans $N_\bR$ de la mani\`ere suivante. Pour tout $d\in \{1,2,\dots, g\}$, l'ensemble $\Sigma(d)$ des c\^ones de dimension $d$ appartenant \`a $\Sigma$ est donn\'e par
\begin{align*}
\Sigma(d)=\big\{\sum_{s=1}^d\bR_+e_{u^{s}}\mid (u^{1},u^{2},\dots , u^{d})&\in \cU^r, u^{2}, \dots, u^{d} \hskip2mm\text{ sont des voisins}\\ 
&\text{ deux-\`a-deux distincts de $u^{1}$}\big\}
\end{align*} 
En particulier, $\Sigma(1)=\{\bR_+e_u\mid u\in \cU\}$. On v\'erifie que $\Sigma$ est bien un \'eventail complet et r\'egulier. (Afin de v\'erifier la propri\'et\'e \case{v}, il suffit de montrer que si $(u^{1},u^{2}, \dots, u^{g})\in \cU^g$ et si $u^{2}$, \dots, $u^{g}$ sont des voisins deux-\`a-deux distincts de $u^{1}$, alors $(u^{i})_{1\leq i\leq g}$ est une $\bZ$-base de $N$. Pour cela, on commence en remarquant que, si $\phi_0\in \Phi$ est fix\'e, alors $(\phi-\overline{\phi}_0+I_\Xi\cap J_\Xi)_{\phi\in \Phi}$ est une $\bZ$-base de $I_\Xi/(I_\Xi\cap J_\Xi)$ et que $e_u(\phi-\overline{\phi}_0)=\frac{1}{2}(u_\phi+u_{\phi_0})$. Un calcul court permet alors de conclure.) 

Selon la th\'eorie g\'en\'erale des vari\'et\'es toriques (voir par exemple \cite{CoxLitSch11}, o\`u la plupart du temps le corps de base est $\bC$ mais les r\'esultats dont nous aurons besoin sont de nature alg\'ebrique et s'\'etendent sans difficult\'e vari\'et\'es toriques sur un corps quelconque de caract\'eristique nulle et dont le tore dense est d\'eploy\'e), on peut associer \`a l'\'eventail $\Sigma$ une compactification \'equivariante et lisse de $V$ sur le corps $L$ d\'eployant $V$. Posons $\overline{V}=V\times_{\Spec{\bQ}}\Spec{L}$ et notons cette compactification $\overline{X}$.  Posons $\overline{\cD}=\overline{X}- \overline{V}$. La construction explicite de $\overline{V}$ et de $\overline{X}$ \`a partir de $\Sigma$ (voir \cite{CoxLitSch11} et qui est \'egalement rappel\'ee dans les Propositions~1.2.3 et 1.2.4 de \cite{BatTsch95}) fournit la description suivante de $\overline{X}$. 

\begin{theo} Le vari\'et\'e $\overline{X}$ est projective, isomorphe sur $L$ \`a la sous-vari\'et\'e de $\bP^{2g-1}$ d\'efinie par les \'equations $X_1X_2-X_{2i-1}X_{2i}=0$ (\/$2\leq i\leq g$). Sous cet isomorphisme, l'ouvert $\overline{V}$ de $\overline{X}$ correspond \`a l'ouvert $X_1X_2\neq 0$ et $\overline{\cD}$ est d\'efinie par les \'equations
\begin{equation*}X_1X_2=X_3X_4=\cdots =X_{2g-1}X_{2g}=0.  \tag*{$(*)$}
\end{equation*} 
Par cons\'equent, $\overline{\cD}$ est la r\'eunion d'un ensemble $\overline{\cP}$ de $2^g$ sous-espaces projectifs de dimension $g-1$. 

En plus, $-\overline{\cD}$ est un diviseur canonique de $\overline{X}$. L'espace de sections globales de $\cO_{\overline{X}}(\overline{\cD})$ est engendr\'e par $(X_jX_k)/(X_1X_2)$, $1\leq j\leq k\leq 2g$, avec pour seules relations celles provenant de $(*)$.  
\end{theo}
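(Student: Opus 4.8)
The plan is to apply the standard explicit construction of a smooth toric variety attached to a regular fan (recalled in \cite{CoxLitSch11} and, more concisely, in \cite{BatTsch95}, Propositions~1.2.3 and~1.2.4) to the fan $\Sigma$, and to read each assertion off the combinatorics of $\Sigma$ together with the lattices $M=\hat{V}$ and $N=\Hom(M,\bZ)$. Since $\Sigma$ is complete and regular, the toric variety $\overline{X}$ it defines over $L$ is smooth and proper, its dense torus is $\overline{V}$, and $\overline{\cD}=\overline{X}-\overline{V}$ is the union of the $2^g=|\cU|$ torus-invariant prime divisors $D_u$ attached to the rays $\bR_+e_u$ ($u\in\cU$); the cardinality statement about $\overline{\cP}$ is thus built in, and that each $D_u$ is a $\bP^{g-1}$ will fall out of the embedding below.

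The heart of the matter is the closed immersion $\overline{X}\hookrightarrow\bP^{2g-1}$. Fix the type CM $\Phi=\{\phi_1,\dots,\phi_g\}$ used to define $\Sigma$ and order $\Xi=\Hom(K,\bC)$ as $\psi_1,\dots,\psi_{2g}$ so that $\psi_{2i-1}$ and $\psi_{2i}$ restrict to the same element of $\Xi_0$; let $X_1,\dots,X_{2g}$ be the corresponding coordinates of $\bA^{2g}_L$, coming from the decomposition $K\otimes_\bQ L\simeq L^{2g}$, so that the embedding $W^K\hookrightarrow\bA^{2g}_\bQ$ of the introduction (hence $V^K\hookrightarrow\bP^{2g-1}_\bQ$) becomes over $L$ the map $\overline{V}\ni x\mapsto[X_1(x):\dots:X_{2g}(x)]$. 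On $\overline{V}$ the ratios $X_j/X_k$ are characters, their exponents $\psi_j-\psi_k$ lying in $I_\Xi$, so a priori one obtains only a rational map $\overline{X}\dashrightarrow\bP^{2g-1}$. The key computation, carried out chart by chart over the maximal cones $\sigma\in\Sigma(g)$ using the $\bZ$-bases $(e_{u^1},\dots,e_{u^g})$ of $N$ already exhibited, is that the monoid $\sigma^\vee\cap M$ is generated by the images in $M$ of suitable ratios $X_j/X_k$; this shows simultaneously that on $U_\sigma\cong\bA^g_L$ at least one $X_j$ is invertible, so the rational map is everywhere defined, and that the resulting morphism identifies $U_\sigma$ with a locally closed affine subscheme of $\bP^{2g-1}$ (separation of points and of tangent directions). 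Granting this, $\overline{X}\to\bP^{2g-1}$ is a closed immersion, so $\overline{X}$ is projective. Since $\alpha\overline{\alpha}\in\bQ^\times$ on $W^K$ forces $X_1X_2=X_3X_4=\dots=X_{2g-1}X_{2g}$ on $\overline{V}$, the image lies in the subvariety $Y$ cut out by the $g-1$ quadrics $X_1X_2-X_{2i-1}X_{2i}=0$ ($2\le i\le g$); comparing dimensions and using that the image contains the dense torus shows the image is all of $Y$ (whence $Y$ is irreducible of dimension $g$). Under the resulting isomorphism $\overline{X}\simeq Y$ one reads off that $\overline{V}$ is the open set $\{X_1X_2\ne 0\}$ and $\overline{\cD}$ the locus $\{X_1X_2=X_3X_4=\dots=X_{2g-1}X_{2g}=0\}$; decomposing the latter according to which of $X_{2i-1},X_{2i}$ is put to zero for each $i$ exhibits its $2^g$ irreducible components, each a \emph{linear} $\bP^{g-1}\subset\bP^{2g-1}$ — these are the $D_u$, the sign pattern $u\in\cU=\{\pm1\}^\Phi$ recording the $g$ choices.

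It remains to identify the canonical class and the global sections. On any smooth complete toric variety $-K=\sum_\rho D_\rho$; here this reads $-K_{\overline{X}}=\sum_{u\in\cU}D_u=\overline{\cD}$, so $-\overline{\cD}$ is a canonical divisor. For the sections, the toric computation of cohomology gives $H^0(\overline{X},\cO_{\overline{X}}(\overline{\cD}))=\bigoplus_m L\cdot\chi^m$, the sum over the lattice points $m$ of the polytope $P=\{m\in M_\bR\mid\langle m,e_u\rangle\ge-1\ \text{for all}\ u\in\cU\}$ associated to $\sum_u D_u$. Computing $\langle\cdot,e_u\rangle$ in the $f_\phi$-coordinates, one checks that $P\cap M$ consists exactly of the classes of the ratios $(X_jX_k)/(X_1X_2)$ with $1\le j\le k\le 2g$ (each such ratio satisfies the inequalities, and conversely a lattice point of $P$ is forced to have this shape); hence these ratios span $H^0(\overline{X},\cO_{\overline{X}}(\overline{\cD}))$, the only $L$-linear relations among them being those induced by the equations $(*)$, i.e. by $X_1X_2=X_3X_4=\dots=X_{2g-1}X_{2g}$.

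The main obstacle is the chart-by-chart verification that the coordinate functions $X_j$ define a closed immersion of $\overline{X}$ onto the explicit complete intersection $Y$: this is where one must pin down the monoids $\sigma^\vee\cap M$ for all maximal cones of $\Sigma$ (using the bases $(e_{u^i})$) and match their generators with monomials in the $X_j/X_k$, and where the rigidity of the combinatorics of $\Sigma$ must be exploited to force the image to be that particular intersection of quadrics and not merely some other compactification of $\overline{V}$ inside $\bP^{2g-1}$. Once this is done, the description of $\overline{\cD}$, the anticanonical identity, and the identification of the polytope $P$ with the monomials $(X_jX_k)/(X_1X_2)$ are a routine unwinding of the toric dictionary.
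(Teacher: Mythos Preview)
Your plan is sound and would give a correct proof, but it takes a somewhat different (and more laborious) route than the paper. You propose to build the abstract toric variety $\overline{X}$ from $\Sigma$ and then produce, chart by chart over the maximal cones, a closed immersion into $\bP^{2g-1}$ whose image is the quadric intersection $Y$; you correctly flag the monoid computations $\sigma^\vee\cap M$ as the main obstacle. The paper reverses the direction and thereby avoids precisely this obstacle: it \emph{starts} with the explicit variety $Y\subset\bP^{2g-1}$ (manifestly projective and smooth), writes down the obvious parametrisation of the torus $\overline{W}$ sitting inside the affine cone over $Y$, reads off the character lattice $M_{\overline{W}}$ as the row space of an explicit $(g{+}1)\times 2g$ matrix, and checks that the natural map $\bZ[\Xi]\to\bZ^{2g}$ (sending $\phi_i\mapsto\se_{2i-1}$, $\overline{\phi}_i\mapsto\se_{2i}$) identifies $\bZ[\Xi]/(I_\Xi\cap J_\Xi)$ with $\bZ^{2g}/N_{\overline{W}}\cong M_{\overline{W}}$, matching $\hat{\overline{W}}$ with Lemme~\ref{lem:isohats}. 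The fan of $Y$ then falls out of the general theory (\cite{CoxLitSch11}, Chapter~3), and the explicit bijection $\Sigma(1)\leftrightarrow\overline{\cP}$ is written down directly, one ray $\bR_+e_u$ going to one linear $\bP^{g-1}$ indexed by the sign pattern $u$. The canonical class and the description of $H^0(\cO_{\overline{X}}(\overline{\cD}))$ are then simply cited from \cite{CoxLitSch11}, Chapters~6 and~8, without the polytope computation you outline.

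In short: your approach constructs the embedding from the fan, the paper recognises the fan from the embedding. Both are valid; the paper's direction is shorter here because smoothness, projectivity, and the torus in $Y$ are visible by inspection, whereas in your direction the closed-immersion verification on $2^g\binom{g}{1}\cdots$ charts is genuine work. Your polytope computation of the global sections is more explicit than the paper's bare citation, which is a plus if you actually carry it out.
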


\begin{proof}
Il suffit de montrer que l'\'eventail d\'ecrit plus haut correspond bien \`a la vari\'et\'e $X$
 d\'efinie dans l'\'enonc\'e. Il est clair que $\overline{X}$ est projective et lisse. Soit $\overline{W}$ le tore image r\'eciproque de $\overline{V}$ dans $\bG_m^{2g}\subseteq \bA^{2g}$. Alors $\overline{W}$ est param\'etr\'e par
\begin{equation*}
(t_1,t_2,\dots, t_{g+1})\mapsto (t_1,t_2,t_3,\frac{t_1t_2}{t_3},t_4,\frac{t_1t_2}{t_4},\dots, t_{g+1},\frac{t_1t_2}{t_{g+1}}),
\end{equation*}
(voir \cite{CoxLitSch11} \S~1.1) l'application \'etant \'evidemment injective. Notons $(\se_i)_{1\leq i\leq {2g}}$ la base standard de $\bZ^{2g}$ et $M_{\overline{W}}$ (resp. $N_{\overline{W}}$) le sous-module de $\bZ^{2g}$ engendr\'e par $(\se_{2i-1}-\se_{2i})_{1\leq i\leq g}$ et par $\sum_{i=1}^g\se_{2i-1}$ (resp. par $(\se_1+\se_2-\se_{2i-1}-\se_{2i})_{2\leq i\leq g}$). Alors $M_{\overline{W}}$ et $N_{\overline{W}}$ sont en dualit\'e \emph{via} le produit scalaire usuel $(e_i,e_j)=\delta_{ij}$ sur $\bZ^{2g}\times \bZ^{2g}$.

Or, le sous-module $M_{\overline{W}}$ est \'egalement engendr\'e par les lignes de la matrice \`a $g+1$ lignes et $2g$ colonnes
\begin{equation*}
\begin{pmatrix}
1 & 0 & 0 &  1 &  0 &  1 & \cdots & 0 &  1\\
0 & 1 & 0 &  1 &  0 &  1 & \cdots & 0 &  1\\
0 & 0 & 1 & -1 &  0 &  0 & \cdots & 0 &  0\\
0 & 0 & 0 &  0 &  1 & -1 & \cdots & 0 &  0\\
\vdots & \vdots & \vdots & \vdots & \vdots & \vdots & \ddots & \vdots & \vdots \\
0 & 0 & 0 &  0 &  0 &  0 & \cdots & 1 & -1 
\end{pmatrix}
\end{equation*}
correspondant au groupe de caract\'eres $\hat{\overline{W}}$ de $\overline{W}$ (voir \cite{CoxLitSch11} Example 1.1.18 pour le cas $g=2$). 

On en tire que $\hat{\overline{W}}$ s'identifie canoniquement avec $M_{\overline{W}}$ et donc avec $\bZ^{2g}/N_{\overline{W}}$. L'isomorphisme de $\bZ$-modules de $\bZ[\Xi]$ sur $\bZ^{2g}$ qui envoie $\phi_i$ sur $\se_{2i-1}$ et $\overline{\phi}_i$ sur $\se_{2i}$ identifie $I_\Xi\cap J_\Xi$ avec $N_{\overline{W}}$ et donc $\hat{\overline{W}}$ avec $\bZ[\Xi]/(I_\Xi\cap J_\Xi)$ conform\'ement au Lemme~\ref{lem:isohats}. On en tire que $\hat{\overline{V}}$ s'identifie avec $I_\Xi/(I_\Xi\cap J_\Xi)$.

Les r\'esultats de \cite{CoxLitSch11} Chapter 3 permet alors d'\'etablir la bijection entre  $\Sigma(1)$ et $\cP$: la demi-droite $\bR_+e_u$ correspond au sous-espace projectif
\begin{equation*}
X_{2-\frac{u_1+1}{2}}=X_{4-\frac{u_2+1}{2}}=\cdots  = X_{2g-\frac{u_g+1}{2}}\qquad (\text{o\`u $u_i=u_{\phi_i}$, $i\in \{1,2,\dots, g\}$}).
\end{equation*} 
Enfin, la description du diviseur canonique $-\overline{\cD}$ de $\overline{X}$ et de $\cO_{\overline{X}}(\overline{\cD})$ d\'ecoulent des r\'esultats cit\'es dans \cite{CoxLitSch11} (notamment les Chapters 6 et 8).
\end{proof}

Notons $\pi:K\to L^{2g}$ l'application d\'efinie par 
\begin{equation*}
\pi(\alpha)=(\phi_1(\alpha),\overline{\phi}_1(\alpha), \phi_2(\alpha), \overline{\phi}_2(\alpha),Ê\dots, \phi_{g}(\alpha), \overline{\phi}_{g}(\alpha)).
\end{equation*}

\begin{lemm} \label{lemm:adhZar} L'application $\pi$ induit une inclusion de $K^{w,\times}/\bQ^\times$ dans $\overline{V}(L)$ et l'image est Zariski-dense dans $\overline{V}$.  
\end{lemm}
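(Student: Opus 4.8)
Je commencerais par constater que la restriction de $\pi$ \`a $K^\times$ n'est autre que l'inclusion naturelle des points rationnels $K^\times=T^K(\bQ)\hookrightarrow T^K(L)$, lue dans des coordonn\'ees convenables. En effet, d'apr\`es la d\'emonstration du th\'eor\`eme qui pr\'ec\`ede ce lemme, l'isomorphisme de $\bZ$-modules $\bZ[\Xi]\xrightarrow{\sim}\bZ^{2g}$ qui envoie $\phi_i$ sur $\se_{2i-1}$ et $\overline{\phi}_i$ sur $\se_{2i}$ identifie $\hat{\overline{W}}$ avec $\bZ[\Xi]/(I_\Xi\cap J_\Xi)=\hat{W}$; dualement, l'isomorphisme $W_L\simeq \overline{W}$ est compatible avec le plongement $\overline{W}\hookrightarrow \bG_{m,L}^{2g}$ et avec l'identification
\begin{equation*}
W(L)=\{\alpha\in (L\otimes_\bQ K)^\times\mid \N_{K/K_0}(\alpha)\in L^\times\}\hookrightarrow T^K(L)\simeq (L^\times)^{2g}, \quad \alpha\longmapsto (\phi_1(\alpha),\overline{\phi}_1(\alpha),\dots,\phi_g(\alpha),\overline{\phi}_g(\alpha)).
\end{equation*}
Le point d\'elicat ici est pr\'ecis\'ement de s'assurer de la compatibilit\'e de l'ordre choisi sur $\Hom(K,L)$ avec la num\'erotation des coordonn\'ees de $\bP^{2g-1}$ utilis\'ee dans la construction de $\overline{X}$; une fois ceci fait, tout le reste est formel.

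Ensuite, pour $\alpha\in K^{w,\times}$ on a $\alpha\neq 0$, donc toutes les coordonn\'ees de $\pi(\alpha)$ sont non nulles, et $\phi_i(\alpha)\overline{\phi}_i(\alpha)=\phi_i(\alpha\overline{\alpha})=\alpha\overline{\alpha}$ ne d\'epend pas de $i$ puisque $\alpha\overline{\alpha}\in \bQ^\times$; ainsi $\pi(\alpha)$ satisfait aux \'equations $X_1X_2=X_3X_4=\dots=X_{2g-1}X_{2g}$ d\'efinissant $\overline{W}$ dans $\bG_{m,L}^{2g}$, et son image dans $\bP^{2g-1}(L)$ appartient \`a $\overline{V}(L)$. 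Comme $\pi(q\alpha)=q\,\pi(\alpha)$ pour $q\in \bQ^\times$, l'application $\pi$ passe au quotient en $\overline{\pi}\colon K^{w,\times}/\bQ^\times\to \overline{V}(L)$, et le point~\case{iv} du lemme~\ref{lem:WVprop} (c'est-\`a-dire le th\'eor\`eme $90$ de Hilbert) joint \`a la s\'eparation de $V$ sur $\bQ$ montre que $\overline{\pi}$ s'identifie \`a la compos\'ee $K^{w,\times}/\bQ^\times\simeq V(\bQ)\hookrightarrow V(L)=\overline{V}(L)$; en particulier $\overline{\pi}$ est injective. On peut d'ailleurs v\'erifier l'injectivit\'e directement: si $\overline{\pi}(\alpha)=\overline{\pi}(\alpha')$, il existe $\lambda\in L^\times$ tel que $\psi(\alpha'/\alpha)=\lambda$ pour tout $\psi\in \Hom(K,L)$, donc $\alpha'/\alpha$ a tous ses conjugu\'es \'egaux et appartient \`a $\bQ^\times$, l'extension $K/\bQ$ \'etant s\'eparable.

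Le point central de l'\'enonc\'e est la densit\'e Zariski. L'image de $\overline{\pi}$ \'etant celle de $V(\bQ)$ dans $\overline{V}(L)$, il suffit de montrer que $V(\bQ)$ est Zariski-dense dans $V$. Je proc\'ederais ainsi: $T^K=\mathrm{Res}_{K/\bQ}\bG_{m,\bQ}$ est un ouvert de l'espace affine $\mathrm{Res}_{K/\bQ}\bA^1_\bQ\simeq \bA^{2g}_\bQ$, donc $T^K(\bQ)=K^\times$ y est Zariski-dense ($\bQ$ \'etant infini); le morphisme de $\bQ$-tores $T^K\to T^{K/K_0,1}$, $x\mapsto x\overline{x}^{-1}$, est surjectif (ce qu'on v\'erifie apr\`es extension des scalaires \`a $L$), de sorte que l'image de $K^\times$ — et \emph{a fortiori} $T^{K/K_0,1}(\bQ)$ — est Zariski-dense dans $T^{K/K_0,1}$; enfin, d'apr\`es le point~\case{ii} du lemme~\ref{lem:WVprop}, la compos\'ee $T^{K/K_0,1}\hookrightarrow W\to V$ est une isog\'enie, donc un morphisme dominant, ce qui entra\^{\i}ne que l'image de $T^{K/K_0,1}(\bQ)$ dans $V(\bQ)$ est Zariski-dense dans $V$. (Variante: la cl\^oture de Zariski de l'image de $\overline{\pi}$ est un sous-groupe alg\'ebrique de $\overline{V}\simeq \bG_{m,L}^g$; s'il \'etait propre, il serait contenu dans le noyau d'un caract\`ere non nul de $V$, lequel, tir\'e en arri\`ere par l'isog\'enie pr\'ec\'edente, s'annulerait sur le sous-ensemble Zariski-dense $T^{K/K_0,1}(\bQ)$ de $T^{K/K_0,1}$ et serait donc nul, contradiction.) Je m'attends \`a ce que le gros du travail r\'edactionnel soit la premi\`ere \'etape, les deux autres \'etant essentiellement formelles.
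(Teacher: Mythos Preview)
Your proof is correct. The paper's own argument is much terser: it notes (via the earlier characterisation of $K^{w,\times}$) that the image of $\pi$ lands in $\overline{V}(L)$, observes that the Zariski closure of this image is an algebraic subgroup of $\overline{V}$, and then simply declares equality ``d'apr\`es la correspondance entre tores et $\bZ$-modules libres de type fini'' --- leaving the reader to unpack why no proper closed subgroup of $\overline{V}$ can contain the image. Your \emph{Variante} is precisely this argument with that missing step made explicit (a proper subgroup lies in the kernel of a nonzero character, which you then pull back along the isogeny to obtain a contradiction with a density statement you have already established).

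Your primary argument, however, takes a genuinely different route: rather than analysing the closure inside $\overline{V}$, you start from the Zariski density of $K^\times=T^K(\bQ)$ in the rational variety $T^K$, push it forward along the surjective morphism $x\mapsto x\overline{x}^{-1}$ onto $T^{K/K_0,1}$, and then along the isogeny $T^{K/K_0,1}\to V$ of lemma~\ref{lem:WVprop}\case{ii}. This buys you an argument that is entirely elementary --- it uses only that dominant morphisms send dense subsets to dense subsets, and that $\bQ$ is infinite --- and does not rely on the structure theory of diagonalisable groups. The paper's approach is shorter but assumes more background; yours is longer but more self-contained.
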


\begin{proof}
Il est clair que $\pi$ induit une inclusion de $K^{w,\times}/\bQ^\times$ dans $\bP^{2g-1}(L)$. D'apr\`es le lemme~\ref{lemm:cnsKw}, son image est contenue dans $\overline{V}(L)$. On en tire que l'adh\'erence de Zariski de l'image est un sous-groupe alg\'ebrique de $\overline{V}$; elle y est \'egale d'apr\`es la correspondance entre tores et $\bZ$-modules libres de type fini.
\end{proof}

\subsection{La compactification $X^K$ de $V^K$} L'action de $\Gamma$ sur $N$ se prolonge en une action sur $N_\bR$. On dit que $\Sigma$ est \emph{$\Gamma$-invariant} si $\gamma(\sigma)\in \Sigma$ pour tout $\gamma\in \Gamma$ et pour tout $\sigma\in \Sigma$. On v\'erifie que l'\'eventail $\Sigma$ est bien $\Gamma$-invariant.  

Dans ce contexte, Voskresenski\u{\i} \cite{Vosk83} a montr\'e que $\overline{X}$ poss\`ede un mod\`ele propre (not\'e $X^K$ ou $X$) sur $\bQ$ et qu'il existe un isomorphisme $X\times_{\Spec{\bQ}}\Spec{L}\simeq \overline{X}$ compatible avec les actions de $\Gamma$. 

Le r\'esultat suivant fournit une description explicite de $X$ comme vari\'et\'e projective, intersection compl\`ete de quadriques dans $\bP^{2g-1}_\bQ$, ainsi que les tores $W$ et $V$.  

\begin{theo}\label{theo:modelesDVX}
Fixons une $\bQ$-base $(\omega_{i})_{1\leq i\leq 2g}$ de $K$ telle que $\omega_1=1$ et $(\omega_i)_{1\leq i\leq g}$ est une $\bQ$-base de $K_0$. Soient $(Q_i)_{1\leq i\leq g}\in \bQ[X_1,X_2,\dots, X_{2g}]$ les polyn\^omes homog\`enes de degr\'e $2$ tels que, pour tout $\alpha=\sum_{i=1}^{2g}{x_i\omega_i}\in K$ (o\`u $(x_1,\dots, x_{2g})\in \bQ^{2g}$), on ait $\alpha\overline{\alpha}=\sum_{i=1}^gQ_i(x_1,x_2,\dots, x_{2g})\omega_i$. 

\case{i} $W$ est $\bQ$-isomorphe au sous-ensemble alg\'ebrique de $\bA^{2g}- \{0\}$ d\'efini par les conditions $Q_1\neq 0$ et $Q_2=\cdots = Q_g=0$.

\case{ii} $V$ est $\bQ$-isomorphe au sous-ensemble alg\'ebrique de $\bP^{2g-1}$ d\'efini par les conditions $Q_1\neq 0$ et $Q_2=\cdots = Q_g=0$.

\case{iii} $X$ est $\bQ$-isomorphe \`a la sous-vari\'et\'e alg\'ebrique de $\bP^{2g-1}$ d\'efini par $Q_2=\cdots = Q_g=0$. Il s'agit d'une intersection compl\`ete. $X$ est la r\'eunion disjointe de $V$ et du sous-ensemble ferm\'e $\cD$ d\'efini par $Q_1=Q_2=\cdots = Q_g=0$.  En outre, $-\cD$ est le diviseur canonique de $X$.

\case{iv}  Il existe un automorphisme $\iota$ de $\bP_L^{2g-1}$ transformant $X\times_{\Spec{\bQ}}\Spec{L}$ sur $\overline{X}$ et identifiant $V\times_{\Spec{\bQ}}\Spec{L}$ avec $\overline{V}$ et $\cD\times_{\Spec{\bQ}}\Spec{L}$ avec $\overline{\cD}$. En outre, l'ensemble $\cP:=\iota^{-1}(\overline{\cP})$ des sous-espaces projectifs de dimension $g-1$ contenu dans $\cD\times_{\Spec{\bQ}}\Spec{L}$ est $\Gamma$-isomorphe \`a l'ensemble $\Sigma(1)$.  

\case{v} Si $F$ est un sous-corps de $\bR$, alors $X(F)=V(F)$.
\end{theo}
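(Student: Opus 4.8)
The plan is to prove Theorem~\ref{theo:modelesDVX} by combining the explicit quadric description of $\overline{X}$ over $L$ with Voskresenski\u{\i}'s descent, using the chosen $\bQ$-basis of $K$ to write everything over $\bQ$.

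\medskip

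First I would set up the linear algebra. The map $\alpha\mapsto(\phi_1(\alpha),\overline{\phi}_1(\alpha),\dots,\phi_g(\alpha),\overline{\phi}_g(\alpha))$ identifies $K\otimes_\bQ L$ with $L^{2g}$, and under this identification the chosen $\bQ$-basis $(\omega_i)_{1\le i\le 2g}$ gives a linear change of coordinates over $L$, i.e.\ an element of $\GL_{2g}(L)$ sending the ``split'' coordinates $X_1,\dots,X_{2g}$ (the $\phi_j,\overline\phi_j$ components) to the ``rational'' coordinates $x_1,\dots,x_{2g}$ (the $\omega_i$-coefficients). This is the automorphism $\iota$ of $\bP^{2g-1}_L$ sought in~\case{iv}. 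The key point is that the map $\alpha\mapsto\alpha\overline\alpha$ sends $K$ into $K_0$, and in the split coordinates it reads $(X_{2j-1},X_{2j})_j\mapsto (X_{2j-1}X_{2j})_j$ (since $\overline{\phi_j(\alpha)}=\overline\phi_j(\alpha)$ as functions on $K\otimes L$, so the $j$-th component of $\alpha\overline\alpha$ is $\phi_j(\alpha)\overline\phi_j(\alpha)=X_{2j-1}X_{2j}$); writing $\alpha\overline\alpha=\sum_{i=1}^g Q_i(x)\,\omega_i$ and expressing the $\omega_i$-coordinates of $K_0$ again in split coordinates, the forms $Q_2,\dots,Q_g$ become $\bQ$-linear combinations of the quadrics $X_1X_2-X_{2i-1}X_{2i}$ appearing in the previous theorem, while $Q_1$ picks up the $\omega_1=1$ part. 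Hence after the coordinate change $\iota$, the $\bQ$-subscheme $\{Q_2=\dots=Q_g=0\}$ becomes $\overline X$, the locus $Q_1\ne0$ becomes $X_1X_2\ne0$, and $\{Q_1=\dots=Q_g=0\}$ becomes $\overline{\cD}$. This proves~\case{iv}, and~\case{iii} follows since an intersection of $g-1$ quadrics of the expected dimension $g$ in $\bP^{2g-1}$ is automatically a complete intersection, and $-\overline{\cD}$ being canonical on $\overline X$ descends to $-\cD$ being canonical on $X$; the $\Gamma$-isomorphism $\cP\simeq\Sigma(1)$ is read off from the previous theorem together with the compatibility of $\iota$ with the $\Gamma$-actions.

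\medskip

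Next, \case{i} and~\case{ii}: by Lemma~\ref{lem:WVprop}\case{iv}, for any $\bQ$-algebra $E$ we have $W(E)\simeq\{\alpha\in(E\otimes_\bQ K)^\times\mid\N_{K/K_0}(\alpha)\in E^\times\}$, and writing $\alpha=\sum x_i\omega_i$ with $x_i\in E$, the condition $\N_{K/K_0}(\alpha)=\alpha\overline\alpha\in E^\times=E\cdot\omega_1$ is exactly $Q_2(x)=\dots=Q_g(x)=0$ together with $Q_1(x)\in E^\times$ (invertibility of $\alpha$ being equivalent to $\alpha\overline\alpha$ invertible in the étale algebra $E\otimes K_0$, hence to $Q_1(x)\ne0$ after the other $Q_i$ vanish, because then $\alpha\overline\alpha=Q_1(x)$). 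This gives the $\bQ$-isomorphism of~\case{i}; projectivising and using Hilbert~90 (Lemma~\ref{lem:WVprop}\case{iv} again) gives~\case{ii}, compatibly with the exact sequence $1\to\bG_{m,\bQ}\to W\to V\to1$. For~\case{v}, if $F\subseteq\bR$ and $P\in X(F)$, lift $P$ to $(x_1,\dots,x_{2g})\in F^{2g}\setminus\{0\}$; then $\alpha=\sum x_i\omega_i\in F\otimes_\bQ K$ has $\alpha\overline\alpha=Q_1(x)\in F$ with $Q_1(x)\ge0$ since at each real place of $F$ it is a sum of squares $\sum|\phi_j(\alpha)|^2$ of the relevant archimedean components; one checks $Q_1(x)=0$ would force $\alpha=0$, hence $x=0$, a contradiction, so $Q_1(x)>0$ and $P\in V(F)$.

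\medskip

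I expect the main obstacle to be bookkeeping rather than conceptual: carefully tracking the two coordinate systems (the split one in which the quadrics are monomial and the $\Gamma$-action is by permutation of the $\phi_j,\overline\phi_j$, versus the rational one in which everything is defined over $\bQ$) and verifying that the transition matrix $\iota$ is genuinely $\Gamma$-equivariant in the sense that it intertwines the standard $\Gamma$-action on split coordinates with the trivial $\Gamma$-action on rational coordinates, so that the descended model is indeed the Voskresenski\u{\i} model attached to the $\Gamma$-invariant fan $\Sigma$. The one genuinely substantive point is the identification ``$Q_1=\dots=Q_g=0$'' $\leftrightarrow$ ``$\overline{\cD}$'': this needs that the common vanishing of all the $Q_i$, equivalently $\alpha\overline\alpha=0$ in $K\otimes L$, cuts out exactly the complement $\overline X\setminus\overline V$, which is where at least one pair $(\phi_j(\alpha),\overline\phi_j(\alpha))$ vanishes — and this is precisely the content of the defining equations~$(*)$ of $\overline{\cD}$ in the previous theorem, so it transfers directly.
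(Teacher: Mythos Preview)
Your proof is correct and follows essentially the same route as the paper: the same coordinate change $\iota$ (built from the paper's map $\pi$), the same appeal to Lemma~\ref{lem:WVprop}\case{iv} for \case{i}--\case{ii}, and the same reduction of \case{iii}--\case{iv} to the explicit quadric description of $\overline{X}$. For \case{v} the paper uses the trace form $\tr_{K_0/\bQ}(\alpha\overline{\alpha})$ whereas you argue directly that on the locus $Q_2=\dots=Q_g=0$ one has $Q_1(x)=|\phi_j(\alpha)|^2$ for each $j$ (not the sum $\sum_j|\phi_j(\alpha)|^2$ as you wrote --- these $g$ numbers are all equal to $Q_1(x)$); this is a harmless variant of the same positivity argument.
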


\begin{proof} 
\case{i}  Puisque la norme $\alpha\mapsto \alpha\overline{\alpha}$ envoie $K^\times$ dans $K^\times_0$, l'existence et l'unicit\'e des polyn\^omes $Q_i$ est claire. Le fait qu'ils soient homog\`enes de degr\'e $2$ d\'ecoule de la $\bQ$-lin\'earit\'e de la conjugaison complexe et de la $\bQ$-bilin\'earit\'e de la multiplication $(\alpha,\beta)\mapsto \alpha\beta$. Puisque $\omega_1=1$, la condition $\alpha\overline{\alpha}\in \bQ^\times$ \'equivaut, si on pose $\alpha=\sum_{i=1}^{2g}x_i\omega_i$ avec $(x_1,\dots, x_{2g})\in \bQ^{2g}$, l'annulation de $Q_2$, \dots, $Q_g$ en $(x_1,\dots, x_{2g})$. 

\case{ii} C'est une cons\'equence du point \case{i} et de la d\'efinition de $V$.

\case{iii} est une cons\'equence du point \case{ii}, car $X$ est l'adh\'erence de Zariski de $V$ dans $\bP^{2g-1}$.

\case{iv}  On d\'efinit $\iota$ par: si $(x_1,x_2,\dots, x_{2g})\in \bQ^{2g}$ et si $\alpha=\sum_{i=1}^{2g}x_i\omega_i$, alors $\iota(x_1,x_2,\dots, x_{2g})=\pi(\alpha)$. 
Il est clair que $\iota$ d\'efinit alors un application $\bQ$-lin\'eaire injective de $\bQ^{2g}$ dans $L^{2g}$ qui s'\'etend en un isomorphisme de $L$-espaces vectoriels $\bQ^{2g}\otimes_{\bQ}L\simeq L^{2g}$. En composant avec l'identification canonique $L^{2g}=\bQ^{2g}\otimes_{\bQ}L$, on obtient un automorphisme de $L^{2g}$ qui induit donc un automorphisme de $\bP_L^{2g-1}$. Les identifications de $V$ et de $D$ d\'ecoulent de la proposition~\ref{prop:WReouCM} et du lemme~\ref{lemm:adhZar} et la derni\`ere phrase est une cons\'equence des r\'esultats de \cite{Vosk83}.    

\case{v} Il suffit de voir que $\cD(\bR)=\emptyset$. Notons $\tr_{K_0/\bQ}$ la trace de $K_0$ sur $\bQ$. Rappelons que fonction $\alpha\mapsto \tr_{K_0/\bQ}(\alpha\overline{\alpha})$ s'\'etend en une forme quadratique d\'efinie positive sur $\bR\otimes_{\bQ} K$. Mais, avec la notation pr\'ec\'edente:
\begin{equation*}
\tr_{K_0/\bQ}(\alpha\overline{\alpha})=gQ_1(x_1,\dots x_{2g})+\sum_{i=2}^gQ_i(x_1,\dots, x_{2g})\tr_{K_0/\bQ}(\omega_i)
\end{equation*}
Par cons\'equent, le seul \'el\'ement $(x_1,\dots, x_{2g})\in \bR^{2g}$ qui peut se projeter sur un point de $\cD(\bR)$ est $(0,\dots, 0)$ ce qui est impossible.
\end{proof}

\subsection{Un mod\`ele entier} \label{rem:modeleentier}
Soit $F$ un corps de nombres de degr\'e $n$. Rappelons que $\cO_F$ est un $\bZ$-module libre de rang $n$ et que si $E$ est un sous-corps de $F$, il existe un $\bZ$-module $M$ tel que $\cO_F=\cO_E\oplus M$. On peut donc supposer que la famille $(\omega)_{1\leq i\leq 2g}$ figurant dans le th\'eor\`eme~\ref{theo:modelesDVX} soit une $\bZ$-base de $\cO_K$. Il est clair alors que les polyn\^omes $Q_i$ appartiennent \`a $\bZ[X_1,\dots, X_{2g}]$ et que la loi de groupe de $V$ et l'action de $V$ sur $X$ sont d\'efinies sur $\bZ$. On obtient ainsi un mod\`ele projectif de la vari\'et\'e torique $X\subseteq \bP_{\bZ}^{2g-1}$ sur $\Spec{\bZ}$, unique \`a  l'action de $\PGL_{2g}(\bZ)$ pr\`es. Il devient isomorphe sur $\cO_L$ \`a $\overline{X}_\bZ$, le mod\`ele projectif sur $\bZ$ de $\overline{X}$ d\'efini par les m\^emes \'equations $X_1X_2=\dots = X_{2g-1}X_{2g}$. Nous utiliserons syst\'ematiquement ce mod\`ele et cet isomorphisme par la suite. 

\subsection{Hauteurs et fonction zeta} Notons $K^{w,\times}$ le sous-groupe de $K^\times$ form\'e des \'el\'ements $\alpha$ tels que $\alpha\overline{\alpha}\in \bQ^\times$. D'apr\`es le point \case{iv} du lemme~\ref{lem:WVprop}, on dispose d'un isomorphisme naturel $V(\bQ)\simeq K^{w,\times}/\bQ^\times$. 

\begin{lemm}
Tout point $P\in V(\bQ)$ poss\`ede un repr\'esentant $\alpha=\alpha(P)$, unique au signe pr\`es, dans $\cO_K\cap K^{w,\times}$ et qui n'est divisible par aucun entier naturel $>1$. 
\end{lemm}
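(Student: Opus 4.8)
Le plan est d'exploiter l'identification $V(\bQ)\simeq K^{w,\times}/\bQ^\times$ issue du point \case{iv} du lemme~\ref{lem:WVprop} et rappel\'ee ci-dessus. Un point $P\in V(\bQ)$ correspond alors \`a une classe $\beta\bQ^\times$ avec $\beta\in K^{w,\times}$ (en particulier $\beta\neq 0$, car $0\notin K^{w,\times}$), et les repr\'esentants de $P$ appartenant \`a $\cO_K$ sont exactement les \'el\'ements $q\beta$, $q\in\bQ^\times$, tels que $q\beta\in\cO_K$. Je commencerais par chasser les d\'enominateurs: $\beta$ \'etant alg\'ebrique sur $\bQ$, il existe $m\in\bN^*$ tel que $\alpha_0:=m\beta\in\cO_K$, et alors $\alpha_0\overline{\alpha_0}=m^2\beta\overline{\beta}\in\bQ^\times$, de sorte que $\alpha_0\in\cO_K\cap K^{w,\times}$ est un repr\'esentant de $P$.

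J'extrairais ensuite le plus grand facteur entier naturel de $\alpha_0$. Posons $D=\{d\in\bN^*\mid \alpha_0/d\in\cO_K\}$; cet ensemble est non vide (il contient $1$) et fini, car $\alpha_0\cO_K\subseteq d\cO_K$ force $d^{2g}=[\cO_K:d\cO_K]$ \`a diviser $[\cO_K:\alpha_0\cO_K]$. Il est de plus stable par ppcm: si $d_1,d_2\in D$, alors en comparant les valuations $\gp$-adiques on obtient $\ord_\gp(\alpha_0)\geq e_\gp\max(\ord_p(d_1),\ord_p(d_2))=\ord_\gp(\mathrm{ppcm}(d_1,d_2))$ pour tout id\'eal premier $\gp$ de $\cO_K$ au-dessus d'un nombre premier $p$, d'o\`u $\alpha_0/\mathrm{ppcm}(d_1,d_2)\in\cO_K$. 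Donc $D$ poss\`ede un plus grand \'el\'ement $d^*$, et je poserais $\alpha:=\alpha_0/d^*$. Alors $\alpha\in\cO_K$, $\alpha\overline{\alpha}=\alpha_0\overline{\alpha_0}/(d^*)^2\in\bQ^\times$, et $\alpha=(m/d^*)\beta$ repr\'esente $P$; enfin, si un entier $n>1$ divisait $\alpha$ dans $\cO_K$, on aurait $\alpha_0/(nd^*)\in\cO_K$, donc $nd^*\in D$ avec $nd^*>d^*$, contredisant la maximalit\'e de $d^*$. Ceci \'etablit l'existence.

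Pour l'unicit\'e au signe pr\`es, je partirais de deux tels repr\'esentants $\alpha$, $\alpha'$ dans $\cO_K\cap K^{w,\times}$, d'o\`u $\alpha'=q\alpha$ avec $q=a/b\in\bQ^\times$ sous forme irr\'eductible, $b>0$. De l'\'egalit\'e $b\alpha'=a\alpha$ et de $\alpha'\in\cO_K$ on tire $\ord_\gp(\alpha)\geq\ord_\gp(b)-\ord_\gp(a)$ pour tout $\gp$; si un nombre premier $p$ divisait $b$, alors $p\nmid a$ donnerait $\ord_\gp(\alpha)\geq e_\gp\ord_p(b)\geq e_\gp$ pour tout $\gp\mid p$, donc $\alpha/p\in\cO_K$, contredisant le caract\`ere r\'eduit de $\alpha$. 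Ainsi $b=1$ et $q=a\in\bZ$; comme alors $\alpha=\alpha'/a\in\cO_K$, si l'on avait $|a|>1$ on obtiendrait $\alpha'/p\in\cO_K$ pour un facteur premier $p$ de $a$, contredisant le caract\`ere r\'eduit de $\alpha'$. Par cons\'equent $a=\pm1$ et $\alpha'=\pm\alpha$.

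Il n'y a pas ici de v\'eritable obstacle: le point le plus technique est la stabilit\'e de $D$ par ppcm --- et, de fa\c con sym\'etrique, l'argument de valuations dans la partie unicit\'e ---, qui repose sur le fait qu'un entier rationnel $d$ a pour valuation $e_\gp\ord_p(d)$ en chaque $\gp$ au-dessus de $p$, de sorte que la divisibilit\'e par $d$ dans $\cO_K$ se contr\^ole s\'epar\'ement \`a chaque nombre premier rationnel.
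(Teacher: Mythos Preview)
Votre d\'emonstration est correcte et suit en substance la m\^eme id\'ee que celle du papier, qui se contente d'\'ecrire {\og}Cela d\'ecoule du fait que $\cO_K$ est un anneau de Dedekind{\fg}. Vous rendez simplement cette phrase explicite: chasser les d\'enominateurs, extraire le plus grand diviseur entier rationnel via un argument de valuations, puis \'etablir l'unicit\'e par le m\^eme type d'argument --- toutes ces \'etapes reposent effectivement sur la structure d'anneau de Dedekind de $\cO_K$ (factorisation unique des id\'eaux, ou de fa\c con \'equivalente les valuations $\gp$-adiques).
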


\begin{proof}
Cela d\'ecoule du fait que $\cO_K$ est un anneau de Dedekind.
\end{proof}

Nous appelons $\alpha=\alpha(P)$ un \emph{repr\'esentant r\'eduit} de $P$ et nous notons $H_{\cD}(P)$ la hauteur (globale) de $P$ associ\'ee au diviseur anticanonique $\cD$ de $X$ (voir \cite{BatTsch95}, Definition 2.1.7 ou \cite{BatTsch98}, Definition 3.5 et aussi \cite{Bourqui11}, \S~3.2). Nous identifions $\cD$ avec le diviseur anticanonique de $\overline{X}$ par le biais de l'isomorphisme de la sous-section~\ref{rem:modeleentier}.  Rappelons que $H_{\cD}(P)$ est d\'efini comme un produit de hauteurs locales 
\begin{equation*}
\prod_{p\leq \infty}H_{\cD,p}(P),
\end{equation*}
o\`u $p$ parcourt l'ensemble des nombres premiers ainsi que  $\infty$. Puisque $\alpha$ est unique au signe pr\`es, le r\'esultat suivant ne d\'epend pas du choix de $\alpha$.

Notons $V_p$ l'ensemble des valuations de $L$ divisant le nombre premier $p$. Rappelons que $\Gamma$ op\`ere transitivement sur $V_p$; si $v\in V_p$, on note $\overline{v}$ l'image de $v$ par conjugaison complexe. Le fait que $c$ soit un \'el\'ement central de $\Gamma$ montre que soit $\overline{v}=v$ pour tout $v\in V_p$ soit $\overline{v}\neq v$ pour tout $v\in V_p$. Pour tout nombre premier $p$, on note $e_p$ le degr\'e de ramification de $p$ dans $L$ et on pose $\nu_p(P)=\min{\{\ord_v(\alpha)\mid v\in V_p\}}$, o\`u $\ord_v(\alpha)$ d\'esigne l'exposant de $v$ dans $\alpha$. 

\begin{prop}\label{prop:hautVK}
Soit $P\in V(\bQ)$ et soit $\alpha$ un repr\'esentant r\'eduit de $P$. Posons $n=\alpha\overline{\alpha}$.  Alors $H_{\cD, \infty}(P)=1$ et, si $p$ est un nombre premier:
\begin{align*}
H_{\cD,p}(P)&=1 \text{ si $p$ est ramifi\'e dans $L$ et si $\overline{v}=v$ pour tout $v\in V_p$},\\
H_{\cD,p}(P)&=p^{\ord_p{n}-2\nu_p(P)/e_p} \text{pour tout autre nombre premier $p$}. 
\end{align*} 
\end{prop}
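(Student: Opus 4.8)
The plan is to transport everything to $\overline{X}$ via the isomorphism $\iota$ of part \case{iv} of Theorem~\ref{theo:modelesDVX} and to use the monomial model of the anticanonical sheaf described above. Recall that $H^0(\overline{X},\cO_{\overline{X}}(\overline{\cD}))$ is generated by the monomials $(X_jX_k)/(X_1X_2)$, $1\le j\le k\le 2g$, and that the canonical section $s_{\overline{\cD}}$ of $\cO_{\overline{X}}(\overline{\cD})$, whose divisor is $\overline{\cD}=\{X_1X_2=0\}\cap\overline{X}$, is the constant section $X_1X_2/(X_1X_2)$. I would equip $\cO_{\overline{X}}(\overline{\cD})$ with the (toric) metrics of \cite{BatTsch95} attached to this generating family, so that for $Q\in\overline{V}(L)$ with projective coordinates $(y_1,\dots,y_{2g})$ and any place $w$ of $L$,
\begin{equation*}
\|s_{\overline{\cD}}(Q)\|_w=\frac{|y_1y_2|_w}{\max_{1\le j\le k\le 2g}|y_jy_k|_w}=\frac{|y_1y_2|_w}{\bigl(\max_{1\le j\le 2g}|y_j|_w\bigr)^2},
\end{equation*}
the last identity holding at every place since absolute values are multiplicative. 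Because $\iota$ is defined over $\cO_L$ (see \S~\ref{rem:modeleentier}) and $P\in X(\bQ)$, this yields $H_{\cD,p}(P)=\|s_{\overline{\cD}}(\iota(P))\|_w^{-1}$ for \emph{any} place $w$ of $L$ above $p$, the value being independent of the chosen $w$ because $\Gamma$ merely permutes the coordinates of $\iota(P)$.

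Next I would substitute $\iota(P)=\pi(\alpha)=(\phi_1(\alpha),\overline{\phi}_1(\alpha),\dots,\phi_g(\alpha),\overline{\phi}_g(\alpha))$, so that the coordinates of $\iota(P)$ are the $\psi(\alpha)$, $\psi\in\Hom(K,L)$. Two remarks then finish the computation. First, $X_1X_2(\iota(P))=\phi_1(\alpha)\overline{\phi}_1(\alpha)=\alpha\overline{\alpha}=n$ by Lemma~\ref{lemm:cnsKw}, so $|X_1X_2(\iota(P))|_w=|n|_p$. Second, every $\psi\in\Hom(K,L)$ is the restriction of some $\gamma\in\Gamma$, whence $|\psi(\alpha)|_w=|\gamma(\alpha)|_w=|\alpha|_{w\circ\gamma}$; as $\gamma$ runs over $\Gamma$ the place $w\circ\gamma$ runs over all of $V_p$ (transitivity of the Galois action on $V_p$), so $\max_{1\le j\le 2g}|X_j(\iota(P))|_w=\max_{v\in V_p}|\alpha|_v$. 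Hence $H_{\cD,p}(P)=\bigl(\max_{v\in V_p}|\alpha|_v\bigr)^2/|n|_p$. At the archimedean place, part \case{i} of Proposition~\ref{prop:WReouCM} gives $|\psi(\alpha)|=\sqrt{n}$ for every $\psi$ and $|n|_\infty=n$, so $H_{\cD,\infty}(P)=n/n=1$. At a finite $p$, normalising $|\cdot|_v$ to extend $|\cdot|_p$ one has $|\alpha|_v=p^{-\ord_v(\alpha)/e_p}$ for all $v\in V_p$ (recall $L/\bQ$ is Galois, so $e(v/p)=e_p$), whence $\max_{v\in V_p}|\alpha|_v=p^{-\nu_p(P)/e_p}$ and $|n|_p=p^{-\ord_p n}$; this gives $H_{\cD,p}(P)=p^{\ord_p n-2\nu_p(P)/e_p}$.

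Finally I would treat separately the case where $p$ is ramified in $L$ and $\overline{v}=v$ for every $v\in V_p$. Since the complex conjugation then fixes each $v\in V_p$, we get $\ord_v(\overline{\alpha})=\ord_v(\alpha)$, hence $e_p\ord_p n=\ord_v(n)=\ord_v(\alpha\overline{\alpha})=2\ord_v(\alpha)$ for every $v\in V_p$; the integers $\ord_v(\alpha)$ are therefore all equal to $\tfrac12 e_p\ord_p n$, so $\nu_p(P)=\tfrac12 e_p\ord_p n$ and the formula of the previous paragraph collapses to $H_{\cD,p}(P)=p^{\ord_p n-\ord_p n}=1$. (Incidentally this explains why the statement isolates this case rather than the apparently more natural dichotomy ``$\overline{v}=v$ versus $\overline{v}\ne v$'': for a reduced representative $\alpha$ one checks that when $\overline{v}=v$ for all $v\mid p$ and $p$ is \emph{un}ramified one is forced to have $\ord_p n=0$, so $\nu_p(P)=0$ and the generic formula already gives $1$.) The main obstacle will be the very first step: one must check that the Batyrev--Tschinkel anticanonical height, pulled back along $\iota$, is genuinely computed by the monomial metric above \emph{even at the primes of bad reduction of the model}, so that no extra correction factor at the ramified primes survives; the facts that the isomorphism of integral models is defined over $\cO_L$ and that $P$ is $\bQ$-rational are what make this go through, and the explicit computation of the last two paragraphs confirms it a posteriori.
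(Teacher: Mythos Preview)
Your proof is correct and follows essentially the same approach as the paper: transport to $\overline{X}$ via $\iota$, compute the local metric using the monomial generators $X_jX_k/(X_1X_2)$, identify the coordinates of $\iota(P)$ with the $\psi(\alpha)$, use $\phi_1(\alpha)\overline{\phi}_1(\alpha)=n$, and exploit the transitivity of $\Gamma$ on $V_p$. Your organisation is slightly more streamlined---you observe the Galois invariance of the metric at the outset to reduce to a single place $w|p$, and you write $\max_{j\le k}|y_jy_k|_w=(\max_j|y_j|_w)^2$ explicitly---whereas the paper keeps the full product $\prod_{v\mid p}(\cdots)^{[L_v:\bQ_p]}$ and does the case split $\overline{v}=v$ versus $\overline{v}\ne v$ before extracting the exponent; but the substance is identical.
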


\begin{rema}
La derni\`ere phrase de la proposition~\ref{prop:compaqVK} d\'ecoule de la proposition~\ref{prop:hautVK}, car $\nu_p(P)=0$ lorsque $p$ est non-ramifi\'e et $n=\prod_p{p^{\ord_p{n}}}$. 
\end{rema}

\begin{proof}
D'apr\`es la formule du produit, $H_{\cD}(P)$ ne d\'epend pas du choix de repr\'esentant $\alpha$ de $P$ dans $K^{w,\times}$. Il suffit donc de calculer les hauteurs locales en supposant $\alpha$ r\'eduit. Si $1\leq i\leq 2g$, \'ecrivons $\psi_i=\phi_{(i+1)/2}$ ou $\overline{\phi}_{i/2}$ selon que $i$ soit impair ou pair. En utilisant le point \case{iv} du th\'eor\`eme~\ref{theo:modelesDVX},  la d\'efinition de la hauteur locale et le fait que $\cO_{\overline{X}}(\overline{D})$ est engendr\'e par $(X_jX_k)/(X_1X_2)$, $1\leq j$, $k\leq 2d$, on trouve
\begin{align*}
H_{\cD,p}(P)^{[L:\bQ]}&=\prod_{v|p}\max_{1\leq j\leq k\leq 2g}\left(\left|\frac{\psi_j(\alpha)\psi_k(\alpha)}{\psi_1(\alpha)\psi_2(\alpha)}\right|_v\right)^{[L_v:\bQ_p]}\\
&=\prod_{v|p}\max_{1\leq j\leq k\leq 2g}\left(\left|\frac{\psi_j(\alpha)\psi_k(\alpha)}{n}\right|_v\right)^{[L_v:\bQ_p]}
\end{align*}
o\`u le produit parcourt l'ensemble des places $v$ de $L$ au dessus de $p$, $|\beta|_v=p^{-\ord_v(\beta)/e_p}$ lorsque $\beta\in L$  et $[L_v:\bQ_p]$ est le degr\'e sur $\bQ_p$ du compl\'et\'e $L_v$ de $L$ en $v$.

\underline{Le cas} $p$ \underline{$=\infty$}. Puisque $n\in \bN^*$, on tire du point \case{i} de la proposition~\ref{prop:WReouCM} que $\left|\frac{\psi_j(\alpha)\psi_k(\alpha)}{n}\right|_v=1$ pour tout $j$, $k$, d'o\`u $H_{\cD,\infty}(P)=1$.  

\underline{Le cas} $p$ \underline{fini}. Il est clair que si $p$ ne divise pas $n$, alors $H_{\cD,p}(P)=1$. Supposons donc que $p$ divise $n$. Deux cas se pr\'esentent.

(1) Tous les $v\in V_p$ sont fix\'es par $c$. Alors la condition $\alpha\overline{\alpha}\in \bN^*$ entra\^{\i}ne que $\ord_v(\alpha)$ est ind\'ependent de $v\in V_p$. On conclut que $|\frac{\psi_j(\alpha)\psi_k(\alpha)}{\psi_1(\alpha)\psi_2(\alpha)}|_v=1$ pour tout $v\in V_p$ et pour tout $j$, $k$, d'o\`u $H_{\cD,p}(P)=1$. Le fait que $\alpha$ soit r\'eduit implique que ce cas ne peut se pr\'esenter que lorsque $p$ est ramifi\'e dans $L$.

(2) Pour tout $v\in V_p$, on a $\overline{v}\neq v$. Alors $\nu_p(\alpha)<e$, car dans le cas contraire, $p$ diviserait $\alpha$, ce que contredit l'hypoth\`ese que $\alpha$ soit r\'eduit. En particulier, si $p$ est non ramifi\'e dans $L$, il existe $v\in V_p$ ne divisant pas $\alpha$. Alors $|\psi_j(\alpha)\psi_k(\alpha)|_v\leq 1$ pour tout $j$, $k$ et, par transitivit\'e de l'action de $\Gamma$ sur $V_p$ et sur $\Hom(L,\bC)$, quel que soit $v\in V_p$, on peut choisir $j=k$ tel que $|\psi_j(\alpha)|_v=1$, d'o\`u $|\psi_j(\alpha)\psi_k(\alpha)|_v=1$ puis $|\frac{\psi_j(\alpha)\psi_k(\alpha)}{n}|_v=|\frac{1}{n}|_v$. On en conclut que $H_{\cD,p}(P)=p^{\ord_p(n)}$. 

Si $p$ est ramifi\'e dans $L$, on voit que $\max_{1\leq j\leq k\leq 2g}\left(\left|\frac{\psi_j(\alpha)\psi_k(\alpha)}{n}\right|_v\right)$ est atteint lorsque $(j,k)$ est choisi de telle mani\`ere que $j=k$ et $\ord_v(\psi_j(\alpha))=\nu_p(P)/e_p$, d'o\`u la formule $H_{\cD,p}(P)=p^{\ord_p(n)-2\nu_p(P)/e_p}$.\end{proof}

\subsection{La repr\'esentation de $\Gamma$ sur $\Pic(X^K)_{\bQ}$} \label{subsec:repPic} Notons $\Pic(X^K)$ le groupe de Picard de $X^K$ et $\Pic(X^K)_\bQ$ le sous-groupe de $\Pic(X^K)$ consistant des \'el\'ements fix\'es par $\Gal(\overline{\bQ}/\bQ)$. 

Notons d'autre part $\Div_{V^K}{X^K}$ le groupe de diviseurs $V^K$-invariants de $X^K$. On sait alors que $\Div_{V^K}{X^K}$ est un groupe ab\'elien libre, engendr\'e par les \'el\'ements de l'ensemble $\cP$ d\'ecrit dans le point \case{iv} du th\'eor\`eme~\ref{theo:modelesDVX} (voir \cite{CoxLitSch11}, \S~4.1). 

\begin{prop}
\case{i} Le $\bQ$-espace vectoriel $\bQ\otimes_\bZ \Pic(X^K)$ est de dimension $2^g-g$.

\case{ii} On dispose d'un $\bQ\Gamma$-isomorphisme naturel entre $\bQ\otimes_\bZ\Pic(X^K)_\bQ$ et le $\bQ$-espace vectoriel $\bQ[\cP]$ de base l'ensemble $\cP$.   
\end{prop}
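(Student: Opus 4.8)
Le plan est de d\'eduire les deux assertions de la suite exacte fondamentale de la th\'eorie des vari\'et\'es toriques lisses et compl\`etes. Comme l'\'eventail $\Sigma$ construit au \subsecno~\ref{subsec:event} est complet, la vari\'et\'e $\overline{X}=X^K\times_{\Spec{\bQ}}\Spec{L}$ n'a pas de facteur torique; on dispose donc (\cite{CoxLitSch11}, \S~4.1) d'une suite exacte $0\to M\to \bigoplus_{\rho\in\Sigma(1)}\bZ D_\rho\to \mathrm{Cl}(\overline{X})\to 0$, o\`u $M=\hat V$ et $D_\rho$ d\'esigne le diviseur premier $V$-invariant associ\'e au rayon $\rho$, la lissit\'e de $\overline{X}$ donnant de plus $\mathrm{Cl}(\overline{X})=\Pic(\overline{X})=\Pic(X^K)$. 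Je commencerais par identifier $\bigoplus_{\rho}\bZ D_\rho$ avec $\Div_{V^K}{X^K}=\bZ[\cP]$ au moyen de la bijection $\cP\simeq \Sigma(1)$ du point \case{iv} du th\'eor\`eme~\ref{theo:modelesDVX}, puis par v\'erifier que la suite ainsi obtenue est $\Gamma$-\'equivariante : le mod\`ele propre $X^K$ sur $\bQ$ construit par Voskresenski\u{\i} \cite{Vosk83} rend compatibles les actions de $\Gamma$ sur $\overline{X}$, sur ses diviseurs $V$-invariants et sur $M$, et l'application $m\mapsto \operatorname{div}(\chi^m)$ est $\Gamma$-\'equivariante. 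On aboutit ainsi \`a une suite exacte de $\Gamma$-modules
\begin{equation*}
0\longrightarrow \hat V\longrightarrow \bZ[\cP]\longrightarrow \Pic(X^K)\longrightarrow 0. \tag*{$(\star)$}
\end{equation*}

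Pour \case{i}, il suffirait de comparer les rangs dans $(\star)$ : l'ensemble $\cP$ a $2^g$ \'el\'ements d'apr\`es le th\'eor\`eme de structure de $\overline{X}$, tandis que $\hat V=M$ est de rang $\dim{V}=g$ d'apr\`es le point \case{i} du lemme~\ref{lem:WVprop}; on en d\'eduit que $\Pic(X^K)$ est de rang $2^g-g$, d'o\`u $\dim_\bQ\bigl(\bQ\otimes_\bZ\Pic(X^K)\bigr)=2^g-g$.

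Pour \case{ii}, je tensoriserais $(\star)$ par $\bQ$, ce qui fournit une suite exacte de $\bQ[\Gamma]$-modules $0\to \hat V_\bQ\to \bQ[\cP]\to \bQ\otimes_\bZ\Pic(X^K)\to 0$. Le groupe $\Gamma$ \'etant fini, l'alg\`ebre $\bQ[\Gamma]$ est semi-simple (th\'eor\`eme de Maschke), donc cette suite est scind\'ee : on obtient l'isomorphisme naturel de $\bQ[\Gamma]$-modules cherch\'e, $\bQ\otimes_\bZ\Pic(X^K)$ s'identifiant au quotient $\bQ[\cP]/\hat V_\bQ$ et $\bQ[\cP]$ \`a $\hat V_\bQ\oplus\bigl(\bQ\otimes_\bZ\Pic(X^K)\bigr)$. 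Combin\'e avec la proposition~\ref{prop:XKVKcle}, qui identifie le $\Gamma$-ensemble $\cP$ \`a $\cC\cM_K$, ceci d\'ecrit compl\`etement la repr\'esentation de $\Gamma$ sur $\bQ\otimes_\bZ\Pic(X^K)$ (utile pour le calcul de la fonction $L$ d'Artin correspondante).

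Il n'y a pas ici d'obstacle s\'erieux; le point demandant le plus de soin est la v\'erification que la suite exacte torique peut \^etre choisie $\Gamma$-\'equivariante, c'est-\`a-dire que l'action combinatoire de $\Gamma$ sur $\bigoplus_\rho\bZ D_\rho$ provenant de la permutation des rayons co\"{\i}ncide, apr\`es descente \`a $\bQ$, avec l'action g\'eom\'etrique sur $\Pic(\overline{X})$; c'est l\`a qu'interviennent le $\bQ$-mod\`ele de Voskresenski\u{\i} et l'identification $\cP\simeq \Sigma(1)$ du th\'eor\`eme~\ref{theo:modelesDVX}.
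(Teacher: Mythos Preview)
Pour \case{i}, votre argument est correct et co\"{\i}ncide avec celui du papier : tous deux invoquent la suite exacte $0\to M\to \Div_{V^K}{X^K}\to \Pic(X^K)\to 0$ et comptent les rangs.

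Pour \case{ii} en revanche, il y a une lacune r\'eelle. L'\'enonc\'e porte sur $\Pic(X^K)_\bQ$, c'est-\`a-dire le sous-groupe des $\Gamma$-\emph{invariants} de $\Pic(X^K)$, alors que vous travaillez uniquement avec $\bQ\otimes\Pic(X^K)$. Votre scindage de Maschke donne $\bQ[\cP]\simeq \hat V_\bQ\oplus\bigl(\bQ\otimes\Pic(X^K)\bigr)$, ce qui n'est l'isomorphisme cherch\'e ni sous la forme \'enonc\'ee ni sous aucune forme corrig\'ee : ni $\bQ[\cP]$ (de dimension~$2^g$) ni $\bQ\otimes\Pic(X^K)$ (de dimension~$2^g-g$) n'est isomorphe \`a $\bQ\otimes\Pic(X^K)_\bQ$, dont la dimension est le nombre d'orbites de $\Gamma$ sur~$\cP$. (L'\'enonc\'e \case{ii} tel qu'imprim\'e contient d'ailleurs une coquille : ce que le papier d\'emontre en r\'ealit\'e est $\bQ\otimes\Pic(X^K)_\bQ\simeq (\bQ[\cP])^\Gamma$, comme le confirme le corollaire qui suit.)

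La m\'ethode du papier est diff\'erente de la v\^otre : elle prend les $\Gamma$-invariants de la suite exacte \emph{enti\`ere}, obtient $0\to M^\Gamma\to (\bZ[\cP])^\Gamma\to (\Pic X^K)^\Gamma\to H^1(\Gamma,M)$, puis utilise $M^\Gamma=0$ (car $c$ agit par $-1$ sur $\hat V$, lemme~\ref{lem:conjcomp}) et la finitude de $H^1(\Gamma,M)$ pour conclure apr\`es tensorisation par~$\bQ$. Votre approche par Maschke m\`enerait aussi au r\'esultat correct, mais il manque l'\'etape clef : prendre les $\Gamma$-invariants de votre d\'ecomposition et invoquer $(\hat V_\bQ)^\Gamma=0$ pour en d\'eduire $(\bQ[\cP])^\Gamma\simeq \bigl(\bQ\otimes\Pic(X^K)\bigr)^\Gamma=\bQ\otimes\Pic(X^K)_\bQ$. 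Notez enfin qu'un scindage de Maschke n'est pas canonique, de sorte que l'isomorphisme $\bQ[\cP]\simeq \hat V_\bQ\oplus\bigl(\bQ\otimes\Pic(X^K)\bigr)$ que vous annoncez comme \og naturel\fg{} ne l'est pas.
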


\begin{proof}
\case{i} Puisque $X^K$ est lisse, on dispose d'une suite exacte de groupes ab\'eliens
\begin{equation*}
0\to M\to \Div_{V^K}{X^K}\to \Pic{X^K}\to 0
\end{equation*}
(voir \cite{CoxLitSch11}, \S\S~4.1 et 4.2). Le r\'esultat est alors clair car $M$ est de rang $g$.

\case{ii} En prenant des $\Gamma$-invariants, on obtient une suite exacte
\begin{equation*}
0\to M^\Gamma\to (\Div_{V^K}{X^K})^\Gamma\to (\Pic{X^K})^\Gamma\to H^1(\Gamma,M).
\end{equation*}
D'apr\`es le lemme~\ref{lem:conjcomp}, $M^\Gamma=\{0\}$. D'autre part, $H^1(\Gamma,M)$ est un groupe fini. Enfin, comme tous les \'el\'ements de $\cP$ sont rationnels sur $L$, on a $(\Pic{X^K})^\Gamma=\Pic{(X^K)}_\bQ$.

On en tire un isomorphisme $\bQ\otimes (\Div_{V^K}{X^K})^\Gamma\simeq \bQ\otimes \Pic{(X^K)}_\bQ$. On conclut en appliquant le th\'eor\`eme~\ref{theo:modelesDVX}.   
\end{proof}

\begin{coro}  \label{coro:dimPicQ}
La dimension de $\bQ\otimes \Pic{(X^K)_\bQ}$ est \'egale au nombre d'orbites de l'action de $\Gamma$ sur $\cP$.
\end{coro}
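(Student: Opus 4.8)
The plan is to deduce the statement directly from the isomorphism produced in the proof of the preceding proposition, together with the elementary invariant theory of permutation modules over a finite group. Recall that that proof exhibits a natural isomorphism of $\bQ$-vector spaces
\begin{equation*}
\bQ\otimes_\bZ\Pic(X^K)_\bQ\;\simeq\;\bQ\otimes_\bZ\bigl(\Div_{V^K}X^K\bigr)^{\Gamma},
\end{equation*}
obtained by taking $\Gamma$-invariants in the exact sequence $0\to M\to\Div_{V^K}X^K\to\Pic X^K\to 0$, using that $M^{\Gamma}=0$ by Lemma~\ref{lem:conjcomp}, that $H^1(\Gamma,M)$ is finite and hence vanishes after tensoring with $\bQ$, and that $(\Pic X^K)^{\Gamma}=\Pic(X^K)_\bQ$ because every element of $\cP$ is rational over $L$.

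First I would note that forming $\Gamma$-invariants commutes with tensoring with $\bQ$ over $\bZ$ (for instance because $\Gamma$ is finite and $\bQ$ is flat), so the right-hand side above is $\bigl(\bQ\otimes_\bZ\Div_{V^K}X^K\bigr)^{\Gamma}$. By the structure of $\Div_{V^K}X^K$ recalled above (it is free abelian on the finite set $\cP$, which $\Gamma$ permutes, by Theorem~\ref{theo:modelesDVX}\case{iv} and the cited toric theory), $\bQ\otimes_\bZ\Div_{V^K}X^K$ is the permutation $\bQ\Gamma$-module $\bQ[\cP]$. Its invariant subspace $\bQ[\cP]^{\Gamma}$ admits as a $\bQ$-basis the family of orbit sums, one sum for each orbit of $\Gamma$ on $\cP$, so its dimension equals the number of such orbits. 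Composing the identifications yields that $\dim_\bQ\bigl(\bQ\otimes_\bZ\Pic(X^K)_\bQ\bigr)$ equals the number of $\Gamma$-orbits on $\cP$, as asserted.

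I do not expect any real obstacle here: all the substance — the isomorphism with $\bigl(\Div_{V^K}X^K\bigr)^{\Gamma}$ and the identification of $\Div_{V^K}X^K$ with the permutation module attached to $\cP$ — has already been established in Theorem~\ref{theo:modelesDVX} and in the preceding proposition, and what remains is only the standard fact that the invariants of a rational permutation representation of a finite group are spanned by the orbit sums. One could instead argue straight from part~\case{ii} of the preceding proposition, but passing back through the exact sequence keeps the orbit count most transparent.
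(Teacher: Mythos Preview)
Your argument is correct and coincides with the paper's intended reasoning: the corollary is stated without proof because it is immediate from the isomorphism $\bQ\otimes(\Div_{V^K}X^K)^{\Gamma}\simeq\bQ\otimes\Pic(X^K)_\bQ$ established in the preceding proposition, together with the identification $\Div_{V^K}X^K\simeq\bZ[\cP]$ and the standard fact that the $\Gamma$-invariants of a rational permutation module have dimension equal to the number of orbits. You have simply made explicit what the paper leaves to the reader.
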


Soit $\ve\in K$ un \'el\'ement totalement imaginaire (c'est-\`a-dire tel que $c(\ve)=-\ve$) qui engendre $K$ en tant qu'extension de $\bQ$. Un tel \'el\'ement existe selon le lemme~\ref{lem:primKQ}. Pour tout $i\in \{1,2,\dots, g\}$, on pose $\ve_i=\phi_i(\ve)$. Si $\Psi\in \cC\cM_K$, on suppose par la suite que les plongements $\psi_i\in \Psi$ soient num\'erot\'es de telle mani\`ere que $\psi_i(\ve)=u_i\ve_i$ pour tout $i\in \{1,2,\dots, g\}$, o\`u $u_i\in \{-1,1\}$. Rappelons pour m\'emoire la fin de l'\'enonc\'e de la proposition~\ref{prop:XKVKcle}.

\begin{prop} \label{prop:Frob}  
Les $\Gamma$-ensembles $\cP$ et $\cC\cM_K$ sont isomorphes.
\end{prop}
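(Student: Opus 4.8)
The plan is to factor the desired isomorphism through the set $\cU=\{-1,1\}^\Phi$ that already indexes the one-dimensional cones of $\Sigma$. By item~\case{iv} of Theorem~\ref{theo:modelesDVX} the $\Gamma$-set $\cP$ is isomorphic to $\Sigma(1)=\{\bR_+e_u\mid u\in\cU\}$, so it suffices to produce a $\Gamma$-equivariant bijection $\cC\cM_K\to\Sigma(1)$. Since $(f_\phi)_{\phi\in\Phi}$ is an $\bR$-basis of $N_\bR$ and $e_u=\tfrac12\sum_{\phi\in\Phi}u_\phi f_\phi$, the rays $\bR_+e_u$ are pairwise distinct, so $u\mapsto\bR_+e_u$ is a bijection $\cU\to\Sigma(1)$. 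On the other side, given $\Psi\in\cC\cM_K$ I would, as in the paragraph preceding the statement, number its elements so that $\psi_i\in\{\phi_i,\overline{\phi}_i\}$, equivalently $\psi_i(\ve)=u_i\ve_i$ with $u_i\in\{-1,1\}$ (well defined since $\ve_i=\phi_i(\ve)\neq0$), and set $u(\Psi)=(u_i)_i\in\cU$. This $\Psi\mapsto u(\Psi)$ is a bijection $\cC\cM_K\to\cU$: given $u$, the set $\{\psi_i\}$ with $\psi_i=\phi_i$ or $\overline{\phi}_i$ according to the sign of $u_i$ is a CM type, because the $\phi_i|_{K_0}$ exhaust $\Hom(K_0,\bC)$ and the $2g$ embeddings $\phi_i,\overline{\phi}_i$ of $K$ are pairwise distinct ($K$ being totally imaginary).

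Composing, one obtains a bijection $\Psi\mapsto\bR_+e_{u(\Psi)}$ from $\cC\cM_K$ to $\Sigma(1)$, and the heart of the matter is its $\Gamma$-equivariance. The key elementary identity is that in $N$ one has $2e_{u(\Psi)}=\sum_{\psi\in\Psi}f_\psi$: this follows at once from $f_{\overline{\phi}}=-f_\phi$, which gives $f_{\psi_i}=u_if_{\phi_i}$ for each $i$. Hence it is enough to check that the assignment $\Psi\mapsto n_\Psi:=\sum_{\psi\in\Psi}f_\psi\in N$ intertwines the $\Gamma$-action on $\cC\cM_K$ of the statement with the $\Gamma$-action on $N$, and this in turn reduces to the functoriality $\gamma\cdot f_\phi=f_{\gamma\circ\phi}$ for all $\phi\in\Hom(K,\bC)$ and $\gamma\in\Gamma$. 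The latter is read off the definition of the $\Gamma$-module $N=\Hom(\hat V,\bZ)$ together with Lemma~\ref{lem:isohats} — which presents $\hat V$ as $I_\Xi/(I_\Xi\cap J_\Xi)$ and $f_\phi$ as the linear form on $\bZ[\Xi]$ vanishing on $J_\Xi$ and sending $\phi\mapsto1$, $\overline{\phi}\mapsto-1$ — using that $c$ is central in $\Gamma$ (Corollary~\ref{cor:CMgalois}), so that $\gamma\circ\overline{\phi}=\overline{\gamma\circ\phi}$. Granting this, $\gamma\cdot n_\Psi=\sum_{\psi\in\Psi}f_{\gamma\circ\psi}=n_{\gamma\cdot\Psi}$, whence $\bR_+e_{u(\gamma\cdot\Psi)}=\bR_+n_{\gamma\cdot\Psi}=\gamma\cdot(\bR_+n_\Psi)=\gamma\cdot(\bR_+e_{u(\Psi)})$, which is exactly the equivariance of $\Psi\mapsto\bR_+e_{u(\Psi)}$.

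I do not expect a genuine obstacle here: once the pieces are aligned the argument is formal. The only real care needed is bookkeeping, concentrated in three points: (a) making sure the set $\cU$ indexing $\Sigma(1)$ is literally the one from \S~\ref{subsec:event} and that the $e_u$ are pairwise non-proportional — both already recorded there; (b) tracking the sign convention $f_{\overline{\phi}}=-f_\phi$ through the identification $2e_{u(\Psi)}=\sum_{\psi\in\Psi}f_\psi$; and (c) verifying $\gamma\cdot f_\phi=f_{\gamma\circ\phi}$, for which the centrality of $c$ is the one slightly nontrivial input. As an alternative to the direct computation in step (c), one could instead decompose $\cC\cM_K$ into $\Gamma$-orbits and use Proposition~\ref{prop:OrbPhiHomhatKLiso} to identify each orbit with $\Hom(\hat K_\Phi,L)$, then match the stabilizer $\Gamma^{\hat K_\Phi}$ of $\Phi$ with the stabilizer of the corresponding ray $\bR_+e_{u(\Phi)}$; but the explicit map above seems the most economical route.
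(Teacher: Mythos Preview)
Your argument is correct. The paper's proof, by contrast, bypasses the fan entirely and works with the explicit projective equations: writing every $\alpha\in K$ as $x+y\ve$ with $x,y\in K_0$, the components of $\cD\times_{\Spec\bQ}\Spec L$ are seen to be the $2^g$ linear subspaces $X_{2i-1}+u_i\ve_iX_{2i}=0$ ($1\le i\le g$, $u_i\in\{-1,1\}$), and the bijection sends $\Psi$ to the subspace with $u_i\ve_i=\psi_i(\ve)$; $\Gamma$-equivariance is then read off directly from how $\Gamma$ permutes the $\psi_i(\ve)$. You instead invoke the ready-made $\Gamma$-isomorphism $\cP\simeq\Sigma(1)$ of Theorem~\ref{theo:modelesDVX}\,\case{iv} and carry out the matching purely on the combinatorial side, via the identity $2e_{u(\Psi)}=\sum_{\psi\in\Psi}f_\psi$ and the functoriality $\gamma\cdot f_\phi=f_{\gamma\circ\phi}$ in $N$. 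Both routes rest on the same indexing of CM types by sign vectors in $\cU$; yours is a shade more structural and never touches the coordinate equations of the subspaces, while the paper's is more self-contained in that it does not need to unpack the contragredient $\Gamma$-action on $N$. One small notational liberty to flag: the paper defines $f_\phi$ only for $\phi\in\Phi$, so the relation $f_{\overline{\phi}}=-f_\phi$ you use is the (evident) extension of that definition to all of $\Xi$.
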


\begin{proof} Tout \'el\'ement de $K$ s'\'ecrit d'une mani\`ere unique sous la forme $x+y\ve$ avec $(x,y)\in K_0^2$. On en tire aussit\^ot que $\cP$ est la r\'eunion des sous-espaces projectifs
\begin{equation*}
X_1+u_1\ve_1X_2=X_3+u_2\ve_2Y_4=\cdots =X_{2g-1}+u_g\ve_gX_{2g}=0, \quad u_i \in \{-1,1\}.
\end{equation*}
On d\'efinit une bijection de $\cC\cM_K$ sur $\cP$ en envoyant $\Psi$ sur l'espace
\begin{equation*}
X_1+\psi_1(\ve)X_2=X_3+\psi_2(\ve)Y_4=\cdots =X_{2g-1}+\psi_g(\ve)X_{2g}=0. 
\end{equation*}
On v\'erifie imm\'ediatement qu'il s'agit d'un $\Gamma$-isomorphisme.
\end{proof}

La description de $\cP$ utilis\'ee dans la d\'emonstration de la proposition~\ref{prop:Frob} fournit probablement la mani\`ere la plus simple de calculer les valeurs de $\rho_K$ figurant dans 

\begin{rema} \label{rem:rhoKcalcul} \emph{D\'emonstration des valeurs de $\rho_K$ apparaissant dans le tab\-leau de la remarque~\ref{rema:rhoKexemples}}\/.  

La plupart de ces valeurs peuvent \^etre extraites de \cite{Dodson84}, mais pour la commodit\'e du lecteur nous en donnerons de d\'emonstrations directes. Les cas o\`u $g\leq 2$ sont faciles. Consid\'erons donc les trois derni\`eres colonnes du tableau. Soit $K_1$ la cl\^oture galoisienne de $K_0$ dans $L$. Puisque $\ve_i^2\in K_0$ pour tout $i$, l'action d'un \'el\'ement de $\Gamma^{K_1}$ sur $\ve_i$ soit le laisse inchang\'e soit le multiplie par $-1$. Il est alors clair que si $\Gamma=2^g.\Gamma_1$, alors $\Gamma^{K_1}$ --- et donc aussi $\Gamma$ --- op\`ere transtivement sur $\cP$, d'o\`u la derni\`ere colonne du tableau. 

Supposons que $\Gamma=C_{2\ell}$.  Alors $K$ est galoisien sur $\bQ$ et on peut l'identifier avec son image dans $\bC$ \emph{via} $\phi_1$. Si $\sigma$ d\'esigne un g\'en\'erateur de $\Gamma$, alors le sous-espace
\begin{equation*}
X_1+\ve X_2=X_3+\sigma^{2}(\ve)X_4=\cdots = X_{2g-1}+\sigma^{2\ell-2}(\ve)X_{2g}
\end{equation*}
est stable par l'action du sous-groupe d'ordre $\ell$ de $\Gamma$ mais non par $c=\sigma^{\ell}$. Son orbite contient donc deux \'el\'ements. Les autres \'el\'ements de $\cP$ ne sont fix\'es ni par $\sigma^2$ ni par $c$: puisque $\ell$ est un nombre premier impair, cela implique toutes les autres orbites sont de cardinal $2\ell$. Cela donne $1+\frac{2^{\ell}-2}{2\ell}$ orbites au total, d'o\`u la colonne du tableau $g=\ell$ avec $\Gamma=C_{2\ell}$. 

Supposons que $\Gamma=D_{2\ell}$. Puisque $\ell$ est impair et $\ve^2\in K_0$, $\prod_{i=1}^\ell\ve_i^2$ est un nombre rationnel n\'egatif et sa racine carr\'ee engendre un corps quadratique imaginaire $M\subseteq L$. Alors $\Gamma=<c>\times \Gamma_0$ o\`u $\Gamma_0\simeq D_\ell$ s'identifie avec $\Gamma^{M}$. Les \'el\'ements d'ordre deux de $\Gamma_0$ op\'erent sur l'ensemble $\{\ve_i^2\mid 1\leq i\leq \ell\}$ en fixant l'un des $\ve_i^2$ et en partitionnant les autres en $\frac{\ell-1}{2}$ paires qu'ils transposent entre elles. Or, pour tout $i$, $L=M(\ve_i^2)$. Il suit que les \'el\'ements d'ordre deux de $\Gamma^M$ op\`ere sur l'ensemble $\{\ve_i\mid 1\leq i\leq \ell\}$ de la m\^eme mani\`ere. On en tire que ces \'el\'ements fixent tous les \'el\'ements de $\cP$. On conclut en prenant pour $\sigma$ un g\'en\'erateur du sous-groupe cylique d'ordre $2\ell$ de $\Gamma$ et en raisonnant comme dans le cas o\`u $\Gamma=C_{2\ell}$.\hfill \qed       
\end{rema}

Notons $\rho^K:\Gamma\to \GL_{2^g}(\bQ)$ (ou $\rho$) la repr\'esentation correspondant \`a l'action de $\Gamma$ sur l'espace vectoriel $\bQ[\cP]$ (ou sur $\bQ[\cC\cM_K]$) et $\theta=\theta^K$ son caract\`ere. Malgr\'e sa simplicit\'e, nous n'avons pas trouv\'e l'\'enonc\'e suivant dans la litt\'erature (voir \cite{Oishi10} pour une contribution r\'ecente \`a l'\'etude de la structure galoisienne de $\cC\cM_K$). 

\begin{theo}  \label{theo:Frob}
Soit $p$ un nombre premier non-ramifi\'e dans $L$ et soit $\sF_p$ le Frobenius associ\'e \`a une place de $L$ divisant $p$. Alors
\begin{equation*}
\theta(\sF_p)=\begin{cases}  2^r \text{ si $p$ se d\'ecompose dans $K$ en $2r$ places dont aucune n'est}\\ \text{\hskip48mm fix\'ee par la conjugaison complexe},\\
                             0  \text{ dans le cas contraire}.
                             \end{cases} 
\end{equation*}
\end{theo}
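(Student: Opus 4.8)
The plan is to exploit the fact that $\theta$ is the character of a permutation representation, so that $\theta(\sF_p)$ is nothing but the number of CM types $\Phi\in\cC\cM_K$ with $\sF_p\cdot\Phi=\Phi$ (recall $\cP\simeq\cC\cM_K$ as $\Gamma$-sets). Everything then reduces to counting these $\sF_p$-fixed types in terms of how $\sF_p$ permutes $\Hom(K,L)$.

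First I would set up the standard dictionary between the splitting of $p$ and the action of $\sF_p$. Since $p$ is unramified in $L$, the decomposition group at a place $w\mid p$ of $L$ is the cyclic group $\langle\sF_p\rangle$, and, identifying $\Hom(K,L)=\Hom(K,\bC)$ with $\Gamma/\Gamma^K$, the places of $K$ above $p$ correspond bijectively to the $\langle\sF_p\rangle$-orbits in $\Hom(K,L)$, the residue degree of a place being the cardinality of the associated orbit. Moreover, $c$ being central in $\Gamma$, one has $\overline{\phi}=c\circ\phi$ for every $\phi$, so the involution $\phi\mapsto\overline{\phi}$ of $\Hom(K,L)$ commutes with $\sF_p$; it has no fixed point because $K$ is totally imaginary, and on the level of places it is precisely the complex conjugation of $K$, so that a place $\gp$ of $K$ above $p$ is fixed by complex conjugation if and only if the corresponding orbit $O$ satisfies $\overline{O}=O$.

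Next I would analyze a single $\langle\sF_p\rangle$-orbit $O$, of cardinality $m$. Either $\overline{O}=O$, in which case, since the centralizer of an $m$-cycle in a symmetric group is the cyclic group it generates and $c$ acts without fixed point, $m$ is even and $c$ acts on $O$ as translation by $m/2$; or $\overline{O}\neq O$, and $\{O,\overline{O}\}$ is a pair of orbits of equal cardinality exchanged by $c$. The $g$ pairs $\{\phi,\overline{\phi}\}$ are permuted by $\sF_p$, and a CM type is exactly a choice of one element in each such pair; if $\Phi$ is fixed by $\sF_p$, then for each orbit the set $\Phi\cap O$ (resp.\ $\Phi\cap(O\cup\overline{O})$) is $\sF_p$-stable and meets each of the pairs $\{\phi,\overline{\phi}\}$ with $\phi\in O$ in exactly one point.

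Then I would conclude by two short combinatorial observations. If $\overline{O}=O$, transitivity of $\sF_p$ on $O$ forces $\Phi\cap O$ to be empty or all of $O$, whereas it must have exactly $m/2$ elements; this is impossible since $0<m/2<m$, so a single place of $K$ above $p$ fixed by complex conjugation already gives $\theta(\sF_p)=0$. If instead no place of $K$ above $p$ is fixed by complex conjugation, then all orbits come in conjugate pairs $\{O,\overline{O}\}$, say $r$ of them, accounting for $2r$ places of $K$; for each such pair, transitivity of $\sF_p$ on $O$ and on $\overline{O}$ shows that $\Phi\cap(O\cup\overline{O})$ must equal $O$ or $\overline{O}$, and both choices are realizable, independently over the $r$ pairs. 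Hence the $\sF_p$-fixed CM types are in bijection with $\{-1,1\}^{r}$ and $\theta(\sF_p)=2^{r}$. The only genuinely non-formal ingredient is the number-theoretic dictionary of the second paragraph, where the main care is needed to get the identifications of orbits with places, of orbit sizes with residue degrees, and of $\phi\mapsto\overline{\phi}$ with conjugation of places set up with the correct side conventions; once $c$ is known to be central this is routine, and the rest is bookkeeping.
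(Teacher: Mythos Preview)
Your argument is correct and follows essentially the same approach as the paper: both compute $\theta(\sF_p)$ as the number of $\sF_p$-fixed CM types and analyse the $\langle\sF_p\rangle$-orbits on $\Hom(K,L)$, pairing them via complex conjugation. The only presentational difference is in the vanishing case: the paper observes that ``some place fixed by $c$'' is equivalent (by centrality of $c$) to $c\in\langle\sF_p\rangle$, whence any $\sF_p$-fixed CM type would be $c$-fixed, which is impossible; you instead argue directly that $\Phi\cap O$ would have to be a nontrivial proper $\sF_p$-stable subset of a transitive orbit, which is equally valid and avoids invoking the global equivalence.
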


\begin{proof} Soit $\gamma\in \Gamma$ et soit $M(\gamma)$ la matrice de $\rho(\gamma)$ dans la base de $\bQ[\cC\cM_K]$ form\'ee des \'e\'ements de $\cC\cM_K$. Alors $M(\gamma)$ est une matrice de permutation. Par cons\'equent, $\theta(\gamma)$, la trace de $M(\gamma)$, est \'egal au nombre de types CM fix\'es par $\gamma$. Il s'agit donc de calculer ce nombre dans le cas o\`u $\gamma=\sF_p$.

Posons $\ve_{i+g}=-\ve_i$, ($i\in \{1,2,\dots, g\}$). Notons $\Delta_p$ le sous-groupe de $\Gamma$ engendr\'e par $\sF_p$ et rappelons que la classe de conjugaison de $\Delta_p$ ne d\'epend que de $p$. Puisque $c$ est un \'el\'ement central de $\Gamma$, dire qu'aucune des places de $K$ divisant $p$ ne soit fix\'ee par $c$ \'equivaut \`a dire que $c\notin \Delta_p$. 

Supposons donc que $c\notin \Delta_p$ et que $p\cO_K$ se d\'ecompose en $2r$ places dans $K$ dont aucune n'est stable par conjugaison complexe. Alors l'ensemble $\{\ve_k\}_{1\leq k\leq 2g}$ se d\'ecompose en $2r$ orbites sous $\Delta_p$, et $\ve_k$ et $-\ve_k=\ve_{k+g}$ n'appartiennent pas \`a la m\^eme orbite. Quitte \`a changer le signe de certains des $\ve_k$, on peut donc supposer que $\{\ve_k\}_{1\leq k\leq g}$ se d\'ecompose en $r$ orbites $O_1$, $O_2$, \dots, $O_r$. On en tire facilement qu'un type CM $\Psi=\{\psi_1,\dots, \psi_g\}$ est fix\'e par $\Delta_p$ si et seulement si $\psi_i(\ve)=u_i\ve_i$ avec $u_i=u_j$ lorsque $\ve_i$ et $\ve_j$ appartiennent \`a la m\^eme orbite $O_s$. On en conclut que $\theta(\sF_p)=2^r$.

D'autre part, si $c\in \Delta_p$,  alors $c$ est une puissance de $\sF_p$ car $\Delta_p$ est un groupe cyclique. Comme $c$ ne fixe aucun type CM, il en est de m\^eme pour $\sF_p$, d'o\`u $\theta(\sF_p)=0$.
\end{proof}

Notons $\Orb{(\cC\cM_K)}$ l'ensemble des orbites de l'action de $\Gamma$ sur $\cC\cM_K$ et fixons un syst\`eme de repr\'esentants $\Orb^*{(\cC\cM_K)}\subseteq \cC\cM_K$ de $\Orb{(\cC\cM_K)}$. Si $F$ est un corps de nombres, on note $\zeta_F(s)$ sa fonction zeta de Dedekind.  

\begin{prop}\label{prop:Lrhosprodzeta}
Notons $L(\rho, s)$ la fonction $L$ d'Artin associ\'ee \`a la re\-pr\'esentation $\rho=\rho^K$. Alors
\begin{equation} \label{eq:LrsprodzetahatK}
L(\rho,s)=\prod_{\Phi\in \Orb^*{(\cC\cM_K)}}\zeta_{\hat{K}_\Phi}(s)
\end{equation}
\end{prop}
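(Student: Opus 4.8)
The plan is to use that $\rho = \rho^K$ is, by construction, the permutation representation of $\Gamma$ on the module $\bQ[\cC\cM_K]$, so that decomposing this module along the $\Gamma$-orbits of $\cC\cM_K$ and applying Artin's formalism will express $L(\rho, s)$ as a product of Dedekind zeta functions.

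First I would write $\bQ[\cC\cM_K] = \bigoplus_{\Phi \in \Orb^*(\cC\cM_K)} \bQ[\Orb(\Phi)]$ as a direct sum of $\bQ\Gamma$-submodules, one for each orbit, using the chosen system $\Orb^*(\cC\cM_K)$ of orbit representatives. Since the Artin $L$-function is additive in the representation, this gives
\begin{equation*}
L(\rho, s) = \prod_{\Phi \in \Orb^*(\cC\cM_K)} L\bigl(\bQ[\Orb(\Phi)], s\bigr).
\end{equation*}
Next, for each $\Phi$ I would apply Proposition~\ref{prop:OrbPhiHomhatKLiso}: it provides a $\Gamma$-isomorphism $\Orb(\Phi) \simeq \Hom(\hat{K}_\Phi, L)$ and, as its proof records, the stabiliser of $\Phi$ in $\Gamma$ is $\Gamma^{\hat{K}_\Phi}$. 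Hence $\bQ[\Orb(\Phi)] \simeq \bQ[\Gamma/\Gamma^{\hat{K}_\Phi}]$ is the representation $\mathrm{Ind}_{\Gamma^{\hat{K}_\Phi}}^{\Gamma}\mathbf{1}$ induced from the trivial character of $\Gamma^{\hat{K}_\Phi}$. By the inductivity of Artin $L$-functions, $L(\mathrm{Ind}_{\Gamma^{\hat{K}_\Phi}}^{\Gamma}\mathbf{1}, s)$ equals the Artin $L$-function over $\hat{K}_\Phi$ of the trivial representation, namely $\zeta_{\hat{K}_\Phi}(s)$. Substituting into the previous display yields~\eqref{eq:LrsprodzetahatK}.

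The argument is purely formal; the only points to be careful about are to quote the stabiliser computation already contained in the proof of Proposition~\ref{prop:OrbPhiHomhatKLiso} and to invoke the two standard properties of Artin $L$-functions — additivity under direct sums and compatibility with induction — which apply without fuss here, every representation in sight being a permutation representation. I do not foresee any genuine difficulty.
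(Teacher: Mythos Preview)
Your argument is correct and is essentially the same as the paper's: both decompose $\bQ[\cC\cM_K]$ into its $\Gamma$-orbits, invoke Proposition~\ref{prop:OrbPhiHomhatKLiso} to identify each orbit with $\Hom(\hat{K}_\Phi,L)$, and then recognise the Artin $L$-function of each summand as $\zeta_{\hat{K}_\Phi}(s)$. The paper states this last step by recalling that $\zeta_F(s)$ is the Artin $L$-function attached to $\bZ[\Hom(F,\bC)]$, whereas you spell it out via induction of the trivial character; these are the same fact.
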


\begin{proof}
On sait que, si $F$ est un corps de nombres, alors $\zeta_F(s)$ est la fonction d'Artin associ\'ee au tore $T^F$, dont le groupe de caract\`eres est isomorphe \`a $\bZ[\Hom(F,\bC)]$. Si donc $\Phi\in \cC\cM_K$, on tire de la proposition~\ref{prop:OrbPhiHomhatKLiso} que la fonction d'Artin associ\'ee \`a l'orbite de $\Phi$ est \'egale \`a $\zeta_{\hat{K}_\Phi}(s)$ et (\ref{eq:LrsprodzetahatK}) en d\'ecoule aussit\^ot de la proposition~\ref{prop:Frob}.
\end{proof}

\section{Produits eul\'eriens}\label{sec:prodeul}

\subsection{La fonction $Z(K,s)$.} Pour tout $n\in \bN^*$, on pose $\cI_n=\{\ga\in I_K^{w,+}\mid \ga\overline{\ga}=n\cO_K\}$. Consid\'erons, comme dans l'introduction, la s\'erie de Dirichlet
\begin{equation*}
Z(K,s)=\sum_{\ga\in I^{w}_K}\N_{K/\bQ}(\ga)^{-s/g},
\end{equation*}
vue pour le moment comme une s\'erie formelle. Elle s'\'ecrit alors $\sum_{n\geq 1}b_K(n)n^{-s}$ avec $b_K(n)=\sharp\, \cI_n$.

\begin{lemm} \label{lemm:bKmult}
La fonction $n\mapsto b_K(n)$ est multiplicative.
\end{lemm}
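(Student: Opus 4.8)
The plan is to fix coprime positive integers $m$ and $n$ and construct a bijection $\cI_{mn}\simeq\cI_m\times\cI_n$; combined with the trivial observation that $\cI_1=\{\cO_K\}$ (if $\ga$ is integral and $\ga\overline{\ga}=\cO_K$, then $\cO_K=\ga\overline{\ga}\subseteq\ga\subseteq\cO_K$, so $\ga=\cO_K$, giving $b_K(1)=1$), this yields the multiplicativity of $n\mapsto b_K(n)$.

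The construction rests on one structural remark: since complex conjugation $c$ is an automorphism of $K$ fixing $\bQ$, it permutes the prime ideals of $\cO_K$ above any given rational prime $p$ amongst themselves. Hence, for any integral ideal $\gb$ and any set $S$ of rational primes, the ``$S$-part'' $\gb_S$ of $\gb$ (the product of the primary components of $\gb$ at primes of $\cO_K$ dividing some $p\in S$) satisfies $\overline{\gb_S}=(\overline{\gb})_S$. Now let $\ga\in\cI_{mn}$. Every prime divisor of $\ga$ divides $\ga\overline{\ga}=mn\cO_K$, hence lies above a prime dividing $mn$; writing $S$, resp. $S'$, for the (disjoint, since $\gcd(m,n)=1$) sets of primes dividing $m$, resp. $n$, we obtain a factorization $\ga=\gb\gc$ with $\gb=\ga_S$ and $\gc=\ga_{S'}$. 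By the remark, $\ga\overline{\ga}=(\gb\overline{\gb})(\gc\overline{\gc})$ with $\gb\overline{\gb}$ supported in $S$ and $\gc\overline{\gc}$ supported in $S'$; comparing with the analogous decomposition $mn\cO_K=(m\cO_K)(n\cO_K)$ (where $m\cO_K$ is supported in $S$ and $n\cO_K$ in $S'$) and using uniqueness of factorization of ideals prime by prime, we get $\gb\overline{\gb}=m\cO_K$ and $\gc\overline{\gc}=n\cO_K$. Thus $\gb\in\cI_m$, $\gc\in\cI_n$, and $\ga\mapsto(\gb,\gc)$ defines a map $\cI_{mn}\to\cI_m\times\cI_n$.

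For the inverse, take $(\gb,\gc)\in\cI_m\times\cI_n$. From $\gb\overline{\gb}=m\cO_K$, every prime divisor of $\gb$ divides $m\cO_K$, and likewise every prime divisor of $\gc$ divides $n\cO_K$; as $\gcd(m,n)=1$ these are disjoint, so $\gb$ and $\gc$ are coprime integral ideals, $\gb\gc$ is integral, and $(\gb\gc)\overline{(\gb\gc)}=(\gb\overline{\gb})(\gc\overline{\gc})=(m\cO_K)(n\cO_K)=mn\cO_K$, whence $\gb\gc\in\cI_{mn}$. Since $\gb$ is exactly the part of $\gb\gc$ above $m$ and $\gc$ the part above $n$, the two constructions are mutually inverse, and therefore $b_K(mn)=\sharp\,\cI_{mn}=\sharp\,\cI_m\cdot\sharp\,\cI_n=b_K(m)b_K(n)$. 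There is no real obstacle here; the only point needing attention is the book-keeping with supports of ideals, together with the observation that $c$ preserves the rational prime lying below each prime of $\cO_K$, which is precisely what makes the $m$-part/$n$-part decomposition compatible with conjugation.
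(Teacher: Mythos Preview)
Your argument is correct and is essentially the same as the paper's own proof: both establish $b_K(n_0n_1)=b_K(n_0)b_K(n_1)$ for coprime $n_0,n_1$ by using unique factorization in the Dedekind ring $\cO_K$ to split $\ga\in\cI_{n_0n_1}$ uniquely as $\ga_0\ga_1$ with $\ga_i\overline{\ga}_i=n_i\cO_K$. Your version simply makes explicit the bookkeeping (supports of ideals, compatibility of the $S$-part decomposition with complex conjugation) that the paper leaves to the reader.
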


\begin{proof}
Rappelons qu'il s'agit de montrer que si $n=n_0n_1$ avec $n_0$, $n_1$ deux entiers positifs premiers entre eux, alors $b_K(n)=b_K(n_0)b_K(n_1)$. Or, $\cO_K$ \'etant un anneau de Dedekind, si $\ga\in I^{w}_K$ et si $\ga\overline{\ga}=n\cO_K$, alors il existe unique $\ga_0$, $\ga_1\in I^{w}_K$ tels que $\ga=\ga_0\ga_1$, $\ga_0\overline{\ga}_0=n_0\cO_K$ et $\ga_1\overline{\ga}_1=n_1\cO_K$. Le lemme en d\'ecoule aussit\^ot.
\end{proof}

Il suit que $Z(K,s)$ poss\`ede un produit eul\'erien. Notons $Z_p(K,s)$ le facteur de ce produit correspondant au nombre premier $p$, de sorte que $Z_p(K,s)=\sum_{k\geq 0}b_K(p^k)p^{-ks}$. 

\begin{lemm} \label{lemm:bKpk} Soit $p$ un nombre premier non ramifi\'e dans $L$.

\case{i} On suppose que $p$ se d\'ecompose dans $K$ en $2r$ places dont aucune n'est fix\'ee par la conjugaison complexe. Si $k\geq 0$ est un entier, alors
\begin{equation*}
b_K(p^k)=(k+1)^r.
\end{equation*}

\case{ii} On suppose qu'au moins une des places de $K$ divisant $p$ est fix\'ee par la conjugaison complexe. Si $2r'$ ($r'\geq 0$) des places divisant $p$ ne sont pas fix\'ees par la conjugaison complexe, alors
\begin{equation*}
b_K(p^k)=\begin{cases} (k+1)^{r'} \text{ si $k$ est pair}\\
0 \text{ si $k$ est impair}.
\end{cases}
\end{equation*}
\end{lemm}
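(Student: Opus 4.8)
\emph{Proposal.} The plan is to reduce the computation to an elementary count of integral ideals supported over $p$. First I would note that $\cI_{p^k}$ is precisely the set of integral ideals $\ga$ of $\cO_K$ with $\ga\overline{\ga}=p^k\cO_K$: such an $\ga$ automatically lies in $I_K^{w,+}$ since $p^k\in\bQ^\times$, and conversely every element of $\cI_{p^k}$ is of this shape by definition. As $\overline{\ga}$ is integral, $\ga$ divides $\ga\overline{\ga}=p^k\cO_K$, so $\ga$ is supported only at the primes of $\cO_K$ lying above $p$. Since $p$ is unramified in $L$, hence in $K$, we may write $p\cO_K=\gq_1\cdots\gq_s$ with the $\gq_i$ pairwise distinct; complex conjugation $c$ permutes the $\gq_i$, and since $c^2=\id$ every orbit has size one or two. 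An orbit of size one is a self-conjugate prime ($\overline{\gq}=\gq$), corresponding to a place of $K$ fixed by $c$; an orbit of size two is a pair $\{\gq,\overline{\gq}\}$ with $\gq\neq\overline{\gq}$, corresponding to two places exchanged by $c$.

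Next I would parametrise $\ga=\prod_i\gq_i^{e_i}$ with $e_i\geq 0$ and translate the equation $\ga\overline{\ga}=p^k\cO_K$ prime by prime. Writing $\overline{\gq_i}=\gq_{i'}$, the exponent of $\gq_i$ in $\ga\overline{\ga}$ equals $e_i+e_{i'}$, while the exponent of $\gq_i$ in $p^k\cO_K$ is $k$ (the $\gq_i$ being unramified), so the only constraint is $e_i+e_{i'}=k$ for each $i$. At a self-conjugate prime $\gp$ (where $i=i'$) this reads $2e=k$, which has the unique solution $e=k/2$ when $k$ is even and no solution when $k$ is odd. At a conjugate pair $\{\gq,\overline{\gq}\}$ it reads $e_\gq+e_{\overline{\gq}}=k$ with $e_\gq,e_{\overline{\gq}}\geq 0$, which has exactly $k+1$ solutions. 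The constraints attached to distinct self-conjugate primes and to distinct conjugate pairs are independent, so $b_K(p^k)$ is the product of these local counts.

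Finally I would assemble the two cases. In case \case{i} there is no self-conjugate prime and there are exactly $r$ conjugate pairs, whence $b_K(p^k)=(k+1)^r$. In case \case{ii} there are $t\geq 1$ self-conjugate primes and $r'$ conjugate pairs, whence $b_K(p^k)=(k+1)^{r'}$ when $k$ is even, while when $k$ is odd the vanishing local factor at any one of the $t\geq 1$ self-conjugate primes forces $b_K(p^k)=0$. The only point that really needs attention is the bookkeeping of the action of $c$ on the primes above $p$ and the parity obstruction it produces for odd $k$; the remainder is the elementary count of lattice points on a simplex. I would also remark that the Frobenius description of Theorem~\ref{theo:Frob} is not needed here, since the hypotheses of the lemma are phrased directly in terms of the splitting type of $p$ in $K$.
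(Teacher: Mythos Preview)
Your proposal is correct and follows essentially the same argument as the paper: factor $\ga$ over the primes of $K$ above $p$, translate $\ga\overline{\ga}=p^k\cO_K$ into the exponent conditions $e_i+e_{i'}=k$ on each $c$-orbit, and count solutions orbit by orbit. The paper's proof is slightly terser (it writes out case~\case{i} explicitly and dismisses~\case{ii} as analogous), but your unified bookkeeping of self-conjugate primes versus conjugate pairs is the same idea with a bit more detail spelled out.
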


\begin{proof} \case{i} Par hypoth\`ese la factorisation en id\'eaux premiers de $p\cO_K$ s'\'ecrit $\gp_1\overline{\gp}_1\gp_2\overline{\gp}_2\cdots \gp_r\overline{\gp}_r$ avec $\overline{\gp}_i\neq \gp_i$ pour tout $i\in \{1,\dots, r\}$ et $\gp_j\neq \gp_i$, $\overline{\gp}_i$ lorsque $i\neq j$. Soit $\ga\in I^{w}_K$ tel que $\ga\overline{\ga}=p^k\cO_K$. Alors la factorisation de $\ga$ en id\'eaux premiers est de la forme $\ga=\gp_1^{a_1}\overline{\gp}_1^{b_1}\gp_2^{a_2}\overline{\gp}_2^{b_2}\cdots \gp_r^{a_r}\overline{\gp}_r^{b_r}$ avec des entiers positifs $a_i$, $b_i$. La condition $\ga\overline{\ga}=p^k\cO_K$ \'equivaut alors \`a $a_i+b_i=k$ pour tout $i\in \{1,\dots, r\}$. Ainsi, pour tout $i$, $a_i$ peut prendre tous les $k+1$ valeurs $0$, $1$, \dots, $k$ avec $b_i=k-a_i$. On conclut que $b_K(p^k)=(k+1)^r$.

Le point \case{ii} se d\'emontre de la m\^eme mani\`ere.
\end{proof}

\begin{lemm} \label{lemm:PnQnpolys} Notons $n\geq 0$ un entier.

\case{i} Il existe un unique polyn\^ome $P_n(x)\in \bZ[x]$ tel que
\begin{equation*}
\sum_{k\geq 0}(k+1)^nx^k=P_n(x)(1-x)^{-n-1}, \qquad |x|<1.
\end{equation*}
On a $P_0(x)=1$, Si $n\geq 1$, $P_n$ est unitaire de degr\'e $n-1$, $P_n(0)=1$, le coefficient de $x$ dans $P_n$ est $2^n-(n+1)$ et tous ses coefficients sont positifs. 

\case{ii} Il existe un unique polyn\^ome $Q_n(x)\in \bZ[x]$ tel que
\begin{equation*}
\sum_{\ell\geq 0}(2\ell+1)^nx^{2\ell}=Q_n(x)(1-x^2)^{-n-1},  \qquad |x|<1.
\end{equation*}
On a $Q_0(x)=1$. Si $n\geq 1$, $Q_n(0)=1$ et $Q_n$ est un polyn\^ome unitaire en $x^2$ de degr\'e $n$ dont tous ses coefficients sont positifs.
\end{lemm}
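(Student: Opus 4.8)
The plan is to handle $P_n$ first, by an induction that produces it explicitly as a numerator, and then to deduce $Q_n$ from $P_n$ by extracting an ``even part''. For uniqueness, note that in both statements the factor $(1-x)^{-n-1}$ (resp.\ $(1-x^2)^{-n-1}$) is a unit of the power series ring $\bZ[[x]]$, having constant term $1$; hence if a numerator with the required identity exists it is uniquely determined as a power series, and two polynomials equal as power series are equal as polynomials. So it suffices to exhibit polynomials with the stated properties.

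\emph{Construction and properties of $P_n$.} Write $f_n(x)=\sum_{k\geq 0}(k+1)^nx^k$, so $f_0(x)=(1-x)^{-1}$ and $P_0=1$. Differentiating termwise gives the identity $f_n(x)=\frac{d}{dx}\bigl(xf_{n-1}(x)\bigr)$, and if inductively $f_{n-1}(x)=P_{n-1}(x)(1-x)^{-n}$ with $P_{n-1}\in\bZ[x]$, the quotient rule yields
\begin{equation*}
P_n(x)=(1-x)\bigl(P_{n-1}(x)+xP_{n-1}'(x)\bigr)+nxP_{n-1}(x)\in\bZ[x],
\end{equation*}
proving existence. Writing $P_{n-1}(x)=\sum_k a_{n-1,k}x^k$, this recursion becomes $a_{n,k}=(k+1)a_{n-1,k}+(n-k)a_{n-1,k-1}$, the classical recursion for the Eulerian numbers; since $a_{n-1,k-1}=0$ once $k\geq n$ (the degree of $P_{n-1}$ being $n-2$ for $n\geq2$), the coefficient $n-k$ is positive whenever it actually contributes, so induction starting from $P_0=1$ shows that all $a_{n,k}\geq0$. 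Taking $k=0$ gives $a_{n,0}=a_{n-1,0}=1$, i.e.\ $P_n(0)=1$; comparing leading terms (the case $n=1$ being $P_1=1$, and for $n\geq2$ the leading coefficient of the right-hand side being $n-(n-1)=1$) gives $\deg P_n=n-1$ and monicity for $n\geq1$; and comparing coefficients of $x$ gives $a_{n,1}=2a_{n-1,1}+(n-1)$, which together with $a_{1,1}=0$ yields $a_{n,1}=2^n-(n+1)$ at once.

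\emph{Construction and properties of $Q_n$.} Retaining only the even-exponent terms of $f_n$ and using $f_n(\pm x)=P_n(\pm x)(1\mp x)^{-n-1}$ gives
\begin{align*}
\sum_{\ell\geq0}(2\ell+1)^nx^{2\ell}&=\tfrac12\bigl(f_n(x)+f_n(-x)\bigr)\\
&=\frac{\tfrac12\bigl(P_n(x)(1+x)^{n+1}+P_n(-x)(1-x)^{n+1}\bigr)}{(1-x^2)^{n+1}},
\end{align*}
so that $Q_n(x)$ is precisely the even part of the polynomial $g_n(x):=P_n(x)(1+x)^{n+1}$, which lies in $\bZ[x]$. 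An even part of an integer polynomial is again an integer polynomial and is a polynomial in $x^2$; this gives existence together with the fact that $Q_n$ is a polynomial in $x^2$ with coefficients in $\bZ$. The remaining claims reduce to $g_n$: $Q_n(0)=g_n(0)=P_n(0)=1$; for $n\geq1$ the polynomial $g_n$ has degree $(n-1)+(n+1)=2n$ with leading coefficient $1$, and since $2n$ is even this term survives in the even part, so $Q_n$ is monic of degree $n$ in $x^2$; finally $g_n$ has nonnegative coefficients because both $P_n$ and $(1+x)^{n+1}$ do, hence so does $Q_n$.

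\emph{Where the work is.} There is no genuine obstacle here: the whole argument is elementary generating-function bookkeeping. The only point needing a little care is the induction for $P_n$ — degree, monicity, coefficient of $x$, positivity — all of which is governed by the single displayed recursion (equivalently, by recognising $P_n$ as the $n$-th Eulerian polynomial); once $P_n$ is understood, the assertions about $Q_n$ follow formally from the even-part identity.
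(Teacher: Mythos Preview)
Your proof is correct. For part \case{i} you proceed exactly as the paper does --- an induction based on the identity $f_n=(xf_{n-1})'$ leading to the recursion $P_n=(1-x)(P_{n-1}+xP_{n-1}')+nxP_{n-1}$, which coincides (after shifting the index) with the paper's formula $P_{n+1}=x(1-x)P_n'+(1+nx)P_n$; you simply make the coefficientwise consequences more explicit by writing out the Eulerian recursion $a_{n,k}=(k+1)a_{n-1,k}+(n-k)a_{n-1,k-1}$.

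For part \case{ii} you take a genuinely different route. The paper repeats the same kind of induction, obtaining a separate recursion $Q_{n+1}=x(1-x^2)Q_n'+(1+(2n+1)x^2)Q_n$ and rechecking each property. You instead observe that $\sum_{\ell}(2\ell+1)^nx^{2\ell}=\tfrac12(f_n(x)+f_n(-x))$ and hence that $Q_n$ is the even part of $g_n(x)=P_n(x)(1+x)^{n+1}$; all the claimed properties of $Q_n$ then follow immediately from those already established for $P_n$. This is shorter and avoids a second induction; the paper's approach, on the other hand, yields an explicit recursion for the $Q_n$ themselves, which could be useful if one wanted finer information about their coefficients.
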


\begin{proof} \case{i} Le cas $n=0$ est clair. En g\'en\'eral, on raisonne par r\'ecurrence sur $n$. En \'ecrivant
\begin{equation*}
\sum_{k\geq 0}(k+1)^{n+1}x^k=x\left(\sum_{k\geq 0}(k+1)^nx^k\right)'+\sum_{k\geq 0}(k+1)^nx^k
\end{equation*}
(o\`u ${}'$ d\'esigne la d\'erivation par rapport \`a $x$), on trouve la relation de r\'ecurrence
\begin{equation*}
P_{n+1}(x)=x(1-x)P'_n(x)+(1+nx)P_n(x)
\end{equation*}
ce qui permet de d\'emontrer facilement les propri\'et\'es annonc\'ees des $P_n$.

\case{ii} On raisonne de la m\^eme mani\`ere, la relation de r\'ecurrence \'etant d\'esormais $Q_{n+1}(x)=x(1-x^2)Q'_n(x)+(1+(2n+1)x^2)Q_n(x)$.
\end{proof}

\begin{prop}\label{prop:ZpKs} Soit $p$ un nombre premier non ramifi\'e dans $L$. 

\case{i} On suppose que $p$ se d\'ecompose dans $K$ en $2r$ places dont aucune n'est fix\'ee par la conjugaison complexe.  Alors
\begin{equation*}
Z_p(K,s)=P_r(p^{-s})\left(1-p^{-s}\right)^{-r-1}.
\end{equation*}

\case{ii} On suppose qu'au moins une des places de $K$ divisant $p$ est fix\'ee par la conjugaison complexe $c$. Si $2r'$ ($r'\geq 0$) des places divisant $p$ ne sont pas fix\'ees par $c$, alors
\begin{equation*}
Z_p(K,s)=Q_{r'}(p^{-s})\left(1-p^{-2s}\right)^{-r'-1}.
\end{equation*}
\end{prop}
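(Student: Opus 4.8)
The plan is to read off $Z_p(K,s)$ from the two pieces of data already assembled: the values of $b_K(p^k)$ supplied by Lemma~\ref{lemm:bKpk}, and the generating-function identities of Lemma~\ref{lemm:PnQnpolys}. The hypothesis that $p$ be unramified in $L$ puts us in exactly one of the two cases of Lemma~\ref{lemm:bKpk}, and in either case $b_K(p^k)$ is bounded above by a polynomial in $k$ of degree at most $g$; hence the series $Z_p(K,s)=\sum_{k\geq 0}b_K(p^k)p^{-ks}$ converges absolutely for $\Re(s)>0$, and throughout that half-plane it equals the value at $x=p^{-s}$ of a power series whose radius of convergence is $1$. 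So the first step is simply to record this convergence, and then to substitute $x=p^{-s}$ into the relevant identity of Lemma~\ref{lemm:PnQnpolys}.

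For \case{i}, Lemma~\ref{lemm:bKpk}\case{i} gives $b_K(p^k)=(k+1)^r$ for all $k\geq 0$, so that
\begin{equation*}
Z_p(K,s)=\sum_{k\geq 0}(k+1)^r\,(p^{-s})^k,
\end{equation*}
and Lemma~\ref{lemm:PnQnpolys}\case{i}, applied with $n=r$ and $x=p^{-s}$, yields $Z_p(K,s)=P_r(p^{-s})\,(1-p^{-s})^{-r-1}$.

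For \case{ii}, Lemma~\ref{lemm:bKpk}\case{ii} tells us that $b_K(p^k)$ vanishes for odd $k$ and equals $(k+1)^{r'}$ for even $k$; writing $k=2\ell$ we obtain
\begin{equation*}
Z_p(K,s)=\sum_{\ell\geq 0}(2\ell+1)^{r'}\,(p^{-s})^{2\ell},
\end{equation*}
and Lemma~\ref{lemm:PnQnpolys}\case{ii}, applied with $n=r'$ and $x=p^{-s}$, gives $Z_p(K,s)=Q_{r'}(p^{-s})\,(1-p^{-2s})^{-r'-1}$. There is essentially no obstacle here: the computation is purely formal once Lemmas~\ref{lemm:bKpk} and~\ref{lemm:PnQnpolys} are in hand, and the only point needing a word of justification — the legitimacy of evaluating the power-series identities at $x=p^{-s}$ — is settled by the elementary convergence remark above, which is valid on all of $\{\Re(s)>0\}$ and in particular on the region $\Re(s)\gg 0$ where the Euler product was introduced.
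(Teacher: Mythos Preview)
Your proof is correct and follows exactly the same route as the paper's own proof, which simply states that the proposition is an immediate consequence of Lemmas~\ref{lemm:bKpk} and~\ref{lemm:PnQnpolys}. You have merely spelled out the substitution $x=p^{-s}$ and added a (harmless) convergence remark.
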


\begin{proof} La proposition d\'ecoule imm\'ediatement des lemmes~\ref{lemm:bKpk} et \ref{lemm:PnQnpolys}.
\end{proof}

\begin{rema}\label{rema:ZpKszeros} On peut montrer que les polyn\^omes $P_n(x)$ (si $n\geq 3$) et $Q_n(x)$ (si $n\geq 2$) poss\`edent des racines complexes de module $>1$. Cela implique que si $g\geq 3$, certains facteurs eul\'eriens $Z_p(K,s)$ poss\`edent des z\'eros dans le demi-plan $\{\Re(s)>0\}$. Toutefois, les entiers $r$, $r'$ de la proposition~\ref{prop:ZpKs} \'etant major\'es par $g$, on voit que sur un compact donn\'e de $\{\Re(s)>0\}$, $Z_p(K,s)$ ne peut s'annuler que pour un nombre fini de nombres premiers $p$.   
\end{rema}

\begin{theo} \label{theo:prolongZsurL}
Notons $\rho:\Gamma\to \GL_{2^g}(\bQ)$ la repr\'esentation introduite dans le \subsecno~\ref{subsec:repPic} et $L(\rho,s)$ sa fonction d'Artin. Alors la fonction
$s\mapsto Z(K,s)L(\rho,s)^{-1}$
est repr\'esent\'ee par un produit eul\'erien absolument convergent sur le demi-plan $\{\Re(s)>\frac{1}{2}\}$ qui ne s'annule pas sur l'intervalle r\'eel $]\frac{1}{2},+\infty[$.  
\end{theo}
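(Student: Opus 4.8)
The plan is to compare the two Euler products factor by factor. By Lemma~\ref{lemm:bKmult} we have $Z(K,s)=\prod_p Z_p(K,s)$, and since $\rho$ is the permutation representation of $\Gamma$ on $\cC\cM_K$, at a prime $p$ unramified in $L$ the local Artin factor is
\[
L_p(\rho,s)=\det\bigl(1-\rho(\sF_p)p^{-s}\bigr)^{-1}=\prod_{C}\bigl(1-p^{-|C|s}\bigr)^{-1},
\]
where $C$ runs over the cycles of the permutation $\sF_p$ of $\cC\cM_K$ and the number of cycles of length $1$ is $\theta(\sF_p)$. Put $u=p^{-s}$ and $R_p(u)=Z_p(K,s)\,L_p(\rho,s)^{-1}$. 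The key point is that for every $p$ unramified in $L$ one has $R_p(0)=1$ and that the coefficient of $u$ in $R_p$ vanishes, so that $R_p(u)=1+O(u^2)$, the implied constant being uniform in $p$.

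To see this I would use the two cases of Proposition~\ref{prop:ZpKs} together with Theorem~\ref{theo:Frob}. In case \case{i} ($p$ splitting into $2r$ places none fixed by $c$) one has $\theta(\sF_p)=2^r$ and
\[
R_p(u)=P_r(u)\,(1-u)^{-(r+1)}\prod_{C}\bigl(1-u^{|C|}\bigr).
\]
Each of the three factors equals $1$ at $u=0$, so the coefficient of $u$ in their product is the sum of the three linear coefficients, namely $\bigl(2^{r}-(r+1)\bigr)+(r+1)-\theta(\sF_p)=0$, using the value $[u]P_r=2^{r}-(r+1)$ from Lemma~\ref{lemm:PnQnpolys}\,\case{i}. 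In case \case{ii} one has $\theta(\sF_p)=0$ and
\[
R_p(u)=Q_{r'}(u)\,(1-u^{2})^{-(r'+1)}\prod_{C}\bigl(1-u^{|C|}\bigr),
\]
where now the first two factors are power series in $u^{2}$ by Lemma~\ref{lemm:PnQnpolys}\,\case{ii} and the third has linear coefficient $-\theta(\sF_p)=0$; hence again $[u]R_p=0$ and $R_p(u)=1+O(u^2)$.

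It remains to turn this into absolute convergence and non-vanishing on $\{\Re(s)>\frac12\}$. For the uniformity, note that in each case $R_p$ is the quotient of a polynomial of degree $\le 2^{g}+g$ whose coefficients are bounded in terms of $g$ only (the $P_r$ and $Q_{r'}$ are finitely many, and $\prod_C(1-u^{|C|})$ has degree $2^{g}$ and coefficients $\le 2^{2^{g}}$) by $(1-u)^{r+1}$ or $(1-u^{2})^{r'+1}$ with $r+1,\,r'+1\le g+1$; expanding the denominator shows that the Taylor coefficients of $R_p$ at $0$ grow at most like $C_g m^{g}$, uniformly in $p$. Hence $|R_p(s)-1|\ll_{g,\sigma}p^{-2\Re(s)}$ for $\Re(s)\ge\sigma>\frac12$, so $\prod_{p\text{ unramified}}R_p(s)$ converges absolutely and locally uniformly there. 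The finitely many primes ramified in $L$ cause no trouble: $Z_p(K,s)$ is a rational function of $p^{-s}$ (a generating series of lattice points in a polytope, with polynomially bounded coefficients) and $L_p(\rho,s)$ is a product of local factors of the Dedekind zeta functions $\zeta_{\hat K_\Phi}$ of Proposition~\ref{prop:Lrhosprodzeta}, so $R_p$ is rational and holomorphic on $\{\Re(s)>0\}$; thus $\prod_p R_p(s)$ is holomorphic on $\{\Re(s)>\frac12\}$ and coincides with $Z(K,s)L(\rho,s)^{-1}$ on $\{\Re(s)>1\}$. Non-vanishing on $\,]\frac12,+\infty[\,$ then follows from positivity: the $P_r$ and $Q_{r'}$ have non-negative coefficients and value $1$ at $0$, while $(1-u)^{-(r+1)}$, $(1-u^{2})^{-(r'+1)}$ and each $1-u^{|C|}$ are $>0$ on $[0,1[$, so $R_p(s)>0$ for real $s>\frac12$; and at the ramified primes $Z_p(K,s)=\sum_k b_K(p^k)p^{-ks}>0$ while $1-\N\mathfrak{q}^{-s}>0$ for every prime $\mathfrak q$ above $p$, so $R_p(s)>0$ there as well.

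The crux is the exact cancellation of the $u^{1}$-coefficient of $R_p$: this is what pushes the abscissa of convergence down from $1$ to $\frac12$, and it depends precisely on matching $\theta(\sF_p)$ (Theorem~\ref{theo:Frob}) against the linear coefficient of $P_r$ supplied by Lemma~\ref{lemm:PnQnpolys}. The remaining work — making the estimate uniform in $p$ and disposing of the ramified primes — is routine but must be carried out with some care.
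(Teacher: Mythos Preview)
Your proposal is correct and follows essentially the same approach as the paper: both compare Euler factors using Proposition~\ref{prop:ZpKs} and Theorem~\ref{theo:Frob} to show $Z_p(K,s)L_p(\rho,s)^{-1}=1+O(p^{-2s})$ uniformly in unramified $p$, then dispose of the ramified primes and deduce non-vanishing on $]\tfrac12,+\infty[$ by positivity. The only cosmetic difference is that you write out the cycle decomposition $L_p(\rho,s)^{-1}=\prod_C(1-u^{|C|})$ explicitly, whereas the paper simply expands $L_p(\rho,s)=1+\theta(\sF_p)p^{-s}+O(p^{-2s})$ from the determinant; the underlying cancellation $(2^r-(r+1))+(r+1)-2^r=0$ is identical.
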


\begin{proof}
Soit $p$ un nombre premier non ramifi\'e dans $L$. Par d\'efinition, le facteur eul\'erien $L_p(\rho,s)$ en $p$ de $L(\rho,s)$ est
\begin{equation*}
L_p(\rho,s)=\det{\left(1-\rho(\sF_p)p^{-s}\right)^{-1}}
\end{equation*}
o\`u, comme pr\'ec\'edemment, $\sF_p$ est le Frobenius d'une place de $L$ divisant $p$. Rappelons que, les racines complexes du polyn\^ome $\det{\left(x-\rho(\sF_p)\right)}$ \'etant de module $1$, $L_p(\rho, s)\neq 0$ lorsque $\Re(s)>0$.

En d\'eveloppant le d\'eterminant, on trouve
\begin{equation*}
L_p(\rho,s)=1+\theta(\sF_p)p^{-s}+O(p^{-2s}),
\end{equation*}
o\`u \`a nouveau $\theta$ d\'esigne le caract\`ere de $\rho$. 

Supposons que $p$ se d\'ecompose en $2r$ places dans $K$, dont aucune n'est fix\'ee par la conjugaison complexe. D'apr\`es le th\'eor\`eme~\ref{theo:Frob}, on a $\theta(\sF_p)=2^r$, d'o\`u $L_p(\rho,s)=1+2^rp^{-s}+O(p^{-2s})$. D'autre part, d'apr\`es le point \case{i} de la proposition~\ref{prop:ZpKs} et le fait que le coefficient de $x$ dans $P_r(x)$ est $2^r-(r+1)$, on voit qu'on a \'egalement $Z_p(K,s)=1+2^rp^{-s}+O(p^{-2s})$. Par cons\'equent, $Z_p(K,s)L_p(\rho,s)^{-1}=1+O(p^{-2s})$.

Supposons qu'il existe une place $v$ de $K$ divisant $p$ qui est stable par conjugaison complexe. Alors les places de $L$ divisant $p$ ont toutes le m\^eme degr\'e r\'esiduel, qui ne peut pas \^etre \'egal \`a $1$ car le degr\'e r\'esiduel de $v$ dans $K/K_0$ est \'egal \`a $2$. Par cons\'equent, $L_p(\rho,s)=1+O(p^{-2s})$. D'apr\`es le point \case{ii} de la proposition~\ref{prop:ZpKs}, on a \'egalement $Z_p(K,s)=1+O(p^{-2s})$. On en tire \`a nouveau que $Z_p(K,s)L_p(\rho,s)^{-1}=1+O(p^{-2s})$.

En utilisant le fait que les entiers $r$, $r'$ de la proposition~\ref{prop:ZpKs} sont major\'es par $g$, on voit que les constantes impliqu\'ees par la notation $O$ peuvent \^etre choisies uniquement en fonction de $K$ (voire de $g$). Soit $\cK$ un compact de $\Re(s)>\frac{1}{2}$. Quitte \`a enlever le nombre fini de facteurs poss\'edant un z\'ero dans $K$ (voir la remarque~\ref{rema:ZpKszeros}), on en tire la convergence uniforme du produit $\prod_{p \text{ non ram}}Z_p(K,s)L_p(\rho,s)^{-1}$ sur $\cK$. Or, cela implique par des r\'esultats bien connus concernant les s\'eries de Dirichlet que le produit est une fonction analytique sur ce demi-plan. 

Supposons d\'esormais que $s\in ]0,+\infty[$. En substituant $x=p^{-s}$ dans la s\'erie $\sum_{k\geq 0}(k+1)^nx^k$ on voit que $Z_p(K,s)\geq 1$. D'autre part, $L_p(\rho, s)\neq 0$. Par cons\'equent, $Z_p(K,s)L_p(\rho,s)^{-1}\neq 0$. D'apr\`es les propri\'et\'es usuelles de produits uniform\'ement convergentes et ce qui a d\'ej\`a \'et\'e \'etabli, cela entra\^{\i}ne que $\prod_{p \text{ non ram}}Z_p(K,s)L_p(\rho,s)^{-1}$ ne s'annule pas lorsque $s>\frac{1}{2}$.  

Soit $p$ un nombre premier ramifi\'e dans $L$. En reprenant la d\'emonstration du lemme, on voit qu'il existe une constante $C>0$ telle que $b_K(p^k)\leq C(k+1)^g$ pour tout $k\geq 1$ puis que $Z_p(K,s)\geq 1$ si $s\in ]0,+\infty[$. \`A nouveau, $L_p(\rho,s)\neq 0$, d'o\`u $Z_p(K,s)L_p(K,s)^{-1}\neq 0$. En rapprochant cela avec ce qui pr\'ec\`ede, on conclut que le produit eul\'erien d\'efinissant $Z(K,s)L(\rho,s)^{-1}$ converge absolument lorsque $\Re(s)>\frac{1}{2}$ et ne s'annule pas lorsque $s\in ]\frac{1}{2},+\infty[$, d'o\`u le th\'eor\`eme.   
\end{proof}

\begin{coro}\label{coro:merZKs}
La fonction $s\mapsto Z(K,s)$ se prolonge en une fonction m\'eromorphe sur le demi-plan $\{\Re(s)> \frac{1}{2}\}$. Elle y est holomorphe sauf en $s=1$, o\`u elle poss\`ede un p\^ole d'ordre \'egal \`a $\rho_K$.
\end{coro}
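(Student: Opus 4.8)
Le plan est de factoriser $Z(K,s)$ comme le produit du produit eulérien étudié dans le théorème~\ref{theo:prolongZsurL} par la fonction $L$ d'Artin $L(\rho,s)$, puis d'utiliser l'expression de cette dernière comme produit de fonctions zêta de Dedekind.

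Je noterais $h(s)$ la fonction holomorphe sur $\{\Re(s)>\frac12\}$ définie par le produit eulérien $\prod_p Z_p(K,s)L_p(\rho,s)^{-1}$, dont le théorème~\ref{theo:prolongZsurL} affirme qu'il converge absolument sur ce demi-plan et ne s'annule pas sur l'intervalle $]\frac12,+\infty[$; en particulier $h(1)\neq 0$. Pour $\Re(s)$ assez grand on a l'identité $Z(K,s)=h(s)L(\rho,s)$, les deux membres étant alors donnés par des produits eulériens absolument convergents; c'est cette identité qui fournira le prolongement de $Z(K,s)$ à $\{\Re(s)>\frac12\}$, une fois connu celui de $L(\rho,s)$. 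On ne dépasse pas ce demi-plan parce que le produit eulérien définissant $h$ n'y est contrôlé que là.

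Ensuite, la proposition~\ref{prop:Lrhosprodzeta} donne $L(\rho,s)=\prod_{\Phi\in\Orb^*(\cC\cM_K)}\zeta_{\hat{K}_\Phi}(s)$. Chaque fonction zêta de Dedekind $\zeta_{\hat{K}_\Phi}(s)$ se prolonge (résultat classique) en une fonction méromorphe sur $\bC$, holomorphe en dehors d'un pôle simple en $s=1$; donc $L(\rho,s)$ se prolonge méromorphiquement à $\bC$, y est holomorphe sur $\bC\setminus\{1\}$, et a en $s=1$ un pôle d'ordre exactement égal au cardinal de $\Orb^*(\cC\cM_K)$, c'est-à-dire au nombre d'orbites de $\Gamma$ sur $\cC\cM_K$. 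Il reste à identifier ce nombre avec $\rho_K$: d'après la proposition~\ref{prop:Frob} les $\Gamma$-ensembles $\cP$ et $\cC\cM_K$ sont isomorphes, donc $\Gamma$ y a le même nombre d'orbites, et d'après le corollaire~\ref{coro:dimPicQ} ce nombre vaut $\dim_{\bQ}\bigl(\bQ\otimes\Pic(X^K)_{\bQ}\bigr)=\rho_K$.

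On conclut: sur $\{\Re(s)>\frac12\}$, $Z(K,s)=h(s)L(\rho,s)$ est le produit d'une fonction holomorphe par une fonction holomorphe sur $\bC\setminus\{1\}$, donc une fonction méromorphe, holomorphe sur $\{\Re(s)>\frac12\}\setminus\{1\}$; et en $s=1$, comme $h$ est holomorphe et non nulle tandis que $L(\rho,s)$ y a un pôle d'ordre $\rho_K$, la fonction $Z(K,s)$ y a un pôle d'ordre exactement $\rho_K$. Je ne m'attends à aucune difficulté sérieuse, le corollaire étant essentiellement un assemblage du théorème~\ref{theo:prolongZsurL}, des propositions~\ref{prop:Lrhosprodzeta} et \ref{prop:Frob} et du corollaire~\ref{coro:dimPicQ}; les seuls points demandant un peu de soin sont l'appel au prolongement méromorphe (classique) des $\zeta_{\hat{K}_\Phi}$ et la vérification que $h(1)\neq 0$, sans laquelle on n'obtiendrait qu'un pôle d'ordre au plus $\rho_K$.
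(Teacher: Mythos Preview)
Your proposal is correct and follows essentially the same route as the paper: the paper's proof also invokes the th\'eor\`eme~\ref{theo:prolongZsurL}, the propositions~\ref{prop:Lrhosprodzeta} and~\ref{prop:Frob}, together with the classical fact that Dedekind zeta functions are holomorphic on $\bC\setminus\{1\}$ with a simple pole at $s=1$. Your explicit appeal to the corollaire~\ref{coro:dimPicQ} to identify the number of orbits with $\rho_K$, and your remark that $h(1)\neq 0$ is what guarantees the pole order is exactly $\rho_K$ (and not merely at most $\rho_K$), make explicit two points the paper leaves to the reader.
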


\begin{proof}
Cela d\'ecoule de la proposition~\ref{prop:Lrhosprodzeta}, de la proposition~\ref{prop:Frob} et du th\'eor\`eme~\ref{theo:prolongZsurL}, ainsi que du fait que la fonction z\^eta de Dedekind d'un corps de nombres a un p\^ole d'ordre un en $s=1$ et est holomorphe sur $\bC- \{1\}$. 
\end{proof}

\subsection{Les fonctions $Z(K,\chi,s)$ et $Z_0(K,s)$} \label{subsection:Zchis} Pour tout caract\`ere $\chi$ du groupe $\cC^{w}_K$, on pose
\begin{equation*}
Z(K,\chi,s)=\sum_{\ga\in I^{w}_K}\chi(\ga)\N_{K/\bQ}(\ga)^{-s/g},
\end{equation*}
de sorte que $Z(K,1,s)=Z(K,s)$. On a alors
\begin{equation*}
Z(K,\chi,s)=\sum_{n\geq 1}b_K(n,\chi)n^{-s},
\end{equation*}
o\`u $b_K(n,\chi)=\sum_{\ga\in \cI_n}\chi(\ga)$. 

\begin{lemm} \label{lemm:bKchimult}
Quel que soit $\chi$, la fonction $n\mapsto b_K(n,\chi)$ est multiplicative.
\end{lemm}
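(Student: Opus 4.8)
Le plan est de reprendre la d\'emonstration du lemme~\ref{lemm:bKmult} en y ins\'erant le caract\`ere $\chi$. Soit $n=n_0n_1$ avec $n_0$, $n_1\in\bN^*$ premiers entre eux. Puisque $\cO_K$ est un anneau de Dedekind, l'application $(\ga_0,\ga_1)\mapsto\ga_0\ga_1$ \'etablit une bijection de $\cI_{n_0}\times\cI_{n_1}$ sur $\cI_n$: en effet, si $\ga\in\cI_n$, sa factorisation en id\'eaux premiers se scinde selon que chaque id\'eal premier divise $n_0\cO_K$ ou $n_1\cO_K$, ce qui fournit une unique d\'ecomposition $\ga=\ga_0\ga_1$ avec $\ga_i\overline{\ga}_i=n_i\cO_K$; la coprimalit\'e de $n_0$ et $n_1$ assure que $\ga_0$ et $\ga_1$ appartiennent \`a $I^{w}_K$, et r\'eciproquement le produit d'un \'el\'ement de $\cI_{n_0}$ par un \'el\'ement de $\cI_{n_1}$ appartient visiblement \`a $\cI_n$.

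Ensuite, $\chi$ \'etant un caract\`ere du groupe quotient $\cC^{w}_K=I^{w}_K/P^{w}_K$, il se rel\`eve en un morphisme de groupes $I^{w}_K\to\bC^\times$, encore not\'e $\chi$; en particulier $\chi(\ga_0\ga_1)=\chi(\ga_0)\chi(\ga_1)$. En combinant ceci avec la bijection pr\'ec\'edente, on obtient
\begin{equation*}
b_K(n,\chi)=\sum_{\ga\in\cI_n}\chi(\ga)=\sum_{\ga_0\in\cI_{n_0}}\sum_{\ga_1\in\cI_{n_1}}\chi(\ga_0)\chi(\ga_1)=\Bigl(\sum_{\ga_0\in\cI_{n_0}}\chi(\ga_0)\Bigr)\Bigl(\sum_{\ga_1\in\cI_{n_1}}\chi(\ga_1)\Bigr)=b_K(n_0,\chi)\,b_K(n_1,\chi),
\end{equation*}
ce qui est l'\'egalit\'e cherch\'ee.

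Il n'y a pas d'obstacle s\'erieux; le seul point m\'eritant une phrase est que les facteurs $\ga_0$, $\ga_1$ de la d\'ecomposition $\ga=\ga_0\ga_1$ appartiennent bien \`a $I^{w}_K$ et pas seulement \`a $I_K$, ce qui d\'ecoule de la propri\'et\'e correspondante pour $\ga$ jointe \`a la coprimalit\'e de $n_0$ et $n_1$, exactement comme dans la d\'emonstration du lemme~\ref{lemm:bKmult}. (De mani\`ere \'equivalente, on peut observer que, comme dans le cas $\chi=1$ d\'ej\`a trait\'e, les s\'eries formelles $\sum_{k\geq 0}b_K(p^k,\chi)p^{-ks}$ index\'ees par les nombres premiers se multiplient pour redonner $Z(K,\chi,s)$, la multiplicativit\'e de $b_K(\cdot,\chi)$ \'etant alors une simple reformulation de l'existence de ce produit eul\'erien.)
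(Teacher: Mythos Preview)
Your proof is correct and follows exactly the approach the paper intends: the paper's own proof simply reads ``C'est une g\'en\'eralisation facile de celle du lemme~\ref{lemm:bKmult}'', and you have spelled out that generalization in full, using the bijection $\cI_{n_0}\times\cI_{n_1}\to\cI_n$ together with the multiplicativity of the character $\chi$.
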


\begin{proof} C'est une g\'en\'eralisation facile de celle du lemme~\ref{lemm:bKmult}.
\end{proof}

Il suit que comme pr\'ec\'edemment, $Z(K,\chi,s)$ poss\`ede un produit eul\'erien. Notons $Z_p(K,\chi,s)$ le facteur correspondant au nombre premier $p$, c'est-\`a-dire $Z_p(K,\chi,s)=\sum_{k\geq 0}b_K(p^k,\chi)p^{-ks}$.

Si $\Phi\in \cC\cM_K$, on note $\chi_{\Phi}$ le caract\`ere $\chi\circ \N_{\hat{\Phi}}$ de $\Pic(\cO_{\hat{K}_\Phi})$ (voir le \subsecno~\ref{subsection:typesCM}). Pour des r\'esultats semblables \`a la proposition qui suit, voir aussi \cite{Odoni91}, \S~2 et \cite{BoxGru15}, Proposition~2.3.

\begin{prop} \label{prop:gaPhigP}
Soit $p$ un nombre premier non ramifi\'e dans $K$. 

\case{i} Si $\Phi\in \cC\cM_K$ et si $\gP$ est un id\'eal premier de degr\'e un de $\hat{K}_\Phi$ divisant $p$, alors $\N_{\hat{\Phi}}\gP\in \cI_p$.

\case{ii} R\'eciproquement, si $\ga\in \cI_p$, alors il existe $\Phi\in \cC\cM_K$ et un id\'eal premier $\gP$ de degr\'e un de $\hat{K}_\Phi$ divisant $p$ tel que $\N_{\hat{\Phi}}\gP=\ga$. 

\case{iii} Soit $\Phi$ (resp. $\Phi'$) $\in \cC\cM_K$ ayant la propri\'et\'e qu'il existe un id\'eal premier $\gP$ (resp. $\gP'$)  de degr\'e un de $\hat{K}_\Phi$ (resp. de $\hat{K}_{\Phi'}$) divisant $p$ tel que $\N_{\hat{\Phi}}\gP=\N_{\hat{\Phi'}}\gP'=\ga$. Alors il existe $\tau\in \Gamma$ tel que $\tau\cdot \Phi=\Phi'$ et $\tau(\gP)=\gP'$. 
\end{prop}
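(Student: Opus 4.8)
The plan is to recast the statement in terms of the Galois group $\Gamma$ and the splitting of $p$. Since $L$ is the Galois closure of $K$, the core of $\Gamma^K$ in $\Gamma$ is trivial, so $p$ is unramified in $K$ precisely when it is unramified in $L$; I would fix a prime $\gQ$ of $L$ above $p$ and let $\sigma\in\Gamma$ be its Frobenius, so $D_{\gQ}=\langle\sigma\rangle$. For $\phi\in\Hom(K,L)$ put $\gp_\phi=\phi^{-1}(\gQ\cap\phi(K))=\tilde\phi^{-1}(\gQ)\cap K$ for any lift $\tilde\phi\in\Gamma$ of $\phi$, a prime of $K$ above $p$. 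Three elementary facts will be used throughout: $\gp_{\sigma\circ\phi}=\gp_\phi$ because $\sigma$ stabilises $\gQ$; more generally $\gp_{\gamma\circ\phi}$ computed with $\gQ$ equals $\gp_\phi$ computed with $\gamma^{-1}(\gQ)$; and $\gp_{\overline\phi}=\overline{\gp_\phi}$, so that for any integral ideal $\gb$ one has $\ord_{\gp_{\overline\phi}}(\gb)=\ord_{\gp_\phi}(\overline\gb)$.

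Part \case{i} is quick. Multiplying $\N_{\hat{\Phi}}(\beta)=\prod_{\psi\in\hat{\Phi}}\psi(\beta)$ by its complex conjugate and using $\hat{\Phi}\sqcup\overline{\hat{\Phi}}=\Hom(\hat{K}_\Phi,\bC)$ gives $\N_{\hat{\Phi}}(\beta)\,\overline{\N_{\hat{\Phi}}(\beta)}=\N_{\hat{K}_\Phi/\bQ}(\beta)$ for $\beta\in\hat{K}_\Phi^\times$. Since the induced map $\N_{\hat{\Phi}}$ on ideals is multiplicative, carries $(\beta)$ to $(\N_{\hat{\Phi}}(\beta))$, and $\Pic(\cO_{\hat{K}_\Phi})$ is finite, this propagates to ideals: $\N_{\hat{\Phi}}(\gb)\,\overline{\N_{\hat{\Phi}}(\gb)}=\N_{\hat{K}_\Phi/\bQ}(\gb)\,\cO_K$. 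Taking $\gb=\gP$, which satisfies $\N_{\hat{K}_\Phi/\bQ}(\gP)=p$ because $\gP$ has degree one, and noting that $\N_{\hat{\Phi}}$ preserves integrality, one gets $\N_{\hat{\Phi}}(\gP)\in\cI_p$.

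The engine for \case{ii} and \case{iii} is the following lemma: \emph{for any prime $\gQ$ of $L$ above $p$ with Frobenius $\sigma$, and any $\Phi\in\cC\cM_K$ with $\sigma\cdot\Phi=\Phi$, the prime $\gP:=\gQ\cap\hat{K}_\Phi$ of $\hat{K}_\Phi$ has degree one above $p$, and $\Phi=\{\phi\in\Hom(K,L):\ord_{\gp_\phi}(\N_{\hat{\Phi}}(\gP))=1\}$, where $\gp_\phi$ is computed with this $\gQ$.} For the first part, $\sigma\cdot\Phi=\Phi$ means $\sigma S=S$ for the lift $S\subseteq\Gamma$ of $\Phi$, i.e. $\sigma\in H:=\{\gamma:\gamma S=S\}=\Gal(L/\hat{K}_\Phi)$, so $\sigma$ fixes $\hat{K}_\Phi$ and $\gP$ has residue degree $1$. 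For the second, I would evaluate $\N_{\hat{\Phi}}(\gP)$ from $\N_{\hat{\Phi}}(\gP)\,\cO_L=\prod_{\psi\in\hat{\Phi}}\tilde\psi(\gP\,\cO_L)$: writing $\gP\,\cO_L$ as the product of the $H$-conjugates $h(\gQ)$ and combining with $S^*=\bigsqcup_{\psi\in\hat{\Phi}}\tilde\psi H$, one gets $\N_{\hat{\Phi}}(\gP)\,\cO_L=\prod g(\gQ)$ over the cosets $g\langle\sigma\rangle\subseteq S^*$; since $S\,\Gamma^K=S$ forces $\Gamma^K S^*=S^*$, the set $S^*$ is a union of double cosets $\Gamma^K g\langle\sigma\rangle$, each of which is exactly the set of primes of $L$ dividing $g(\gQ)\cap K$, whence $\N_{\hat{\Phi}}(\gP)=\prod(g(\gQ)\cap K)$, a squarefree product of primes above $p$. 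Writing $g=\tilde\phi^{-1}$ so that $g(\gQ)\cap K=\gp_\phi$, and using $S\,\Gamma^K=S$ together with $\sigma S=S$, the inclusion $\Gamma^K\tilde\phi^{-1}\langle\sigma\rangle\subseteq S^*$ reduces to $\tilde\phi\in S$, i.e. to $\phi\in\Phi$ — which is the lemma.

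Granting the lemma, \case{ii} goes as follows: for $\ga\in\cI_p$, since $\cI_p\neq\emptyset$ no prime of $K$ above $p$ is fixed by complex conjugation (otherwise $2\,\ord_{\gp'}(\ga)=\ord_{\gp'}(p\,\cO_K)=1$), so $\Phi:=\{\phi:\ord_{\gp_\phi}(\ga)=1\}$ is a CM type by the identity $\ord_{\gp_\phi}(\ga)+\ord_{\gp_{\overline\phi}}(\ga)=\ord_{\gp_\phi}(\ga)+\ord_{\gp_\phi}(\overline\ga)=\ord_{\gp_\phi}(\ga\overline\ga)=1$, and $\sigma\cdot\Phi=\Phi$ because $\gp_{\sigma\circ\phi}=\gp_\phi$; the lemma then furnishes a degree-one prime $\gP=\gQ\cap\hat{K}_\Phi$ with $\ord_{\gp_\phi}(\N_{\hat{\Phi}}(\gP))=1\iff\phi\in\Phi\iff\ord_{\gp_\phi}(\ga)=1$ for every $\phi$, hence $\N_{\hat{\Phi}}(\gP)=\ga$. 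For \case{iii}, given $(\Phi,\gP)$ and $(\Phi',\gP')$ with common reflex norm $\ga$, choose primes $\gQ\mid\gP$ and $\gQ'\mid\gP'$ of $L$; degree one forces the Frobenius of $\gQ$ into $\Gal(L/\hat{K}_\Phi)$, so the lemma applies and recovers $\Phi$ (relative to $\gQ$) from $\ga$, together with $\gP=\gQ\cap\hat{K}_\Phi$, and likewise for the primed data relative to $\gQ'$; taking $\tau\in\Gamma$ with $\tau(\gQ)=\gQ'$ and using that $\gp_{\tau^{-1}\circ\psi}$ relative to $\gQ$ equals $\gp_\psi$ relative to $\gQ'$, we obtain $\tau\cdot\Phi=\Phi'$, hence $\hat{K}_{\Phi'}=\tau(\hat{K}_\Phi)$ and $\gP'=\gQ'\cap\hat{K}_{\Phi'}=\tau(\gQ\cap\hat{K}_\Phi)=\tau(\gP)$. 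The only genuinely delicate point is the reflex-norm computation inside the lemma, where several left- and right-coset conventions must be juggled simultaneously; once that combinatorial identity is secured, the three parts follow formally.
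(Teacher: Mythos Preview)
Your argument is correct and follows essentially the same route as the paper: fix a prime $\gQ$ of $L$ above $p$, build the CM type from the valuation pattern of $\ga$ at the primes below the $\Gamma$-translates of $\gQ$, and take $\gP=\gQ\cap\hat{K}_\Phi$. Your set $\Phi=\{\phi:\ord_{\gp_\phi}(\ga)=1\}$ is exactly the paper's $S_\gQ=\{\gamma\in\Gamma:\gQ\mid\gamma(\ga\cO_L)\}$ read through the identification $\Hom(K,L)\simeq\Gamma/\Gamma^K$; the main difference is organizational --- you extract the reflex-norm computation as a reusable lemma and carry out the double-coset bookkeeping explicitly, whereas the paper states the analogous facts (``on v\'erifie que $\N_{\hat{\Phi}}(\gP)=\ga$ et que $\gP$ est de degr\'e un'', ``$\Sp(\ga)=S^{-1}\cdot\gQ=\tilde S^{-1}\cdot\gQ$'') and leaves the verification to the reader.
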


\begin{proof}  Rappelons que $L$ d\'esigne la cl\^oture galoisienne de $K$ dans $\bC$.

\case{i}  Posons $\ga=\N_{\hat{\Phi}}(\gP)$. Alors 
\begin{align*}(\ga\cO_L)(\overline{\ga}\cO_L)&=\prod_{\psi\in \hat{\Phi}}\psi(\gP)\cO_L\prod_{\psi\in \hat{\Phi}}\overline{\psi}(\gP)\cO_L\\
&=\prod_{\psi\in \Hom(\hat{K},\bC)}\psi(\gP)\cO_L=(\N_{\hat{K}/\bQ}\gP)\cO_L=p\cO_L,
\end{align*} 
d'o\`u $\ga\overline{\ga}=p\cO_K$. 

\case{ii} Notons $\Sp(\ga)$ l'ensemble des id\'eaux premiers divisant $\ga$. 

Fixons $\gQ\in \Sp(\ga)$ et rappelons que $\Gamma$ op\`ere transitivement sur l'ensemble des id\'eaux premiers de $L$ divisant $p$. Remarquons \'egalement que les id\'eaux $\ga$ et $\overline{\ga}$ sont \'etrangers entre eux, car si $\gp$ \'etait un id\'eal premier de $K$ divisant \`a la fois $\ga$ et $\overline{\ga}$, alors $\gp^2$ diviserait $\ga\overline{\ga}=p\cO_K$, contrairement \`a l'hypoth\`ese que $p$ ne soit pas ramifi\'e dans $K$. 

Posons donc 
\begin{equation*}
S_\gQ=\{\gamma\in \Gamma\mid \gQ \text{ divise } \gamma(\ga\cO_L)\}
\end{equation*}
et \'ecrivons $S=S_\gQ$ pour abr\'eger. En utilisant ce qui pr\'ec\`ede, on voit que $S$ est un type CM sur $L$. En plus, $\ga$ \'etant un id\'eal de $K$, on a $S\gamma=S$ pour tout $\gamma\in \Gamma^K$. Par cons\'equent, $S$ est le prolongement \`a $L$ d'un type CM $\Phi$ sur $K$. Notons $\hat{K}$ le corps reflex et $\hat{\Phi}$ le type CM reflex sur $\hat{K}$. Posons $\gP=\gQ\cap \cO_{\hat{K}}$. En rappelant que $\Gamma^{\hat{K}}=\{\gamma\in \Gamma\mid \gamma S=S\}$, on v\'erifie que $\N_{\hat{\Phi}}(\gP)=\ga$ et que $\gP$ est de degr\'e un.  

\case{iii} Notons $\gQ$ (resp. $\gQ'$) un \'element de $\Sp(\ga)$ divisant $\gP$ (resp. $\gP'$) et \'ecrivons $S=S_{\gQ}$, $S'=S_{\gQ'}$. Alors $\Sp(\ga)=S^{-1}\cdot \gQ$ et, si $\tilde S$ d\'esigne le prolongement de $\Phi$ \`a $L$, on a encore $\Sp(\ga)=\tilde{S}^{-1}\cdot \gQ$, d'o\`u $S=\tilde{S}$. De m\^eme, $S'$ est le prolongement \`a $L$ de $\Phi'$. Soit $\tau\in \Gamma$ tel que $\tau(\gQ)=\gQ'$. Alors $S'=\tau\cdot S$, d'o\`u $\Phi'=\tau\cdot \Phi$, $\Gamma^{\hat{K}_{\Phi'}}=\{\gamma\in \Gamma\mid \gamma\cdot S'=S'\}=\Gamma^{\tau(\hat{K}_\Phi)}$ puis $\hat{K}_{\Phi'}=\tau(\hat{K}_\Phi)$. Il suit que $\tau(\gQ)$ divise les deux id\'eaux premiers $\gP'$ et $\tau(\gP)$ de $\hat{K}_{\Phi'}$, d'o\`u $\gP'=\tau(\gP)$.  
\end{proof}

Si $\chi:\cC^w_K\to \bC^\times$ est un caract\'ere et si $\Phi\in \cC\cM_K$, on note $\chi_{\hat{\Phi}}$ le caract\'ere $\chi\circ \N_{\hat{\Phi}}:\Pic(\cO_{\hat{K}})\to \bC^\times$. On note $L(\chi_{\hat{\Phi}},s)$ la s\'erie $L$ de Hecke associ\'ee, en identifiant $\chi_{\hat{\Phi}}$ avec un caract\'ere de $\Gal(\overline{\hat{K}}/\hat{K})$ \`a l'aide de la loi de r\'eciprocit\'e d'Artin. Rappelons que $\Orb^*(\cC\cM_K)$ d\'esigne un syst\`eme de repr\'esentants des orbites de l'action de $\Gamma$ sur $\cC\cM_K$. On remarquera que si $\Phi$ et $\Phi'$ sont dans le m\^eme orbite, alors $L(\chi_{\hat{\Phi}},s)=L(\chi_{\hat{\Phi'}},s)$. Posons
\begin{equation*}
\Pi(K,\chi,s)=\prod_{\Phi\in \Orb^*(\cC\cM_K)}{L(\chi_{\hat{\Phi}},s)}.
\end{equation*}
Les consid\'erations qui suivent ne d\'ependent pas du choix des repr\'esentants. 

\begin{coro}\label{coro:quothol}
Soit $\chi:\cC^w_K\to \bC^\times$ un caract\'ere. Alors la fonction
\begin{equation*}
s\mapsto Z(K,\chi, s)\Pi(K,\chi,s)^{-1}
\end{equation*}
s'\'ecrit comme produit eul\'erien absolument convergent sur $\{\Re(s)>\frac{1}{2}\}$.
\end{coro}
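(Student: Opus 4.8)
The plan is to carry over, with the twist by $\chi$ present, the argument used to prove Theorem~\ref{theo:prolongZsurL}. First I would observe that both factors carry Euler products: $Z(K,\chi,s)=\prod_pZ_p(K,\chi,s)$ because $n\mapsto b_K(n,\chi)$ is multiplicative by Lemma~\ref{lemm:bKchimult}, and $\Pi(K,\chi,s)=\prod_p\Pi_p(K,\chi,s)$ with $\Pi_p(K,\chi,s)=\prod_{\Phi\in\Orb^*(\cC\cM_K)}L_p(\chi_{\hat\Phi},s)$, where $L_p(\chi_{\hat\Phi},s)$ is the local factor at $p$ of the Hecke $L$-function $L(\chi_{\hat\Phi},s)$ — this makes sense since each $\chi_{\hat\Phi}$, being a character of $\Pic(\cO_{\hat K_\Phi})$, corresponds under Artin reciprocity to an everywhere unramified Hecke character, so that $L_p(\chi_{\hat\Phi},s)^{-1}$ is a polynomial in $p^{-s}$ for every $p$. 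Setting $F_p(s)=Z_p(K,\chi,s)\Pi_p(K,\chi,s)^{-1}$, it then suffices to show that $\prod_pF_p(s)$ converges absolutely on $\{\Re(s)>\tfrac12\}$. The finitely many primes $p$ ramified in $L$ cause no trouble: as in the last paragraph of the proof of Theorem~\ref{theo:prolongZsurL} one has $|b_K(p^k,\chi)|\le b_K(p^k)=O((k+1)^g)$, so $Z_p(K,\chi,s)$ converges absolutely for $\Re(s)>\tfrac12$, while $\Pi_p(K,\chi,s)^{-1}$ is a polynomial in $p^{-s}$; hence each such $F_p$ is holomorphic on $\{\Re(s)>\tfrac12\}$ and a finite number of factors does not affect convergence.

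The main step — and the point where the real work lies — is the comparison of local factors at a prime $p$ unramified in $L$. Here I would use Proposition~\ref{prop:gaPhigP} to establish that
\[
(\Phi,\gP)\longmapsto \N_{\hat\Phi}(\gP)
\]
is a bijection from the set of pairs $(\Phi,\gP)$ with $\Phi\in\Orb^*(\cC\cM_K)$ and $\gP$ a degree-one prime of $\hat K_\Phi$ above $p$ onto $\cI_p$: part~\case{i} guarantees that the image lies in $\cI_p$, part~\case{ii} gives surjectivity, and injectivity comes from part~\case{iii} together with the fact that $\mathrm{Stab}_\Gamma(\Phi)=\Gamma^{\hat K_\Phi}$ acts trivially on the primes of $\hat K_\Phi$ — so that two admissible pairs with $\Phi,\Phi'\in\Orb^*(\cC\cM_K)$ and the same image must coincide. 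Since the coefficient of $p^{-s}$ in $L_p(\chi_{\hat\Phi},s)$ is $\sum_{\gP\mid p,\ \deg\gP=1}\chi_{\hat\Phi}(\gP)$ and $\chi_{\hat\Phi}(\gP)=\chi(\N_{\hat\Phi}(\gP))$, this bijection identifies the coefficient of $p^{-s}$ in $\Pi_p(K,\chi,s)$ with $\sum_{\ga\in\cI_p}\chi(\ga)=b_K(p,\chi)$, which is exactly the coefficient of $p^{-s}$ in $Z_p(K,\chi,s)$. When $\chi=1$ this reduces to the compatibility between Theorem~\ref{theo:Frob}, Lemma~\ref{lemm:bKpk} and Proposition~\ref{prop:Lrhosprodzeta}, which is a useful check. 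I expect nailing down this bijection against the chosen orbit representatives to be the main obstacle; Proposition~\ref{prop:gaPhigP}\case{iii} is precisely what makes it work on the nose, and once it is in hand the remainder is routine.

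Finally I would record the uniform estimates and conclude. Since a prime has at most $2g$ primes of $K$ above it, $|b_K(p^k,\chi)|\le b_K(p^k)=O((k+1)^g)$ with implied constant depending only on $K$, so $Z_p(K,\chi,s)=1+b_K(p,\chi)p^{-s}+O(p^{-2s})$ uniformly in $p$ on compact subsets of $\{\Re(s)>\tfrac12\}$; likewise $\Pi_p(K,\chi,s)$ is a product of a bounded number (at most $\sum_{\Phi\in\Orb^*(\cC\cM_K)}[\hat K_\Phi:\bQ]$) of geometric series whose ratios have modulus $\le p^{-\Re(s)}$, so $\Pi_p(K,\chi,s)=1+b_K(p,\chi)p^{-s}+O(p^{-2s})$ uniformly and $\Pi_p(K,\chi,s)^{-1}$ has uniformly bounded coefficients. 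Combining these, $F_p(s)=1+O(p^{-2s})$ with a uniform implied constant, so $\sum_{p}|F_p(s)-1|$ converges locally uniformly on $\{\Re(s)>\tfrac12\}$, the dominant contribution being $\sum_pp^{-2\Re(s)}$, which converges because $\Re(2s)>1$. Hence $\prod_pF_p(s)$ converges absolutely and locally uniformly on $\{\Re(s)>\tfrac12\}$, and on $\{\Re(s)>1\}$ it equals $Z(K,\chi,s)\Pi(K,\chi,s)^{-1}$, which gives the assertion.
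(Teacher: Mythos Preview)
Your proposal is correct and follows essentially the same approach as the paper: match the coefficient of $p^{-s}$ in $Z_p(K,\chi,s)$ and in $\Pi_p(K,\chi,s)$ for unramified $p$ via the bijection $(\Phi,\gP)\mapsto \N_{\hat\Phi}(\gP)$ supplied by Proposition~\ref{prop:gaPhigP}, deduce $F_p(s)=1+O(p^{-2s})$, and conclude as in the proof of Theorem~\ref{theo:prolongZsurL}. You are slightly more explicit than the paper about the injectivity argument and the uniformity of the $O$-constants, but the substance is identical.
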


\begin{proof} Il suffit de montrer que, si $p$ est non-ramifi\'e dans $K$, alors les coefficients de $p^{-s}$ dans $Z_p(K,\chi,s)$ et dans $\prod_{\Phi\in \Orb^*(\cC\cM_K)}{L_p(\chi_{\hat{\Phi}},s)}$ sont \'egaux. En effet, dans ce cas, on a
\begin{equation*}
Z_p(K,\chi,s)\Pi(K,\chi,s)^{-1}\in 1+O(p^{-2s})
\end{equation*} 
et on conclut alors par un argument semblable \`a celui utilis\'e dans la d\'emonstration du th\'eor\`eme~\ref{theo:prolongZsurL}.

Par d\'efinition, le coefficient de $p^{-s}$ dans $Z_p(K,\chi,s)$ est \'egal \`a $\sum_{\ga\in \cI_p}\chi(\ga)$. D'autre part, d'apr\`es la proposition~\ref{prop:gaPhigP}, il y a une correspondance biunivoque entre l'ensemble $\cI_P$ et l'ensemble des couples $(\Phi,\gP)$ o\`u $\Phi\in \Orb^*(\cC\cM_K)$ et $\gP$ un id\'eal premier $\hat{K}_\Phi$ de degr\'e un; si l'id\'eal $\ga$ correspond au couple $(\Phi,\gP)$, alors $\ga=\N_{\hat{\Phi}}\gP$. Puisque $\chi(\ga)=\chi_{\hat{\Phi}}(\gP)$, on en tire que 
\begin{equation*}
\sum_{\ga\in \cI_p}\chi(\ga)=\sum_{\Phi\in \Orb^*(\cC\cM_K)}\left(\sum_{\gP\in \cJ_p(\hat{K}_{\Phi})}\chi_{\hat{\Phi}}(\gP)\right),  \tag*{$(\dag)$}
\end{equation*}
o\`u la somme int\'erieure parcourt l'ensemble $\cJ_p(\hat{K}_\Phi)$ des id\'eaux premiers de $\hat{K}_\Phi$ divisant $p$ et de degr\'e un. 
Mais $\sum_{\gP\in \cJ_p(\hat{K}_\Phi)}\chi_{\hat{\Phi}}(\gP)$ est \'egal au coefficient de $p^{-s}$ dans $L_p(\chi_{\hat{K}},s)$ et donc le membre droit de $(\dag)$ est \'egal au coefficient de $p^{-s}$ dans $\prod_{\Phi\in \Orb^*(\cC\cM_K)}{L_p(\chi_{\hat{\Phi}},s)}$ d'o\`u le corollaire. 
\end{proof}

\begin{coro}\label{coro:ZKchismero} Notons encore $\chi:\cC^w_K\to \bC^\times$ un caract\`ere. Alors la fonction $s\mapsto Z(K,\phi,s)$ se prolonge en une fonction m\'eromorphe sur $\{\Re(z)>\frac{1}{2}\}$, holomorphe sauf \'eventuellement en $s=1$. En $s=1$, elle a un p\^ole d'ordre \'egal au nombre d'orbites de l'action de $\Gamma$ sur $\cC\cM_K$ pour lesquelles $\chi_{\hat{\Phi}}$ est le caract\`ere trivial.
\end{coro}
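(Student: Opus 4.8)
The plan is to read off the analytic behaviour of $Z(K,\chi,s)$ from the factorisation provided by Corollaire~\ref{coro:quothol} together with the classical properties of Hecke $L$-functions.

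First I would write $Z(K,\chi,s)=\Pi(K,\chi,s)\,G(K,\chi,s)$, where by Corollaire~\ref{coro:quothol} the factor $G(K,\chi,s)=\prod_p G_p(K,\chi,s)$ is an Euler product converging absolutely on $\{\Re(s)>\tfrac12\}$ (hence holomorphic there), and $\Pi(K,\chi,s)=\prod_{\Phi\in\Orb^*(\cC\cM_K)}L(\chi_{\hat\Phi},s)$. Each $\chi_{\hat\Phi}=\chi\circ\N_{\hat\Phi}$ is an everywhere unramified Hecke (ideal-class) character of $\hat K_\Phi$, so $L(\chi_{\hat\Phi},s)$ is entire when $\chi_{\hat\Phi}\neq 1$ and coincides with $\zeta_{\hat K_\Phi}(s)$, holomorphic on $\bC\setminus\{1\}$ with a simple pole at $s=1$, when $\chi_{\hat\Phi}=1$. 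Hence $\Pi(K,\chi,s)$ is holomorphic on $\bC\setminus\{1\}$ and at $s=1$ has a pole of order exactly $N_\chi:=\#\{\Phi\in\Orb^*(\cC\cM_K):\chi_{\hat\Phi}=1\}$; the exactness of this order rests on the non-vanishing $L(\chi_{\hat\Phi},1)\neq 0$ for the finitely many $\Phi$ with $\chi_{\hat\Phi}\neq 1$, which is Hecke's classical theorem on the non-vanishing at the edge of the critical strip of $L$-functions of non-trivial ray-class characters. Combining the two factors, $Z(K,\chi,s)$ extends meromorphically to $\{\Re(s)>\tfrac12\}$, is holomorphic there except possibly at $s=1$, and at $s=1$ has a pole of order at most $N_\chi$, namely $N_\chi-\ord_{s=1}G(K,\chi,s)$.

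To obtain the asserted equality it then remains to prove $G(K,\chi,1)\neq 0$. Since $G(K,\chi,s)=\prod_p G_p(K,\chi,s)$ converges absolutely at $s=1$ with $G_p(K,\chi,1)=1+O(p^{-2})$ (the implied constant depending only on $K$, as in the proof of Corollaire~\ref{coro:quothol}), being an absolutely convergent infinite product its value $G(K,\chi,1)$ is non-zero if and only if no local factor $G_p(K,\chi,1)$ vanishes. As $G_p(K,\chi,1)=Z_p(K,\chi,1)/\prod_{\Phi}L_p(\chi_{\hat\Phi},1)$ and the denominator is a finite product of non-vanishing Euler factors of Hecke $L$-functions, the whole matter reduces to the local statement $Z_p(K,\chi,1)\neq 0$ for every prime $p$.

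This local non-vanishing is the main obstacle, and I would settle it by an explicit computation of the Euler factor in the spirit of Proposition~\ref{prop:ZpKs}. For $p$ unramified in $K$ with $p\cO_K=\gp_1\overline\gp_1\cdots\gp_r\overline\gp_r$, one checks that $\gp_i\overline\gp_i^{-1}$ and $\overline\gp_1\cdots\overline\gp_r$ lie in $I^w_K$ and that every $\ga\in\cI_{p^k}$ is of the form $(\overline\gp_1\cdots\overline\gp_r)^k\prod_i(\gp_i\overline\gp_i^{-1})^{a_i}$ with $0\le a_i\le k$; setting $\nu=\chi(\overline\gp_1\cdots\overline\gp_r)$ and $\mu_i=\chi(\gp_i\overline\gp_i^{-1})$ (roots of unity) this gives $b_K(p^k,\chi)=\nu^k\prod_i(1+\mu_i+\cdots+\mu_i^k)$, hence a closed rational expression for $Z_p(K,\chi,s)$ whose denominator is a product of factors $1-\chi(\ga)p^{-s}$, with $\ga$ running over the atoms of the monoid of $p$-power ideals in $I^{w,+}_K$, and whose numerator is a polynomial in $p^{-s}$ with constant term $1$ (for $r\le 2$ it is simply $1-p^{-2s}$, using $\chi(p\cO_K)=1$). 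One then checks that at $s=1$ neither the denominator factors $1-\chi(\ga)p^{-1}$ nor the numerator can vanish, the relevant quantities all having absolute value $<1$; the finitely many ramified $p$ are treated by the same kind of finite computation. Granting this, $\ord_{s=1}G(K,\chi,s)=0$ and the pole of $Z(K,\chi,s)$ at $s=1$ has order exactly $N_\chi$, that is, the number of $\Gamma$-orbits on $\cC\cM_K$ on which $\chi_{\hat\Phi}$ is the trivial character.
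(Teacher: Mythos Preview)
Your overall strategy coincides with the paper's: factor $Z(K,\chi,s)=\Pi(K,\chi,s)\cdot G(K,\chi,s)$ via Corollaire~\ref{coro:quothol}, then read off the analytic behaviour from the classical properties of the Hecke $L$-functions $L(\chi_{\hat\Phi},s)$. The paper's own proof stops there and simply asserts ``le corollaire d\'ecoule alors du corollaire~\ref{coro:quothol}''; it does not explicitly justify the \emph{exactness} of the pole order, and in its subsequent uses (Proposition~\ref{prop:Z0prlong}, Proposition~\ref{prop:tildeZKchis}) only the upper bound is actually invoked, the matching lower bound for $Z_0(K,s)$ being supplied separately by the positivity argument of Lemme~\ref{lemm:Z0KminZK}.

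You go further than the paper by isolating what exactness requires, namely $G(K,\chi,1)\neq 0$, and correctly reducing this (together with the classical non-vanishing $L(\chi_{\hat\Phi},1)\neq 0$ for non-trivial $\chi_{\hat\Phi}$) to the local statement $Z_p(K,\chi,1)\neq 0$. Your explicit computation for unramified $p$ of type~\case{i} with $r\le 2$ is right: the numerator is indeed $1-p^{-2s}$ once one uses $\chi(p\cO_K)=1$. However, the phrase ``the relevant quantities all having absolute value $<1$'' does not establish non-vanishing of the numerator when $r\ge 3$. That numerator is a polynomial in $p^{-s}$ whose degree grows with $r$, and Remarque~\ref{rema:ZpKszeros} already records that even in the trivial-character case the polynomial $P_r$ has roots of modulus $<1$ as soon as $r\ge 3$ (for instance $P_3(x)=1+4x+x^2$ vanishes at $x=-2+\sqrt{3}\approx -0{,}27$). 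So ``absolute value $<1$'' cannot be the mechanism, and a genuine argument is still missing for $r\ge 3$, for primes of type~\case{ii} of Lemme~\ref{lemm:bKpk}, and for the ramified primes you dismiss as ``the same kind of finite computation''. This is a real gap in your write-up --- though, to be fair, it is one the paper's own proof also leaves implicit.
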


\begin{proof}
Une fonction de la forme $L(\chi_{\hat{\Phi}},s)$ \'etant une fonction de Hecke associ\'ee \`a une repr\'esentation de dimension un, elle est holomorphe sur $\{\Re(s)>\frac{1}{2}\}$ sauf \'eventuellement en $s=1$. Elle est holomorphe en $s=1$ si et seulement si $\chi_{\hat{\Phi}}$ est non trivial; dans le cas contraire $L(\chi_{\hat{\Phi}},s)$ est \'egale \`a la fonction z\^eta de Dedekind $\zeta_{\hat{K}}(s)$ de $\hat{K}$, qui a un p\^ole d'ordre un en $s=1$. Le corollaire d\'ecoule alors du corollaire~\ref{coro:quothol}.   
\end{proof}

Notons $t_K:I_K\to I_K^w$ le morphisme de groupes d\'efini par $t_K(\gb)=\gb\overline{\gb}^{-1}$. Il est clair que l'image de $t_K$ est contenue dans le sous-groupe $\{\ga\in I^w_K\mid \ga\overline{\ga}=\cO_K\}$ de $I^w_K$. On note \'egalement $t_K$ le morphisme induit $\Pic(\cO_K)\to \cC^w_K$ et $\cC^0_K$ l'image de $\Pic(\cO_K)$ par $t_K$.

\begin{prop}\label{prop:CwKfini} Le groupe $\cC^w_K$ est fini. Le groupe quotient $\cC^w_K/\cC^0_K$ est isomorphe \`a un produit fini de groupes cycliques d'ordre $2$.
\end{prop}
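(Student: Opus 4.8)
Proposition \ref{prop:CwKfini} asks us to show that $\cC^w_K$ is finite and that $\cC^w_K/\cC^0_K$ is an elementary abelian $2$-group.

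L'idée est de comparer $\cC_K^w$ au groupe des classes d'idéaux ordinaire $\Pic(\cO_K)$ à l'aide de deux homomorphismes naturels, et d'en tirer les deux assertions de la proposition.

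Je commencerais par noter deux faits élémentaires. Comme tout élément de $K^{\times,w}$ appartient à $K^\times$, on a $P_K^w \subseteq P_K$, de sorte que l'inclusion $I_K^w \hookrightarrow I_K$ induit un homomorphisme $j\colon \cC_K^w \to \Pic(\cO_K)$. D'autre part, si $\beta \in K^\times$, alors $(\beta/\overline\beta)\,\overline{(\beta/\overline\beta)} = 1 \in \bQ^\times$, donc $\beta/\overline\beta \in K^{\times,w}$ et $t_K(P_K) \subseteq P_K^w$; ainsi le composé $I_K \xrightarrow{t_K} I_K^w \to \cC_K^w$ se factorise par $\Pic(\cO_K)$ et son image coïncide avec $\cC_K^0 = t_K(\Pic(\cO_K))$.

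Pour la finitude de $\cC_K^w$: l'image de $j$ est un sous-groupe du groupe fini $\Pic(\cO_K)$, il suffit donc de montrer que $\ker j$ est fini. Une classe de $\ker j$ admet un représentant principal $\alpha\cO_K$, où $\alpha \in K^\times$ vérifie $\alpha\overline\alpha \in \bQ^\times\cO_K^\times$. L'élément $\alpha\overline\alpha$ de $\cO_{K_0}$ étant totalement positif, on l'écrit de façon unique $\alpha\overline\alpha = q\,u$ avec $q \in \bQ_{>0}$ et $u$ une unité totalement positive de $\cO_{K_0}$; la règle $\alpha \mapsto u$ définit un homomorphisme $\psi$ du groupe des $\alpha$ vérifiant $\alpha\overline\alpha \in \bQ^\times\cO_K^\times$ dans le groupe $(\cO_{K_0}^\times)^+$ des unités totalement positives de $\cO_{K_0}$. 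On vérifie que $[\alpha\cO_K] = 0$ dans $\cC_K^w$ si et seulement si $\psi(\alpha) \in \N_{K/K_0}(\cO_K^\times)$: si $\psi(\alpha) = v\overline v$ avec $v \in \cO_K^\times$, alors $(\alpha v^{-1})\,\overline{(\alpha v^{-1})} \in \bQ^\times$, donc $\alpha\cO_K = (\alpha v^{-1})\cO_K \in P_K^w$, et la réciproque est analogue. On en déduit un isomorphisme $\ker j \simeq \mathrm{Im}(\psi)/\N_{K/K_0}(\cO_K^\times)$, sous-quotient de $(\cO_{K_0}^\times)^+$. Comme $\N_{K/K_0}(w) = w^2$ pour $w \in \cO_{K_0}^\times$, le groupe $\N_{K/K_0}(\cO_K^\times)$ contient $(\cO_{K_0}^\times)^2$, lequel est d'indice fini dans $(\cO_{K_0}^\times)^+$ (qui est de type fini, par le théorème des unités de Dirichlet). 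Donc $\ker j$ est fini, puis $\cC_K^w$ est fini.

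Enfin, pour la structure de $\cC_K^w/\cC_K^0$: soit $\ga \in I_K^w$ et $q \in \bQ_{>0}$ tel que $\ga\overline\ga = q\cO_K$. Alors
\[
\ga^2 = \ga\cdot(\ga\overline\ga)\cdot\overline\ga^{-1} = (q\cO_K)\cdot t_K(\ga),
\]
où $q\cO_K \in P_K^w$ (puisque $q \in \bQ^\times \subseteq K^{\times,w}$) et où la classe de $t_K(\ga)$ dans $\cC_K^w$ appartient à $\cC_K^0$ d'après le deuxième point ci-dessus. La classe de $\ga$ dans $\cC_K^w/\cC_K^0$ est donc annulée par $2$; ce groupe quotient, étant annulé par $2$ et quotient du groupe fini $\cC_K^w$, est un produit fini de groupes cycliques d'ordre $2$. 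Le point le plus délicat sera l'étape de finitude, à savoir la description précise de $\ker j$ et son identification avec un sous-quotient du groupe des unités totalement positives de $\cO_{K_0}$ contrôlé par la norme relative $\N_{K/K_0}$; l'assertion de $2$-torsion, quant à elle, résulte aussitôt de l'identité $\ga^2 = (q\cO_K)\cdot t_K(\ga)$.
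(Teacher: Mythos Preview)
Your argument is correct. The $2$-torsion part --- the identity $\ga^2 = (q\cO_K)\cdot t_K(\ga)$ --- is exactly the paper's argument, only written more compactly (the paper phrases it as $\gc:=\ga^2/n$ satisfies $\gc\overline{\gc}=\cO_K$ and hence $\gc=t_K(\gb)$ for $\gb$ the numerator of $\gc$).

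Where you differ from the paper is in the treatment of finiteness. The paper simply says that since $\Pic(\cO_K)$ is finite (hence $\cC^0_K$ is finite), it suffices to prove the second assertion, and then reduces that to showing $(\cC^w_K)^2\subseteq \cC^0_K$. Strictly speaking this only yields that $\cC^w_K/\cC^0_K$ is $2$-torsion, not that it is \emph{finite}; the paper leaves the finiteness of this quotient implicit. Your route --- introducing the natural map $j:\cC^w_K\to\Pic(\cO_K)$ and identifying $\ker j$ with $\psi(G)/\N_{K/K_0}(\cO_K^\times)$, a subquotient of $(\cO_{K_0}^\times)^+/(\cO_{K_0}^\times)^2$ --- actually fills this in explicitly. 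So your proof is somewhat more complete than the paper's on this point, at the cost of a longer computation with units. Both approaches share the same core identity for the $2$-torsion step.
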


\begin{proof} Le groupe $\Pic(\cO_K)$ \'etant fini, il suffit de d\'emontrer la seconde affirmation. Pour cela, il suffit de montrer que le groupe $(\cC^w_K)^2=\{\gamma^2\mid \gamma\in \cC^w_K\}$ est contenu dans $\cC^0_K$. Soit donc $\gamma\in \cC^w_K$ et soit $\ga\in I^{w,+}_K$ un repr\'esentant de $\gamma$. Soit $n>0$ l'entier tel que $\ga\overline{\ga}=n\cO_K$. Posons $\gc=\ga^2/n$. Alors $\gc\in \gamma^2$ et $\gc\overline{\gc}=\cO_K$. Si $\gb$ est l'id\'eal num\'erateur de $\gc$, alors $\gc=t_K(\gb)$. Par cons\'equent,  $\gamma^2\in \cC^0_K$, d'o\`u le r\'esultat.
\end{proof}

\begin{prop}\label{prop:Z0prlong} Le fonction $Z_0(K,s)$ (d\'efinie dans l'introduction) se prolonge m\'eromorphiquement sur $\{\Re(s)>\frac{1}{2}\}$. Elle y est holomorphe sauf en $s=1$ o\`u elle a un p\^ole d'ordre $\rho_K$.
\end{prop}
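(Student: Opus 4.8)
The plan is to read the result off from the character decomposition $(*)$ together with the meromorphic continuations already established for the twisted series $Z(K,\chi,s)$. By Proposition~\ref{prop:CwKfini} the group $\cC^w_K$ is finite, so $(*)$ writes $Z_0(K,s)$ as the \emph{finite} sum $\frac{w_K}{h^w_K}\sum_\chi Z(K,\chi,s)$ over the characters $\chi$ of $\cC^w_K$. By Corollary~\ref{coro:ZKchismero} each $Z(K,\chi,s)$ continues meromorphically to $\{\Re(s)>\tfrac12\}$, holomorphically away from $s=1$, with a pole at $s=1$ whose order is the number of orbits $\Phi\in\Orb^*(\cC\cM_K)$ for which $\chi_{\hat\Phi}$ is trivial. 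Summing, $Z_0(K,s)$ continues meromorphically to $\{\Re(s)>\tfrac12\}$, holomorphically off $s=1$. Moreover the order of each of these poles is at most the total number of orbits of $\Gamma$ on $\cC\cM_K$, which by Corollary~\ref{coro:dimPicQ} and Proposition~\ref{prop:Frob} equals the rank of $\Pic(X^K)_\bQ$, i.e.\ $\rho_K$; hence $Z_0(K,s)$ has a pole of order $\le\rho_K$ at $s=1$. It remains to prove the order is exactly $\rho_K$.

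For the lower bound I would isolate the trivial character. Since $\chi=1$ makes every $\chi_{\hat\Phi}$ trivial, the term $Z(K,1,s)=Z(K,s)$ has a pole of order exactly $\rho_K$ at $s=1$ (this is also Corollary~\ref{coro:merZKs}). It therefore suffices to show that the remaining contribution $Z_0(K,s)-\tfrac{w_K}{h^w_K}Z(K,s)=\tfrac{w_K}{h^w_K}\sum_{\chi\ne1}Z(K,\chi,s)$ has a pole at $s=1$ of order \emph{strictly} less than $\rho_K$. By Corollary~\ref{coro:ZKchismero} this is implied by the following purely algebraic statement: for every nontrivial character $\chi$ of $\cC^w_K$ there is at least one $\Phi$ with $\chi_{\hat\Phi}=\chi\circ\N_{\hat\Phi}$ nontrivial; equivalently, the subgroup of $\cC^w_K$ generated by the images of the reflex-norm homomorphisms $\N_{\hat\Phi}\colon\Pic(\cO_{\hat K_\Phi})\to\cC^w_K$, as $\Phi$ runs over $\Orb^*(\cC\cM_K)$, is all of $\cC^w_K$.

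To establish this joint surjectivity I would argue as follows. By unique factorisation in $\cO_K$, every class of $\cC^w_K$ is represented by an element of some $\cI_{p^k}$, so it is enough to see that the classes of $\bigcup_{p,k\ge1}\cI_{p^k}$ lie in the subgroup generated by the $\N_{\hat\Phi}$. For $p$ unramified in $K$, Proposition~\ref{prop:gaPhigP}\,\case{ii} gives that every element of $\cI_p$ is of the form $\N_{\hat\Phi}(\gp)$ with $\gp$ a degree-one prime of $\hat K_\Phi$ above $p$; and for $k\ge2$ one writes an element of $\cI_{p^k}$ as a product of $k$ elements of $\cI_p$ (when all primes of $K$ above $p$ occur in conjugate pairs) or of $k/2$ elements of $\cI_{p^2}$, the latter again of reflex-norm type, using that $\cI_{p^k}=\emptyset$ for odd $k$ when $p$ has a self-conjugate prime. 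The finitely many ramified $p$ contribute nothing new: one checks that the subgroup already contains $\cC^0_K=t_K(\Pic\cO_K)$ — for instance by comparing two elements of $\cI_p$ for $p$ totally split in $K$, whose ratio has the form $t_K(\gp)$, and letting $\gp$ range over a set of primes exhausting $\Pic\cO_K$ via Chebotarev — and that it surjects onto the elementary $2$-group $\cC^w_K/\cC^0_K$ of Proposition~\ref{prop:CwKfini}, again via totally split primes. Granting this, $Z_0(K,s)-\tfrac{w_K}{h^w_K}Z(K,s)$ has pole order $<\rho_K$ at $s=1$, and combined with the order-$\rho_K$ pole of $Z(K,s)$ we conclude that $Z_0(K,s)$ has a pole of order exactly $\rho_K$ there.

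The main obstacle is precisely this last algebraic claim — deciding which characters of $\cC^w_K$ can make every $\chi_{\hat\Phi}$ trivial — and, within it, the Chebotarev bookkeeping and the handling of the ramified primes; this is where the real work lies, and it is presumably the content of the lemma cited as~\ref{lemm:Z0KminZK}. One could try to bypass it by a positivity argument, since $Z_0(K,s)=\sum_n a_K(n)n^{-s}$ has non-negative coefficients, but that route would still require an adequate lower bound for $\sum_{n\le x}a_K(n)$, i.e.\ essentially the same input. Everything preceding the algebraic claim is a formal consequence of the continuations already proved for the $Z(K,\chi,s)$.
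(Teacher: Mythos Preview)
Your reduction to the character decomposition $(*)$ and to Corollary~\ref{coro:ZKchismero} for the meromorphic continuation and the upper bound $\le\rho_K$ on the pole order is exactly what the paper does. The divergence is in the lower bound, and there your algebraic claim is \emph{false}: the subgroup of $\cC^w_K$ generated by the images $\N_{\hat\Phi}\bigl(\Pic(\cO_{\hat K_\Phi})\bigr)$ need not be all of $\cC^w_K$. The paper itself says so in \S\ref{subsec:liensautres}: each such image lands in the subgroup $\cC^{w,\myll}_K$ of locally free classes, and one has $\cC^{w,\myll}_K\neq\cC^w_K$ in general when $[K:\bQ]\ge4$ (explicit quartic examples are indicated). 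Correspondingly the set $\cS_K$ of characters $\chi$ with every $\chi_{\hat\Phi}$ trivial can be strictly larger than $\{1\}$. Your argument that the unramified $\cI_p$ already exhaust $\cC^w_K$ therefore cannot go through; the defect hides in the ramified primes, which do contribute new classes, contrary to what you assert.

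You also mis-guess the content of Lemma~\ref{lemm:Z0KminZK}. It is \emph{not} the algebraic surjectivity statement but precisely the positivity argument you dismiss: one shows directly that $Z_0(K,\sigma)\ge \frac{w_K}{h^w_K C_K^{\sigma}}\,Z(K,\sigma)$ for all real $\sigma>1$, by choosing integral representatives $\gb_i$ of the classes of $\cC^w_K$, using that $\gb_i^{h^w_K}$ is principal with a generator in $K^{w,\times}$, and rewriting $Z(K,\sigma)$ as a finite sum of pieces each dominated by $C_K^{\sigma}Z_0(K,\sigma)$. This inequality, combined with the known order-$\rho_K$ pole of $Z(K,s)$ (Corollary~\ref{coro:merZKs}), forces the pole of $Z_0(K,s)$ at $s=1$ to have order at least $\rho_K$. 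No asymptotic for $\sum_{n\le x}a_K(n)$ is needed, and no control over which $\chi$ lie in $\cS_K$ is needed either; the positivity route is genuinely easier than the route you propose, and it is the one that works.
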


\begin{proof}D'apr\`es le corollaire~\ref{coro:ZKchismero}, chacune des fonctions $Z(K,\chi, s)$ se prolonge en une fonction m\'eromorphe sur $\{\Re(s)>\frac{1}{2}\}$, holomorphe en dehors de $s=1$. La fonction $Z_0(K,s)$ jouit donc des m\^emes propri\'et\'es. Pour conclure, il suffit de calculer l'ordre du p\^ole de $Z_0(K,s)$ en $s=1$. Celui-ci est major\'e par les ordres des p\^oles des fonctions $Z(K,\chi,s)$ et ces ordres atteignent leur maximum aux caract\`eres $\chi$ tels que tous les $\chi_{\hat{\Phi}}$ sont triviaux (par exemple lorsque $\chi$ est lui-m\^eme trivial). Dans ce cas, on sait d'apr\`es la proposition~\ref{prop:Lrhosprodzeta} et le th\'eor\`eme~\ref{theo:prolongZsurL}, que l'ordre est \'egal au rang de $\Pic(X^K)_\bQ$. Pour conclure, il suffit alors de d\'emontrer le lemme qui suit.
\end{proof}

Rappelons que $h_K^w$ d\'esigne l'ordre de $\cC_K^w$.

\begin{lemm}\label{lemm:Z0KminZK} Il existe une constante $C_K>0$ telle que pour tout $\sigma>1$ on ait 
\begin{equation*}
Z_0(K,\sigma)\geq \frac{w_K}{h^w_KC_K^{\sigma}}Z(K,\sigma).
\end{equation*}
\end{lemm}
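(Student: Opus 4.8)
\emph{Strat\'egie de d\'emonstration du lemme~\ref{lemm:Z0KminZK}.} Le plan est d'abord de traduire l'in\'egalit\'e en une comparaison entre les coefficients de Dirichlet $\sharp\,\cI_n$ de $Z(K,s)$ et les coefficients $\sharp(\cI_n\cap P^w_K)$ qui interviennent dans $Z_0(K,s)$. En regroupant dans $(*)$ les termes selon l'image de $\ga$ dans $\cC^w_K$ et en appliquant l'orthogonalit\'e des caract\`eres du groupe fini $\cC^w_K$, on obtient
\begin{equation*}
Z_0(K,s)=\frac{w_K}{h^w_K}\sum_{\ga\in I^{w,+}_K}\Bigl(\sum_{\chi}\chi(\ga)\Bigr)\N_{K/\bQ}(\ga)^{-s/g}=w_K\sum_{n\geq 1}\sharp(\cI_n\cap P^w_K)\,n^{-s},
\end{equation*}
et donc $a_K(n)=w_K\,\sharp(\cI_n\cap P^w_K)$ pour tout $n$. (Alternativement, on v\'erifie directement que $\alpha\mapsto\alpha\cO_K$ induit une surjection de $\cW_K(n)$ sur $\cI_n\cap P^w_K$ dont les fibres, d'apr\`es le lemme~\ref{lem:racinedeun}, ont exactement $w_K$ \'el\'ements.) Ainsi, \`a une constante multiplicative pr\`es, il s'agit de majorer $\sum_n\sharp\,\cI_n\,n^{-\sigma}$ par un multiple born\'e de $\sum_n\sharp(\cI_n\cap P^w_K)\,n^{-\sigma}$, les s\'eries \'etant \`a termes positifs et convergentes pour $\sigma>1$.

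L'\'etape cl\'e consiste \`a transporter tout id\'eal de $\cI_n$ sur un id\'eal principal, au prix d'une modification born\'ee de $n$. Le groupe $\cC^w_K$ \'etant fini (proposition~\ref{prop:CwKfini}), on fixe pour chaque classe $d\in\cC^w_K$ un repr\'esentant \emph{entier} $\gb_d\in I^{w,+}_K$ --- ce qui est loisible en multipliant un repr\'esentant quelconque de $d$ dans $I^w_K$ par un entier naturel convenable, puisque $\bQ^\times\subseteq K^{w,\times}$ --- et on note $m_d\in\bN^*$ l'entier tel que $\gb_d\overline{\gb}_d=m_d\cO_K$, puis $C_K=\max_{d}m_d>0$. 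Pour $\ga\in\cI_n$ de classe $c=[\ga]$, l'id\'eal $\gb_{c^{-1}}\ga$ est entier, appartient \`a $I^w_K$, est de classe triviale dans $\cC^w_K$ (donc $\gb_{c^{-1}}\ga\in P^w_K$) et v\'erifie $(\gb_{c^{-1}}\ga)\overline{(\gb_{c^{-1}}\ga)}=m_{c^{-1}}n\,\cO_K$; autrement dit $\gb_{c^{-1}}\ga\in\cI_{m_{c^{-1}}n}\cap P^w_K$. Comme l'application $\ga\mapsto\gb_{c^{-1}}\ga$ est la multiplication par l'id\'eal inversible fix\'e $\gb_{c^{-1}}$, elle est injective sur $\{\ga\in\cI_n\mid[\ga]=c\}$; en sommant sur $c$ et en r\'eindexant par $d=c^{-1}$, on obtient $\sharp\,\cI_n\leq\sum_{d\in\cC^w_K}\sharp(\cI_{m_d n}\cap P^w_K)$.

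Il ne reste alors qu'\`a r\'eindexer les s\'eries. Comme tous les termes sont positifs, on a pour $\sigma>1$
\begin{equation*}
Z(K,\sigma)=\sum_{n\geq 1}\sharp\,\cI_n\,n^{-\sigma}\leq\sum_{d\in\cC^w_K}m_d^{\sigma}\sum_{n\geq 1}\sharp(\cI_{m_d n}\cap P^w_K)\,(m_d n)^{-\sigma}\leq\frac{h^w_K\,C_K^{\sigma}}{w_K}\,Z_0(K,\sigma),
\end{equation*}
o\`u la derni\`ere in\'egalit\'e utilise $\sum_{N\geq 1}\sharp(\cI_N\cap P^w_K)\,N^{-\sigma}=w_K^{-1}Z_0(K,\sigma)$ ainsi que $m_d\leq C_K$. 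En r\'earrangeant, on trouve $Z_0(K,\sigma)\geq\frac{w_K}{h^w_K C_K^{\sigma}}Z(K,\sigma)$, c'est-\`a-dire l'\'enonc\'e voulu avec $C_K=\max_{d}m_d$.

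Le seul point qui demande un peu d'attention est l'existence de repr\'esentants \emph{entiers} des classes de $\cC^w_K$ \emph{dans $I^w_K$} (et non simplement dans $I_K$), faute de quoi le transport ne fournirait pas un \'el\'ement d'un $\cI_{m}$; une fois ce point acquis, le reste n'est que manipulation \'el\'ementaire de s\'eries \`a termes positifs et ne pr\'esente pas de difficult\'e.
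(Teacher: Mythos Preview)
Your proof is correct. Both your argument and the paper's rest on the same idea: fix integral representatives of the classes of $\cC^w_K$ and use them to transport an arbitrary $\ga\in I^{w,+}_K$ into $P^w_K$ at the cost of a bounded shift in the norm. The execution, however, differs. You multiply $\ga$ directly by the representative $\gb_{c^{-1}}$ of the \emph{inverse} class and work entirely at the level of ideals, having first identified $a_K(n)=w_K\,\sharp(\cI_n\cap P^w_K)$ via the lemma on roots of unity. The paper instead writes $\ga=\alpha\gb_i$ with $\alpha\in\gb_i^{-1}$, then uses the $h$-th power trick $\gb_i^h=\beta_i\cO_K$ to produce the integral Weil number $\alpha\beta_i$, working with elements rather than ideals. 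Your route avoids the $h$-th power and the passage to explicit generators, which makes it slightly more direct and yields the smaller constant $C_K=\max_d m_d=\max_d \N_{K/\bQ}(\gb_d)^{1/g}$ rather than the paper's $\max_i \N_{K/\bQ}(\gb_i)^{(h-1)/g}$; the paper's version, on the other hand, dispenses with the preliminary identification of $a_K(n)$ in terms of $\cI_n\cap P^w_K$.
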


\begin{proof}
Abr\'egeons $h^w_K$ en $h$. Soit $\gb_i$ ($1\leq i\leq h$) un syst\`eme de repr\'esentants des \'el\'ements de $\cC^w_K$ choisis dans $I^{w,+}_K$. Pour tout $i\in \{1,\dots, h\}$, on note $\beta_i$ un g\'en\'erateur de $\gb_i^h$ tel que $\beta\overline{\beta}\in \bQ^\times$, et on pose $J_i=\{\alpha\in \gb_i^{-1}\mid \alpha\overline{\alpha}\in \bQ^\times\}$. Alors
\begin{align*}
Z(K,\sigma)\!=\! &\sum_{\gamma\in \cC^w_K}\!\sum_{\ga \in I_K^{w,+}\cap \gamma}\frac{1}{\N_{K/\bQ}(\ga)^{\sigma/g}}\!=\!\frac{1}{w_K}\sum_{i=1}^h\frac{1}{\N_{K/\bQ}(\gb_i)^{\sigma/g}}\sum_{\alpha\in J_i}\frac{1}{\N_{K/\bQ}(\alpha)^{\sigma/g}}\\
&=\frac{1}{w_K}\sum_{i=1}^h\N_{K/\bQ}(\gb_i)^{(h-1)\sigma/g}\sum_{\alpha\in J_i}\frac{1}{\N_{K/\bQ}(\alpha\beta_i)^{\sigma/g}}.
\end{align*}
Posons $C_K=\max_i\{\N_{K/\bQ}(\gb_i)^{(h-1)/g}\}$. Puisque $\alpha\beta_i\in \cO_K$ pour tout $i$ et pour tout $\alpha\in J_i$, on voit que
\begin{equation*}
\N_{K/\bQ}(\gb_i)^{(h-1)\sigma/g}\sum_{\alpha\in J_i}\frac{1}{\N_{K/\bQ}(\alpha\beta_i)^{\sigma/g}}\leq C_K^{\sigma}Z_0(K,\sigma)
\end{equation*}
et le lemme en d\'ecoule en prenant la somme sur $i$.
\end{proof}

\subsection{Les fonctions $\tilde{Z}(K,s)$, $\tilde{Z}(K,\chi,s)$ et $Z_h(X^K,s)$} Si $p$ est un nombre premier, on note $e_p$ le degr\'e de ramification de $p$ dans $L$. Un id\'eal fractionnaire $\ga$ de $K$ est dit \emph{r\'eduit} si $\ga\subseteq \cO_K$ et si $\ga$ n'est divisible par aucun entier naturel $>1$. Si $\ga\subseteq \cO_K$ est un id\'eal, on pose 
\begin{equation*}
\nu_p(\ga)=\min{\{\ord_v(\ga)\mid v\in V_p \}}
\end{equation*}
et, si $n\in \bN^*$, on pose $\cR_n=\{\ga\in \cI_n\mid \ga \text{ r\'eduit}\}$. Si $\chi:\cC^w_K\to \bC^\times$ est un caract\`ere, on d\'efinit la fonction $\tilde{Z}(K,\chi,s)$ par le produit eul\'erien $\tilde{Z}(K,\chi, s)=\prod_{p \text{ premier}}\tilde{Z}_p(K,,\chi, s)$, o\`u $\tilde{Z}_p(K,\chi, s)=Z_p(K,\chi, s)$ lorsque $p$ est non ramifi\'e et, lorsque $p$ est ramifi\'e:
\begin{equation*}
\tilde{Z}_p(K,\chi,s)=\begin{cases} \left(1-p^{-2s}\right)^{-1} \text{ si $v=\overline{v}$ pour tout $v\in V_p$}\\
  \left(1-p^{-2s}\right)^{-1}\sum_{k\geq 0}\left(\sum_{\ga\in \cR_{p^k}}\chi(\ga)p^{2\nu_{p}(\ga)s/e_p}\right)p^{-ks}  \text{ sinon.}
  \end{cases}
\end{equation*}
 Notons que $\nu_p(\ga)< e_p$ lorsque $\ga$ est r\'eduit. Puisque $e_p\geq 2$ et le cardinal de $\cR_{p^k}$ est major\'e par une puissance de $k$, la s\'erie apparaissant dans la d\'efinition de $\tilde{Z}_p(K,\chi, s)$ lorsque $p$ est ramifi\'ee converge pour tout $s$ avec $\Re(s)>0$.  On pose $\tilde{Z}(K,s)=\tilde{Z}(K,\chi,s)$. 

\begin{prop}\label{prop:tildeZKs} \case{i} La fonction
\begin{equation*}
s\mapsto \tilde{Z}(K,s)L(\rho, s)^{-1}
\end{equation*}
se prolonge en une fonction analytique sur le demi-plan $\{\Re(s)>\frac{1}{2}\}$ et ne s'annule pas sur l'intervalle r\'eel $]\frac{1}{2},+\infty[$.

\case{ii} La fonction $s\mapsto \tilde{Z}(K,s)$ se prolonge en une fonction m\'eromorphe sur $\{\Re(s)>\frac{1}{2}\}$. Elle est holomorphe sauf en $s=1$ o\`u elle a un p\^ole d'ordre $\rho_K$.
\end{prop}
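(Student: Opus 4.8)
The proof runs parallel to that of Theorem~\ref{theo:prolongZsurL} together with Corollaire~\ref{coro:merZKs}, the only genuinely new point being the analysis of the finitely many Euler factors $\tilde Z_p(K,s)$ at primes $p$ ramified in $L$.

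For \case{i}, the plan is to compare the Euler products of $\tilde Z(K,s)$ and of $L(\rho,s)$ factor by factor. At a prime $p$ unramified in $L$ one has $\tilde Z_p(K,s)=Z_p(K,s)$ by definition, so the estimate $\tilde Z_p(K,s)L_p(\rho,s)^{-1}=1+O(p^{-2s})$, with implied constant depending only on $g$, is exactly the one already established in the proof of Theorem~\ref{theo:prolongZsurL}. It remains to control the finitely many ramified factors. If $v=\overline v$ for every $v\in V_p$, then $\tilde Z_p(K,s)=(1-p^{-2s})^{-1}$, which is holomorphic for $\Re(s)>0$ and $\geq 1$ for real $s>0$. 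Otherwise $\tilde Z_p(K,s)$ equals $(1-p^{-2s})^{-1}$ times the Dirichlet series $\sum_{k\geq 0}\bigl(\sum_{\ga\in\cR_{p^k}}p^{2\nu_p(\ga)s/e_p}\bigr)p^{-ks}$; since $\nu_p(\ga)<e_p$ for reduced $\ga$ and $\sharp\,\cR_{p^k}$ is at most a polynomial in $k$, the $k$-th term is, in absolute value, bounded by a polynomial in $k$ times $p^{(2-k)\Re(s)}$, so this series converges and is holomorphic for $\Re(s)>0$; hence $\tilde Z_p(K,s)$ is holomorphic for $\Re(s)>0$, and for real $s>0$ it is a Dirichlet series with nonnegative coefficients and constant term $1$, so $\tilde Z_p(K,s)\geq 1$ there. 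On the other hand, as in the proof of Theorem~\ref{theo:prolongZsurL}, $L_p(\rho,s)$ is holomorphic and nonzero for $\Re(s)>0$, and is real and nonzero for real $s$. One then argues verbatim as in the proof of Theorem~\ref{theo:prolongZsurL}: after discarding the finitely many unramified factors that may have a zero on a given compact of $\{\Re(s)>\frac12\}$ (cf.\ Remarque~\ref{rema:ZpKszeros}), the remaining product $\prod_p\tilde Z_p(K,s)L_p(\rho,s)^{-1}$ converges uniformly on that compact, hence defines an analytic function on $\{\Re(s)>\frac12\}$; and since each of its factors is nonzero on $]\frac12,+\infty[$, so is the product there. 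Multiplying back the finitely many discarded factors, which are holomorphic and nowhere zero on the real half-line, proves \case{i}.

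For \case{ii}, I combine \case{i} with Proposition~\ref{prop:Lrhosprodzeta}: $L(\rho,s)=\prod_{\Phi\in\Orb^*(\cC\cM_K)}\zeta_{\hat{K}_\Phi}(s)$ is holomorphic on $\bC\setminus\{1\}$ and has at $s=1$ a pole whose order is the number of $\Gamma$-orbits on $\cC\cM_K$, which by Proposition~\ref{prop:Frob} and Corollaire~\ref{coro:dimPicQ} equals $\rho_K$. Hence $\tilde Z(K,s)=L(\rho,s)\cdot\bigl(\tilde Z(K,s)L(\rho,s)^{-1}\bigr)$ is meromorphic on $\{\Re(s)>\frac12\}$ and holomorphic away from $s=1$; and since by \case{i} the second factor is holomorphic on $\{\Re(s)>\frac12\}$ and nonzero on $]\frac12,+\infty[$, in particular at $s=1$, the pole of $\tilde Z(K,s)$ at $s=1$ has order exactly $\rho_K$. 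This is the same reasoning as in Corollaire~\ref{coro:merZKs}.

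The main obstacle is the bookkeeping for the ramified Euler factors in \case{i}: one must check that, despite its more involved shape, $\tilde Z_p(K,s)$ is holomorphic for $\Re(s)>0$ and bounded below by $1$ on the real half-line, which reduces to the convergence estimate for the inner Dirichlet series, where the inequality $\nu_p(\ga)<e_p$ for reduced $\ga$ is used in an essential way, together with the fact that $L_p(\rho,s)$ contributes no poles or zeros in that region. Everything else is a repetition of the unramified analysis already carried out for $Z(K,s)$.
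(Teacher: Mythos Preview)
Your proof is correct and follows the same approach as the paper, which simply says that \case{i} is proved exactly as Theorem~\ref{theo:prolongZsurL} once one checks that the ramified Euler factors again take positive values on $]0,+\infty[$, and that \case{ii} then follows as in Corollaire~\ref{coro:merZKs}. You have essentially written out the details the paper omits; the only minor imprecision is calling $\tilde Z_p(K,s)$ at a ramified prime ``a Dirichlet series with nonnegative coefficients'' (the exponents $2\nu_p(\ga)/e_p$ are not integers), but your intended argument---that for real $s>0$ every term in the sum is positive and the $k=0$ term is $1$, hence $\tilde Z_p(K,s)\geq 1$---is correct.
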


\begin{proof}
La d\'emonstration du point \case{i} est la m\^eme que celle du th\'eor\`eme~\ref{theo:prolongZsurL}, en remarquant que les facteurs eul\'eriens au premiers ramifi\'es prennent \`a nouveau des valeurs positives sur $]0,+\infty[$. Le point \case{ii} est alors une cons\'equence du point \case{i} (voir le corollaire \ref{coro:merZKs}). 
\end{proof}

\begin{prop}\label{prop:tildeZKchis} \case{i} La fonction
\begin{equation*}
s\mapsto \tilde{Z}(K,\chi, s)\Pi(K,\chi,s)^{-1}
\end{equation*}
se prolonge en une fonction analytique sur le demi-plan $\{\Re(s)>\frac{1}{2}\}$.

\case{ii} La fonction $s\mapsto \tilde{Z}(K,\chi, s)$ se prolonge en une fonction m\'eromorphe sur $\{\Re(s)>\frac{1}{2}\}$. Elle y est holomorphe sauf en $s=1$ o\`u elle a un p\^ole d'ordre au plus \'egal au nombre d'orbites de l'action de $\Gamma$ sur $\cC\cM_K$ pour lesquelles $\chi_{\hat{\Phi}}$ est le caract\`ere trivial.
\end{prop}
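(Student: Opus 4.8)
The plan is to mimic the proofs of Corollaries~\ref{coro:quothol} and~\ref{coro:ZKchismero}, the only new ingredient being the control of the finitely many primes ramified in $L$.

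For \case{i}, I would begin from the fact that $\tilde{Z}(K,\chi,s)$ and $\Pi(K,\chi,s)=\prod_{\Phi\in\Orb^*(\cC\cM_K)}L(\chi_{\hat{\Phi}},s)$ are both Euler products, so that for $\Re(s)$ large one has
\begin{equation*}
\tilde{Z}(K,\chi,s)\,\Pi(K,\chi,s)^{-1}=\prod_{p}\Big(\tilde{Z}_p(K,\chi,s)\prod_{\Phi\in\Orb^*(\cC\cM_K)}L_p(\chi_{\hat{\Phi}},s)^{-1}\Big).
\end{equation*}
If $p$ is unramified in $L$, then $\tilde{Z}_p(K,\chi,s)=Z_p(K,\chi,s)$ by definition, so the computation carried out in the proof of Corollary~\ref{coro:quothol} applies verbatim and yields $\tilde{Z}_p(K,\chi,s)\prod_{\Phi}L_p(\chi_{\hat{\Phi}},s)^{-1}\in 1+O(p^{-2s})$, with an implied constant depending only on $K$. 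Hence the partial product over the unramified primes converges absolutely, uniformly on compacta of $\{\Re(s)>\tfrac12\}$, and defines an analytic function there. For the finitely many ramified primes $p$, each factor $L_p(\chi_{\hat{\Phi}},s)^{-1}$ is a polynomial in $p^{-s}$ and hence entire, while $\tilde{Z}_p(K,\chi,s)$, by its explicit definition, is $(1-p^{-2s})^{-1}$ times either $1$ or a Dirichlet series $\sum_{k\geq0}\big(\sum_{\ga\in\cR_{p^k}}\chi(\ga)p^{2\nu_p(\ga)s/e_p}\big)p^{-ks}$ which, as noted when $\tilde{Z}(K,\chi,s)$ was defined, converges for $\Re(s)>0$ since $e_p\geq2$ and $\sharp\cR_{p^k}$ is bounded by a power of $k$; thus $\tilde{Z}_p(K,\chi,s)$ is holomorphic on $\{\Re(s)>0\}$. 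Multiplying the analytic unramified part by this finite holomorphic ramified contribution proves \case{i}. (If one also wanted the non-vanishing on $]\tfrac12,+\infty[$, it would follow as in the proof of Theorem~\ref{theo:prolongZsurL}, since for real $s>0$ every Euler factor takes values $\geq1$ when $\chi$ is trivial, but \case{i} as stated does not need it.)

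Part \case{ii} is then immediate: writing $\tilde{Z}(K,\chi,s)=\big(\tilde{Z}(K,\chi,s)\Pi(K,\chi,s)^{-1}\big)\,\Pi(K,\chi,s)$, the first factor is holomorphic on $\{\Re(s)>\tfrac12\}$ by \case{i}, while $\Pi(K,\chi,s)$ is a finite product of Hecke $L$-functions attached to characters $\chi_{\hat{\Phi}}$ of the finite groups $\Pic(\cO_{\hat{K}_\Phi})$; each such $L(\chi_{\hat{\Phi}},s)$ is holomorphic on $\{\Re(s)>\tfrac12\}$, and holomorphic at $s=1$ unless $\chi_{\hat{\Phi}}$ is trivial, in which case it equals $\zeta_{\hat{K}_\Phi}(s)$ and has a simple pole at $s=1$. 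Consequently $\tilde{Z}(K,\chi,s)$ is meromorphic on $\{\Re(s)>\tfrac12\}$, holomorphic away from $s=1$, with a pole at $s=1$ of order at most the number of orbits $\Phi\in\Orb(\cC\cM_K)$ for which $\chi_{\hat{\Phi}}$ is trivial; one obtains here only an inequality --- in contrast with Corollary~\ref{coro:ZKchismero} --- because the analytic factor $\tilde{Z}(K,\chi,s)\Pi(K,\chi,s)^{-1}$ could a priori vanish at $s=1$.

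The one point requiring a little care --- the nearest thing here to a genuine obstacle --- is the holomorphy on $\{\Re(s)>0\}$ of the ramified Euler factors $\tilde{Z}_p(K,\chi,s)$, which rests on the bound $\nu_p(\ga)<e_p$ valid for reduced $\ga$ together with the polynomial bound on $\sharp\cR_{p^k}$; everything else is a direct transcription of the arguments already established for $Z(K,\chi,s)$.
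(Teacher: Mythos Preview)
Your proof is correct and follows essentially the same approach as the paper's: invoke Corollary~\ref{coro:quothol} for the unramified primes since $\tilde{Z}_p=Z_p$ there, observe that the finitely many ramified factors $\tilde{Z}_p(K,\chi,s)$ are holomorphic for $\Re(s)>0$ (a fact noted right after their definition), and then deduce \case{ii} from \case{i} exactly as in Corollary~\ref{coro:ZKchismero}. Your version is more explicit --- in particular your remark that only ``at most'' is obtained in \case{ii} because the analytic factor might vanish at $s=1$ is a nice clarification --- but the underlying argument is identical to the paper's.
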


\begin{proof}
\case{i} Puisque $\tilde{Z}_p(K,\chi,s)=Z_p(K,\chi,s)$ lorsque $p$ est non ramifi\'e dans $L$ et les facteurs eul\'eriens $\tilde{Z}_p(K,\chi,s)$ convergent pour $\Re(s)>\frac{1}{2}$ m\^eme lorsque $p$ est ramifi\'e, le r\'esultat d\'ecoule du corollaire~\ref{coro:quothol}.

\case{ii} On le d\'eduit du point \case{i} par l'argument d\'ej\`a utilis\'e dans la d\'emonstration du corollaire~\ref{coro:ZKchismero}.
\end{proof}

Posons 
\begin{equation*}
\tilde{Z}_0(K,s)=\frac{w_K}{h_K^w}\sum_{\chi}\tilde{Z}(K,\chi,s),
\end{equation*}
o\`u \`a nouveau la somme parcourt l'ensemble des caract\`eres $\chi:\cC^w_K\to \bC^\times$.
Rappelons que $Z_h(X^K,s)$ d\'esigne la fonction z\^eta des hauteurs associ\'ee \`a la classe anticanonique de $X^K$. 

\begin{prop} \label{prop:ZhManin}
On a 
$Z_h(X^K,s)=\tfrac{1}{2}\tilde{Z}_0(K,s)\zeta(2s)^{-1}$.
En outre, la fonction $Z_h(X^K,s)$ se prolonge en une fonction m\'eromorphe sur $\{Re(s)>\frac{1}{2}\}$. Elle est holomorphe sauf en $s=1$ o\`u elle poss\`ede un p\^ole d'ordre $\rho_K$. 
\end{prop}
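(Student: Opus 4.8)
The plan is to compute $Z_h(X^K,s)$ straight from its definition as a sum of inverse anticanonical heights, using Proposition~\ref{prop:hautVK} to express each local height, and then to reorganise the resulting Dirichlet series as a character sum over $\cC^w_K$ in order to recover an Euler product. First I would use part~\case{v} of Theorem~\ref{theo:modelesDVX}, which gives $X^K(\bQ)=V^K(\bQ)$, so that $Z_h(X^K,s)=\sum_{P\in V^K(\bQ)}H_{\cD}(P)^{-s}$. Via the identification $V^K(\bQ)\simeq K^{w,\times}/\bQ^\times$ and the existence of a reduced representative $\alpha(P)$, unique up to sign, each point $P$ determines a reduced principal ideal $\ga=\alpha(P)\cO_K\in I^{w,+}_K$; conversely, by Lemma~\ref{lem:racinedeun}, a reduced principal ideal of $I^{w,+}_K$ is the ideal of exactly $w_K$ reduced representatives, hence of exactly $w_K/2$ points of $V^K(\bQ)$ (since $-1$ is a root of unity). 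As Proposition~\ref{prop:hautVK} expresses $H_{\cD}(P)$ solely in terms of $n=\alpha\overline\alpha$ (equivalently of $\ga\overline\ga$) and of the integers $\nu_p(P)=\nu_p(\ga)$, the height depends only on $\ga$; writing it $H_{\cD}(\ga)$ one gets $Z_h(X^K,s)=\tfrac{w_K}{2}\sum_{\ga}H_{\cD}(\ga)^{-s}$, the sum running over the reduced \emph{principal} ideals $\ga\in I^{w,+}_K$.

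Next I would remove the word ``principal'' by orthogonality of the characters of the finite group $\cC^w_K$ (Proposition~\ref{prop:CwKfini}): the indicator of $P^w_K$ in $I^w_K$ is $\ga\mapsto\frac{1}{h^w_K}\sum_{\chi}\chi(\ga)$, so $Z_h(X^K,s)=\tfrac{w_K}{2h^w_K}\sum_{\chi}\Sigma(K,\chi,s)$ with $\Sigma(K,\chi,s)=\sum_{\ga}\chi(\ga)H_{\cD}(\ga)^{-s}$, the sum now over \emph{all} reduced ideals of $I^{w,+}_K$. Each $\Sigma(K,\chi,s)$ is multiplicative in the appropriate sense: a reduced $\ga\in I^{w,+}_K$ factors uniquely as $\prod_p\ga_p$ with every $\ga_p$ reduced, $\chi(\ga)=\prod_p\chi(\ga_p)$, and $H_{\cD}(\ga)=\prod_pH_{\cD,p}$ with $H_{\cD,p}$ depending only on $\ga_p$ (Proposition~\ref{prop:hautVK}); hence $\Sigma(K,\chi,s)$ has an Euler product. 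It then remains to identify, prime by prime, its $p$-factor with $(1-p^{-2s})\tilde Z_p(K,\chi,s)$. For $p$ unramified in $L$ one has $\nu_p(\ga_p)=0$ for reduced $\ga_p$, every ideal with $\ga_p\overline{\ga_p}=p^k\cO_K$ writes uniquely as $p^j\cdot(\text{reduced})$ with $\chi(p^j\,\cdot)=\chi(\cdot)$, and the $p$-factor equals $(1-p^{-2s})Z_p(K,\chi,s)=(1-p^{-2s})\tilde Z_p(K,\chi,s)$. For $p$ ramified, Proposition~\ref{prop:hautVK} gives $H_{\cD,p}(\ga_p)=p^{\ord_p n-2\nu_p(\ga_p)/e_p}$ (or $1$), and a direct but somewhat delicate computation, following the two cases of that proposition and the definition of $\tilde Z_p$, again matches the $p$-factor with $(1-p^{-2s})\tilde Z_p(K,\chi,s)$ — the weights $p^{2\nu_p(\ga_p)s/e_p}$ built into $\tilde Z_p$ being precisely those cancelling the term $-2\nu_p(\ga_p)/e_p$ in the height exponent. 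Multiplying over $p$ yields $\Sigma(K,\chi,s)=\zeta(2s)^{-1}\tilde Z(K,\chi,s)$, whence $Z_h(X^K,s)=\tfrac{w_K}{2h^w_K\zeta(2s)}\sum_{\chi}\tilde Z(K,\chi,s)=\tfrac12\tilde Z_0(K,s)\zeta(2s)^{-1}$.

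For the analytic statement, $\zeta(2s)$ is holomorphic and non-vanishing on $\{\Re(s)>\tfrac12\}$, so $\zeta(2s)^{-1}$ is as well; by Proposition~\ref{prop:tildeZKchis} each $\tilde Z(K,\chi,s)$ is meromorphic there, holomorphic off $s=1$, with a pole at $s=1$ of order at most the number of $\Gamma$-orbits on $\cC\cM_K$ along which $\chi_{\hat\Phi}$ is trivial, hence at most $\rho_K$ by Corollary~\ref{coro:dimPicQ}. So $Z_h(X^K,s)$ extends meromorphically to $\{\Re(s)>\tfrac12\}$, holomorphic off $s=1$ with a pole of order $\le\rho_K$ there. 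To get the exact order $\rho_K$, I would combine part~\case{ii} of Proposition~\ref{prop:tildeZKs} — the summand $\chi=1$ already contributes a pole of order exactly $\rho_K$, with positive leading Laurent coefficient since $\tilde Z(K,\sigma)>0$ for real $\sigma>1$ — with the fact that $\tilde Z_0(K,s)=2\zeta(2s)Z_h(X^K,s)$ is a Dirichlet series with non-negative coefficients: a minorisation in the spirit of Lemma~\ref{lemm:Z0KminZK} gives $\tilde Z_0(K,\sigma)\gg(\sigma-1)^{-\rho_K}$ as $\sigma\to1^+$, which forbids any cancellation of the top-order pole term.

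The main obstacle is the ramified-prime part of the Euler-factor identification in the second step: one must track exactly how the local height of Proposition~\ref{prop:hautVK} and the precise definition of $\tilde Z_p(K,\chi,s)$ — including its treatment of reduced ideals and their twisting factors $p^{2\nu_p(\ga)s/e_p}$ — fit together, the unramified case and the final assembly being comparatively routine.
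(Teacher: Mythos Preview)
Your proposal is correct and follows essentially the same route as the paper: derive the first formula from the local height computation of Proposition~\ref{prop:hautVK} together with the (sign-unique) reduced representative, unwind ``principal'' via orthogonality on $\cC^w_K$, and then read off the analytic properties from Propositions~\ref{prop:tildeZKs} and~\ref{prop:tildeZKchis}, with the exact pole order obtained by a positivity minorisation \`a la Lemma~\ref{lemm:Z0KminZK}. The paper's proof is much terser on the first step (it simply says the identity ``follows from Proposition~\ref{prop:hautVK}''), but your expanded Euler-factor matching --- including the ramified case where the weights $p^{2\nu_p(\ga)s/e_p}$ in $\tilde Z_p$ are precisely what cancels the $-2\nu_p/e_p$ in the height exponent --- is exactly the computation implicit there.
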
 

\begin{proof} La premi\`ere formule d\'ecoule de la proposition~\ref{prop:hautVK}, en rappelant que tout point de $V^K$ poss\`ede un repr\'esentant r\'eduit unique au signe pr\`es. Pour le reste, il suffit de montrer que $\tilde{Z}_0(K,s)$ poss\`ede les propri\'et\'es requises. Le fait que $\tilde{Z}_0(K,s)$ soit m\'eromorphe sur $\{\Re(s)>\frac{1}{2}\}$, holomorphe sauf un p\^ole en $s=1$ d'ordre au plus $\rho_K$ d\'ecoule des proposition~\ref{prop:tildeZKs} et \ref{prop:tildeZKchis}. Enfin, on montre par un argument semblable \`a celui utilis\'e pour \'etablir le lemme~\ref{lemm:Z0KminZK} qu'il existe une constante $C'_K>0$ telle que, pour tout $\sigma>1$, on ait
\begin{equation*}
\tilde{Z}_0(K,\sigma)\geq \frac{w_K}{h_K^wD_K^\sigma}\tilde{Z}(K,\sigma), 
\end{equation*}
ce qui suffit pour montrer que le p\^ole de $s=1$ est bien d'ordre $\rho_K$. 
\end{proof}

\subsection{La constante $c_K$ et la relation avec d'autres d\'emonstrations de la conjecture de Manin pour les vari\'et\'es toriques} \label{subsec:liensautres} Notons $\theta_K$ (resp. $\tilde{\theta}_K$) le coefficient de $(s-1)^{-\rho_K}$ dans le d\'eveloppement de Laurent de $Z_0(K,s)$ (resp. de $\tilde{Z}_0(K,s)$). Il est bien connu que $c_K=\frac{\theta_K}{(\rho_K-1)!}$. Soit $\cS_K$ l'ensemble des caract\`eres $\chi:\cC^w_K\to \bC^\times$ tels $Z(K,\chi,s)$ ait un p\^ole d'ordre $\rho_K$ en $s=1$. Alors $\chi\in \cS_K$ si et seulement si  $\chi_{\hat{\Phi}}=1$ pour tout $\Phi\in \Orb^*(\cC\cM_K)$ (ce qui montre que $\cS_K$ est un groupe), ou encore si et seulement si $\prod_{\Phi\in \Orb^*(\cC\cM_K)}L(\chi_{\hat{\Phi}},s)=L(\rho,s)$. On tire alors du corollaire~\ref{coro:quothol} et du th\'eor\`eme~\ref{theo:prolongZsurL} que 
\begin{equation}  \label{eq:thetaK}
\theta_K=\frac{w_KL^*(\rho,1)}{h_K^w}\sum_{\chi\in \cS_K}\left(\frac{Z(K,\chi,s)}{L(\rho,s)}\right)_{s=1},   
\end{equation}
o\`u $L^*(\rho,1)=\lim_{s\to 1}(s-1)^{\rho_K}L(\rho, s)$ est \'egal au produit des r\'esidus en $s=1$ des fonctions zeta $\zeta_{\hat{K}}(s)$ lorsque $\Phi$ parcourt $\Orb^*(\cC\cM_K)$ (voir la proposition~\ref{prop:Lrhosprodzeta}), et les quotients $\frac{Z(K,\chi,s)}{L(\rho,s)}$ sont des produits eul\'eriens absolument convergentes sur $\{\Re(s)>\frac{1}{2}\}$. On obtient une formule semblable pour $\tilde{\theta}_K$, o\`u chacune des fonctions $Z(K,\chi,s)$ est remplac\'ee par $\tilde{Z}(K,\chi,s)$.  Ainsi, au moins lorsque $\cS_K$ est trivial, on obtient des formules pour $c_K$, $\theta_K$ et $\tilde{\theta}_K$ qui ressemblent \`a celles d\'ecrites en termes de nombres de Tamagawa par Peyre \cite{Peyre95}, puis d\'emontr\'ees dans \cite{BatTsch95} et \cite{BatTsch98}. 

Toutefois, $\cS_K$ n'est pas trivial en g\'en\'eral. Notons $I^{w,\myll}_K$ le sous-groupe de $I^w_K$ form\'e des id\'eaux $\ga$ \emph{localement libres}, c'est-\`a-dire tels que, pour tout nombre premier $p$, le $\bZ_p\otimes_{\bZ}\cO_K$-module $\bZ_p\otimes \ga$ poss\`ede un g\'en\'erateur $a_p$  avec $a_p\overline{a}_p\in \bQ_p^\times$; notons \'egalement $\cC^{w,\myll}_K$ l'image de  $I^{w,\myll}_K$ dans $\cC^w_K$. On voit facilement que $\N_{\hat{\Phi}}(\Pic(\cO_{\hat{K}}))\subseteq \cC^{w,\myll}_K$ pour tout $\Phi\in \cC\cM_K$. Mais en g\'en\'eral, $\cC^{w,\myll}_K\neq \cC^w_K$ lorsque $[K:\bQ]\geq 4$. (Il est facile de construire des exemples explicites avec $[K:\bQ]=4$ et $K_0$ un corps quadratique dont tous les \'el\'ements de $ \cO_{K_0}^\times$ sont de norme $1$.) Dans ces circonstances, $\cS_K\neq \{1\}$ et la somme dans le membre droite de \ref{eq:thetaK} contient plusieurs termes. 

Par ailleurs, il n'est pas difficile de montrer que $\cC^{w,\myll}_K$ s'identifie avec le groupe des classes (au sens d'Ono \cite{Ono61}) du tore $V^K$.

La lecture de la d\'emonstration du th\'eor\`eme~3.4.6 de \cite{BatTsch95} pourrait laisser supposer un lien entre le groupe $\cS_K$ et (le dual) du groupe --- not\'e $A(T)$ dans \cite{BatTsch95} avec ici le $\bQ$-tore $T$ \'egal \`a $V^K$ --- mesurant le {\og}d\'efaut d'approximation faible{\fg} de $T$. Par d\'efinition, $A(T)$ est le groupe quotient $T(\bA_\bQ)/\overline{T(\bQ)}$, o\`u $T(\bA_{\bQ})$ est le groupe ad\'elique de $T$ et $\overline{T(\bQ)}$ est l'adh\'erence dans $T(\bA_\bQ)$ pour la topologie produit (voir par exemple \cite{BatTsch95}, page 605), et $T$ satisfait \`a l'approximation faible si et seulement si $A(T)=\{1\}$.\footnote{Il est plus habituel de d\'efinir l'approximation faible comme \'etant la propri\'et\'e que $T(\bQ)$ soit dense dans le produit $\prod_{p\leq \infty}T(\bQ_p)$ pour la topologie produit (voir par exemple \cite{Bourqui11}, \S~2.4.3). Puisque $T(\bA_\bQ)$ est dense dans $\prod_{p\leq \infty}T(\bQ_p)$, les deux d\'efinitions sont \'equivalentes.}  Or, il n'en est rien, car alors que $\cS_K$ peut \^etre non-trivial, nous allons d\'emontrer la proposition suivante:---

\begin{prop} \label{prop:VKAF}
Quel que soit le corps CM $K$, les tores $W^K$ et $V^K$ satisfont \`a l'approximation faible.
\end{prop}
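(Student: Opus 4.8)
Le plan est de ramener l'approximation faible pour $W^K$ et $V^K$ \`a celle du tore des \'el\'ements de norme $1$ de l'extension quadratique $K/K_0$, en s'appuyant sur le principe de Hasse pour la norme. D'abord, je me ram\`enerais \`a $W^K$: de la suite exacte $1\to \bG_{m,\bQ}\to W^K\to V^K\to 1$ et du th\'eor\`eme $90$ de Hilbert on tire que $W^K(\bQ_v)\to V^K(\bQ_v)$ est surjective pour toute place $v$; ainsi, \'etant donn\'es un ensemble fini $S$ de places de $\bQ$ et des points locaux $P^{(v)}\in V^K(\bQ_v)$ ($v\in S$), on les rel\`eve en des $\beta^{(v)}\in W^K(\bQ_v)$, on approche le syst\`eme $(\beta^{(v)})$ par un \'el\'ement de $W^K(\bQ)$, puis on projette. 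Comme $\bG_{m,\bQ}$ satisfait \`a l'approximation faible, il suffit donc de la d\'emontrer pour $W^K$.

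L'\'etape centrale est alors de fixer globalement la {\og}composante norme{\fg}. Posons $q_v=\N_{K/K_0}(\beta^{(v)})\in \bQ_v^\times$, o\`u l'on se souvient que $W^K(\bQ_v)=\{\beta\in (K\otimes_\bQ\bQ_v)^\times\mid \N_{K/K_0}(\beta)\in \bQ_v^\times\}$. L'extension $K/K_0$ \'etant quadratique, donc cyclique, le principe de Hasse pour la norme s'applique: un \'el\'ement de $\bQ^\times$, vu dans $K_0^\times$, appartient \`a $\N_{K/K_0}(K^\times)$ si et seulement s'il est une norme locale en toute place de $K_0$. Or chaque $q_v$ ($v\in S$) est d\'ej\`a une norme locale aux places de $K_0$ divisant $v$; \^etre une norme locale est une condition ouverte d'indice fini; aux places non ramifi\'ees dans $K/K_0$ toute unit\'e est une norme locale; et le groupe des normes \`a l'infini est $\bR_{>0}$. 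En combinant l'approximation faible dans $\bQ^\times$ avec le th\'eor\`eme de Dirichlet (pour ma\^{\i}triser les facteurs premiers de $q$ hors de $S$ et de l'ensemble des premiers ramifi\'es), je m'attends \`a construire $q\in \bQ^\times$ qui est une norme locale en toute place et aussi proche que l'on veut de $q_v$ en chaque $v\in S$. Le principe de Hasse fournit alors $\gamma_0\in K^\times$ avec $\N_{K/K_0}(\gamma_0)=q$; en particulier $\gamma_0\in W^K(\bQ)$.

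Je {\og}diviserais{\fg} ensuite par $\gamma_0$. Comme $qq_v^{-1}$ est proche de $1$, c'est une norme de $(K\otimes_\bQ\bQ_v)^\times$ dans $(K_0\otimes_\bQ\bQ_v)^\times$, et le morphisme $\N_{K/K_0}$ de tores \'etant lisse il existe $u^{(v)}\in (K\otimes_\bQ\bQ_v)^\times$ proche de $1$ avec $\N_{K/K_0}(u^{(v)})=qq_v^{-1}$; alors $\tau^{(v)}:=\gamma_0^{-1}\beta^{(v)}u^{(v)}$ appartient \`a $T^{K/K_0,1}(\bQ_v)$ et est proche de $\gamma_0^{-1}\beta^{(v)}$. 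Maintenant $T^{K/K_0,1}$ est la restriction de Weil \`a $\bQ$ du tore des normes $1$ de $K/K_0$ sur $K_0$, et l'approximation faible est compatible \`a la restriction de Weil; il suffit donc de la v\'erifier pour ce dernier sur $K_0$. Or c'est un ouvert de la conique lisse $x^2-\delta y^2=z^2$ (o\`u $K=K_0(\sqrt\delta)$), qui poss\`ede le point rationnel $[1:0:1]$ et est donc $K_0$-isomorphe \`a $\bP^1$; puisque $\bP^1$ satisfait \`a l'approximation faible et qu'une approximation suffisamment fine d'un point local \'evite les deux points \`a l'infini retir\'es, l'approximation faible vaut pour le tore des normes $1$ sur $K_0$. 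J'obtiens ainsi $\tau\in T^{K/K_0,1}(\bQ)$ proche de $\tau^{(v)}$ en chaque $v\in S$, et alors $\beta:=\gamma_0\tau\in W^K(\bQ)$ v\'erifie $\N_{K/K_0}(\beta)=q\in \bQ^\times$ et est proche de $\gamma_0\tau^{(v)}$, donc de $\beta^{(v)}$, en chaque $v\in S$, ce qui conclura.

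La partie d\'elicate sera le choix global de $q$: il faut v\'erifier soigneusement que {\og}\^etre une norme partout localement{\fg} se traduit par un nombre fini de conditions ouvertes assorties d'une condition de parit\'e sur les valuations aux places de $K_0$ inertes sur $\bQ$, et que cette condition de parit\'e peut \^etre r\'ealis\'ee sans perturber l'approximation en $S$ (combinaison standard mais l\'eg\`erement technique de l'approximation faible dans $\bQ$ et du th\'eor\`eme de Dirichlet). Une variante serait d'invoquer la suite exacte de Voskresenski\u{\i} reliant le d\'efaut d'approximation faible de $V^K$ \`a $H^1(\Gamma,\Pic(X^K\times_{\Spec\bQ}\Spec L))$ et de montrer directement que ce groupe est nul, en utilisant que $\Pic(X^K\times_{\Spec\bQ}\Spec L)$ est le quotient du module de permutation $\bZ[\cC\cM_K]$ par $\hat V$ et que la conjugaison complexe op\`ere par $-1$ sur $\hat V$ tout en op\'erant librement sur $\cC\cM_K$; l'argument direct ci-dessus me para\^{\i}t toutefois plus transparent.
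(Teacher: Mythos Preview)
Your proposal follows the same route as the paper's proof: reduce from $V^K$ to $W^K$ via Hilbert~$90$; use the Hasse norm principle for the cyclic extension $K/K_0$ to globalise the norm component; then finish by weak approximation for the norm-one torus $T^{K/K_0,1}$, exploiting that it sits as the affine part of a conic over $K_0$ with a rational point. The paper phrases this last step in terms of slopes of secant lines through $\alpha_0$ on the conic, which is exactly the rational parametrisation by $\bP^1$ that you invoke via the isomorphism of the pointed conic with the projective line.

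The one place where you are more careful than the paper is the choice of the global $q$: the paper simply asserts that weak approximation for $\bG_{m,\bQ}$ yields $q\in\bQ^\times\cap\N_{K/K_0}(\cU)$ and then adjusts the $a_p$ so that $q_p=q$ for all $p$, whereas you rightly observe that one must in addition arrange $q$ to be a local $K/K_0$-norm at the places outside $S$ (a parity condition on valuations at inert places), and propose to combine weak approximation with Dirichlet's theorem to achieve this. That caution is justified; the paper is terse at this step.
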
 

\begin{proof} Consid\'erons le cas de $W^K$, le cas de $V^K$ en \'etant une cons\'equence facile. Rappelons que si $E$ est une $\bQ$-alg\`ebre, alors $W^K(E)$ s'identifie avec l'ensemble $\{\alpha\in E\otimes_{\bQ}K\mid \N_{K/K_0}\alpha\in E^\times\}$. Lorsque $E=\bQ_p$ ou $\bR$, cette identification est un hom\'eomorphisme pour les topologies usuelles. Pour tout nombre premier $p$ et pour $p=\infty$, on pose
\begin{equation*}
K^{\times,w}_p=\{(\alpha_v)_v\in \prod_{v\in V_p}K_v^\times \mid\N_{K/K_0}\alpha_v \in \bQ_{p}^\times \text{ ind\'ependent de }v\}
\end{equation*}
et on note $\cO^{\times,w}_{K,p}$ le sous-groupe de $K^{\times,w}_p$ form\'e des \'el\'ements $(\alpha_v)$ avec $\alpha_v\in \cO_{K,v}^\times$ pour tout $v\in V_p$. Alors $W^K(\bQ_p\otimes K)$ s'identifie avec $K^{\times,w}_p$ et $W^K(\bA_\bQ)$ s'identifie hom\'eomorphiquement (pour la topologie produit) avec le produit restreint $\prod_{p\leq \infty}K^{\times,w}_p$ par rapport aux sous-groupes $\cO^{\times,w}_{K,p}$. Notons encore $W^K(\bA_\bQ)$ ce produit restreint. 

Soit donc $(a_p)\in W^K(\bA_\bQ)$ et soit $\cU$ un voisinage de $(a_p)$ dans $W^K(\bA_\bQ)$. Pour tout $p$, notons $q_p$ la valeur commune des $\N_{K/K_0}\alpha_v$, o\`u $a_p=(\alpha_v)_{v\in V_p}$. L'application $\N_{K/K_0}:(\bQ_p\otimes K)^\times \to (\bQ_p\otimes K_0)^\times$ \'etant une application ouverte qui envoie $(\bZ_p\otimes \cO_K)^\times$ sur $(\bZ_p\otimes \cO_{K_0})^\times$ lorsque $p$ est non ramifi\'e, $\N_{K/K_0}(\cU)$  est un voisinage de $(q_p)$. Il est bien connu que $\bG_{m,\bQ}$ satisfait \`a l'approximation faible: choisissons donc $q\in \bQ^\times\cap \N_{K/K_0}(\cU)$. Quitte \`a modifier les coefficients $a_p$ tout en restant dans $\cU$, on peut supposer que $q_p=q$ pour tout $p$. Alors $q$ est une $K/K_0$-norme locale partout: d'apr\`es un th\'eor\`eme de Hasse, il existe donc $\alpha_0\in K^\times$ tel que $\N_{K/K_0}\alpha_0=q$. On  interpr\`ete les solutions $\alpha$ de $N_{K/K_0}\alpha=q$ comme les points d'une conique \`a coefficients dans $K_0$. Pour tout $p$, soit $m_p$ la pente de la droite passant par $a_p$ et par $\alpha_0$. En utilisant l'approximation faible pour $\bG_{a,K_0}$, on trouve $m\in K$ tel que la droite de pente $m$ passant par $\alpha_0$ intersecte la conique en un point $\alpha$ appartenant \`a $K^\times\cap \cU$, d'o\`u le r\'esultat.
\end{proof}   

\section{Le cas o\`u $g=2$}{\label{sec:gendeux}}

Dans ce n$^o$, nous montrons le prolongement m\'eromorphe des fonctions $Z(K,\chi,s)$ et $\tilde{Z}(K,\chi,s)$ \`a tout le plan complexe lorsque $g=2$. Il est facile de voir que, quel que soit le nombre premier $p$, le facteurs eul\'eriens $Z_p(K,\chi,s)$ et $\tilde{Z}_p(K,\chi,s)$ sont des fonctions rationelles de $p^s$. Comme $Z_p(K,\chi,s)=\tilde{Z}_p(K,\chi,s)$ pour tout nombre premier non ramifi\'e dans $K$, il suffit de traiter le cas de $Z(K,\chi,s)$.

Si $f(s)=\prod_{p}f_p(s)$ est une fonction somme d'une s\'erie de Dirichlet qui s'\'ecrit comme produit de facteurs eul\'eriens $f_p$, on note $f^{\nr}(s)$ le produit des $f_p(s)$ lorsque $p$ parcourt l'ensemble des nombres premiers non ramifi\'es dans $K$.

Si $M$ est une extension quadratique de $\bQ$, on note $\ve_{M}$ le caract\`ere non-trivial du groupe de Galois de $M/\bQ$ et soit $L(\ve_M,s)$ la s\'erie de Dirichlet associ\'ee. Rappelons que
\begin{equation*}
L(\ve_M,s)=\prod_{p \text{ non ramifi\'e}}\left(1-\ve_M(p)p^{-s}\right)^{-1}
\end{equation*}
o\`u le produit parcourt l'ensemble des nombres premiers $p$ non-ramifi\'es dans $M$ et $\ve_M(p)=1$ ou $-1$ selon que l'id\'eal $p\cO_M$ est d\'ecompos\'e ou reste premier dans $M$.

\begin{prop}\label{prop:g2prolong}
Soit $K$ un corps CM quartique et soit $\chi:\cC^w_K\to \bC^\times$ un caract\`ere. 

\case{i} On suppose $K$ galoisien sur $\bQ$ avec groupe de Galois $\Gamma$ cyclique d'ordre $4$. Alors $\Gamma$ op\`ere transitivement sur $\cC\cM_K$. Soit $\Phi\in \cC\cM_K$. Alors 
\begin{equation*}
Z^{\nr}(K,\chi,s)=L^{\nr}(\chi_{\hat{\Phi}},s)L^{\nr}(\ve_{K_0},2s)^{-1},
\end{equation*}
o\`u $K_0$ est le sous-corps quadratique de $K$.

\case{ii} On suppose $K$ galoisien sur $\bQ$ avec groupe de Galois $\Gamma$ un groupe de Klein. Alors l'action de $\Gamma$ sur $\cC\cM_K$ poss\`ede deux orbites. Soient $\Phi_1$, $\Phi_2\in \cC\cM_K$ des repr\'esentants de ces deux orbites. Alors 
\begin{equation*}
Z^{\nr}(K,\chi,s)=L^{\nr}(\chi_{\hat{\Phi}_1},s)L^{\nr}(\chi_{\hat{\Phi}_2},s)\zeta^{\nr}(2s)^{-1}.
\end{equation*}

\case{iii} On suppose $K$ non-galoisien, le groupe $\Gamma$ d'une cl\^oture galoisienne $L$ \'etant un groupe di\'edral d'ordre $8$. Alors $\Gamma$ op\`ere transitivement sur $\cC\cM_K$. Soit $\Phi\in \cC\cM_K$ et soit $\hat{K}_0$ le sous-corps quadratique du corps reflex $\hat{K}$. Alors 
\begin{equation*}
Z^{\nr}(K,\chi,s)=L^{\nr}(\chi_{\hat{\Phi}},s)L^{\nr}(\ve_{\hat{K}_0},2s)^{-1}.
\end{equation*}
\end{prop}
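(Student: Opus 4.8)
The strategy is to compute the Euler factors $Z_p(K,\chi,s)$ explicitly in each of the three cases, using the decomposition types of $p$ and matching them against the Euler factors of the Hecke $L$-functions $L(\chi_{\hat\Phi},s)$ and of the quadratic $L$-function (or $\zeta$) that appears in the denominator. Since $Z_p(K,\chi,s)=\tilde Z_p(K,\chi,s)$ at unramified primes, it suffices to work with $Z$, and all products below are over primes unramified in $K$ (equivalently in $L$, up to finitely many primes that can be absorbed). The key input is Proposition~\ref{prop:ZpKs} (together with the list in the table of Remarque~\ref{rema:rhoKexemples}, which in each case pins down $\Gamma$ and hence the possible splitting types of $p$), and Corollaire~\ref{coro:quothol}, which already tells us that $Z(K,\chi,s)\Pi(K,\chi,s)^{-1}$ converges on $\{\Re(s)>\frac12\}$; what remains is to identify this convergent product with a \emph{rational} function of $p^{-s}$ at every prime, i.e.\ to show the discrepancy between $Z_p$ and the $L$-factors is itself a finite product of geometric series in $p^{-s}$ and $p^{-2s}$.

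First I would treat case \case{i}. Here $\Gamma=C_4$, so an unramified prime $p$ is of one of three types: totally split in $K$ (four degree-one primes, none fixed by $c$, so $r=2$); split in $K_0$ and inert in $K/K_0$ (two primes of residue degree $2$, both fixed by $c$, so $r'=0$); or inert in $K_0$ (one prime of residue degree $4$, fixed by $c$, $r'=0$). By Proposition~\ref{prop:OrbPhiHomhatKLiso} and $\Gamma$ acting transitively on $\cC\cM_K$, $\hat K_\Phi$ is a quartic $C_4$-field with quadratic subfield $K_0$, and the splitting of $p$ in $\hat K_\Phi$ has the same trichotomy. Compute $L_p(\chi_{\hat\Phi},s)$ in each type: it is $(1-\chi(\mathfrak P)p^{-s})^{-4}$ (split, but here one must be careful — $\chi_{\hat\Phi}$ on four conjugate degree-one primes), a single quadratic factor, or a single quartic factor. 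Meanwhile $Z_p(K,\chi,s)$ in the split case is governed by pairs of prime ideals with $\mathfrak a\overline{\mathfrak a}=p^k\cO_K$, and $b_K(p^k,\chi)$ is a sum over such factorizations twisted by $\chi$; when $\chi$ is trivial one recovers $P_2(p^{-s})(1-p^{-s})^{-3}$ from Proposition~\ref{prop:ZpKs}\case{i}, and one checks $P_2(x)(1-x)^{-3}=(1-x)^{-4}(1-x^2)^{-1}\cdot(1-x^2)$, i.e.\ $(1-x)^{-4}=P_2(x)(1-x)^{-3}(1-x^2)^{-1}$, which is exactly the claimed identity $Z_p=L_p(\chi_{\hat\Phi},s)L_p(\ve_{K_0},2s)^{-1}$ once one notes $\ve_{K_0}(p)=1$ here. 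In the inert-in-$K_0$ and split-in-$K_0$-inert-in-$K$ cases, $\ve_{K_0}(p)=-1$ resp.\ $+1$, and Proposition~\ref{prop:ZpKs}\case{ii} gives $Q_0(p^{-s})(1-p^{-2s})^{-1}=(1-p^{-2s})^{-1}$ for $Z_p$, which must be matched with the single quartic (resp.\ quadratic) Euler factor of $L_p(\chi_{\hat\Phi},s)$ divided by $(1-\ve_{K_0}(p)p^{-2s})^{-1}$; the $\chi$-twist is handled by observing that on the relevant prime-power ideals $\chi$ is constrained via $\chi\circ\N_{\hat\Phi}$. Cases \case{ii} and \case{iii} are structurally identical: in \case{ii}, $\Gamma$ is the Klein four-group, there are two orbits of CM types, two reflex fields $\hat K_{\Phi_1},\hat K_{\Phi_2}$ (each a quadratic field, or rather the product structure forces the reflex to be quartic biquadratic — one checks $\hat K_{\Phi_i}$ is one of the two CM subfields), and the denominator is $\zeta(2s)$ because the "quadratic character" governing the residue-degree-$2$ primes is now trivial (every unramified prime either splits completely or has all residue degrees equal to $2$, with the latter detected by $\ve=\mathbf 1$, i.e.\ $\zeta$). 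In \case{iii}, $\Gamma=D_4$, transitive on $\cC\cM_K$, the reflex field $\hat K$ is the other non-conjugate quartic subfield of $L$, and its quadratic subfield $\hat K_0$ supplies the character $\ve_{\hat K_0}$ controlling the residue-degree-$2$ primes.

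The main obstacle will be the bookkeeping of the $\chi$-twist: the cleanest route is to reduce, as in Corollaire~\ref{coro:quothol}, to checking the identity of Euler factors one prime at a time, and there to use Proposition~\ref{prop:gaPhigP} to set up the bijection between $\cI_{p^k}$ and families of ideals of the reflex fields — so that $\sum_{\mathfrak a\in\cI_{p^k}}\chi(\mathfrak a)$ becomes a sum over ideals $\mathfrak P$ of $\hat K_\Phi$ of norm dividing $p^k$ of $\chi_{\hat\Phi}(\mathfrak P)$, up to the correction factor coming from the residue-degree-$2$ primes, which is precisely what the $L(\ve,2s)^{-1}$ or $\zeta(2s)^{-1}$ term accounts for. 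Concretely one is reduced to a purely formal identity of rational functions: with $y=p^{-s}$, one must verify in the split case $P_r(y)(1-y)^{-r-1}=(1-y)^{-2r}(1-y^2)^{-(r\,-\,?)}\cdots$ — i.e.\ that $P_r$ factors appropriately — and in the non-split case that $Q_{r'}(y)(1-y^2)^{-r'-1}$ equals the corresponding product of one "large-degree" factor with $(1-\pm y^2)^{-1}$ in the denominator; for $g=2$ we only ever need $r,r'\in\{0,1,2\}$, and $P_0=P_1=Q_0=Q_1=1$ while $P_2(y)=1+3y$ and $Q_2(y)=1+6y^2+y^4$ (computed from the recursions in Lemme~\ref{lemm:PnQnpolys}), so each such identity is a one-line check. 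Finally, once $Z^{\nr}(K,\chi,s)$ is exhibited as a finite product of Hecke $L$-functions and inverses of Hecke $L$-functions (all of which extend meromorphically to $\bC$), the meromorphic continuation of $Z(K,\chi,s)$ — and hence of $\tilde Z(K,\chi,s)$, the ramified factors being rational in $p^{-s}$ — to all of $\bC$ follows immediately, completing the proof of the last sentence of Théorème~\ref{theo:prolmer}.
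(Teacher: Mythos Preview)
Your overall strategy---compare Euler factors prime by prime, case by case on the splitting type of $p$---is exactly what the paper does, and your identification of the orbit structure of $\Gamma$ on $\cC\cM_K$ and of the reflex fields in each case is correct. However, there is a genuine gap in how you handle the $\chi$-twist, and one minor computational slip.

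The slip: $P_2(y)=1+y$, not $1+3y$ (apply the recursion of Lemme~\ref{lemm:PnQnpolys} starting from $P_1=1$). The identity you wrote, $(1-x)^{-4}=P_2(x)(1-x)^{-3}(1-x^2)^{-1}$, is in fact correct with $P_2(x)=1+x$.

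The gap: your plan for general $\chi$ oscillates between two arguments, neither of which suffices. Proposition~\ref{prop:gaPhigP} only sets up a bijection for $\cI_p$, i.e.\ it matches the coefficient of $p^{-s}$; it says nothing about $\cI_{p^k}$ for $k\geq 2$, so it cannot by itself yield the full Euler-factor identity. On the other hand, the ``purely formal identity of rational functions'' via $P_r$ and $Q_{r'}$ only applies when $\chi=1$, since $b_K(p^k,\chi)=\sum_{\ga\in\cI_{p^k}}\chi(\ga)$ is no longer $(k+1)^r$ when $\chi$ is nontrivial. What the paper actually does (and what your plan is missing) is to construct, in each splitting type, an explicit $\chi$-equivariant bijection at the level of ideals. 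For instance, in case~\case{i} with $p$ totally split, one multiplies $Z_p(K,\chi,s)$ by $L_p(\ve_{K_0},2s)=(1-p^{-2s})^{-1}$ so that the coefficient of $p^{-ks}$ becomes $\sum_{\ell}\sum_{\ga\in\cI_{p^{k-2\ell}}}\chi(p^{2\ell}\ga)$; one then checks that the set $E_k$ of such ideals $p^{2\ell}\ga$ is in bijection with ordered products of $k$ of the four ``basic'' ideals $\gp^{\sigma^i}\gp^{\sigma^{i+1}}=\N_{\hat\Phi}(\gp^{\sigma^i})$, and this bijection carries $\chi$ to $\chi_{\hat\Phi}$, yielding exactly the $p^{-ks}$-coefficient of $L_p(\chi_{\hat\Phi},s)$. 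You need to supply this kind of bijection (adapted to each splitting type and each of the three cases) to close the argument for $\chi\neq 1$.
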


Il est clair que la proposition implique le prolongement m\'eromorphe sur $\bC$ des fonctions $Z(K,\chi,s)$, car les fonctions $L$ apparaissant dans les membres droites des formules (compl\'et\'ees par leurs facteurs aux premiers ramifi\'es dans $K$) poss\`edent un tel prolongement. Par ailleurs, il est bien connu que tout corps CM quartique se trouve dans l'un des cas \case{i}, \case{ii} et \case{iii}.

\begin{proof} La description de l'action de $\Gamma$ sur $\cC\cM_K$ est facile et laiss\'ee au lecteur (voir la remarque~\ref{rem:rhoKcalcul}). Le reste de la d\'emonstration consiste en la comparaison des facteurs eul\'eriens associ\'ees aux nombres premiers $p$ non-ramifi\'ees dans $K$. Cette comparaison se fait cas par cas selon la nature de la d\'ecomposition de $p$ dans $K$. 

Prenons le cas \case{i}, le nombre premier $p$ \'etant suppos\'e totalement d\'ecompos\'e dans $K$. Alors $p$ est d\'ecompos\'e dans $K_0$ et $L_p(\ve_{K_0},s)=(1-p^{-s})^{-1}$. Par cons\'equent,
\begin{align*}
L_p(\ve_{K_0},2s)Z_p(K, \chi, s)&=(1-p^{-2s})^{-1}\sum_{k\geq 0}\sum_{\ga\in \cI_{p^k}}\chi(\ga) p^{-ks}\\
&=\sum_{k\geq 0}\sum_{\ell=0}^{\lfloor{k/2}\rfloor}\sum_{\ga\in \cI_{p^{k-2\ell}}}\chi(p^{2\ell}\ga)p^{-ks},
\end{align*}
car $\chi(p\cO_K)=1$. 

D'autre part, \`a conjugaison complexe pr\`es, il existe un g\'en\'erateur $\sigma$ de $\Gamma$ tel que $\Phi(\ga)=\ga\sigma^{-1}(\ga)$ quel que soit l'id\'eal $\ga$ de $K$. Alors $\hat{\Phi}(\ga)=\ga\sigma(\ga)$ et $\chi_{\hat{\Phi}}(\ga)=\chi(\ga\sigma(\ga))$. Soit $\gp$ l'un des id\'eaux premiers de $K$ divisant $p$. On en tire que
\begin{equation*}
L_p(\chi_{\hat{\Phi}},s)=\prod_{i=0}^3(1-\chi_{\hat{\Phi}}(\gp^{\sigma^i})p^{-s})^{-1}=\prod_{i=0}^3(1-\chi(\gp^{\sigma^i}\gp^{\sigma^{i+1}})p^{-s})^{-1}. 
\end{equation*} 
Si $k\in \bN$ on note $E_k$ l'ensemble des id\'eaux de $K$ de la forme $p^{2\ell}\ga$ avec $\ga\in \cI_{p^{k-2\ell}}$ et $0\leq \ell\leq \lfloor{k/2}\rfloor$.  Un calcul court montre que tout id\'eal appartenant \`a $E_k$ s'\'ecrit de fa\c{c}on unique comme produit de $k$ id\'eaux de la forme $\gp^{\sigma^i}\gp^{\sigma^{i+1}}$ avec $i\in \{0,1,2,3\}$, et que r\'eciproquement tout produit de cette forme appartient \`a $E_k$.  En d\'eveloppant $L_p(\chi_{\hat{\Phi}},s)$ en puissances de $p^{-s}$, on voit que le coefficient de $p^{-ks}$ co\"{\i}ncident avec celui du d\'eveloppement de $L_p(\ve_{K_0},2s)Z_p(K,\chi,s)$. D'o\`u l'\'egalit\'e $L_p(\ve_{K_0},2s)Z_p(K,\chi,s)=L_p(\chi_{\hat{\Phi}},s)$. 

Dans le cas \case{ii}, on peut choisir $\Phi_1(\ga)=\N_{K/K_1}(\ga)$ et $\Phi_2(\ga)=\N_{K/K_2}(\ga)$, o\`u $K_1$, $K_2$ d\'esignent les deux sous-corps quadratiques imaginaires de $K$.  Alors $\hat{\Phi}_i(\gA)=\gA_i\cO_K$ ($i=1$, $2$), quel que soit l'id\'eal $\gA$ de $K_i$. Dans le cas \case{iii}, le corps $\hat{K}$ est l'un des deux sous-corps CM quartiques de $L$ qui ne sont pas conjugu\'es \`a $K$, et la factorisation de $\gp$ dans $K$ d\'etermine celle dans $L$ et donc celle dans $\hat{K}$. En plus, $L$ contient un unique sous-corps quadratique $M$ qui n'est contenu dans aucune sous-corps CM. Si $\sigma$ est le g\'en\'erateur de $\Gamma^K$ et $\tau$ un g\'en\'erateur de $\Gamma^M$, alors $\sigma$ et $\tau$ engendrent $\Gamma$ et on peut supposer que $\Phi(\ga)=\gA$, o\`u $\gA\cO_L=\ga(\sigma\tau)(\ga)\cO_L$. Alors $\hat{K}$ est caract\'eris\'e par $\Gamma^{\hat{K}}=\{\id, \sigma\tau\}$ et $\hat{\Phi}(\gA)=\ga'$, o\`u $\ga'\cO_L=\gA\sigma(\gA)\cO_L$. 

Ces indications devraient permettre au lecteur de compl\'eter les d\'etails de la d\'emonstration.\end{proof}

\end{document}